\documentclass[a4paper,oneside,11pt]{amsart}
\usepackage[utf8]{inputenc}
\allowdisplaybreaks
\usepackage{amsfonts}
\usepackage{amsmath}
\usepackage{amssymb}
\usepackage{amsthm}
\usepackage{geometry}
 \usepackage{lineno}
\usepackage{mathrsfs} 
\usepackage[dvipsnames]{xcolor}

\usepackage{bbm}
\usepackage{mathtools}

\usepackage{dsfont}
 
\usepackage[pagebackref=true,colorlinks=true, linkcolor=Blue, citecolor=Blue]{hyperref}
\renewcommand*\backref[1]{\ifx#1\relax \else (Cited on #1) \fi}

\usepackage{appendix}
\usepackage{enumerate}
\usepackage{float}

\usepackage{tikz}
\usepackage{tikz-cd} 
\usetikzlibrary{arrows, arrows.meta}

\usepackage[nameinlink]{cleveref} 
\usepackage{scalerel}[2016/12/29]

\theoremstyle{plain}
\newtheorem{definition}{Definition}
\newtheorem{proposition}[definition]{Proposition}
\newtheorem{lemma}[definition]{Lemma}
\newtheorem{corollary}[definition]{Corollary}
\newtheorem{theorem}[definition]{Theorem}
\newtheorem{remark}[definition]{Remark}

\theoremstyle{definition}

\numberwithin{definition}{section}
\numberwithin{equation}{section}

\DeclareMathOperator{\sgn}{sgn}
\DeclareMathOperator{\dist}{dist}

\DeclareMathOperator{\supp}{supp}

\newcommand*{\E}{\mathbb{E}}
\newcommand*{\R}{\mathbb{R}}

\newcommand*{\Z}{\mathbb{Z}}

\newcommand*{\N}{\mathbb{N}}

\newcommand*{\Fcal}{\mathcal{F}}

\newcommand*{\Pcal}{\mathcal{P}}

\newcommand*{\Lcal}{\mathcal{L}}

\newcommand*{\Ncal}{\mathcal{N}}

\newcommand{\abs}[1]{\left\lvert #1 \right\rvert}
\newcommand*{\Tl}{T\Ssup{\Lambda}}
\newcommand*{\Xl}{X\Ssup{\Lambda}}

\newcommand*{\GG}{\mathscr{G}}

\newcommand*{\RR}{\mathscr{R}}
\newcommand*{\LL}{\mathscr{L}}

\newcommand*{\1}{\mathds{1}}

\renewcommand*{\L}{\Lambda}

\renewcommand*{\d}{\mathrm{d}}
\newcommand*{\e}{\mathrm{e}}

\newcommand*{\range}{R}

\newcommand*{\SpecEnt}{\mathscr{I}}
\newcommand*{\RelEnt}{I}

\newcommand{\Ssup}[1]{^{\scriptscriptstyle{({#1}})}} 

\newcommand{\sfbf}[1]{{\boldsymbol{\mathsf{#1}}}}

\newcommand*{\birth}{{\sfbf{b}}}

\newcommand{\birthsinL}[1]{B_\L(#1)}
\newcommand{\deathsinL}[1]{D_\L(#1)}
\newcommand{\deathsinLfromomega}[1]{D^{\omega}_\L(#1)}
\newcommand{\deathsinLfrombirths}[1]{D^{2}_\L(#1)}

\newcommand{\rightDerivative}[1]{\frac{\d}{\d #1^+}}

\makeatletter
\newcommand{\subalign}[1]{%
  \vcenter{%
    \Let@ \restore@math@cr \default@tag
    \baselineskip\fontdimen10 \scriptfont\tw@
    \advance\baselineskip\fontdimen12 \scriptfont\tw@
    \lineskip\thr@@\fontdimen8 \scriptfont\thr@@
    \lineskiplimit\lineskip
    \ialign{\hfil$\m@th\scriptstyle##$&$\m@th\scriptstyle{}##$\hfil\crcr
      #1\crcr
    }%
  }%
}
\makeatother

\crefname{equation}{}{}

\title[Reversible birth-and-death dynamics in continuum]{Reversible birth-and-death dynamics in continuum: \\ a de Bruijn-type identity for free-energy dissipation}
\author[B. Jahnel]{Benedikt Jahnel}
\author[J. K\"oppl]{Jonas K\"oppl}
\author[Y. Steenbeck]{Yannic Steenbeck}
\author[A. Zass]{Alexander Zass}

\address[Benedikt Jahnel]{TU Braunschweig, Institut für Mathematische Stochastik,
Germany, and Weierstrass Institute, Berlin, Germany.}
\email{benedikt.jahnel@tu-braunschweig.de}
\address[Jonas K\"oppl]{Weierstrass Institute, Berlin,
Germany.}
\email{koeppl@wias-berlin.de}
\address[Yannic Steenbeck]{TU Braunschweig, Institut für Mathematische Stochastik, 
Germany.}
\email{yannic.steenbeck@tu-braunschweig.de}
\address[Alexander Zass]{Weierstrass Institute, Berlin,
Germany.}
\email{zass@wias-berlin.de}
\date{\today}

\keywords{Gibbs measures, spatial birth and death processes, point processes, relative entropy, Fisher information, entropy dissipation, attractor}
\subjclass[2020]{82C21, 82B21; Secondary 60K35, 60G55, 60J25}

\date{\today}

\definecolor{amethyst}{rgb}{0.6, 0.4, 0.8}

\usepackage[textwidth=20mm,textsize=small]{todonotes}

\usepackage{soul}

\begin{document}

\begin{abstract}
We investigate free-energy dissipation in a continuous-time birth-and-death dynamics in $\R^d$. For these Markov processes, the class of reversible measures coincides with the infinite-volume Gibbs point processes for some sufficiently nice Hamiltonian. For a wide class of initial distributions, we derive a de~Bruijn-type identity that relates the time evolution of the specific relative entropy along trajectories to the Fisher information, in particular establishing the thermodynamic limit of the latter. Along the way, we analyze some fine properties of the considered dynamics, such as the existence and regularity of local densities, obtain a spatial ergodic theorem for the entropy production per unit volume, and derive a small-time exponential series expansion of the dynamics. 
\end{abstract}

\maketitle

\setcounter{tocdepth}{1}
\hypersetup{bookmarksdepth=2}

{
  \hypersetup{linkcolor=black}
  \tableofcontents
}

\section{Introduction}
Gibbs point processes on \(\mathbb{R}^d\) serve as the continuum analogues to Gibbs measures on the lattice \(\mathbb{Z}^d\). These processes provide a fundamental framework for modeling thermodynamic equilibrium, with several intersections between the disciplines of mathematical statistical mechanics, stochastic geometry, and general probability theory. 
Extensive research has been conducted on various non-trivial models, answering fundamental questions of equilibrium statistical physics, including: existence of infinite-volume Gibbsian point processes, e.g.,~\cite{ruelle_1969,Dereudre_Vasseur_2020,Roelly_Zass_2020}, uniqueness, e.g.~\cite{ruelle_1969,Fernndez2007,Houdebert2022,Betsch2023}, phase transitions or non-uniqueness, e.g.,~\cite{Ruelle1971WRPT,Giacomin1995Agreement, CCK95,Georgii_1996,Dereudre2025,dereudre2026}, the equivalence of various ensembles,~\cite{georgii_equivalence_1995}, and validity of the Gibbs variational principle, e.g.,~\cite{georgii_equivalence_1995,dereudre2009variational,JKSZ24}.
The main goal of this work is to extend the existing equilibrium framework by establishing initial results for an associated stochastic dynamics. In fact, relative to the extensive literature on interacting particle systems on the lattice (cf.~\cite{liggett_interacting_2005}), the study of out-of-equilibrium behavior for Gibbsian configurations in the continuum setting remains comparatively underdeveloped. We address this gap by analyzing the time evolution and convergence-to-equilibrium properties of a continuum-space birth-and-death dynamics.

\subsection{Infinite-volume dynamics and convergence to equilibrium}

At least on a formal level, the {\em infinite-volume Gibbs measures} associated with the {\em energy functional} \(H\) are expected to be reversible with respect to the {\em infinite-volume birth-and-death process} on the space of point configurations in \(\mathbb{R}^d\) characterized by the generator $\LL$, which acts on a suitable domain of test functions as:
\begin{equation*}
    (\LL f)(\eta) := \int_{\R^d} \d x \ b(x,\eta)\big( f(\eta+\delta_x) - f(\eta) \big) + \sum_{x\in\eta} \big( f(\eta-\delta_x) - f(\eta)\big),
\end{equation*} 
if the {\em birth-rates} \(b\) are given by 
\begin{align*}
    b(x,\eta) := \e^{-h(x,\eta)}
\end{align*} 
and the {\em conditional energy} is defined as
\begin{align*}
    h(x,\eta) := H(\eta + \delta_x) - H(\eta).
\end{align*} 
This is a direct consequence of the {\em GNZ equations}~\cite{xanh1979integral}, which uniquely characterize Gibbs point processes.
However, a rigorous construction of the associated dynamics poses significant challenges, primarily due to the unbounded local density of particles, which is inherent to point processes in the continuum. Consequently, existence of well-defined dynamics cannot be directly deduced from Liggett's classical results (see~\cite{penrose2008spatial}).
For a more exhaustive survey of the developments regarding the construction of point process dynamics, we refer the reader to~\cite[Section 1.1]{JKSZ25}. We only mention that they include works both on the operator-theoretic and stochastic aspects of the theory (e.g.,~\cite{Preston1975, HS78,KonSko06Contact,Kond2008,Finkelshtein2014Dynamical, Kurtz1980Representations,Garcia1995Birth,garcia2006spatial, Etheridge2019Genealogical}). In the present work, our approach is based on the graphical construction framework of \cite{garcia2006spatial}. This method, allowing for a path-wise definition of the process, is explained in more detail in \Cref{section:setting_dynamics} below.

While the aforementioned literature provides a robust framework for the construction of infinite-volume dynamics and the description of their (infinitesimal) invariant measures, they offer limited insight into the out-of-equilibrium properties of the system, i.e., when the initial configuration is not sampled from, or globally absolutely continuous with respect to, the equilibrium measure. Specifically, aspects relating to free-energy dissipation and asymptotic stability of the system (weak limit points) remain largely unexplored.

These two lines of investigation are deeply intertwined. In the finite-state-space setting, the relation can be illustrated by the classical theory of a continuous-time irreducible Markov chains with generator \(\mathfrak{L}\) as follows. For a reversible measure \(\nu\), the {\em relative entropy} of a probability measure \(\mu\) w.r.t.\ \(\nu\) is defined as
\begin{align*}
    I(\mu \lvert \nu) = \sum_{x} \mu(x) \log \frac{\mu(x)}{\nu(x)},
\end{align*} 
where \(\mu\mapsto I(\mu \lvert \nu)\) is non-negative and vanishing if and only if \(\mu = \nu\). Under the dynamics, the map \(t \mapsto I(\mu T_t \lvert \nu)\) is strictly decreasing if $\mu \neq \nu$.
Consequently, $\mu \mapsto I(\mu \lvert \nu)$ is a strict Lyapunov function for the unique fixed point $\nu$ of the measure-valued evolution
 \begin{align*}
     \partial \mu_t = \mu_t \mathfrak{L}.
 \end{align*} 
 This yields the convergence of \(\mu_t = \mu T_t\) to the equilibrium measure \(\nu\) as \(t \rightarrow \infty\).

This relative-entropy strategy has been successfully adapted to the more complex setting of interacting particle systems on the lattice \(\mathbb{Z}^d\), both for reversible and non-reversible dynamics, cf.~\cite{holley_free_1971,higuchi_results_1975, Sullivan1976,kunsch_time_1984,jahnel_attractor_2019, jahnel_dynamical_2023,JK25}.

\subsection{Contributions of this work}

In \cite{JKSZ25} we started a program focused on the statistical mechanics and long-time behavior of birth-and-death dynamics in the continuum. A key result of that work is a free-energy dissipation inequality, previously observed only for non-interacting systems, cf.~\cite{Dello2024Wasserstein,huesmann2025}. In the present work, we significantly refine this result by showing that the aforementioned dissipation inequality is, in fact, an identity. To the best of our knowledge, this constitutes the first derivation of a \emph{de Bruijn-type} identity for continuum-space birth-and-death processes in the interacting case.

From a technical perspective, the improvement to an equality is achieved by working directly with the original \textit{global} Markov process rather than employing local approximations. Central here is a semi-explicit construction of the local densities for the evolved process. Such a construction ensures that the densities exist and have the regularity required for taking time derivatives. We believe this may be of independent interest for future studies of the dynamics of point processes. Furthermore, we establish the convergence of a small-time series expansion for the dynamics, which allows us to prove that the system cannot reach a Gibbs state in finite time unless the initial distribution is already Gibbs. Although our analysis focuses on a birth-and-death process with rates that come from interactions that satisfy precise assumptions (namely continuity, boundedness from above and below, and finite range), like the area interaction, we believe that the methodology developed here could serve as a framework for broader investigations of interacting particles systems in the continuum.

\subsection{Outline of the paper}
The remainder of the paper is structured as follows.
In \Cref{section:setting_and_main_result}, we introduce our setting and present the main results, with an outlook towards future research directions presented in \Cref{section:outlook}. Before we dive into the technicalities of the proof, we give a brief outline of the proof strategy in \Cref{section:strategy}. 
In \Cref{section:domain_of_generator}, we then go on to check that the (formal) forward Kolmogorov equation, involving the formal generator \(\mathscr{L}\) and the semigroup \((T_t)_{t \geq 0}\) of our birth-and-death-dynamics, holds pointwise for local functions.
\Cref{section:density_properties_of_bd_process} provides an analysis of the regularity and continuity properties of (the distribution of) the birth-and-death process \((Y_t^{\underline\omega})_{t \geq 0}\) given a fixed initial configuration. The main objective here is to construct suitable versions of the local densities to justify their time-differentiation in subsequent proofs.
The core of our results is contained in \Cref{section:entropy_dissipation}, which addresses entropy dissipation in two distinct steps:
In \Cref{section:entropy_dissipation_local_equality}, we derive a formula for the time derivative of the local entropies of the evolved measures; in \Cref{section:entropy_dissipation_thermodynamic_limit}, we establish the thermodynamic limit of these formulae. By piecing together these two parts, we are able to show a de~Bruijn-type identity for the free-energy dissipation in our setting.
In \Cref{section:gibbs_and_reversible_measures}, we conclude the proof of our main result with the observation that our dynamics cannot attain a Gibbsian state in finite time, unless the initial distribution itself is already a Gibbs measure. On the technical side, this is shown with an analyticity argument, using a small-time exponential series expansion. This, together with our de~Bruijn-type identity, implies that the free energy is strictly decreasing in time if the initial distribution is not already an equilibrium distribution.
Finally, in \Cref{section:proof-quantitative-decay} we comment on how to apply the result from \cite{DaiPraPosta2013} in our situation to show that for high temperatures the decay of the specific relative entropy is exponentially fast.

\section{Setting and main results}\label{section:setting_and_main_result}

Throughout the article, where it helps readability, we will use the symbol $f\lesssim g$ if $f\le c g$ for a constant $c \in (0, \infty)$ independent of some properties of $f$ and $g$.
We write $f\lesssim_A g$ if we want to make clear that the implicit constant $c=c(A)$ depends on some parameter $A$, for example a subset $\L\subset \R^d$ or a time \(t > 0\).

\subsection{Point-process formalism}
Our analysis deals with probability measures on and Markov processes with values in the space
\begin{align*}
    \Omega:=\Big\{\omega= \sum_{i\ge 0}\delta_{x_i} \colon x_i\in\R^d \Big\}
\end{align*} of simple, locally finite {\em point configurations} (or measures) in $\R^d$.
We may identify an element \(\omega \in \Omega\) with its support,
\begin{align*}
    \omega = \sum_{i\ge 1} \delta_{x_i}
    \equiv \{x_1,x_2, \dots\}.
\end{align*}
For any subset $\L\subseteq\R^d$, we denote by $\omega_\L = \omega \cap \L$ the restriction of the configuration $\omega\in\Omega$ to $\L$. The space of all such point configurations in $\L$ is denoted by $\Omega_\L$. We will use the notation $\L\Subset\R^d$, or $\L\in\mathfrak{B}_b(\mathbb{R}^d)$, to say that \(\L\) is a bounded Borel subset of $\R^d$. We also fix a sequence of increasing boxes in positional space \(\mathbb{R}^d\), denoted throughout by
\begin{align*}
    \Lambda_n := \left[-n/2, n/2\right)^d, \qquad n \in \mathbb{N}:=\{1,2,\ldots\}.
\end{align*} 
In particular, it is important to note that, when taking infinite-volume (thermodynamic) limits, we will write $\Lambda\uparrow\R^d$ to mean that such a limit is taken as $n\to\infty$ along the sequence $(\Lambda_n)_{n\in\N}$.
For any $\L\Subset\R^d$, the {\em number of points} of $\eta$ in $\L$,
\begin{align*}
    \Omega \ni \eta \mapsto N_\L(\eta)\equiv |\eta_\L| \in [0,\infty),
\end{align*} 
is well-defined; in particular, $N_\Lambda(\eta)$ is a finite number for every $\eta \in \Omega$.
We endow $\Omega$ (and analogously \(\Omega_\Lambda\)) with the canonical $\sigma$-algebra $\mathcal{F}$ generated by the family of all these counting variables, i.e.,
\begin{align*}
    \mathcal{F} = \sigma\big(N_{\L} \colon \L \Subset \mathbb{R}^d \big).
\end{align*} 
A probability measure on the set of point configurations \(\Omega\) is also called \emph{point process} and the set of all these distributions is denoted by $\Pcal(\Omega)$. The restriction (or marginal) $\mu_\L\in\Pcal(\Omega_\L)$ of any $\mu\in\Pcal(\Omega)$ to $\L\Subset\R^d$ is simply given by
\begin{align*}
    \int f(\eta) \mu_\L(\d \eta)
    = \int  f(\eta_\Lambda)  \mu(\d \eta), \qquad f \text{ measurable}.
\end{align*}
A function \(f \colon \Omega \to \mathbb{R}\) is called {\em \(\Lambda\)-local} or {\em \(\Lambda\)-measurable}, for \(\Lambda \Subset \mathbb{R}^d\), if \(f\) is measurable w.r.t.\ \(\mathcal{F}_\Lambda = \sigma\big(N_{\Delta} \colon \Delta \Subset \L \big)\) or equivalently if \(f\) is measurable and \(f = f(\cdot_\Lambda)\). Consequently, a measurable function \(f \colon \Omega \to \mathbb{R}\) is {\em local} if there exists \(\Lambda \Subset \mathbb{R}^d\) such that \(f\) is \(\Lambda\)-measurable. We let $\Lcal$ denote the set of all bounded, local, measurable functions $f\colon\Omega \to \R$.
Finally, we denote by \(\theta_x\) the translation by \(x \in \mathbb{R}^d\), acting either on points in \(\mathbb{R}^d\) via \(y \mapsto y -x\) or on point configurations via \(\sum_{i\ge 1} \delta_{x_i} \mapsto \sum_{i\ge 1} \delta_{x_i - x}\).
A probability measure \(\mu \in \Pcal(\Omega)\) is said to be {\em translation invariant} if \(\mu \circ \theta_x^{-1} = \mu\) for all \(x \in \mathbb{R}^d\) and we write $\Pcal_\theta$ for the set of translation-invariant measures $\mu\in \Pcal(\Omega)$.

\subsection{Gibbs point processes}

Our investigation starts from the so-called {\em Gibbs point processes} that represent thermodynamic equilibrium states in the form of probability measures on the space of point configurations $\Omega$. 
Denoting by $\Omega_f$ the space of finite configurations, consider an {\em energy functional} $H\colon \Omega_f\to \R\cup\{+ \infty\}$ such that for \(\L\Subset\R^d\) the {\em(conditional) energy in \(\Lambda\)}  
\begin{equation*}
H_\L(\eta):=\lim_{n\uparrow\infty}\big(H(\eta_{\L_n})-H(\eta_{\L_n\setminus\L})\big)
\end{equation*} 
is well-defined.
We restrict our study here to interactions that are (i) continuous, (ii) bounded from above and below, and (iii) have finite range, meaning that there exists some $\range>0$, the \emph{interaction range}, such that
\begin{equation*}  H_\L(\eta)=H_\L(\eta_{\L\oplus B(0,\range)}),\quad\text{where } \L\oplus B(0,\range):=\{x\in\R^d\colon \dist(x,\L)\leq \range\}.
\end{equation*}
Our guiding example is the {\em area interaction},
which is given, for any $\range>0$, by the Hamiltonian
\begin{equation*}
H(\eta)=\alpha\abs{B_{\range/2}(\eta)} := \alpha \Big\lvert \bigcup_{x\in \eta}B_{\range/2}(x) \Big\rvert,
\end{equation*} 
with $\alpha\in\R\setminus\{0\}$, and where $B_r(x)$ denotes the closed ball with radius $r>0$ centered at $x\in \R^d$.
Clearly, the interaction range of the area interaction is $\range$.
Note that the interaction is attractive for $\alpha>0$, meaning that points tend to cluster, while it is repulsive for $\alpha<0$, thus exhibiting a more ordered pattern.

Now, {\em finite-volume Gibbs point processes} in $\L\Subset\R^d$ with {\em boundary condition} $\omega\in \Omega$ are defined as
\begin{equation*}
\nu\Ssup{\omega}_\L(\d\eta_\L):=Z_\L(\omega)^{-1}\e^{-H_\L(\eta_\L\omega_{\L^c})}\pi_\L(\d \eta_\L),
\end{equation*}
where the partition function $Z_\L(\omega)$ is the usual normalization constant that makes \(\nu\Ssup{\omega}_\L(\d\eta_\L)\) a probability measure on $\Omega_\L$ and the reference measure $\pi$ is the homogeneous {\em Poisson point process} with intensity \(1\) on \(\Omega\).
A probability measure $\nu$ on $\Omega$ is called a (grand-canonical) {\em infinite-volume Gibbs point process} if it satisfies the {\em DLR equations}  
\begin{equation*}
    \int\nu(\d \omega)f(\omega)
    =
    \int \nu(\d \omega) \int\nu_\L\Ssup{\omega}(\d \eta_\L)\ f(\eta_\L\omega_{\L^c}),\quad \Lambda \Subset \R^d, \, f\ge0 \text{ measurable}.
\end{equation*}
Furthermore, we denote the set of all translation-invariant infinite-volume Gibbs point processes as $\GG_\theta$.
By standard results for Gibbs point processes (cf.~\cite{dereudre_2019}), the set $\GG_\theta$ is never empty, and it is well-known, see e.g.~\cite{Ruelle1971WRPT,Giacomin1995Agreement,CCK95}, that notably the area-interaction model exhibits a {\em phase transition} in the sense that $\GG_\theta$ contains more than one element for sufficiently large radii $R$.
Let us once again stress that our main results hold in regimes of uniqueness as well as non-uniqueness.

\subsection{Dynamics}\label{section:setting_dynamics}

We consider Markovian dynamics that are reversible with respect to elements of $\GG_\theta$. For this, let 
\begin{equation*}
    h(x,\eta):= \lim_{n\uparrow\infty}\big(H(\eta_{\L_n}+\delta_x) - H(\eta_{\L_n})\big)
\end{equation*} 
denote the {\em conditional energy} of a point $x\in\R^d$ in a configuration $\eta\in\Omega$, and define the {\em birth rate}, based on $h$, at $x$ given $\eta$, as
\begin{equation*}
    b(x,\eta) := \e^{-h(x,\eta)}.
\end{equation*}
\begin{remark}
    In the terminology of the GNZ equations, see e.g.~\cite{Georgii1976Canonical,xanh1979integral}, the birth rate is the \emph{Papangelou intensity} associated to the {\em Hamiltonian} $H$. Our assumptions on $H$ mean that the birth rate is also continuous and bounded from above and below; moreover, points at distance larger than $\range$ from $x\in\R^d$ obviously do not contribute to the birth rate at $x$. Observe that, for example, due to the attractive nature of the area interaction for $\alpha>0$, the birth rate is higher close to the points of $\eta$.
\end{remark}

Now, we consider the {\em birth-and-death process} associated to the (formal) generator
\begin{equation*}\label{eq:genBD}
    (\LL f)(\eta) := \int_{\R^d} \d x \ b(x,\eta)\big( f(\eta+\delta_x) - f(\eta) \big) + \sum_{x\in\eta} \big( f(\eta-\delta_x) - f(\eta)\big).
\end{equation*}
We can construct the associated Markov process $X\Ssup{\omega}=(X\Ssup\omega_t)_{t\ge 0}$, with starting configuration $X_0\Ssup{\omega}=\omega$, as a thinning of a space-time Poisson random measure, see~\cite[Theorem~2.13]{garcia2006spatial}. In the same vein (compare with the proof of ~\cite[Theorem~2.13]{garcia2006spatial}) we can first construct a Markov process $X\Ssup{\underline\omega}=(X\Ssup{\underline\omega}_t)_{t\ge 0}$, where the lifespans of the initial configuration are deterministic and already encoded in the marked version \(\underline{\omega}\) of \(\omega\).
\begin{proposition}[Graphical representation]\label{Definition:Markov_Process}
    Let $\Ncal$ be a Poisson random measure on $\R^d\times[0,\infty)^3$ with intensity measure $\d x\otimes \d u \otimes\e^{-r}\d r\otimes\d s$, $\omega = \sum_{i\ge 1}\delta_{x_i} \in \Omega$, and $\underline\omega = \sum_{i\ge 1}\delta_{(x_i,\tau_i)}$ be the point process on $\R^d\times[0,\infty)$ obtained by associating to each $x_i\in\omega$ a lifespan $\tau_i>0$.
    Suppose $\Fcal = (\Fcal_t)_{t\ge 0}$ is a filtration such that $\Ncal$ is $\Fcal$-compatible, i.e., such that for each bounded Borel set $A\subset\R^d\times [0,\infty)^2$, $\Ncal(A,\cdot)$ is $\Fcal$-adapted, and $\Ncal(A,t+s)-\Ncal(A,t)$ is independent of $\Fcal_t$, for any $s,t\geq 0$.
    Then, for any Borel measurable set $B \subset \R^d$, there exists a unique solution of
    \begin{equation}\label{eq:sdeBD}
    \begin{split}
        X\Ssup{\underline\omega}_t(B) = \int_{B\times [0,\infty)^2 \times [0,t]}\Ncal(\d x,\d u,\d r,\d s)\ &\1_{[0,b(x,X\Ssup{\underline{\omega}}_{s-})]}(u)\1_{(t-s,\infty)}(r) \\
        &+ \int_{B\times [0,\infty)}\underline\omega(\d x,\d r)\ \1_{(t,\infty)}(r).
    \end{split}
    \end{equation}
\end{proposition}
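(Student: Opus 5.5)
We construct the solution pathwise, following the proof of \cite[Theorem~2.13]{garcia2006spatial}. The only new feature is the marked initial condition $\underline\omega$, which enters \eqref{eq:sdeBD} as a deterministic, non-random term.

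\textbf{Step 1: domination.} First compare with a dominating process that needs no feedback. Since $b\le \bar b<\infty$, set
\begin{equation*}
\bar X_t(B) := \int_{B\times[0,\infty)^2\times[0,t]}\Ncal(\d x,\d u,\d r,\d s)\,\1_{[0,\bar b]}(u)\1_{(t-s,\infty)}(r) + \int_{B\times[0,\infty)}\underline\omega(\d x,\d r)\,\1_{(t,\infty)}(r).
\end{equation*}
This is an explicit, locally finite random measure: the first term is a Poisson functional with finite intensity on bounded sets, and the second term is dominated by $\omega(B)$. Any solution of \eqref{eq:sdeBD} satisfies $X_t\le \bar X_t$. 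Hence the candidate atoms of any solution are contained in the atoms of $\Ncal$ with $u\le\bar b$ together with the atoms of $\underline\omega$.

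\textbf{Step 2: breaking the infinite range of influence.} Because $b$ has interaction range $\range$, whether a candidate birth at $(x,u,r,s)$ is accepted depends only on the state in $B(x,\range)$ at time $s-$. Fix a time horizon $T$ and look at the space-time ``clusters of influence'' built from the dominating points on $[0,T]$. Two candidate points are connected if their spatial distance is at most $\range$ and their lifetime intervals overlap. Points of $\underline\omega$ act only as fixed obstacles: they are never created, so they do not propagate further influence through new births.

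The main obstacle is that $\omega$ is arbitrary and locally finite, so one cannot simply invoke subcriticality of a Poisson Boolean model. The plan is therefore to choose $T$ small, depending only on $\bar b$, $\range$ and $d$, so that the oriented percolation of births started from any bounded region is a.s.\ finite. Influence from $\underline\omega$ enters only through births near existing points, and each such point contributes only finitely many relevant births in bounded regions. Concretely, we use a branching-process domination in which each birth's descendants within time $T$ are dominated by a Poisson number with mean $\bar b\,|B(0,\range)|\,T<1$. Then all ancestry chains are finite a.s.

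\textbf{Step 3: construction and uniqueness on each cluster.} Within each finite cluster, the solution is constructed by ordering the finitely many candidate birth times and deciding acceptance recursively through the indicator $\1_{[0,b(x,X_{s-})]}(u)$. This simultaneously gives existence and uniqueness, since any solution must make the same decisions in the same order. Adaptedness to $\Fcal$ follows because each decision uses only $\Ncal$ restricted to $[0,s]$.

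\textbf{Step 4: iteration in time.} Extend to all times by iterating on $[kT,(k+1)T]$. At time $kT$, restart from $X_{kT}$, marked with its residual lifespans, which is again a locally finite marked configuration of the same type. We then use the $\Fcal$-compatibility of $\Ncal$, which makes the increments after $kT$ independent of $\Fcal_{kT}$. Uniqueness on each interval glues to uniqueness on $[0,\infty)$. Measurability in $B$ and the identity \eqref{eq:sdeBD} hold by construction, since the right-hand side is exactly the counting of the accepted, still-alive atoms.
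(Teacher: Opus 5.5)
Your proposal is correct and matches the paper's route: the paper gives no separate argument but defers to the construction behind Garcia--Kurtz, Theorem~2.13, with the deterministic lifespans of $\underline\omega$ entering only as known input, just as in your domination, finite-cluster and time-slab steps. Drop the paragraph calling the arbitrariness of $\omega$ the ``main obstacle'': as you note yourself, the cluster graph is built only from the atoms of $\Ncal$ with $u\le \bar b$, so subcriticality of that Poisson Boolean model for $\bar b\,|B(0,\range)|\,T<1$ suffices for every locally finite $\omega$.
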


By randomizing the initial lifespans $\tau_i$ according to independent (of $\mathcal{N}$) unit-exponential random variables, we obtain the Markov process $X\Ssup{\omega}$ that corresponds to the unique solution of the martingale problem for $\LL$, see also~\cite[Theorem~4.4.2]{EK86}. Note that such a process will take values in the space of c\`adl\`ag functions on counting measures on $\R^d$, equipped with the Skorokhod $J_1$-topology. We will show how the semigroup of such a Markov process acts on a large class of functions.

\begin{remark}
    It is worth noting that, in our case, the Markov process could also be directly constructed as a limit of finite-volume approximations. Indeed, a ``finite speed of propagation'' result like \Cref{le_comparison_finite_infinite} below would yield that the coupling between the evolutions of two birth-and-death processes living in large but bounded nested volumes that incorporate the observation window has a sub-exponential (in the distance between the observation window and the boundary of the smaller one of the large volumes) failure probability in a fixed observation window.
\end{remark}

We denote the {\em semigroup} \((T_t)_{t \geq 0}\) associated to $X\Ssup{\omega}$ by
\begin{align*}
    (T_t f)(\omega):= \mathbb{E}[f(X_t\Ssup{\omega})],\qquad t\geq 0,
\end{align*}
for all $f$ for which the right-hand side is well-defined, 
and note that, as shown below in \Cref{lem:domain},
\begin{align*}
    \frac{\d}{\d s}T_s f\Big\vert_{s = t} = T_t \LL f , \qquad f\in \Lcal.
\end{align*}
This semigroup of course gives a dual semigroup that describes the evolution of a starting measure \(\mu\) under the birth-and-death-dynamics by
\begin{align*}
    (\mu T_t)[f]
    = \int \mu(\d\omega) \, (T_t f)(\omega), \qquad f \in \Lcal.
\end{align*}

Let $\RR_\theta$ denote the set of translation-invariant reversible measures for the above dynamics, i.e., the set of \(\mu \in \Pcal_\theta\) with \(\mu[(T_t f) \, g] = \mu[f \, (T_t g)]\) for all \(f, g \in \Lcal\).
The connection to infinite-volume Gibbs point processes is the following, proven in \Cref{section:reversible_measures_are_gibbs_measures} below.
\begin{proposition}\label{prop:reversible_equals_gibbs_with_the_right_definition}
    We have that $\GG_\theta = \RR_\theta$.
\end{proposition}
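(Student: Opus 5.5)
We prove the two inclusions separately, using throughout that the generator identity $\frac{\d}{\d s}T_s f\vert_{s=t} = T_t\LL f$ holds for $f\in\Lcal$ (\Cref{lem:domain}). The point connecting both directions is that reversibility is equivalent to the symmetry
\begin{equation*}
    \mu[g\,\LL f]=\mu[f\,\LL g], \qquad f,g\in\Lcal,
\end{equation*}
and that this symmetry is, in turn, equivalent to the GNZ equation with Papangelou intensity $b$.

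\textbf{The inclusion $\GG_\theta\subseteq\RR_\theta$.} Let $\nu\in\GG_\theta$ and $f,g\in\Lcal$.
\begin{enumerate}
    \item The GNZ equation $\nu\big[\sum_{x\in\eta}F(x,\eta-\delta_x)\big]=\nu\big[\int \d x\, b(x,\eta)F(x,\eta)\big]$ holds. Apply it to $F(x,\eta)=g(\eta)f(\eta+\delta_x)$, which is admissible because $f,g$ are bounded and local and $b$ is bounded. This gives
    \begin{equation*}
        \nu\Big[\int \d x\, b(x,\eta)\,g(\eta)f(\eta+\delta_x)\Big]=\nu\Big[\sum_{x\in\eta}g(\eta-\delta_x)f(\eta)\Big].
    \end{equation*}
    This is exactly what is needed for the off-diagonal parts of $\nu[g\LL f]$ and $\nu[f\LL g]$ to coincide.
    \item The diagonal terms $-\nu[fg\,(\int b\,\d x + N)]$ are infinite, since $\int b(x,\eta)\,\d x=\infty$. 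One must first localise: for $f,g$ both $\Lambda$-local, the integral in $\LL f$ reduces to $x\in\Lambda$ and the sum to $x\in\eta_\Lambda$. All terms are then integrable (the Gibbs measure has finite moments of $N_\Lambda$ since $b$ is bounded), so $\nu[g\LL f]=\nu[f\LL g]$.
    \item To pass from generator symmetry to semigroup symmetry, set $\phi(s)=\nu[(T_sf)(T_{t-s}g)]$ and show it is constant. Here $T_sf$ is no longer local, which is the delicate point.
\end{enumerate}
For step 3 I would approximate $T_sf$ by local functions, using the finite-speed-of-propagation estimate (\Cref{le_comparison_finite_infinite}): $T_sf$ is uniformly close to a $\Lambda_n$-local function, with an error decaying sub-exponentially in $n$. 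I would also need to control $\LL$ applied to that approximation, which requires that the birth part near the boundary of $\Lambda_n$ contributes little. Alternatively, one can use the standard fact that a measure satisfying $\nu[\LL f]=0$ on a core is invariant, together with uniqueness of the martingale problem (\cite[Theorem~4.4.2]{EK86}), and apply it to the time-reversed process. I expect the justification that $\Lcal$ is rich enough, i.e.\ a core-type statement, to be the \textbf{main obstacle}.

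\textbf{The inclusion $\RR_\theta\subseteq\GG_\theta$.} Let $\mu\in\RR_\theta$ and $f,g\in\Lcal$.
\begin{enumerate}
    \item Differentiate $\mu[(T_tf)g]=\mu[f(T_tg)]$ at $t=0$ using \Cref{lem:domain}. This gives $\mu[g\LL f]=\mu[f\LL g]$, with dominated convergence justified by boundedness of $b$ and the moment bounds.
    \item Take $g=\1$, so $\LL g=0$; then $\mu[\LL f]=0$. In particular, with $f=N_\Delta\wedge K$ and $K\to\infty$, this gives a finite first moment of $N_\Delta$.
    \item Choose $g$ a bounded $\Lambda$-local function and $f=\1_{\{\eta_\Lambda=\emptyset\}}h$, with $h$ local outside $\Lambda$, or more generally $f=u\cdot\1$ tailored so that the symmetry identity isolates one birth/death pair. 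In the symmetrised identity the diagonal terms cancel, leaving
    \begin{equation*}
        \mu\Big[\int_\Lambda b(x,\eta)\,g(\eta)f(\eta+\delta_x)\,\d x\Big]=\mu\Big[\sum_{x\in\eta_\Lambda}g(\eta-\delta_x)f(\eta)\Big].
    \end{equation*}
    Products $g(\eta)f(\eta+\delta_x)$ with $f,g\in\Lcal$ generate, by a monotone class argument, all functions $F(x,\eta)$ that are bounded, local and vanish outside $x\in\Lambda$.
    \item The GNZ equation therefore holds for $\mu$ with Papangelou intensity $b$. By the characterisation \cite{xanh1979integral}, $\mu$ satisfies the DLR equations, hence $\mu\in\GG_\theta$, since translation invariance is assumed.
\end{enumerate}
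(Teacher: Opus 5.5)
Your inclusion $\RR_\theta\subseteq\GG_\theta$ is essentially the paper's argument. You differentiate reversibility at $t=0$ and then extract the GNZ equations from infinitesimal reversibility; the paper takes this second step from the proof of \cite[Proposition~3.1]{JKSZ25}. One caveat: a reversible $\mu$ has no moment bounds a priori, so Step~1 should first be carried out only for $f,g\in\Lcal$ with $\LL f,\LL g$ bounded. That is what the paper does, and your own Step~2 obtains first moments only after Step~1 has been used.

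\textbf{The genuine gap.} It is Step~3 of $\GG_\theta\subseteq\RR_\theta$. This step carries the entire difficulty of that inclusion, and you leave it as a plan. To show $\phi(s)=\nu[(T_sf)(T_{t-s}g)]$ is constant you need two things: a backward equation for $T_sf$, and GNZ-symmetry of $\LL$ on the \emph{non-local} functions $T_sf$ and $T_{t-s}g$. That is exactly the core property you flag, and neither of your suggested routes supplies it as stated.
\begin{itemize}
\item Take $\Lambda_n$-local approximations such as the local dynamics $T^{(n)}_sf$. Their $s$-derivative is given by a local generator that differs from $\LL$ by birth terms on the layer $\{x\in\Lambda_n\colon \dist(x,\partial\Lambda_n)\le\range\}$. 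You would therefore need a bound on $\lvert D_xT^{(n)}_sf\rvert$ for $x$ in that layer which is small after integration, uniformly in $n$ and $s\le t$. This is an estimate on the influence of a single added point. \Cref{le_comparison_finite_infinite} is a finite- versus infinite-volume comparison and does not give it as stated, and you do not prove it.
\item The time-reversal/martingale-problem route runs into the same issue. When you verify the martingale property of the reversed process, you again have to pair $\LL f$ with non-local functions of the form $T_uh$.
\end{itemize}

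\textbf{How the paper avoids it.} The paper does not use cores at all. By \Cref{lemma:moments_of_Area_interaction_Gibbs_measures}, $\nu$ satisfies the Poisson-like moment condition. Hence \Cref{prop:small_time_exponential_series_expansion} applies; its proof, via the remainder bound of \Cref{lemma:estimate_powers_of_generator}, works equally well with a bounded weight $g$. For $t$ below an explicit threshold this gives
\[
\nu[(T_tf)g]=\sum_{k\ge 0}\frac{t^k}{k!}\,\nu[(\LL^kf)g]=\sum_{k\ge 0}\frac{t^k}{k!}\,\nu[f(\LL^kg)]=\nu[f(T_tg)].
\]
The middle equality uses GNZ-symmetry only on the \emph{local} functions $\LL^jf$ and $\LL^jg$. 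These are unbounded, but all their moments are finite under $\nu$. The small-time identity then extends to bounded measurable functions, and to all $t>0$ through $\nu[(T_{k\delta}f)g]=\nu[(T_{(k-1)\delta}f)(T_\delta g)]=\dots$.

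\textbf{Rescuing your finite-speed-of-propagation idea.} Approximate by finite-volume dynamics that are \emph{exactly} reversible; the paper mentions this alternative via the local evolutions of \cite{JKSZ25}. For instance, take the dynamics in $\Lambda_n$ with frozen boundary condition $\omega_{\Lambda_n^c}$. It is reversible for $\nu^{(\omega)}_{\Lambda_n}$, so by the DLR equations $\nu[(T^{(n)}_tf)g]=\nu[f(T^{(n)}_tg)]$ holds exactly. You then let $n\to\infty$, using a coupling as in \Cref{le_comparison_finite_infinite} together with bounded convergence.
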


Finally, in order to distinguish the points of $X\Ssup{\omega}$ coming from the initial condition $\omega$ from those born after time $t=0$, we denote by \((\omega_t)_{t\geq 0}\) the pure-death process of the initial condition, i.e., the process in which the points of $\omega$ die after a unit-exponential random time, and consider the process $Y_t\Ssup{\omega}$, with
\begin{equation}\label{eqn:separation-initial-later}
X_t\Ssup{\omega}=\omega_t+Y_t\Ssup{\omega}.
\end{equation}
Analogously, if the initial lifespans are fixed, let \((\underline{\omega}_t)_{t\geq 0}\) be the deterministic pure-death process in which both the points of $\omega$ and their lifespans are fixed, and consider the process \(Y_t\Ssup{\underline\omega}\).

\subsection{Main results}

Besides being of interest on its own in mathematical statistical mechanics, the relative-entropy density backs up a Lyapunov-type approach to long-time behavior of the dynamics for arbitrary initial distributions. 
Hence, in order to state our main result, we need the notion of {\em relative entropy} of a measure \(\mu\) with respect to another measure \(\nu\), defined as
\begin{align*}
    \RelEnt(\mu|\nu) := \int\mu(\d\eta)\log\frac{\d \mu}{\d\nu}(\eta),
\end{align*} if the density $\d \mu/\d\nu$ exists and $\infty$ otherwise.
The {\em local relative entropy} for $\L\Subset\R^d$ is then
\begin{align*}
    \RelEnt_\L(\mu|\nu) := \RelEnt(\mu_\L|\nu_\L)
\end{align*} and the {\em relative entropy density}, also called \emph{specific relative entropy}, is defined as
\begin{align*}
    \SpecEnt(\mu|\nu)  :=\liminf_{n\uparrow\infty}\frac{1}{\abs{\L_n}}\RelEnt_{\L_n}(\mu|\nu),
\end{align*} which even exists as a limit if $\mu$ is translation invariant and \(\nu \in \GG_\theta\), see e.g.~\cite[Proposition~5.4]{JKSZ25}. The {\em Gibbs variational principle} then says that for \(\nu \in \GG_\theta\), the map \(\mu \mapsto \SpecEnt(\mu|\nu)\) does not depend on the choice of \(\nu\) and its zero set coincides with \(\GG_\theta\).

One advantage of the approach to long-time behavior via relative entropy densities is that it also works away from phase-uniqueness regimes and for initial distributions that are not absolutely continuous with respect to the reversible measure. 
We consider here starting measures with only local densities, but that are regular in the following sense.
\begin{definition}[Regular measures]
    \(\mu\in\Pcal_\theta\) is said to be \emph{regular} if for any bounded $\L\Subset\R^d$, there exists \(z_\Lambda \in (0, \infty)\) such that
    \begin{align*}
        z_\Lambda^{-N_{\Lambda}}
        \lesssim_\L 
        \frac{\d \mu_\Lambda}{\d \pi_{\Lambda}}
        \lesssim_\L z_\Lambda^{N_{\Lambda}}.
    \end{align*}
\end{definition}
\begin{remark}
    This assumption, while technically quite convenient, could probably be weakened to allow for more general measures. See e.g.~\Cref{rmk:pair}, for an example of how to adapt the proofs.
    Further, note that any Gibbs point process associated to an interaction satisfying the above Assumptions (i)--(iii) (also at some other temperature or for some other interaction range \(\range > 0\)), as well of course as any homogenous Poisson point process of some intensity \(z > 0\), is regular.
\end{remark}

\begin{remark}
    For any regular \(\mu\), the evolved measure \(\mu T_t\) is still regular for all \(t > 0\), as we will show in \Cref{corollary:regular_measures_stay_regular_under_evolution}.
\end{remark}

Next, if $\mu\in \Pcal_\theta$ has finite intensity, then there exists a unique measure $\mu_0$ on \(\Omega\), its {\em Palm measure}, such that
\begin{align*}
    \int\mu(\d \eta)\sum_{x\in \eta}f(x,\theta_x \eta)
    = \int \d x\, \mu_0[f(x,\cdot)],\qquad f\colon \R^d\times\Omega\to[0,\infty),
\end{align*} see for example~\cite{Kallenberg1983Random}.
The {\em reduced Palm measure} $\mu_0^!$ is then defined via \(\mu_0^! := \mu_0 \circ \gamma_0^{-1}\), where \(\gamma_0 \colon \eta \mapsto \eta - \delta_0\).

\medskip
We can now state our main result, which improves upon~\cite{JKSZ25} by showing that the free energy dissipation inequality established there is in fact an equality, yielding a de~Bruijn-type identity, cf.~\cite{Bakry2014,Dello2024Wasserstein}, that expresses the relative entropy dissipation in terms of the Fisher information. Here, the (rescaled) modified {\em Fisher information} is given by 
\begin{equation*}
    \mathcal{J}(\mu\vert\nu):=\lim_{n \uparrow \infty} \frac{1}{\abs{\Lambda_n}}\mathcal{J}_\L(\mu_\L\lvert \nu),
\end{equation*}
where the finite-volume modified Fisher information with \textit{free boundary conditions} is given by  
\begin{align*}
    \mathcal{J}_\L(\mu_\L\lvert \nu) = \mathcal{J}^{(\emptyset)}_\L(\mu_\L \lvert \nu) 
    := \int\d x\int \nu_\L(\d\eta)\  b(x,\eta_\L)\ D_x\frac{\d\mu_\L}{\d\nu_\L}(\eta_\L) \ D_x\log\frac{\d\mu_\L}{\d\nu_\L}(\eta_\L),
\end{align*} 
and where \((D_x f)(\eta) := f(\eta + \delta_x) - f(\eta)\) is the add-one-cost operator. In the course of our proof, we will show that the above limit exists under some mild assumptions on $\mu$ and does not depend on the choice of the boundary condition in a precise sense, see \Cref{lemma:equivalence_fisher_infos}.

\begin{theorem}[de Bruijn identity]\label{thm:main_thm}
    Let \(\nu\in \GG_\theta\) and \(\mu\in \Pcal_\theta\) a regular starting measure. 
    Then,
\begin{align}\label{eq:de_bruijn}
        \SpecEnt(\mu \,\vert\, \nu) - \SpecEnt(\mu T_t \,\vert\, \nu) = \int_0^t \, \mathcal{J}(\mu T_s \vert\nu) \, \d s
        = \int_{0}^{t} \, \xi^{\mu}(s) \, \d s,
    \end{align} 
    with 
    \begin{align*}
        \xi^\mu(s) := \RelEnt\big(b(0, \cdot) (\mu T_s) \,\big\vert\, (\mu T_s)_0^! \big) +  \RelEnt\big((\mu T_s)_0^! \,\big\vert\, b(0, \cdot) (\mu T_s) \big)\ge 0.
    \end{align*}
    Furthermore, \(\xi^\mu(s) = 0\) if and only if \(\mu \in \GG_\theta\). In particular, \(t \mapsto \SpecEnt(\mu T_t \,\vert\, \nu)\) is strictly decreasing for all times $t \geq 0$ if and only if \(\mu \not\in \GG_\theta\).
\end{theorem}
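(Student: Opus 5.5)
The plan follows the outline of the paper: a local differentiation step, a thermodynamic limit, and an identification of equality cases. Fix $\nu\in\GG_\theta$ and a regular $\mu$. Write $\mu_t:=\mu T_t$ and, for $\L=\L_n$, $f^\L_t := \d(\mu_t)_\L/\d\nu_\L$.

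\emph{Step 1 (local identity).} By \Cref{corollary:regular_measures_stay_regular_under_evolution}, $\mu_t$ stays regular. I would use the semi-explicit construction of local densities from \Cref{section:density_properties_of_bd_process}, built through the graphical representation of \Cref{Definition:Markov_Process} by conditioning on the initial marked configuration $\underline\omega$. This gives versions of $f^\L_t$ that are differentiable in $t$ and bounded above and below by $z^{\pm N_\L}$, uniformly on compact time intervals. The forward equation $\partial_t \mu_t[g]=\mu_t[\LL g]$ for $g\in\Lcal$ (\Cref{lem:domain}) then lets me differentiate $\RelEnt_\L(\mu_t|\nu)=\int \nu_\L(\d\eta)\, f^\L_t\log f^\L_t$. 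I take $g=\log f^\L_t$, after truncation, and pass to the limit by dominated convergence using regularity. The key point is that $\LL g$ for a $\L$-local $g$ involves births with rates $b(x,\eta)$ that depend on $\eta_{\L^c}$ within distance $\range$ of $\L$. I split the derivative into a bulk term and a boundary term. In the bulk term, I rewrite the death part via the GNZ equations for $\nu$ in free-boundary form, which produces exactly $-\mathcal{J}_\L(\mu_{t,\L}|\nu)$. The boundary term is supported on $(\L\oplus B(0,\range))\setminus \L$ or its interior analogue. Integrating in time then gives
\[
\RelEnt_\L(\mu|\nu)-\RelEnt_\L(\mu_t|\nu)=\int_0^t \mathcal{J}_\L(\mu_{s,\L}|\nu)\,\d s + \mathrm{Err}_\L(t).
\]
Here $|\mathrm{Err}_\L(t)|\lesssim_t |\partial \L\oplus B(0,\range)|$, which follows from the moment bounds implied by regularity.

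\emph{Step 2 (thermodynamic limit).} I divide by $|\L_n|$. The left side converges to $\SpecEnt(\mu|\nu)-\SpecEnt(\mu_t|\nu)$, since specific entropies exist for translation-invariant measures (\cite[Proposition~5.4]{JKSZ25}), and the error term vanishes. For the integrand I invoke \Cref{lemma:equivalence_fisher_infos}: the normalized free-boundary Fisher information converges to $\mathcal{J}(\mu_s|\nu)$. To identify the limit with $\xi^\mu(s)$, I would rewrite $\mathcal{J}_\L$ using the Palm/Campbell formula. The term $\int\d x\, b\, D_x f \log(\cdot)$ splits into an integral against $b(0,\cdot)\mu_s$ and one against $(\mu_s)^!_0$, and the ratio $f(\eta+\delta_x)/f(\eta)$ becomes a local approximation of the density between these two measures. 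A spatial ergodic theorem for the entropy production per unit volume, combined with martingale convergence of these local ratios as $\L\uparrow\R^d$, yields the symmetrized relative entropy $\xi^\mu(s)$. Uniform integrability in $s\in[0,t]$ comes from the regularity bounds, so the limit can be exchanged with $\int_0^t$.

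\emph{Step 3 (strictness).} The quantity $\xi^\mu(s)$ vanishes iff $(\mu_s)^!_0=b(0,\cdot)\mu_s$. This is the GNZ characterization, so it holds iff $\mu_s\in\GG_\theta$. It remains to show that $\mu_s\in\GG_\theta$ for some $s>0$ forces $\mu\in\GG_\theta$. I would use the small-time exponential expansion $\mu_s[g]=\sum_k \frac{s^k}{k!}\mu[\LL^k g]$ for local $g$, prove it converges with a positive radius uniform in the base time, and deduce real-analyticity of $s\mapsto \mu_s[g]$. If $\mu_{s_0}$ is Gibbs, it is invariant by \Cref{prop:reversible_equals_gibbs_with_the_right_definition}, so $\mu_s[g]$ is constant for $s\ge s_0$. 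Analyticity then propagates this back to $s=0$, giving $\mu=\mu_{s_0}$. Strict monotonicity follows from the identity.

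The main obstacle I expect is Step 1: justifying differentiation of $\int f\log f$ in time with a boundary error of surface order. This requires the regular time-differentiable versions of the densities and control of $\log f^\L_t$ from the $z^{\pm N_\L}$ bounds. The convergence radius of the series in Step 3 is the second most delicate point, and I would handle it by bounding $|\LL^k g|$ combinatorially using the boundedness of $b$ and the finite range.
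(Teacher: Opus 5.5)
Your Step 1, up to $\frac{\d}{\d s}\RelEnt_\L(\mu_s\vert\nu)=\mu_s[\LL\log f^\L_s]$, matches the paper. The argument breaks at two later points: the bound $\lvert\mathrm{Err}_\L(t)\rvert\lesssim_t\lvert\partial\L\oplus B(0,\range)\rvert$ ``from the moment bounds implied by regularity'', and the claimed uniform integrability in $s$ that lets you swap $\lim_\L$ and $\int_0^t$. After your GNZ rewriting, the boundary term consists of integrals over $\{x\in\L\colon \dist(x,\partial\L)\le\range\}$ of $\mu_s$- and Palm-type expectations of $D_x\log f^\L_s$, plus the boundary part of $\mathcal{J}_\L$ itself. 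You therefore need these expectations to be $O(1)$ uniformly in $\L$ and $s$. Regularity cannot give that. Write the regularity bounds as $c_\L z_\L^{-N_\L}\le f^\L_s\le C_\L z_\L^{N_\L}$, where $c_\L,C_\L,z_\L$ depend on $\L$. The best you get is $\lvert D_x\log f^\L_s\rvert\lesssim\log(C_\L/c_\L)+(2N_\L+1)\log z_\L$. Its mean grows with $\L$ in an uncontrolled way, typically at least like $\lvert\L\rvert$, so the resulting error is not $o(\lvert\L\rvert)$. For the same reason you have no $\L$-uniform domination of $s\mapsto\lvert\L\rvert^{-1}\mathcal{J}_\L(\mu_s\vert\nu)$. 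Without it, Fatou only gives $\int_0^t\mathcal{J}(\mu_s\vert\nu)\,\d s\le\SpecEnt(\mu\vert\nu)-\SpecEnt(\mu_t\vert\nu)$, which is exactly the inequality of \cite{JKSZ25} that the theorem upgrades to an identity.

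The paper never needs uniform control of $D_x\log f^\L_s$. It keeps the exact derivative $-\sigma_\L(\mu_s)$ and uses \Cref{lemma:rest_term}. Through the Palm representation of the local ratio (\Cref{lemma:existence_of_nice_local_papangelou_intensities}), the unbounded log-ratio goes entirely into the main term $\widetilde\sigma_\L(\mu_s)=\int_\L\{\RelEnt_{\L-x}(b(0,\cdot)\mu_s\vert(\mu_s)_0^!)+\RelEnt_{\L-x}((\mu_s)_0^!\vert b(0,\cdot)\mu_s)\}\,\d x$. The remainder only involves $\log(b^{\nu}_\L/b^{\mu_s}_\L)$, which is bounded by $\log(\Vert b\Vert_\infty\Vert 1/b\Vert_\infty)$. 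Hence $\mu_s[\lvert R_\L\rvert]\lesssim\lvert\partial\L\oplus B(0,\range)\rvert$ uniformly in $s\in[0,t]$, since only the intensity enters. The main term is nonnegative and super-additive, so $0\le\widetilde\sigma_\L(\mu_s)/\lvert\L\rvert\le\sigma(\mu_s)$, with convergence to $\sigma(\mu_s)$. This, together with Fatou for the integrability of $s\mapsto\sigma(\mu_s)$, justifies exchanging $\lim_\L$ and $\int_0^t$. Only afterwards, at each fixed $s$, is $\sigma(\mu_s)$ identified with $\xi^\mu(s)$ and $\mathcal{J}(\mu_s\vert\nu)$, via \Cref{theorem:main_theorem_HANDA} and \Cref{lemma:equivalence_fisher_infos}. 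The boundary comparison there costs $\lvert\partial\L\oplus B(0,\range)\rvert\,(\lvert\RelEnt(b(0,\cdot)\mu_s\vert(\mu_s)_0^!)\rvert+1)$. That is harmless at fixed $s$ but cannot replace the domination inside the time integral.

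Separately, in Step 3 the radius of the series is governed by the moment condition \eqref{eqn:moment-condition} for the base measures $\mu_s$, $s\le s_0$, uniformly over all $\Delta$. Your combinatorial bound on $\LL^k g$ presupposes this condition, and regularity with $\L$-dependent constants does not provide it. The missing idea is to pull the bounds back from the Gibbs measure $\mu_{s_0}$ through $\mu[N_\Delta^k]\le\e^{s_0k}(\mu T_{s_0})[N_\Delta^k]$ (\Cref{lemma:moments_cannot_be_much_worse_at_starting_point}, \Cref{lemma:moments_of_Area_interaction_Gibbs_measures}), and then push them forward with \Cref{lemma:moments_stay_okay}.
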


We will present the proof of \Cref{eq:de_bruijn} in \Cref{sec:de_bruijn_proof}.
The second statement in this theorem follows from \Cref{prop:not_reversible_in_finite_time}, showing that \(\mu T_s \in  \GG_\theta\) only if \(\mu \in  \GG_\theta\), together with \cite[Lemma~3.4]{JKSZ25}, which equivalently states that \(\xi^\mu(s) = 0\) if and only if \(\mu T_s \in  \GG_\theta\). 

For completeness we recall here an important consequence of the above identity \Cref{eq:de_bruijn} for the long-time behavior of the dynamics that we could already derive from the corresponding inequality in \cite[Theorem~2.4]{JKSZ25}. 

\begin{theorem}[Attractor property]\label{theorem:attractor-property}
    Let \(\mu\) be a regular starting measure. Then, for any $\mu^*$ such that there exists an increasing sequence of times $(t_k)_{k\ge 0}$, with $t_k\uparrow\infty$ and \(\lim_{k\uparrow\infty}\mu T_{t_k}=\mu^*\) in the $\tau_\mathcal{L}$-topology, we have that $\mu^*\in\GG_\theta$.
    In particular, \(\mu^*\) is reversible.
\end{theorem}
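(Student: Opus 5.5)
The plan is to derive the attractor property from the de~Bruijn identity \Cref{eq:de_bruijn}, in a Lyapunov-type argument, together with lower semicontinuity of the entropy production $\xi$ in the $\tau_\Lcal$-topology.

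\textbf{Integrability of the dissipation.} Fix $\nu\in\GG_\theta$. Since $\mu$ is regular, $\SpecEnt(\mu\,\vert\,\nu)<\infty$: the local density bounds give $\RelEnt_{\L_n}(\mu|\nu)\lesssim \abs{\L_n}$, because $\mu$ has finite intensity and $\d\nu_\L/\d\pi_\L$ is bounded above and below exponentially in $N_\L$ under Assumptions (i)--(iii). By \Cref{eq:de_bruijn} and nonnegativity of the specific relative entropy,
\begin{align*}
\int_0^\infty \xi^\mu(s)\,\d s \le \SpecEnt(\mu\,\vert\,\nu)<\infty .
\end{align*}

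\textbf{Passing to the limit along the given sequence.} Let $t_k\uparrow\infty$ with $\mu T_{t_k}\to\mu^*$. Fix $h>0$. The integrability gives $\int_0^h \xi^\mu(t_k+s)\,\d s\to0$. Moreover $\xi^\mu(t_k+s)=\xi^{\mu T_{t_k}}(s)$ by the semigroup property, and $\mu T_{t_k}$ is regular by \Cref{corollary:regular_measures_stay_regular_under_evolution}. Next we need that $\mu T_{t_k}T_s\to\mu^* T_s$ in $\tau_\Lcal$ for each fixed $s$. This follows because $T_s$ maps local bounded functions to functions that are uniformly approximable by local ones (finite speed of propagation, \Cref{le_comparison_finite_infinite}). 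We then invoke lower semicontinuity of $\rho\mapsto\xi(\rho)$ in the $\tau_\Lcal$-topology. Both terms in $\xi$ are relative entropies between the measures $b(0,\cdot)\rho$ and $\rho_0^!$. Their local versions are jointly lower semicontinuous via the Donsker--Varadhan variational formula, and the relevant quantities depend continuously on $\rho$ because $b$ is bounded, continuous and of finite range. Combining this with Fatou's lemma gives
\begin{align*}
\int_0^h \xi(\mu^*T_s)\,\d s\le\liminf_k\int_0^h\xi^\mu(t_k+s)\,\d s=0 .
\end{align*}
Hence $\xi(\mu^*T_s)=0$ for almost every $s$, and by \cite[Lemma~3.4]{JKSZ25} we get $\mu^*T_s\in\GG_\theta$ for some (indeed almost every) $s>0$.

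\textbf{Returning to $\mu^*$ itself.} Apply \Cref{prop:not_reversible_in_finite_time}: if $\mu^*T_s\in\GG_\theta$ then $\mu^*\in\GG_\theta$. This step requires $\mu^*$ to lie in the class for which the analytic small-time expansion holds. Regularity of $\mu^*$ should follow from the density bounds for $\mu T_{t}$ being uniform in large $t$, since the birth rates are bounded above and below; I would verify this using the construction from \Cref{section:density_properties_of_bd_process}. Alternatively, one can avoid the issue by taking $h\downarrow0$ and using lower semicontinuity at $s=0$ directly, so that $\xi(\mu^*)=0$. Reversibility then follows from \Cref{prop:reversible_equals_gibbs_with_the_right_definition}.

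The main obstacle is the lower semicontinuity of $\xi$ under $\tau_\Lcal$-convergence, particularly of the Palm-measure term. I would handle it by rewriting $\xi$, through the GNZ/Campbell formulas, as a supremum over local test functions of quantities that are continuous in $\rho$.
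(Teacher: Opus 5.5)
Your architecture (integrable dissipation, pass to the limit along $t_k+s$, identify the zero set of $\xi$) is the standard Lyapunov argument, and it is what the paper relies on. The paper does not re-prove the theorem: it recalls it from \cite[Theorem~2.4]{JKSZ25}, where it follows from the dissipation \emph{inequality}, so only $\int_0^\infty\xi^\mu(s)\,\d s\le\SpecEnt(\mu\,\vert\,\nu)$ is needed, not the identity.

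The step that does not work as you justify it is the lower semicontinuity of $\xi$. The factor $b(0,\cdot)\rho$ is harmless, since it is a bounded $B(0,\range)$-local density; continuity of $b$ plays no role for $\tau_\Lcal$. The Palm term, however, is \emph{not} $\tau_\Lcal$-continuous in $\rho$: $\rho_0^![f]=\rho\big[\sum_{y\in\eta}\varphi(y)f(\theta_y(\eta-\delta_y))\big]$ is the expectation of an unbounded local functional. Rewriting via Campbell/GNZ only produces more functionals of this kind, so ``a supremum of quantities continuous in $\rho$'' is exactly what you do not have. The missing ingredient is control of the counting variables, and there are two ways to get it.
\begin{itemize}
\item \emph{Uniform integrability along your sequences.} Under $\mu T_t$, $N_\Delta$ is dominated by the surviving initial points (at most $N_\Delta(\omega)$, which is $\mu$-integrable) plus a Poisson variable of mean $\Vert b\Vert_\infty\abs{\Delta}$ from the driving noise, uniformly in $t\ge0$. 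Hence $\mu T_{t_k+s}\to\mu^*T_s$ in $\tau_\Lcal$ upgrades to convergence of the local Palm marginals and of the intensities. The Donsker--Varadhan formula for finite measures, $\int p\log\frac pq=\sup_f\{P[f]-P(1)\log Q[\e^f]+P(1)\log P(1)\}$ over bounded $\mathcal F_\Delta$-measurable $f$, then makes each $\Delta$-local term lower semicontinuous. Finally, $\xi$ is the increasing limit of these terms, as in the proof of \Cref{theorem:main_theorem_HANDA}.
\item \emph{Truncation.} The integrand $(p-q)\log(p/q)$ of $\xi$ is nonnegative, so you may restrict to $\{N_\Delta\le M\}$. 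If $\Delta\supseteq B(0,2\epsilon)$ and $\supp\varphi\subseteq B(0,\epsilon)$, the Palm functional becomes a bounded local one on this event.
\end{itemize}

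Two smaller points.

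First, your worry in the last step is unfounded. \Cref{prop:not_reversible_in_finite_time} is stated for every $\mu\in\Pcal_\theta\setminus\GG_\theta$ and needs no regularity of $\mu^*$: the moment bound required for the series expansion at $\mu^*$ is inherited from the Gibbs measure $\mu^*T_s$ via \Cref{lemma:moments_cannot_be_much_worse_at_starting_point}. It is simpler still to use that $\GG_\theta$ is $\tau_\Lcal$-closed (for a finite-range bounded interaction the DLR kernels map $\Lcal$ into $\Lcal$) and that $\mu^*T_{s_n}\to\mu^*$ for $s_n\downarrow0$. Then $\mu^*T_s\in\GG_\theta$ for a.e.\ $s$ already gives $\mu^*\in\GG_\theta$.

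Second, $\RelEnt_{\L_n}(\mu\,\vert\,\nu)\lesssim\abs{\L_n}$ does not follow from regularity as defined, because both the implicit constant and $z_\L$ may depend on $\L$. Finiteness of $\SpecEnt(\mu\,\vert\,\nu)$, without which the Lyapunov argument cannot start, has to be supplied separately, for instance by $\L$-uniform regularity constants.
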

Finally, the following result states that in a high-temperature regime, the relative-entropy density decays exponentially fast.

\begin{proposition}\label{prop:quantitative-decay}
    There exists \(\beta_0 = \beta_0(\range, \Vert b\Vert_\infty) > 0\) such that for all \(\beta \leq \beta_0\), there exists \(\kappa(\beta) > 0\) such that
    \begin{align*}
        \SpecEnt(\mu T_t \,\vert\, \nu)
        \leq {\rm e}^{-\kappa(\beta) t} \SpecEnt(\mu \,\vert\, \nu)
    \end{align*} 
    for all (regular)  initial distributions \(\mu \in \Pcal_\theta\).
\end{proposition}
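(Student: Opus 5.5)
The plan is to combine the de~Bruijn identity \eqref{eq:de_bruijn} of \Cref{thm:main_thm} with a \emph{modified logarithmic Sobolev inequality} at the level of densities, and then close with Gr\"onwall's lemma. Concretely, I would aim to show that for small $\beta$ there is $\kappa(\beta)>0$ such that for every regular $\rho\in\Pcal_\theta$,
\begin{align*}
    \mathcal{J}(\rho\vert\nu) \geq \kappa(\beta)\, \SpecEnt(\rho\vert\nu).
\end{align*}
Given this, apply it to $\rho=\mu T_s$; this is legitimate because $\mu T_s$ is again regular by \Cref{corollary:regular_measures_stay_regular_under_evolution}. Then \eqref{eq:de_bruijn} gives
\begin{align*}
    \SpecEnt(\mu T_t\vert\nu)\le \SpecEnt(\mu\vert\nu)-\kappa\int_0^t \SpecEnt(\mu T_s\vert\nu)\,\d s .
\end{align*}
The function $s\mapsto\SpecEnt(\mu T_s\vert\nu)$ is finite, nonincreasing and absolutely continuous, so the integral form of Gr\"onwall yields $\SpecEnt(\mu T_t\vert\nu)\le \e^{-\kappa t}\SpecEnt(\mu\vert\nu)$.

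For the density inequality I would work in finite volume and use the result of \cite{DaiPraPosta2013}. That paper establishes, for birth-and-death dynamics with Gibbsian (Papangelou) rates in a high-temperature/small-activity regime, a modified log-Sobolev inequality of the form $\RelEnt(f\nu^{\omega}_\Lambda\vert\nu^\omega_\Lambda)\le \kappa^{-1}\mathcal{E}_\Lambda(f,\log f)$, with $\kappa$ uniform in $\Lambda$ and in the boundary condition $\omega$. Here the rates are $b=\e^{-\beta h}$, so small $\beta$ makes $b$ close to $1$ on the finite range $\range$; this is how the threshold depends on $\range$ and $\Vert b\Vert_\infty$. The Dirichlet form $\mathcal{E}_\Lambda(f,\log f)$ is exactly the integrand of $\mathcal{J}_\Lambda^{(\omega)}$.

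Take $f=\d\rho_\Lambda/\d\nu^{\omega}_\Lambda$, or rather work with the free-boundary version directly. This gives $\RelEnt_\Lambda(\rho\vert\nu)\lesssim \kappa^{-1}\mathcal{J}_\Lambda(\rho_\Lambda\vert\nu)$ up to a boundary error. Dividing by $\abs{\Lambda_n}$ and letting $n\to\infty$ then produces the claimed inequality between densities. This step uses the existence of the limit defining $\mathcal{J}$ and its independence of boundary conditions (\Cref{lemma:equivalence_fisher_infos}), together with the existence of $\SpecEnt$ as a limit.

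The main obstacle is this boundary bookkeeping. One has to compare $\RelEnt(\rho_\Lambda\vert\nu_\Lambda)$ with the relative entropy with respect to $\nu^{\omega}_\Lambda$ averaged over boundary conditions, and compare the free-boundary $\mathcal{J}_\Lambda$ with the version having boundary condition $\omega$. Both comparisons should cost only $O(\abs{\partial\Lambda\oplus B(0,\range)})$, which is $o(\abs{\Lambda})$.

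\begin{itemize}
\item For the entropy, the finite range and the two-sided bounds on $b$ give $\e^{-c N_{\partial_\range\Lambda}}\lesssim \d\nu_\Lambda/\d\nu^\omega_\Lambda\lesssim \e^{c N_{\partial_\range\Lambda}}$. Integrating against $\rho$, whose intensity is finite by regularity, makes the error of surface order.
\item For the Fisher information, I would rely on the boundary-independence already proved in \Cref{lemma:equivalence_fisher_infos}. Alternatively, one can use the convexity of $(a,b)\mapsto (a-b)(\log a-\log b)$, averaging $\mathcal{J}^{(\omega)}_\Lambda$ over $\omega\sim\rho$ by Jensen.
\end{itemize}

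If the uniform constant from \cite{DaiPraPosta2013} is stated only for Poisson-type reference measures, I would first try to transfer it to $\nu^\omega_\Lambda$ by a perturbation (Holley--Stroock) argument, which is uniform in $\omega$ thanks to the bounded, finite-range energy and small $\beta$.
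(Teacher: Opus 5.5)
Your main line works, and its architecture matches the paper's: the de~Bruijn identity \eqref{eq:de_bruijn}, a volume-uniform modified log-Sobolev inequality from \cite{DaiPraPosta2013}, the thermodynamic limit via \Cref{lemma:equivalence_fisher_infos}, and Gr\"onwall. The genuine difference is which finite-volume dynamics carries the inequality.

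The paper uses the local generator $\mathscr{L}^\nu_\Lambda$ with $\nu$-averaged rates $b^\nu_\Lambda$, which is reversible with respect to the marginal $\nu_\Lambda$ itself. So \Cref{lemma:appendix_DaiPraPosta_exponential_decay_entropy_finite_volume} reads directly $\mathcal{J}^{(\nu)}_\Lambda(\rho\vert\nu)\ge\kappa(\beta)\,\RelEnt_\Lambda(\rho\vert\nu)$. The left side is exactly the functional whose normalized limit is $\xi$ by \Cref{lemma:equivalence_fisher_infos}, and the right side is exactly the local entropy defining $\SpecEnt$. The boundary bookkeeping you call the main obstacle therefore never arises.

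Your fixed-$\omega$ route pays for two surface comparisons. In exchange, your rates $b(x,\eta_\Lambda\omega_{\Lambda^c})$ are genuinely finite-range and Gibbsian, which is the setting where the Dai Pra--Posta criterion is naturally checked. By contrast, $b^\nu_\Lambda(x,\cdot)$ for $x$ near $\partial\Lambda$ depends, through the conditional law of the exterior, on the whole inner boundary layer of $\eta_\Lambda$.

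Several of your sub-steps need repair.

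\textbf{Entropy comparison.} This can be done with no error at all. By DLR, $\nu_\Lambda=\int\nu(\d\omega)\,\nu^\omega_\Lambda$, and convexity of relative entropy in its second argument gives $\RelEnt(\rho_\Lambda\vert\nu_\Lambda)\le\int\nu(\d\omega)\,\RelEnt(\rho_\Lambda\vert\nu^\omega_\Lambda)$.

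\textbf{Fisher information.} \Cref{lemma:equivalence_fisher_infos} only compares free and $\nu$-sampled boundaries, so you must rerun its argument for fixed $\omega$. The integrands agree whenever $\dist(x,\Lambda^c)>\range$. When $\xi<\infty$, the boundary layer contributes $O(\abs{\partial\Lambda\oplus B(0,\range)})$, by boundedness of $b$ and \Cref{lemma:abs_log_entropy_inequality}.

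\textbf{Jensen alternative.} This points the wrong way. Joint convexity of $(a,c)\mapsto(a-c)(\log a-\log c)$ bounds an average of $\mathcal{J}^{(\omega)}_\Lambda$ from \emph{below}. After $\kappa\,\RelEnt\le\mathcal{J}^{(\omega)}_\Lambda$ you need an \emph{upper} bound.

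\textbf{Holley--Stroock fallback.} This cannot give a volume-independent constant. Its constant is exponential in the oscillation of the density perturbation, and that oscillation grows with $\Lambda$. For interacting point particles, \cite{DaiPraPosta2013} treat non-negative pair potentials. So one must verify their Bochner--Bakry--\'Emery inequality (2.6) directly for the present interactions, as the paper does.

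\textbf{Gr\"onwall step.} Your displayed integral inequality anchored at $0$ does not imply exponential decay, even for nonincreasing absolutely continuous functions, since it allows long plateaus. Use instead that \eqref{eq:de_bruijn} gives $\frac{\d}{\d s}\SpecEnt(\mu T_s\vert\nu)=-\xi^\mu(s)\le-\kappa\,\SpecEnt(\mu T_s\vert\nu)$ for a.e.\ $s$. Then apply Gr\"onwall to this differential inequality.
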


For \(\Lambda \Subset \mathbb{R}^d\) we recall from \cite[Definition 4.1]{JKSZ25} the local semigroup \(\Tl=\big(\Tl_t \big)_{t \geq 0}\) associated to the following (formal) generator
    \begin{align*}
        &(\LL_\Lambda^\nu f)(\eta_\Lambda)
        = \int_{\Lambda} \d x\, b^\nu_\Lambda(x, \eta_\Lambda) \big(f(\eta_\Lambda+\delta_x) - f(\eta_\Lambda)\big) + \sum_{x \in \eta_\Lambda} \big(f(\eta_\Lambda -\delta_x) - f(\eta_\Lambda)\big), 
    \end{align*} 
    with  $b^\nu_\Lambda(x, \eta_\Lambda):= \int \nu(\d \zeta_{\Lambda^c} |\eta_\Lambda)\, b(x, \eta_\Lambda\zeta_{\Lambda^c})$ and the {\em modified Fisher information} with boundary conditions sampled from $\nu$:
    \begin{align*}
         \mathcal{J}^{(\nu)}_\L(\mu_\L\lvert \nu) 
         := \int\d x\int \nu_\L(\d\eta)\  b^{\nu}_\L(x,\eta_\L)\ D_x\frac{\d\mu_\L}{\d\nu_\L}(\eta_\L) \ D_x\log\frac{\d\mu_\L}{\d\nu_\L}(\eta_\L),
    \end{align*} 
    which we could also write as \(\mathcal{E}_\Lambda(\d\mu_\L/\d\nu_\L, \log(\d\mu_\L/\d\nu_\L))\) for the Dirichlet energy 
    \begin{align*}
        \mathcal{E}_\Lambda(f, g)
        = \nu_\Lambda\Big[f (-\mathscr{L}^\nu_\Lambda) g \Big]
        = \int\d x\int \nu_\L(\d\eta)\  b^{\nu}_\L(x,\eta_\L)\ D_x f(\eta_\L) \ D_x g(\eta_\L).
    \end{align*} 
    Note that here it is more convenient to work with boundary conditions sampled from $\nu$ as opposed to free boundary conditions, but the choice of boundary conditions becomes irrelevant in the thermodynamic limit, see \Cref{lemma:equivalence_fisher_infos}. 
    So to conclude the the claimed exponential decay of the relative entropy density it therefore suffices to show that for any $\Lambda \Subset \R^d$, the term \(\mathcal{E}_\Lambda(\d\mu_\L/\d\nu_\L, \log(\d\mu_\L/\d\nu_\L))\) can be bounded in terms of the relative entropy in $\Lambda$, up to a multiplicative constant that does not depend on $\Lambda$. By adapting the strategy of~\cite{DaiPraPosta2013} to our slightly different class of models, this can be done at very high temperatures, i.e., $\beta \ll 1$. 
    \begin{lemma}\label{lemma:appendix_DaiPraPosta_exponential_decay_entropy_finite_volume}
    There is a \(\beta_0 = \beta_0(\range, \Vert b\Vert_\infty) > 0\) such that for all \(\beta \leq \beta_0\) it holds that
    \begin{align*}
        \mathcal{E}_\Lambda \Big(\frac{\d\mu_\L}{\d\nu_\L}, \, \log \frac{\d\mu_\L}{\d\nu_\L} \Big)
        \geq \kappa(\beta) \, \RelEnt_{\L}(\mu \lvert \nu) 
    \end{align*} for a \(\kappa(\beta) > 0\) and all (regular) initial distributions \(\mu \in \Pcal_\theta\) and all \(\Lambda \Subset \mathbb{R}^d\).
    \end{lemma}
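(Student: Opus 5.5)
We prove this modified log-Sobolev inequality for the finite-volume reversible birth-and-death generator $\LL^\nu_\Lambda$ by following Dai Pra--Posta. The plan is a Bakry--Émery-type convexity argument for the entropy along the local semigroup $\Tl$. Write $f=\d\mu_\L/\d\nu_\L$. Because $\LL^\nu_\L$ is reversible with respect to $\nu_\L$ (the GNZ equations in $\L$ with boundary law averaged under $\nu$), the map $t\mapsto \RelEnt(\nu_\L \Tl_t f \mid \nu_\L)$ is differentiable. Its derivative is $-\mathcal{E}_\L(\Tl_t f,\log \Tl_t f)$, and the entropy tends to $0$ as $t\to\infty$, since the finite-volume chain is ergodic (it visits the empty configuration with positive rate, as the death rate is $1$ per particle and $b$ is bounded). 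Hence
\begin{align*}
\RelEnt_\L(\mu|\nu)=\int_0^\infty \mathcal{E}_\L\big(\Tl_t f,\log \Tl_t f\big)\,\d t .
\end{align*}
It therefore suffices to prove exponential contraction of the Fisher information,
\begin{align*}
\mathcal{E}_\L\big(\Tl_t f,\log \Tl_t f\big)\le \e^{-\kappa t}\,\mathcal{E}_\L(f,\log f).
\end{align*}
Integrating this contraction gives the lemma with the constant $\kappa$.

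To get the contraction, we differentiate $\Phi(t)=\mathcal{E}_\L(\Tl_t f,\log\Tl_t f)$ and show $\Phi'(t)\le-\kappa\Phi(t)$, a $\Gamma_2$-type inequality for the convex function $(a,b)\mapsto (b-a)(\log b-\log a)$.
\begin{itemize}
\item We write $\Phi'$ in terms of $D_x$ and the generator.
\item We use the commutation $D_x\LL^\nu_\L g=\LL^\nu_\L D_x g - D_x g + R_x g$. Here $-D_x g$ comes from the extra death of the added point. The remainder $R_x g$ collects $\int \d y\, D_x b^\nu_\L(y,\cdot)\, D_y g$, i.e.\ the variation of the birth rate caused by adding $x$.
\end{itemize}
In the non-interacting (Poisson) case $R_x\equiv0$, and convexity of $(a,b)\mapsto(b-a)\log(b/a)$ (via a Jensen/discrete-convexity step as in Dai Pra--Posta) gives $\Phi'\le-\Phi$, i.e.\ $\kappa=1$.

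In the interacting case, the remainder has to be treated as a perturbation. Finite range gives $D_x b^\nu_\L(y,\cdot)=0$ for $|x-y|>\range$. At inverse temperature $\beta$ the Hamiltonian is $\beta H$, so the rate is $b_\beta=\e^{-\beta h}$, and boundedness gives $|D_x b_\beta(y,\cdot)|\le C\beta\, b_\beta(y,\cdot)$ with $C$ depending on $\Vert h\Vert_\infty$. Also, $b^\nu_\L$ is an average of $b$ over boundary conditions, so the same bound holds for it. With Cauchy--Schwarz in the form $|D_y g\, D_x\log g|\le \tfrac12(\ldots)$ and the comparison of $b(y,\eta+\delta_x)$ with $b(y,\eta)$, we bound the remainder contribution by $C\beta\,|B(0,\range)|\,\Vert b\Vert_\infty\,\Phi(t)$. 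Choosing $\beta_0$ so that this is at most $1/2$ gives $\kappa(\beta)=1-C\beta|B_\range|\Vert b\Vert_\infty>0$, uniformly in $\L$ and $\mu$.

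Technical justification comes from regularity of $\mu$, which gives $z^{-N}\lesssim f\lesssim z^N$, and from \Cref{corollary:regular_measures_stay_regular_under_evolution}. Together these make all the derivatives and integrations legitimate; interchanges of derivative and integral are controlled by Poisson moment bounds on $N_\L$.

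The main obstacle is the remainder estimate in this entropic form. The Fisher-type quantity $(D_xg)(D_x\log g)$ is not quadratic, so the cross terms $D_y g\cdot D_x\log g$ generated by $R_x$ cannot be absorbed by plain Cauchy--Schwarz. We would first try the Dai Pra--Posta trick of comparing with the quadratic form of $\sqrt{g}$ and using the pointwise inequality $(b-a)\log(b/a)\ge 4(\sqrt b-\sqrt a)^2$ together with its reverse on bounded ratios. Failing that, we would use a coupling/path representation of $\Tl_t$, so that $D_x\Tl_t f$ is expressed through the lifetime of the added particle, and bound its influence on births within range $\range$ by $\beta$ times an exponential moment.
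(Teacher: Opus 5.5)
Your overall architecture is the paper's. The paper verifies Dai Pra--Posta's inequality
\[
\kappa\,\mathcal{E}_\Lambda(f,\log f)\le\nu_\Lambda\big[((\LL^\nu_\Lambda)^2 f)\log f\big]+\nu_\Lambda\big[(\LL^\nu_\Lambda f)^2/f\big],
\]
which is exactly your $\Phi'(t)\le-\kappa\Phi(t)$ along $\Tl_t f$, and then integrates in time as in their Proposition~2.1, as you do. The gap is that the one step with real content, this inequality for the interacting rates, is not established.

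You assert the bound of the $R_x$-contribution by $C\beta\abs{B(0,\range)}\Vert b\Vert_\infty\Phi(t)$, and your own last paragraph explains why it does not follow. $R_x$ generates mixed products of the $y$-increment of $g$ with the $x$-increment of $\log g$, and the form $(D_xg)(D_x\log g)$ offers no Cauchy--Schwarz to absorb them. Neither fallback repairs this.
\begin{itemize}
\item The reverse of $(b-a)\log(b/a)\ge4(\sqrt b-\sqrt a)^2$ holds only with a constant depending on a bound for $b/a$. Regularity of $\mu$ only bounds $f(\eta+\delta_x)/f(\eta)$ by constants that depend on $\mu$ and $\Lambda$ and grow with $N_\Lambda(\eta)$, so the resulting $\kappa$ would not be uniform in $\mu$ and $\Lambda$. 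In fact no uniform comparison $\mathcal{E}_\Lambda(f,\log f)\lesssim\mathcal{E}_\Lambda(\sqrt f,\sqrt f)$ can exist: combined with the modified log-Sobolev inequality of the Poisson process, it would give a classical log-Sobolev inequality, which is false there.
\item The coupling route is only a sketch. In the Poisson case, survival of the added point with probability $\e^{-t}$ plus joint convexity gives $\kappa=1$. Beyond that case, you do not say how the non-quadratic structure is controlled.
\end{itemize}

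The missing ingredient is the one the paper actually invokes: Dai Pra--Posta's general criterion, which never estimates such cross terms directly. One exhibits an admissible weight $r$ on pairs of moves with $1-r\ge0$. Here
\[
r(\eta,\gamma_x^+,\gamma_y^+)=\e^{-2\beta\log_+\Vert b\Vert_\infty}\e^{-\beta\nabla_x^+\nabla_y^+H(\eta)},
\]
essentially the factor by which adding $y$ changes the birth rate at $x$. For birth--death pairs and for deaths of two distinct points, $r=1$. Their argument (as the paper invokes it; the paper does not reproduce the details) handles the $r$-weighted part of the double sum over pairs of moves by convexity on commuting pairs of moves. Only the defect $1-r$ then has to be paid for, at a cost of $\varepsilon(\beta)\,\mathcal{E}_\Lambda(f,\log f)$. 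Here $\varepsilon(\beta)$ is the birth-rate bound $\e^{\beta\log_+\Vert b\Vert_\infty}$ times $\sup_\eta\int(1-r)\,\d y$. This gives $\kappa(\beta)=1-\varepsilon(\beta)$, which is positive for small $\beta$.

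A smaller point: finite range holds for $b$ but not for $b^\nu_\Lambda$, because adding $x$ also reweights the boundary law $\nu(\d\zeta_{\Lambda^c}\mid\eta_\Lambda)$. Hence $D_xb^\nu_\Lambda(y,\cdot)$ need not vanish for $\abs{x-y}>\range$, and confining the $y$-integral to $B(x,\range)$ requires an additional argument.
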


    We defer the proof of \Cref{lemma:appendix_DaiPraPosta_exponential_decay_entropy_finite_volume} to \Cref{section:proof-quantitative-decay} but note that the statement of \Cref{prop:quantitative-decay} now follows from \Cref{eq:de_bruijn} and
    \begin{align*}
        \xi_s(\mu) 
        &=  \lim_{n \to \infty} \frac{1}{\abs{\Lambda_n}} \mathcal{J}^{(\nu)}_{\L_n}(\mu T_s \lvert \nu) 
        = \lim_{n \to \infty} \frac{1}{\abs{\Lambda_n} }  \mathcal{E}_{\L_n}\Big( \tfrac{\d (\mu T_s)_{\L_n}}{ \d\nu_{\L_n}}, \, \log  \tfrac{\d (\mu T_s)_{\L_n}}{ \d\nu_{\L_n}} \Big)  \\
        &\geq \kappa(\beta) \lim_{n \to \infty} \frac{1}{\abs{\Lambda_n}} \RelEnt_{\L_n}(\mu T_s \lvert \nu)  = \kappa(\beta) \SpecEnt(\mu T_s \,\vert\, \nu),
    \end{align*}
    using \Cref{lemma:equivalence_fisher_infos} for the first equality and \Cref{lemma:appendix_DaiPraPosta_exponential_decay_entropy_finite_volume} for the inequality.

\section{Outlook}\label{section:outlook}
Let us briefly comment on some possible refinements and extensions of our analysis. 
\begin{itemize}
\item The regularity of (versions of) the time-evolved log-densities and the resulting ability to work with them opens the door for a finer analysis of the long-time behavior, possibly even beyond translation-invariant dynamics and starting measures, see e.g.~\cite{holley_one_1977,JK25}, where fine estimates of these log-densities play a key role. 
\item Another interesting direction of future research would be to investigate the rate of entropy dissipation beyond the high-temperature regime in \Cref{prop:quantitative-decay}. Of course, one cannot expect exponential decay to hold at all inverse temperatures $\beta>0$, but is it at least possible to derive something quantitative as opposed to the purely qualitative statement in \Cref{theorem:attractor-property}? 
\item  Last but not least, one may ask how generic the existence of a reversible \textit{Gibbs} measure actually is. More precisely, under which conditions does a reversible birth-and-death dynamics actually admit a Gibbs measure as a reversible measure? Since Gibbsianity is essentially about the existence of a quasilocal version of the local conditional distributions, this should be interpreted as a statement about the transfer of regularity from the transition rates to the equilibrium measure. 
On the lattice this question has already been studied in~\cite{vossboehme} in the reversible case, and also for non-reversible dynamics in~\cite{kunsch_time_1984}.  
\end{itemize}
\section{Proof strategy}\label{section:strategy}
The proof of our main result \Cref{thm:main_thm} can roughly be split into two separate parts. First, the proof of the de~Bruijn-type identity itself and second, the proof that one cannot converge to a Gibbs measure in finite time if one starts outside of the Gibbs simplex. Let us start with the former. 

\subsection{The de~Bruijn-type identity}\label{sec:de_bruijn_proof}
As already noted in \cite[Section~6.1]{JKSZ25}, if one restricts the dynamics to finite-volumes then one can obtain the de~Bruijn-type identity 
\begin{align*}
    I_\Lambda(\mu \lvert \nu) - I_\Lambda(\mu \Tl_t) = \int_0^t \mathcal{J}_\Lambda(\mu \Tl_s\lvert \nu) ds, 
\end{align*}
where $\mathcal{J}_\Lambda$ is the (rescaled) modified Fisher information, by somewhat elementary means, see also \Cref{remark:local-approximation}. The main work in \cite{JKSZ25} was then devoted to comparing the error made by this local approximation of the global dynamics. This came at the cost of only obtaining an inequality instead of an identity in the thermodynamic limit. 

To mend this discrepancy between finite and infinite volumes, we therefore use a different strategy in this work. While still relying on local approximations of the global dynamics, we work much closer with the restriction of the original process to finite volumes. This allows us to establish the following entropy dissipation identity for the global dynamics in finite volumes $\Lambda\Subset\R^d$.

\begin{proposition}\label{theorem_derivative_of_the_local_entropy}
    Let \(\mu\) be regular. 
    Then,    \begin{align}\label{eqn:time-derivative-relative-entropy}
        \frac{\d }{\d t}\Big\vert_{t = 0} \RelEnt_\Lambda(\mu T_t \,\vert\, \nu)
        = \mu\Big[\mathscr{L} \log\frac{\d\mu_\Lambda}{\d \nu_\Lambda}\Big].
    \end{align}
\end{proposition}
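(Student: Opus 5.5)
Write $f_t := \d(\mu T_t)_\Lambda/\d\nu_\Lambda$ and $f:=f_0$. The plan is to differentiate $\RelEnt_\Lambda(\mu T_t\,|\,\nu) = \nu_\Lambda[f_t\log f_t]$ at $t=0$ in three steps: split the derivative into two pieces, evaluate each by the forward Kolmogorov equation of \Cref{lem:domain}, and observe that the piece involving $\partial_t f_t$ has total mass zero. Formally,
\begin{align*}
\frac{\d}{\d t}\nu_\Lambda[f_t\log f_t] = \nu_\Lambda[(\partial_t f_t)\log f_t] + \nu_\Lambda[\partial_t f_t],
\end{align*}
and the second term is $\frac{\d}{\d t}(\mu T_t)[1]=0$. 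The first term at $t=0$ equals $\frac{\d}{\d t}\big|_{t=0}(\mu T_t)[\log f] = \mu[\LL\log f]$. The work is in making these heuristic steps rigorous.

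First, the existence and regularity of $f_t$. Regularity of $\mu T_t$ (\Cref{corollary:regular_measures_stay_regular_under_evolution}) gives $z^{-N_\Lambda}\lesssim f_t\lesssim z^{N_\Lambda}$ uniformly for $t$ in a compact interval. Together with the Gibbs bounds on $\d\nu_\Lambda/\d\pi_\Lambda$, this yields $|\log f_t|\lesssim 1+N_\Lambda$. I then use the exact identity
\begin{align*}
\RelEnt_\Lambda(\mu T_t|\nu)-\RelEnt_\Lambda(\mu|\nu) = (\mu T_t)[\log f] - \mu[\log f] + \RelEnt\big((\mu T_t)_\Lambda\,\big|\,\mu_\Lambda\big).
\end{align*}
The first difference, divided by $t$, tends to $\mu[\LL\log f]$. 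This requires extending \Cref{lem:domain} from bounded local functions to the local function $\log f$, which grows at most linearly in $N_\Lambda$. I would do this by truncation, $\log f\wedge K \vee(-K)$, combined with uniform moment bounds $\sup_{s\le t}(\mu T_s)[N_\Lambda^2]<\infty$, which follow from the graphical construction because births are dominated by a Poisson process of rate $\|b\|_\infty$. Dominated convergence then applies, since $|\LL g|\lesssim (1+N_{\Lambda\oplus B(0,\range)})$ for such $g$.

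The remaining task is to show $\RelEnt((\mu T_t)_\Lambda|\mu_\Lambda)=o(t)$, and this is the main obstacle. I would bound it by the $\chi^2$-divergence, $\nu_\Lambda[(f_t-f)^2/f]$, and aim for $\|f_t-f\|=O(t)$ in a weighted sense. For this I would use the semi-explicit construction of local densities from \Cref{section:density_properties_of_bd_process}. Conditioning on the initial configuration and on the event of at most one birth or death in $\Lambda\oplus B(0,\range)$ up to time $t$ shows that $(f_t-f)(\eta)$ is bounded by $t\,C^{N_\Lambda(\eta)+1}$ times a local rate, with the multi-event remainder of order $t^2$. After dividing by $f\gtrsim z^{-N_\Lambda}$, the regularity bounds make the weights integrable under $\nu_\Lambda$, because Gibbs measures with bounded Papangelou intensity have exponential moments of $N_\Lambda$. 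This gives $\chi^2=O(t^2)=o(t)$.

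The delicate point in this step is controlling the boundary influence from outside $\Lambda$ through the finite interaction range. If the crude one-event expansion is insufficient there, I would fall back on the small-time series expansion mentioned in the outline. Combining the three pieces yields \eqref{eqn:time-derivative-relative-entropy}.
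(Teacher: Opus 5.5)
Your proposal is correct and shares the paper's skeleton. The paper also writes $\RelEnt_\Lambda(\mu T_t\,\vert\,\nu)-\RelEnt_\Lambda(\mu\,\vert\,\nu)$ as $(\mu T_t)[\log g_\Lambda]-\mu[\log g_\Lambda]$, treated exactly as you propose (\Cref{lem:log_generator}), plus $\RelEnt((\mu T_t)_\Lambda\,\vert\,\mu_\Lambda)=\pi_\Lambda[\widetilde g_{\Lambda,t}\log(\widetilde g_{\Lambda,t}/\widetilde g_\Lambda)]$. The genuine difference is how you show this last term is $o(t)$.

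The paper builds versions of $\widetilde g_{\Lambda,t}$ that are differentiable in $t$ (\Cref{prop:differentiability-of-densities}, \Cref{lemma:derivative_of_g_t}). It then proves uniform integrability of $\partial_t\widetilde g_{\Lambda,t}$ and $(\partial_t\widetilde g_{\Lambda,t})\log\widetilde g_{\Lambda,t}$ via a tailored de la Vall\'ee-Poussin function (\Cref{lemma:uniform_integrability_estimates}), and concludes with the mean-value theorem. You only need a weighted Lipschitz bound $\vert\widetilde g_{\Lambda,t}-\widetilde g_\Lambda\vert\le tW$ with $\pi_\Lambda[W^2/\widetilde g_\Lambda]<\infty$, after which $\RelEnt\le\log(1+\chi^2)\le\chi^2=O(t^2)$.

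That bound is true, but the rigorous way to get it is from the formula \eqref{eq:gtilde}, not by conditioning on a fixed initial configuration. For fixed $\omega$ the law of $(X_t)_\Lambda$ has an atom at $\omega_\Lambda$; a density only appears after averaging over $\mu_\Lambda$, which is what the Poisson variable change does. In \eqref{eq:gtilde}:
\begin{itemize}
\item the term $\zeta_\Lambda=\beta_\Lambda=\emptyset$ lies between $\widetilde g_\Lambda(\chi_\Lambda)\e^{-tN_\Lambda(\chi_\Lambda)}(1-\|b\|_\infty\vert\Lambda\vert t)$ and $\widetilde g_\Lambda(\chi_\Lambda)$;
\item every other term carries a factor $1-\e^{-t}$, either from $\int_{[0,t]^{\beta_\Lambda}}$ or from the bound on $\psi_{\Lambda,t}$ in \Cref{proposition:good_densities_of_the_birth_and_death_process};
\item regularity then gives $W\lesssim C^{N_\Lambda}$ and $1/\widetilde g_\Lambda\lesssim z_\Lambda^{N_\Lambda}$.
\end{itemize}
Since $b$ is bounded, the boundary influence you flag is harmless. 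The series-expansion fallback would not help anyway, since it controls expectations of bounded local functions, not pointwise densities.

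One small slip: $\vert\LL\log g_\Lambda\vert$ is only bounded by $(1+N_\Lambda)^2$, not linearly, because the death part sums $N_\Lambda$ differences each of size $O(1+N_\Lambda)$. So do the domination pathwise, by $(1+N_\Lambda(X_0)+P)^2$ with $P$ the number of birth attempts in $\Lambda\times[0,t]$.

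What each route buys: yours is shorter and quantitative, giving $\RelEnt((\mu T_t)_\Lambda\,\vert\,\mu_\Lambda)=O(t^2)$. It uses only existence and size bounds of the evolved densities, never their time-differentiability. The size bound for the newborn points could even be obtained directly: they form a dependent thinning of a Poisson process of intensity $\|b\|_\infty$, so their factorial moment densities are at most $(\|b\|_\infty t)^n$. The paper's route yields differentiable densities, which the authors regard as of independent interest. Its uniform-integrability argument also runs under the weaker hypotheses of \Cref{lemma:uniform_integrability_estimates}, where $\chi^2$ may be infinite although the entropy is finite. Your argument, by contrast, uses the two-sided exponential bounds of regularity in an essential way.
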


While this is an elementary routine calculation for interacting particle systems on the lattice, there are quite some technical hurdles that need to be overcome for birth-and-death processes in continuum. A priori, it is not even clear if the time-propagated measure $\mu T_t$ has a local density with respect to $\nu$ (or the Poisson point process $\pi$), even if $\mu$ is regular.

\subsubsection{Regularity of log-densities}

Indeed, for lattice (spin-flip) dynamics, as e.g.\ considered in~\cite{holley_free_1971} or~\cite{handa1996entropy}, the relative entropy of the finite-volume restrictions is given by the finite sum
\[\RelEnt_\Lambda(\mu T_t \,\vert\, \nu)
    =
    \int_{\Omega}\mu T_t(d\omega) \log\frac{d(\mu T_t)_\Lambda}{d\nu_\Lambda}(\omega)
    =
    \sum_{\eta_\Lambda}\mu T_t[\mathbf{1}_{\eta_\Lambda}]\log\frac{\mu T_t[\mathbf{1}_{\eta_\Lambda}]}{\nu[\mathbf{1}_{\eta_\Lambda}]}, 
\]
where $\Omega_\Lambda = \{\pm 1 \}^\Lambda$, with $\Lambda \Subset \Z^d$, is the finite set of local configurations. Now note that in this discrete setting the indicator functions $\mathbf{1}_{\eta_\Lambda}$ are always in the domain of the generator of the spin-flip semigroup and \eqref{eqn:time-derivative-relative-entropy} directly follows from the chain rule. Under very mild assumptions on the dynamics there are moreover no regularity issues because $\nu$ puts strictly positive mass on every local configuration, see e.g.~\cite[Proposition~4.3]{JK25}, hence \textit{every} probability measure $\mu_\Lambda$ is absolutely continuous with respect to $\nu_\Lambda$. This is of course too much to hope for in the continuum. 

From a slightly different point of view, the reason why there are no regularity issues in the discrete setting is that $\nu_\Lambda$ is equivalent to a nice reference measure to which every other measure is at least absolutely continuous:  the counting measure. 
In continuum, this leads us to consider the next best thing: using the Poisson point process \(\pi_\Lambda\) as a reference measure. 

In order to use this to our advantage,  we (a) have to make sure that \(\mu T_t\) has local densities \(\d (\mu T_t)_\Lambda/\d \pi_\Lambda\), or equivalently \(\d (\mu T_t)_\Lambda/\d \nu_\Lambda\), if the local densities \(\d \mu_\Lambda/\d \nu_\Lambda\) exist in the first place, and (b) have good enough regularity properties to differentiate them in time.

\begin{remark}\label{remark:local-approximation}
In the previous work~\cite{JKSZ25}, we worked around this issue by considering instead of the semigroup \((T_t)_{t \geq 0}\) the local evolutions \((\Tl)_{t \geq 0}\) in \(\Lambda\) with random boundary configurations sampled from \(\nu\), thus having reversible measure \(\nu_\Lambda\). There, the corresponding Markov processes \(\Xl\) are truly Markov processes living in the volume \(\Lambda\), which is of course not true for the restrictions of \(X\) to the observation window \(\Lambda\). In fact, \((\Tl)_{t \geq 0}\) is a \(C_0\)-semigroup on the space \(L^1(\nu_\Lambda)\) by the pointwise convergence \(\Tl_t f \to f\), as $t \downarrow 0$, and uniform integrability of \((\Tl_t f)_{t \geq 0}\) due to the de~la~Vallée--Poussin criterion, Jensen's inequality and stationarity.
Consequently, the reversibility of \(\nu_\Lambda\) here gets us the simple identity 
\[\frac{\d (\mu \Tl_t)_\Lambda}{\d \nu_\Lambda} = \Tl_t \frac{\d \mu_\Lambda}{\d \nu_\Lambda}.\] 
The price we had to pay is just having an asymptotic comparison of \(\RelEnt_\Lambda(\mu T_t \,\vert\, \nu)\) and \(\RelEnt_\Lambda(\mu \Tl_t \,\vert\, \nu)\) in form of an inequality.
Here, we work much closer with the original process \(X\) and construct good (versions of the) densities  \(\d (\mu T_t)_\Lambda/\d \nu_\Lambda\). 
\end{remark}

Assuming that all of the involved densities exist, a formal calculation shows that 
\begin{equation}\label{eqn:proof-motivation}
\begin{split}
    \frac{1}{t} \big(\RelEnt_{\Lambda}(\mu T_t \,\vert\, \nu) -& \RelEnt_{\Lambda}(\mu \,\vert\, \nu) \big)
        = 
        \frac{1}{t} \Big(\mu T_t \Big[\log\frac{d\mu_\Lambda}{d\nu_\Lambda}\Big] - \mu\Big[\log\frac{d\mu_\Lambda}{d\nu_\Lambda}\Big]+ \mu T_t\Big[\log\frac{d(\mu T_t)_\Lambda}{d\nu_\Lambda}-\log\frac{d\mu_\Lambda}{d\nu_\Lambda}\Big]\Big) \\\
        &=\frac{1}{t} \Big(\mu T_t \Big[\log\frac{d\mu_\Lambda}{d\nu_\Lambda}\Big] - \mu\Big[\log\frac{d\mu_\Lambda}{d\nu_\Lambda}\Big]+ 
        \pi_\Lambda \Big[\frac{d(\mu T_t)_\Lambda}{d\pi_\Lambda}\Big(\log\frac{d(\mu T_t)_\Lambda}{d\nu_\Lambda}-\log\frac{d\mu_\Lambda}{d\nu_\Lambda}\Big)\Big]\Big).
\end{split}
\end{equation}
This splits our problem into two separate parts. First, we deal with the first two terms and show that if $\mu$ is a regular measure, then the time-derivative behaves as one would expect from standard semigroup theory (which we cannot directly apply here). 

\begin{lemma}\label{lem:log_generator}
    Let \(\mu\) be regular and set \(g_\Lambda := \d \mu_\Lambda/\d\nu_\Lambda\).
    Then,
    \begin{align}
        \frac{1}{t}\big((\mu T_t)[\log g_\Lambda] - \mu[\log g_\Lambda]\big)
        \xrightarrow[t \downarrow 0]{} \mu[\mathscr{L}(\log g_\Lambda)].
    \end{align} 
\end{lemma}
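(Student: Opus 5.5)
The approach is to write
\[
\frac{1}{t}\big((\mu T_t)[\log g_\Lambda] - \mu[\log g_\Lambda]\big) = \int \mu(\d\omega)\, \frac{1}{t}\big(T_t f(\omega) - f(\omega)\big), \qquad f := \log g_\Lambda,
\]
and then pass to the limit inside the $\mu$-integral by dominated convergence. The function $f$ is $\Lambda$-local but \emph{unbounded}, so \Cref{lem:domain} (which is stated for $f \in \Lcal$) does not apply directly. However, regularity of $\mu$, together with the fact that $\d\nu_\Lambda/\d\pi_\Lambda$ is bounded above and below by $c_\Lambda^{\pm N_\Lambda}$ (the Hamiltonian is bounded and of finite range), gives the linear growth bound
\[
\abs{f(\eta)} \lesssim_\Lambda 1 + N_\Lambda(\eta).
\]
So the first step is to extend the pointwise forward equation $\frac{\d}{\d s}T_s f = T_s\LL f$ from $\Lcal$ to $\Lambda$-local functions with this linear growth. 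Concretely, I would truncate $f_M := (f\wedge M)\vee(-M) \in \Lcal$, apply \Cref{lem:domain} in integrated form,
\[
T_t f_M(\omega) - f_M(\omega) = \int_0^t T_s \LL f_M(\omega)\,\d s,
\]
and let $M\to\infty$ using moment bounds on $N_\Lambda(X^{\omega}_s)$.

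Those moment bounds come from the graphical representation (\Cref{Definition:Markov_Process}). Since $b \le \Vert b\Vert_\infty$, we have $N_\Lambda(X^{\omega}_s) \le N_\Lambda(\omega) + P_\Lambda(s)$, where $P_\Lambda(s)$ is a Poisson variable of mean $\Vert b\Vert_\infty \abs{\Lambda} s$, independent of $\omega$. This controls all polynomial moments uniformly in $s \le 1$.

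Next I would bound $\LL f$ itself. For $\Lambda$-local $f$, births outside $\Lambda$ and deaths outside $\Lambda$ contribute nothing, so
\[
\LL f(\eta) = \int_\Lambda \d x\, b(x,\eta)\, D_x f(\eta) + \sum_{x\in\eta_\Lambda}\big(f(\eta-\delta_x)-f(\eta)\big).
\]
Using the growth bound, this gives $\abs{\LL f(\eta)} \lesssim_\Lambda (1+N_\Lambda(\eta))^2$. By the previous paragraph, $T_s\abs{\LL f}(\omega) \lesssim_\Lambda (1+N_\Lambda(\omega))^2$ uniformly in $s\le 1$. Consequently
\[
\abs{\tfrac1t (T_tf-f)(\omega)} \le \sup_{s\le t} T_s\abs{\LL f}(\omega) \lesssim_\Lambda (1+N_\Lambda(\omega))^2.
\]
This dominating function is $\mu$-integrable, because regularity makes $\mu_\Lambda$ dominated by $z_\Lambda^{N_\Lambda}\pi_\Lambda$ up to a constant, and that measure has exponential moments of $N_\Lambda$.

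It then remains to show pointwise convergence $T_s\LL f(\omega) \to \LL f(\omega)$ as $s\downarrow 0$, for each fixed $\omega$; the lemma then follows by dominated convergence. Here lies the main obstacle: $\LL f$ need not be continuous, since $g_\Lambda$ is only a density and $D_x f$ involves the values of $f$ at specific configurations. What saves us is that with probability $1-O(s)$ no event (birth, or death of a point in $\Lambda\oplus B(0,\range)$) occurs in a neighbourhood of $\Lambda$ before time $s$. On that event $X^{\omega}_s$ and $\omega$ agree on the relevant window, so $\LL f(X_s^{\omega}) = \LL f(\omega)$ exactly, because $b(x,\cdot)$ for $x\in\Lambda$ depends only on $\Lambda\oplus B(0,\range)$. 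On the complementary event, Cauchy--Schwarz with the quadratic bound gives a contribution of order $O(\sqrt{s})\,(1+N_\Lambda(\omega))^2$.

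One subtlety is the choice of version of $g_\Lambda$: the right-hand side $\mu[\LL\log g_\Lambda]$ evaluates $g_\Lambda$ at $\eta+\delta_x$, a $\mu$-null set. For this reason I would fix the version satisfying the regularity bounds everywhere, and check that the answer is unchanged under modifications on $\pi_\Lambda$-null sets. This check holds because the birth term integrates over $\d x$ and, by the Mecke formula, $\int \d x\, \mu_\Lambda$ evaluated at $\eta+\delta_x$ is absolutely continuous with respect to $\pi_\Lambda$.
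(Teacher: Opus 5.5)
Your proposal is correct and follows the same route as the paper's proof. You extend the integrated forward equation of \Cref{lem:domain} to $f=\log g_\Lambda$ by truncation using the regularity bounds, obtain pointwise convergence of $t^{-1}(T_tf-f)$ to $\LL f$ from the short-time behaviour of $s\mapsto T_s\LL f$, and then interchange the limit with the $\mu$-integral. The only differences are in how the steps are justified. You use the explicit dominating function $(1+N_\Lambda)^2$, obtained from the Poisson domination $N_\Lambda(X_s^{\omega})\le N_\Lambda(\omega)+P_\Lambda(s)$, where the paper uses uniform integrability via Jensen's inequality with a convex increasing $\phi$. You also spell out the pointwise continuity argument and the choice of version, which the paper leaves implicit.
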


The proof can be found in \Cref{section:domain_of_generator}.
Dealing with the third term in \eqref{eqn:proof-motivation} is harder and requires the construction of sufficiently nice versions of the Radon--Nikodym derivatives, which allow us to take the time-derivative \textit{configuration-wise}. 
\begin{proposition}\label{prop:differentiability-of-densities}
    There is a measurable function \((t, \omega) \mapsto g_{\Lambda, t}(\omega)\) such that for each \(t \geq 0\), \(g_{\Lambda, t}\) is a version of \(\d (\mu T_t)_\Lambda/\d \nu_\Lambda\) and such that for fixed \(\omega\), \(t \mapsto g_{\Lambda, t}(\omega)\) is differentiable.
\end{proposition}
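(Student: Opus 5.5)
Since $\nu_\Lambda$ and $\pi_\Lambda$ are mutually absolutely continuous with density bounded above and below by $C^{\pm N_\Lambda}$ (finite range and bounded $b$), it suffices to build a version of $\d(\mu T_t)_\Lambda/\d\pi_\Lambda$ that is differentiable in $t$ configuration-wise. Dividing by a fixed version of $\d\nu_\Lambda/\d\pi_\Lambda$, which does not depend on $t$, then gives $g_{\Lambda,t}$.

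The approach is a semi-explicit decomposition via the graphical construction of \Cref{Definition:Markov_Process}. Use the splitting \eqref{eqn:separation-initial-later}, $X_t^{\omega}=\omega_t+Y_t^{\omega}$. For fixed marked initial configuration $\underline\omega$, condition on the Poisson clock data outside $\Lambda$, or rather on the whole history of the process outside a neighbourhood of $\Lambda$. Inside $\Lambda$ we then have a finite-volume birth-and-death process with time-inhomogeneous but bounded birth rates $b(x,\cdot)\in[c,C]$. Its law at time $t$ can be written as a Dyson/Poisson-type series. For each $n$, the density on $\{N_\Lambda=n\}$ with respect to Lebesgue measure $\d x_1\cdots\d x_n$ is a sum over histories: which points survive from $\omega_\Lambda$ (with deterministic lifespans, contributing indicator-type factors), plus the newborn points at positions $x_i$ with birth times $s_i\in[0,t]$, survival factors $\e^{-(t-s_i)}$, and acceptance probabilities given by products of $b$ evaluated along the path.

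Concretely, I would first treat the contribution of the initial points. Since $\mu$ is regular, $\mu_\Lambda$ has a density with respect to $\pi_\Lambda$. After averaging over exponential lifespans, the surviving part of $\omega_\Lambda$ at time $t$ is an independent $\e^{-t}$-thinning, whose density is an explicit polynomial-exponential function of $t$ times the initial density. The newborn part, averaged over $\mathcal N$, has a density of the form $\int_{[0,t]^k} F(s_1,\dots,s_k;x_1,\dots,x_k;\text{outside data})\,\d s$, with integrands bounded by $C^k$. Hence each term is absolutely continuous in $t$, with derivative given by evaluating the integrand at $s_i=t$ plus terms from differentiating $\e^{-(t-s_i)}$. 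The exterior influence enters only through $b$ within range $\range$, and it is averaged out by Fubini.

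Summing over the number of births and deaths that occurred in $\Lambda\oplus B(0,\range)$ before $t$ (including points born and died unseen), the series converges absolutely, with term-wise bounds like $(Ct)^k/k!$ times $z_\Lambda^{N_\Lambda}$. This allows term-by-term differentiation on compact time intervals and defines $g_{\Lambda,t}(\omega)$ pointwise and jointly measurably in $(t,\omega)$.

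The main obstacle is that births inside $\Lambda$ depend, through $b$, on the infinite-volume history near the boundary, which itself depends on births inside $\Lambda$. So the "exterior" cannot literally be conditioned on independently. My plan is to handle this with the finite-speed-of-propagation comparison (\Cref{le_comparison_finite_infinite}). I would represent the law of the restriction to $\Lambda$ via the graphical construction on a large box $\Lambda'$. I would show that the resulting densities and their $t$-derivatives converge uniformly on compact time intervals as $\Lambda'\uparrow\R^d$, with sub-exponential error. Uniform convergence of the derivatives then preserves differentiability in the limit. I expect the bookkeeping of the uniform bounds on the derivative, in particular the $z_\Lambda^{N_\Lambda}$ growth inherited from regularity of $\mu$, to be the hardest routine step.
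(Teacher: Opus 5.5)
Your reduction to $\d(\mu T_t)_\Lambda/\d\pi_\Lambda$, the splitting $X_t=\omega_t+Y_t$, and the Fubini-averaging of the exterior initial condition all match the paper. The key step, however, does not follow from the tool you cite.

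\textbf{The infinite-volume limit.} Lemma~\ref{le_comparison_finite_infinite} only bounds the probability that the coupled finite- and infinite-volume processes disagree in $\Lambda$. That is a bound on the total-variation distance between the laws of $(Y^{(\underline\omega,\Lambda')}_t)_\Lambda$ and $(Y^{(\underline\omega)}_t)_\Lambda$. It gives, for each fixed $t$, convergence of densities in $L^1(\pi_\Lambda)$. It does not give convergence of your explicit finite-box versions at a given configuration, and it says nothing about $t$-derivatives. Producing one version, defined at every configuration and differentiable in $t$ there, is the entire content of the proposition, and an $L^1$-limit of differentiable densities does not supply it.

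So you need a genuinely pointwise input. The paper gets it in three steps:
\begin{enumerate}
\item It works with box averages $\abs{Q_\ell}^{-n}\E[\prod_i N_{Q_\ell(x_i)}(Y^{(\underline\omega)}_t)]$, to which probabilistic coupling bounds do apply.
\item It proves these averages are continuous in the box centers (Proposition~\ref{lemma:birth_death_process_expected_number_of_points_continuity_in_box_centers}, via Lemma~\ref{le_continuity_wrt_translation_of_initial_conditions}). Hence every point is a Lebesgue point and $\rho_n$ is canonically defined everywhere.
\item It computes right-derivatives with an $O(h^2)\abs{Q_\ell}^n$ error uniform in $\ell$, so the limits $h\downarrow 0$ and $\ell\downarrow 0$ commute. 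The Janossy inversion then gives $\psi_{\Lambda,t}$.
\end{enumerate}
In your framework the substitute would be as follows. Write the Janossy density at $(x_1,\dots,x_n)$, via the multivariate Mecke formula, as an integral over inserted space-time points of probabilities for the process driven by $\mathcal N+\sum_i\delta_{(x_i,s_i,r_i,u_i)}$. Then prove finite speed of propagation uniformly over such insertions. You would need this both for the density and for every term of its $t$-derivative.

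\textbf{Differentiability at the initial lifespans.} For fixed $\underline\omega$ the newborn density is \emph{not} differentiable in $t$. The birth rate $b(x,X_t)$ jumps at the deterministic lifespans of initial points in $\Lambda\oplus B(0,\range)$. So your integrand $F(s_1,\dots,s_k;\dots)$ is discontinuous in $s_i$ there, and ``evaluating at $s_i=t$'' gives only a right-derivative. Absolute continuity plus uniform convergence of a.e.\ derivatives then yields only a.e.\ differentiability.

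Differentiability at every $t>0$ comes only after averaging over the $\mathrm{Exp}(1)$ lifespans of the initial points: the averaged right-derivative is continuous, hence is a true derivative. For this you must keep survivors and newborns jointly, rather than treating the surviving part as a separate $\e^{-t}$-thinning, for two reasons:
\begin{itemize}
\item the newborn rates depend on which initial points are still alive;
\item the $t$-dependent lifespan domains ($s_x>t$ for survivors, $s_x\le t$ for initial points already dead) produce the boundary terms $B_t$ and $C_t$ of Lemma~\ref{lemma:derivative_of_g_t}.
\end{itemize}
The paper organizes this joint bookkeeping via the Poisson change of variables in Lemma~\ref{le_variable_change_poisson_point_process}.
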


This strong regularity property of the densities can be seen as the main technical contribution of this manuscript and opens the door for further investigations on the dynamics of point processes. The proof relies on a careful analysis of the dynamics given in terms of the graphical representation. Recall from \eqref{eqn:separation-initial-later} that the graphical representation allows us to separate the configuration of the process at time $t$ into two parts: the points coming from the initial condition and those born after time $t=0$. 
In \Cref{section:density_properties_of_bd_process}, for the latter part, i.e., the points not already present in the initial condition, we  establish that for any fixed volume $\Lambda \Subset \R^d$ and for a fixed initial configuration $\underline{\omega}$ with deterministic lifespans, the restriction of the infinite-volume process $(Y^{(\underline{\omega})}_t)_{t \geq 0}$ to $\Lambda$ possesses densities $\psi_{\Lambda,t}(\cdot \lvert \underline{\omega})$ with respect to the Poisson point process $\pi_\Lambda$ with good regularity in time. 
This is then used in \Cref{section:entropy_dissipation_local_equality} to prove \Cref{prop:differentiability-of-densities}.
Once we have this plus some uniform integrability one can conclude \Cref{theorem_derivative_of_the_local_entropy} starting from \eqref{eqn:proof-motivation}, see \Cref{sec:proof-theorem-derivative-local-entropy}. 

\subsubsection{Thermodynamic limit and de~Brujin-type identity}
The intermediate aim is now to prove a continuum analogue of~\cite[Theorem~1]{handa1996entropy}. This allows us to identify the thermodynamic limit of the entropy production \(\mu[(-\mathscr{L}) \log(\d\mu_\Lambda/\d \nu_\Lambda)]\) in the volume \(\Lambda\).
Our proof is strongly inspired by the one given in \cite{handa1996entropy}, but we have to identify the correct analogues in all computations and take care of additional technical subtleties that arise due to the continuum setting. Additionally, we make the observation in \Cref{sec:de_bruijn_proof} that at least in our setting we can commute limit operations and hence get identities for the derivative of the specific relative entropy \(\SpecEnt(\mu T_t \,\vert\, \nu)\) from the thermodynamic limit of the derivatives of local relative entropies \(\RelEnt(\mu T_t \,\vert\, \nu)\).
Let us now state the main result of this step on our way towards the proof of the main result.

\begin{proposition}\label{theorem:main_theorem_HANDA}
    Let \(\mu\in \Pcal_\theta\) have a finite first moment \(\mu[N_{[0,1]^d}] < \infty\) and let \(\mu\) be such that \(\mu_\Lambda\) and \(\pi_\Lambda\) are equivalent for all \(\Lambda \Subset \mathbb{R}^d\).
    Denote 
    \begin{align}
        \sigma_\Lambda(\mu) 
        := \mu\Big[(-\mathscr{L}) \log\frac{\d\mu_\Lambda}{\d \nu_\Lambda}\Big]
    \end{align} if the expectation exists and otherwise set \(\sigma_\Lambda(\mu) = +\infty\).
    Then, the limit
    \begin{align}
        \sigma(\mu) 
        = \lim_{\Lambda \uparrow \mathbb{R}^d} \frac{1}{\abs{\Lambda}} \sigma_\Lambda(\mu) 
    \end{align} 
    exists and 
    \begin{align}
        \sigma(\mu) = \RelEnt\big(b(0, \cdot) \mu \,\vert\, \mu_0^!\big) + \RelEnt\big(\mu_0^! \,\vert\, b(0, \cdot) \mu \big).
    \end{align}
    Furthermore, if \(\sigma(\mu) < \infty\), we have
    \begin{align}
        \frac{1}{\abs{\Lambda}}(-\mathscr{L}) \log\frac{\d \mu_\Lambda}{\d \nu_\Lambda}
        \xrightarrow[\Lambda \uparrow \mathbb{R}^d]{}
        \mathbb{E}^{\mu}\Big[b(0, \cdot)\,\log \frac{\d(b(0, \cdot) \mu)}{\d \mu_0^!} \,\Big\vert\, \mathscr{S} \Big] 
        + \mathbb{E}^{\mu_0}\Big[\log \frac{\d \mu_0^!}{\d(b(0, \cdot) \mu)} \circ \gamma_0 \,\Big\vert\, \mathscr{S} \Big]
    \end{align} in \(L^1(\mu)\), where \(\mathscr{S}\) is the \(\sigma\)-algebra of translation-invariant and measurable sets and \(\gamma_0 \colon \eta \mapsto \eta \setminus \{0\}\).
\end{proposition}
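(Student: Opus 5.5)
Following the lattice argument of \cite[Theorem~1]{handa1996entropy}, the plan is to write the entropy production in $\Lambda$ explicitly via the GNZ/Palm formalism and then apply a spatial ergodic theorem. Set $g_\Lambda := \d\mu_\Lambda/\d\nu_\Lambda$ and $f_\Lambda := \d\mu_\Lambda/\d\pi_\Lambda$. Since $\nu_\Lambda$ has a local density $\rho_\Lambda$ w.r.t.\ $\pi_\Lambda$, we have $\log g_\Lambda = \log f_\Lambda - \log \rho_\Lambda$. By the GNZ equation for $\nu$, $D_x\log\rho_\Lambda(\eta_\Lambda) = \log b^\nu_\Lambda(x,\eta_\Lambda)$, where $b^\nu_\Lambda$ is the boundary-averaged Papangelou intensity. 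Applying $\mathscr L$ to the $\Lambda$-local function $\log g_\Lambda$, only births and deaths inside $\Lambda$ contribute. Hence
\begin{align*}
\sigma_\Lambda(\mu) = -\int\mu(\d\eta)\int_\Lambda \d x\, b(x,\eta)\, D_x\log g_\Lambda(\eta_\Lambda) - \int \mu(\d\eta)\sum_{x\in\eta_\Lambda}\big(\log g_\Lambda(\eta_\Lambda-\delta_x)-\log g_\Lambda(\eta_\Lambda)\big).
\end{align*}
The death term is rewritten with the Palm measure (Campbell--Mecke), and the birth term with the disintegration $\mu(\d\eta)\,\d x$. The central identity to establish is that $\mu_{\Lambda}$-a.s.\ the ratio $f_\Lambda(\eta+\delta_x)/f_\Lambda(\eta)$ is the local Papangelou kernel of $\mu$, i.e.\ the density of the restriction to $\Lambda\setminus\{x\}$ of $\mu^!_x$ (reduced Palm at $x$) w.r.t.\ $\mu$, conditioned on $\Lambda$. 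The equivalence of $\mu_\Lambda$ and $\pi_\Lambda$ makes these ratios well defined and finite.

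Second, we translate to the origin using translation invariance. We write $\sigma_\Lambda(\mu) = \int_\Lambda \d x\, \Phi_{\Lambda-x}$, where
\begin{align*}
\Phi_{\Delta} = \mu\Big[b(0,\cdot)\log\frac{b(0,\cdot)}{\kappa_{\Delta}}\Big] + \mu^!_0\Big[\log\frac{\kappa_{\Delta}}{b(0,\cdot)}\Big].
\end{align*}
Here $\kappa_\Delta$ is the $\Delta$-local version of the Radon--Nikodym derivative $\d\mu^!_0/\d\mu$ restricted to $\mathcal F_\Delta$; this replaces $\d\mu_0^!/\d(b(0,\cdot)\mu)$ by its projection. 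One must also replace $b^\nu_\Lambda$ by $b$, which changes things only within distance $\range$ of $\partial\Lambda$ and, by boundedness of $b$ and the first moment, costs $O(|\partial\Lambda\oplus B(0,\range)|)$. Then $\Phi_\Delta = \RelEnt_\Delta(b(0,\cdot)\mu|\mu_0^!)+\RelEnt_\Delta(\mu_0^!|b(0,\cdot)\mu)$ holds up to normalization corrections which cancel between the two terms, since $\mu[b(0,\cdot)]$ and $\mu_0^!$ have matching total mass structure through GNZ-type bookkeeping. By monotonicity of relative entropy under restriction to increasing $\sigma$-algebras, $\Phi_\Delta \uparrow \RelEnt(b(0,\cdot)\mu|\mu_0^!)+\RelEnt(\mu_0^!|b(0,\cdot)\mu)$ as $\Delta\uparrow\R^d$ (martingale convergence of relative entropies, valued in $[0,\infty]$). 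Since for $x$ in the bulk $\Lambda-x$ contains a large ball, averaging over $x\in\Lambda$ and using monotone convergence together with the boundary estimate gives the limit formula, including the case $\sigma(\mu)=\infty$.

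Finally, for the $L^1(\mu)$ statement we assume $\sigma(\mu)<\infty$, so that the two relative entropies are finite and the infinite-volume integrands $F(\eta)=b(0,\eta)\log\frac{\d(b(0,\cdot)\mu)}{\d\mu_0^!}(\eta)$ and $G=\log\frac{\d\mu_0^!}{\d(b(0,\cdot)\mu)}$ are integrable under $\mu$ and $\mu_0^!$ respectively. The pathwise quantity $|\Lambda|^{-1}(-\mathscr L)\log g_\Lambda$ is written as $|\Lambda|^{-1}\big(\int_\Lambda F\circ\theta_x\,\d x + \sum_{x\in\eta_\Lambda}G\circ\gamma_0\circ\theta_x\big)$ plus an error from replacing local by global kernels. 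Wiener's multiparameter ergodic theorem for the first sum, and its marked/Palm version (Nguyen--Zessin) for the second, give $L^1(\mu)$ convergence to the stated conditional expectations given $\mathscr S$. The error term is controlled in $L^1$ by the $L^1$-martingale convergence $\kappa_\Delta\to\d\mu_0^!/\d\mu$ in the $x\log x$ sense, given finite entropies, averaged over $x$. The main obstacle is this last step: showing $L^1$, not merely in-expectation, smallness of the replacement error uniformly over bulk translates. I would first try a Pinsker/Scheffé-type argument, bounding $\mu[|F_\Delta-F|]$ by a function of $\RelEnt(\cdot)-\Phi_\Delta\to0$, and then use the convergence $\Phi_\Delta\uparrow$ limit already obtained in the second step.
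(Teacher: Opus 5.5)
Your proposal is correct and follows essentially the paper's route. The paper uses the same ingredients: the local Papangelou identity obtained from the Mecke formula, and the representation of $\sigma_\Lambda(\mu)$ as $\int_\Lambda$ of symmetrized local relative entropies on $\Lambda-x$ plus an $O(\abs{\partial\Lambda\oplus B(0,\range)})$ boundary remainder. It then uses monotone convergence of these local entropies (citing Nguyen--Zessin's Theorem~6.6 where you invoke martingale convergence of relative entropies), and finally the spatial ergodic theorem together with $L^1$ convergence of the local log-densities, which your Pinsker-type idea supplies just as the paper's appendix inequality does.

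One minor correction: no ``GNZ-type'' matching of total masses is needed, and in fact $\mu[b(0,\cdot)]$ need not equal the intensity of $\mu$ when $\mu\notin\GG_\theta$. For $\Delta\supseteq B(0,\range)$ your $\Phi_\Delta$ coincides exactly with $\RelEnt_\Delta(b(0,\cdot)\mu\,\vert\,\mu_0^!)+\RelEnt_\Delta(\mu_0^!\,\vert\,b(0,\cdot)\mu)$, simply because $b(0,\cdot)$ is $\mathcal{F}_\Delta$-measurable.
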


We refer the reader to \Cref{section:well_definedness} for an explanation why the assumptions in \Cref{theorem:main_theorem_HANDA} are in fact optimal. 
Instrumental for proving \Cref{theorem:main_theorem_HANDA} is the following \textit{pointwise} identity with a remainder vanishing in the thermodynamic \(L^1(\mu)\)-limit.
\begin{lemma}\label{lemma:rest_term}
    There exist a jointly measurable strictly positive function $\varrho$ and a function \(\eta\mapsto R_\Lambda(\eta)\) with \(\mu[R_{\Lambda}]/\abs{\Lambda} \to 0\), as $\Lambda \uparrow \mathbb{R}^d$, such that
    \begin{align}
        (-\mathscr{L}) \log\frac{\d \mu_\Lambda}{\d \nu_\Lambda} (\eta)
        = \int_{\Lambda} \, b(0, \theta_x \eta) \, \log \frac{1}{\varrho_{\Lambda}(x, \theta_x \eta)}
        + \sum_{x \in \eta_\Lambda} \log \varrho_\Lambda(x, \theta_x(\eta -\delta_{x}))
    + R_{\Lambda}(\eta) 
    \end{align} 
    with 
     \begin{align*}
        \varrho_\Lambda(x, \cdot)
        = \frac{\d(\mu_0^!)_{\Lambda -x}}{\d (b(0, \cdot) \mu)_{\Lambda -x}}\quad\text{and}\quad \frac{1}{\varrho_\Lambda(x, \cdot)}
        = \frac{\d (b(0, \cdot) \mu)_{\Lambda -x}}{\d(\mu_0^!)_{\Lambda -x}}
    \end{align*} 
    (as in ``is a version of'') for \(\mathrm{d}x\)-a.e.\ \(x\).
    Additionally, we have the following simple estimate for the remainder \(R_\Lambda\),
    \begin{align*}
\mu\big[\abs{R_\Lambda}\big]
        \leq \big\vert \{x \in \Lambda \colon  d(x, \partial \Lambda) \leq \range \}\big\vert \cdot \log\big(\Vert b\Vert_\infty \Vert 1/b\Vert_\infty\big) \big(\Vert b\Vert_\infty + \mu\big[N_{[0,1 ]^d}\big] \big).
    \end{align*}
\end{lemma}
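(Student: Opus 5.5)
We plan to compute $(-\mathscr{L})\log g_\Lambda$ with $g_\Lambda = \d\mu_\Lambda/\d\nu_\Lambda$ directly from the definition of the generator. Since $\log g_\Lambda$ is $\Lambda$-local, only births inside $\Lambda$ and deaths of points of $\eta_\Lambda$ contribute:
\begin{align*}
(-\mathscr{L})\log g_\Lambda(\eta) = -\int_\Lambda \d x\, b(x,\eta)\, D_x\log g_\Lambda(\eta_\Lambda) + \sum_{x\in\eta_\Lambda}\Big(\log g_\Lambda(\eta_\Lambda) - \log g_\Lambda(\eta_\Lambda-\delta_x)\Big).
\end{align*}
We write $g_\Lambda = f_\Lambda/\phi_\Lambda$ with $f_\Lambda = \d\mu_\Lambda/\d\pi_\Lambda$ and $\phi_\Lambda = \d\nu_\Lambda/\d\pi_\Lambda$. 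By the GNZ/DLR structure, the ratio $\phi_\Lambda(\eta_\Lambda+\delta_x)/\phi_\Lambda(\eta_\Lambda)$ equals $b^\nu_\Lambda(x,\eta_\Lambda)$. This quantity differs from $b(x,\eta)$ only when $d(x,\partial\Lambda)\le \range$, because of finite range. Hence $D_x\log g_\Lambda = \log\frac{f_\Lambda(\eta_\Lambda+\delta_x)}{f_\Lambda(\eta_\Lambda)\, b(x,\eta)}$ up to an error bounded by $\log(\Vert b\Vert_\infty\Vert 1/b\Vert_\infty)$, and that error appears only for $x$ in the $\range$-boundary layer. The same holds for the death terms at $x\in\eta_\Lambda$.

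Next we identify $f_\Lambda(\eta_\Lambda+\delta_x)/(b(x,\eta) f_\Lambda(\eta_\Lambda))$ with a Radon--Nikodym derivative in the Palm picture. By the Campbell--Mecke formula and translation invariance, for $\Lambda$-local test functions one has
\begin{align*}
\int\mu(\d\eta)\sum_{x\in\eta_\Lambda}F(x,\theta_x(\eta-\delta_x)) = \int_\Lambda \d x\, \mu_0^![F(x,\cdot)].
\end{align*}
Computing the left-hand side with $f_\Lambda$ gives $\int_\Lambda\d x\int\pi_\Lambda(\d\zeta)\, f_\Lambda(\zeta+\delta_x)F(x,\theta_x\zeta)$. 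So $(\mu_0^!)_{\Lambda-x}$ has $\pi$-density $\zeta\mapsto f_\Lambda(\theta_{-x}\zeta+\delta_x)$, while $(b(0,\cdot)\mu)_{\Lambda-x}$ has density $b(0,\zeta)f_\Lambda(\theta_{-x}\zeta)$. The latter is where the boundary layer enters again, since $b(0,\cdot)$ is not $(\Lambda-x)$-local near the boundary. This requires a conditional version, using $\mu[b(0,\cdot)\mid\mathcal F_{\Lambda-x}]$ in place of $b$, which is again the same as $b$ up to the boundary layer.

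We therefore define $\varrho_\Lambda(x,\cdot)$ as the ratio of these two densities, chosen jointly measurably in $(x,\zeta)$. It is strictly positive because of the assumed equivalence of $\mu_\Lambda$ and $\pi_\Lambda$, together with $b$ being bounded below. Substituting gives the birth term $\int_\Lambda b(0,\theta_x\eta)\log(1/\varrho_\Lambda(x,\theta_x\eta))$ and the death term $\sum_{x\in\eta_\Lambda}\log\varrho_\Lambda(x,\theta_x(\eta-\delta_x))$. Here we use $b(x,\eta)=b(0,\theta_x\eta)$, which holds by translation invariance of $H$.

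All discrepancies are collected into $R_\Lambda$. Each one is bounded pointwise by $\log(\Vert b\Vert_\infty\Vert1/b\Vert_\infty)$ and supported on the boundary layer: a birth integral with rate at most $\Vert b\Vert_\infty$, plus a count of points of $\eta$ in the layer. Taking $\mu$-expectations and using translation invariance, the expected number of points in the layer is its volume times $\mu[N_{[0,1]^d}]$. This yields the stated bound, which is $o(\abs{\Lambda})$.

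The main obstacle is the careful bookkeeping of versions. The Radon--Nikodym identities hold only $\d x\otimes\pi$-a.e., while the identity for $(-\mathscr{L})\log g_\Lambda$ must be pointwise, or at least $\mu$-a.e. We plan to fix versions of $f_\Lambda$ first and then define $\varrho_\Lambda$ explicitly from $f_\Lambda$, so that the identity holds by construction everywhere.
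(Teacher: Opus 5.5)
Your proposal is correct and follows essentially the paper's own route: write $D_x\log g_\Lambda$ as $\log\widetilde\varrho_\Lambda(x,\theta_x\eta)-\log b^{\nu}_\Lambda(x,\eta)$, where $\widetilde\varrho_\Lambda(x,\theta_x\eta)=f_\Lambda(\eta+\delta_x)/f_\Lambda(\eta)$ is identified with $\d(\mu_0^!)_{\Lambda-x}/\d\mu_{\Lambda-x}$ via Mecke and the Palm formula (the paper isolates this step as \Cref{lemma:existence_of_nice_local_papangelou_intensities}); then set $\varrho_\Lambda=\widetilde\varrho_\Lambda/b^{\mu}_\Lambda(x,\theta_{-x}\cdot)$ and put $\pm\log\bigl(b^{\nu}_\Lambda/b^{\mu}_\Lambda\bigr)$, which vanishes off the $\range$-layer, into $R_\Lambda$. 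The one thing to watch is that your two intermediate comparisons with $b$ telescope, so bound $\log(b^{\nu}_\Lambda/b^{\mu}_\Lambda)$ directly (both are averages of $b$); bounding the two comparisons separately would give twice the stated constant.
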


For concluding \Cref{theorem:main_theorem_HANDA} from \Cref{lemma:rest_term} we will additionally use the following ergodic theorem. 

\begin{lemma}[Ergodic theorem]\label{lemma:ergodic_theorem}
    Let $\mu\in \Pcal_\theta$. Then, there is a translation-invariant random measure \(\zeta^{\mu}(\cdot \,\vert\, \omega)\) such that
    \begin{align}
        \mathbb{E}^{\mu}\Big[\sum_{x \in \eta} g(x, \theta_x(\eta -\delta_{x})) \,\Big\vert\, \mathscr{S} \Big]
        = \int \d x \int \zeta^\mu(\d \eta \,\vert\, \cdot) \,g(x, \eta)
    \end{align} and
    \begin{align}
        \frac{1}{\vert\Lambda\vert} \sum_{x \in \omega_\Lambda} h(\theta_x(\omega -\delta_{x}))
        \xrightarrow[\Lambda \uparrow \mathbb{R}^d]{} \int \zeta^\mu(\d\eta \,\vert\, \omega) \, h(\eta)
    \end{align} \(\mu\)-a.s.\ and in \(L^1(\mu)\) for all \(h \in L^1(\mu_0^!)\).
\end{lemma}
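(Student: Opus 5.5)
We reduce the statement to the classical multiparameter ergodic theorem for translation-invariant measures, applied to the stationary random measure $\sum_{x\in\eta}\delta_x\otimes\delta_{\theta_x(\eta-\delta_x)}$. The construction of $\zeta^\mu$ will be via the Palm measure of $\mu$ disintegrated over the invariant $\sigma$-algebra $\mathscr{S}$.

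\emph{Step 1 (construction of $\zeta^\mu$).} We first assume finite intensity. Define the random measure on $\R^d\times\Omega$ by
\begin{align*}
\Xi_\eta(A\times B):=\sum_{x\in\eta}\1_A(x)\1_B(\theta_x(\eta-\delta_x)).
\end{align*}
This measure is equivariant under shifts: $\Xi_{\theta_y\eta}(A\times B)=\Xi_\eta((A+y)\times B)$. Let $\mu(\cdot\,\vert\,\mathscr{S})(\omega)$ be a regular conditional probability. Since $\mathscr{S}$ is shift-invariant, $\mu(\cdot\,\vert\,\mathscr{S})(\omega)$ is translation invariant for $\mu$-a.e.\ $\omega$, and these are its ergodic components. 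For each such component $\mu^\omega$ we apply the refined Campbell theorem with its reduced Palm measure:
\begin{align*}
\mathbb{E}^{\mu^\omega}\Big[\sum_{x\in\eta}g(x,\theta_x(\eta-\delta_x))\Big]=\int\d x\int(\mu^\omega)^!_0(\d\eta)\,g(x,\eta).
\end{align*}
We then set $\zeta^\mu(\cdot\,\vert\,\omega):=(\mu^\omega)^!_0$, which is a finite, not necessarily normalized, measure of total mass equal to the intensity of $\mu^\omega$. This gives the first identity. Measurability in $\omega$ follows because the Palm measure is obtained as $\mu^\omega[\sum_{x\in\eta_{[0,1]^d}}\1_B(\theta_x(\eta-\delta_x))]$, which is measurable. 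Averaging over $\omega$ gives $\mu[\zeta^\mu(B\,\vert\,\cdot)]=\mu_0^!(B)$, so $h\in L^1(\mu_0^!)$ implies $\int\zeta^\mu(\d\eta\,\vert\,\omega)\,|h(\eta)|<\infty$ for $\mu$-a.s.\ $\omega$.

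\emph{Step 2 (a.s.\ convergence).} We discretize space into unit cubes. Set
\begin{align*}
F(\omega):=\sum_{x\in\omega_{[0,1)^d}}h(\theta_x(\omega-\delta_x)).
\end{align*}
Then $\mu[|F|]\le\mu_0^!(|h|)<\infty$ by Campbell. Since $\Lambda_n$ is a union of $n^d$ translates of $[0,1)^d$ by $k\in\Z^d$ (up to the half-integer shift when $n$ is odd, which is harmless), we have
\begin{align*}
\sum_{x\in\omega_{\Lambda_n}}h(\theta_x(\omega-\delta_x))=\sum_{k}F(\theta_k\omega).
\end{align*}
The multiparameter ergodic theorem for the $\Z^d$-action along the cubes (Tempelman/Wiener) gives convergence $\mu$-a.s.\ and in $L^1(\mu)$ to $\mathbb{E}^\mu[F\,\vert\,\mathscr{I}_{\Z^d}]$.

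\emph{Step 3 (identifying the limit).} The $\Z^d$-invariant $\sigma$-algebra is larger than $\mathscr{S}$, so the limit in Step 2 must be identified with $\int\zeta^\mu(\d\eta\,\vert\,\omega)h(\eta)$. This is the step I expect to be the main obstacle. The argument I would use is the following. The limit $L$ is $\Z^d$-invariant. Moreover, replacing $\Lambda_n$ by $\Lambda_n+y$ for $y\in[0,1)^d$ does not change $L$: the symmetric difference contributes $O(n^{d-1})$ unit cubes, and their contribution, after dividing by $n^d$, vanishes a.s. To see this, I would write the averages over the shifted boxes and over $\Lambda_{n\pm1}$ and sandwich them for $h\ge0$; the general case follows by splitting $h=h^+-h^-$. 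Hence $L(\theta_y\omega)=L(\omega)$ for all $y$, so $L$ is $\mathscr{S}$-measurable. Taking conditional expectations, $L=\mathbb{E}^\mu[F\,\vert\,\mathscr{S}]$. By Step 1 applied with $g(x,\eta)=\1_{[0,1)^d}(x)h(\eta)$, this equals $\int\zeta^\mu(\d\eta\,\vert\,\cdot)h(\eta)$.

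\emph{Step 4 (infinite intensity).} If $\mu$ has infinite intensity, $L^1(\mu_0^!)$ still makes sense with $\mu_0^!$ $\sigma$-finite, and the same argument goes through for $h\ge0$ by monotone truncation. For the $L^1(\mu)$ convergence, either the a.s.\ convergence plus Scheffé's lemma for nonnegative parts, or directly the $L^1$ ergodic theorem, suffices.
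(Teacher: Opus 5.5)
Your proof is correct, but it takes a genuinely different route from the paper.

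\textbf{The paper's route.} The paper outsources the construction of $\zeta^\mu$ and the first identity to Daley--Vere-Jones (Lemma~13.4.II). For the convergence it mollifies: with $f(\eta)=\sum_{u\in\eta}h(\theta_u(\eta-\delta_u))\varphi_\epsilon(u)$, it sandwiches $\sum_{y\in\eta_{\Lambda_n}}h(\theta_y(\eta-\delta_y))$ between $\int_{\Lambda_n^{-\epsilon}}f(\theta_x\eta)\,\d x$ and $\int_{\Lambda_n^{\epsilon}}f(\theta_x\eta)\,\d x$. It then applies the continuous-parameter ergodic theorem for the $\R^d$-flow. Since the invariant $\sigma$-algebra of that flow is $\mathscr{S}$ itself, the limit $\mathbb{E}^\mu[f\,\vert\,\mathscr{S}]=\int\zeta^\mu(\d\eta\,\vert\,\cdot)\,h(\eta)$ comes out directly.

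\textbf{Your route.} You discretize into unit cubes and use the classical $\Z^d$ ergodic theorem. This avoids mollification and the continuous-time theorem. The price is the extra step of identifying $\mathbb{E}^\mu[F\,\vert\,\mathscr{I}_{\Z^d}]$ with $\mathbb{E}^\mu[F\,\vert\,\mathscr{S}]$, and your monotone sandwich does handle this.

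\textbf{Points to state more precisely.}
\begin{enumerate}
\item For $h\ge 0$ and $y\in[0,1)^d$, one has $\Lambda_{n-2}\subseteq\Lambda_n+y\subseteq\Lambda_{n+2}$. Your choice $\Lambda_{n\pm1}$ only works for $y\in[0,1/2]^d$.
\item The same sandwich shows that the set where the limits for $h^{+}$ and $h^{-}$ exist and are finite is invariant under every $\theta_y$. On that set the limit is exactly shift invariant, hence $\mathscr{S}$-measurable. The tower property then gives $L=\mathbb{E}^\mu[F\,\vert\,\mathscr{S}]$.
\item The nesting $\Lambda_{2m}\subseteq\Lambda_{2m+1}\subseteq\Lambda_{2m+2}$ is what makes the half-integer offset for odd $n$ harmless. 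You should say this explicitly rather than only call it harmless.
\end{enumerate}

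\textbf{Step 1.} Defining $\zeta^\mu(\cdot\,\vert\,\omega)$ as the reduced Palm measure of the conditional law $\mu(\cdot\,\vert\,\mathscr{S})(\omega)$ is a concrete, self-contained replacement for the citation. It needs the standard fact that these conditional laws are a.s.\ invariant under \emph{all} shifts $\theta_y$, $y\in\R^d$. You can obtain this for $y$ in a countable dense set and a countable generating class, then extend by weak continuity of $y\mapsto P\circ\theta_y^{-1}$. The claim that these laws are the ergodic components is true but not needed.

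\textbf{Trade-off.} The paper's argument is shorter once the continuous-parameter theorem is granted. Yours relies only on the discrete ergodic theorem and an explicit disintegration over $\mathscr{S}$.
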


Combining this with the pointwise representation of the entropy dissipation from \Cref{lemma:rest_term} yields \Cref{theorem:main_theorem_HANDA}, see \Cref{sec:proof-main-result-handa}. This finally puts us in the position to prove the desired de~Bruijn-type identity for the global dynamics. 

\begin{proof}[Proof of \Cref{eq:de_bruijn}]
    By the semigroup property and the fact that for \(\mu\) regular, the evolved measures \(\mu T_t\) stay regular  by \Cref{corollary:regular_measures_stay_regular_under_evolution}, we can apply \Cref{theorem_derivative_of_the_local_entropy} and \Cref{lemma:rest_term} to see that
    \begin{align*}
        \frac{1}{\abs{\Lambda}} \Big(-\frac{\d}{\d r}_{r=s} \RelEnt_\Lambda(\mu T_s \,\vert\, \nu) \Big)
        = \frac{1}{\abs{\Lambda}} \Big(-\frac{\d}{\d r}_{r=0} \RelEnt_\Lambda((\mu T_s)T_r \,\vert\, \nu) \Big) 
        = \widetilde{\sigma}_\Lambda(\mu T_s) + \frac{\, o(\abs{\Lambda})}{\abs{\Lambda}} 
    \end{align*} for \(s \in [0, t]\), where the remainder on the right-hand side, proved by \Cref{lemma:rest_term}, is uniform in \(s \in [0, t]\).
    Then, (the proof of) \Cref{theorem:main_theorem_HANDA} gives \(\sup_{\Lambda}  \frac{1}{\abs{\Lambda}} \widetilde{\sigma}_\Lambda(\mu T_s)  = \sigma(\mu T_s)\) and hence we have
    \begin{align*}
        \SpecEnt(\mu \,\vert\, \nu) - \SpecEnt(\mu T_t \,\vert\, \nu)
        &= \lim_{\Lambda \uparrow \mathbb{R}^d} \int_{0}^{t} \, \frac{1}{\abs{\Lambda}} \Big(-\frac{\d}{\d r}_{r=s} \RelEnt_\Lambda(\mu T_r \,\vert\, \nu) \Big) \, \d s
        = \lim_{\Lambda \uparrow \mathbb{R}^d} \int_{0}^{t} \, \widetilde{\sigma}_\Lambda(\mu T_s) \, \d s + \frac{t \, o(\abs{\Lambda})}{\abs{\Lambda}} \\
        &=  \int_{0}^{t} \, \lim_{\Lambda \uparrow \mathbb{R}^d}  \frac{1}{\abs{\Lambda}} \widetilde{\sigma}_\Lambda(\mu T_s) \, \d s 
        = \int_{0}^{t} \, \sigma(\mu T_s) \, \d s 
        = \int_{0}^{t} \, \xi^{s}(\mu) \, \d s,
    \end{align*} 
    as desired. 
\end{proof}

\subsection{Impossibility of convergence in finite time}
In order to show that the relative entropy is strictly decreasing for all times $t>0$ if we start in a non-Gibbsian initial distribution $\mu$, we establish the intuitive fact, that it is not possible for $\mu T_t$ to be in $\mathscr{G}_\theta$ at some finite time $t>0$ if $\mu\not\in\mathscr{G}_\theta$. 

\begin{proposition}\label{prop:not_reversible_in_finite_time}
    Let \(\mu\in \Pcal_\theta\setminus  \GG_\theta\).
    Then, also \(\mu T_t \in \Pcal_\theta\setminus  \GG_\theta\) for all \(t > 0\).
\end{proposition}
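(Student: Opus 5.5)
We argue by contradiction via analyticity in time, as announced in the outline. Suppose $\mu\in\Pcal_\theta\setminus\GG_\theta$ but $\mu T_{t_0}\in\GG_\theta$ for some $t_0>0$. Since $\GG_\theta=\RR_\theta$ by \Cref{prop:reversible_equals_gibbs_with_the_right_definition}, every element of $\GG_\theta$ is invariant. Hence $\mu T_t=\mu T_{t_0}$ for all $t\ge t_0$, and in particular $t\mapsto (\mu T_t)[f]$ is constant on $[t_0,\infty)$ for every $f\in\Lcal$. The plan is to show that, for a sufficiently rich class of local test functions $f$, the map $t\mapsto(\mu T_t)[f]$ is real-analytic on $(0,\infty)$. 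Given this, constancy on $[t_0,\infty)$ propagates to all $t\geq 0$ by the identity theorem. The argument in $t=0$ uses continuity of $t\mapsto (\mu T_t)[f]$ at $0$, which follows from \Cref{lem:domain}. We conclude $\mu[f]=(\mu T_{t_0})[f]$ for all such $f$, hence $\mu=\mu T_{t_0}\in\GG_\theta$, a contradiction.

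To prove analyticity I would establish the small-time exponential series expansion
\begin{align*}
(\mu T_{s+h})[f]=\sum_{k\ge0}\frac{h^k}{k!}\,(\mu T_s)[\LL^k f],\qquad 0\le h<h_0,
\end{align*}
with a radius $h_0>0$ uniform in $s$. Since $(\mu T_s)$ is again translation invariant, this uniformity gives analyticity on $[0,\infty)$. The natural test functions are bounded local $f$ depending on $\eta_\Lambda$, or better products of the type $\eta\mapsto \prod_{x\in\eta_\Lambda}\phi(x)$ or polynomial/exponential functionals of counting variables, which are measure-determining. Applying $\LL$ to a $\Lambda$-local function produces a function that is local in $\Lambda\oplus B(0,\range)$, because $b$ has range $\range$. $\LL^k f$ is therefore local in $\Lambda\oplus B(0,k\range)$. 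The key estimate I would aim for is
\begin{align*}
\sup_{s\ge0}(\mu T_s)\big[\abs{\LL^k f}\big]\le C_f\, C^k k!\,,
\end{align*}
obtained by expanding $\LL^k$ into a sum over sequences of births and deaths. The birth terms contribute at most $\Vert b\Vert_\infty|\Lambda\oplus B(0,j\range)|$ at step $j$. The death terms contribute factors of $N_{\Lambda\oplus B(0,j\range)}$, controlled by factorial moments of $\mu T_s$. These moments are uniformly bounded in $s$, since the evolved process is stochastically dominated by the initial points plus a Poisson process of intensity $\Vert b\Vert_\infty$.

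The main obstacle is this combinatorial bound. Naively $|\Lambda\oplus B(0,k\range)|^k\sim k^{dk}$ grows faster than $k!$, so the crude growth of supports destroys convergence. To fix this I would use the graphical representation and the finite speed of propagation (\Cref{le_comparison_finite_infinite}) instead of bounding $\LL^k f$ termwise. The alternative route is to run the Dyson/Duhamel expansion of the semigroup in the graphical construction. There, the $k$-th term corresponds to $k$ Poisson events in the backward space-time cluster of $\Lambda$ during $[s,s+h]$, and this cluster's size has exponential tails for small $h$. Cluster-expansion-type counting then gives a bound $C^k h^k$ on the $k$-th term for $h<h_0(\range,\Vert b\Vert_\infty,d)$. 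This yields the series with positive radius, and hence analyticity, and I expect this step to require most of the work. Identifying the coefficients with $(\mu T_s)[\LL^k f]$ is then routine via \Cref{lem:domain}, but it is not needed for the uniqueness argument itself.
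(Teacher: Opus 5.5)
\textbf{There is a genuine gap: your key estimate needs Poisson-like moments of the initial measure, and you never derive them.} The proposition assumes only $\mu\in\Pcal_\theta\setminus\GG_\theta$, so $\mu$ need not even have finite intensity. Your domination argument (``initial points plus a Poisson process of intensity $\Vert b\Vert_\infty$'') only passes on to $\mu T_s$ whatever moments $\mu$ already has. The surviving initial points at time $s$ are an independent $\e^{-s}$-thinning of $\omega$, so they keep the tails of $N_\Delta$ under $\mu$. The death part of $\LL^k f$ produces products of $k$ counting variables. For example, for $f=\1_{\{N_\Lambda\ \text{even}\}}$, the top-order term of $\LL^k f$ is of size $2^{k-1}N_\Lambda^k$. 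So for a $\mu$ with polynomial tails, $(\mu T_s)[\abs{\LL^k f}]=\infty$ for large $k$. Then neither the series nor the bound $\sup_s(\mu T_s)[\abs{\LL^k f}]\le C_f C^k k!$ makes sense. For the same reason, a Dyson/cluster bound $C^k h^k$ with $h_0$ depending only on $(\range,\Vert b\Vert_\infty,d)$ cannot hold: points present at time $s$ die at an unbounded total rate governed by $\mu T_s$. Heavy tails are in fact incompatible with the contradiction hypothesis $\mu T_{t_0}\in\GG_\theta$, but your argument never uses that hypothesis to rule them out.

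\textbf{How the paper closes this.} First, Gibbs measures have Poisson-like moments. From $\d\nu_\Delta/\d\pi_\Delta\le c^{\abs{\Delta}}z^{N_\Delta}$ one gets $\nu[N_\Delta^k]^{1/k}\le c_2^{\abs{\Delta}/k}k/\log(1+k/(c_1\abs{\Delta}))$ (\Cref{lemma:moments_of_Area_interaction_Gibbs_measures}). Second, these bounds transfer backwards in time, by the same thinning inequality read in reverse. Each initial point survives to $t_0$ independently with probability $\e^{-t_0}$, so by Jensen
\[
\mu\big[N_\Delta^k\big]=\e^{t_0k}\,\mu\big[\E[N_\Delta(\underline{\omega}_{t_0})]^k\big]\le \e^{t_0k}\,(\mu T_{t_0})\big[N_\Delta^k\big]
\]
(\Cref{lemma:moments_cannot_be_much_worse_at_starting_point}). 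Third, forward domination (\Cref{lemma:moments_stay_okay}) gives the same type of bound for every $\mu T_s$, $s\in[0,t_0]$, with constants uniform in $s$. This is exactly what makes the radius in \Cref{prop:small_time_exponential_series_expansion} uniform on $[0,t_0]$.

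With this step inserted, the rest of your plan is the paper's argument: invariance of $\mu T_{t_0}$ via $\GG_\theta=\RR_\theta$, a uniform-radius expansion, and propagating constancy backwards by the identity theorem. Since $\mu$ itself then satisfies the moment condition, the expansion also holds at $s=0$, so the separate continuity-at-zero step is unnecessary.

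For the combinatorial bound you do not need the graphical representation. Write $\LL_I f$ via the kernels $\kappa^{(I)}$; the integrand vanishes unless each $x_i\in\Lambda\cup\bigcup_{j>i,\,I_j=\text{birth}}B_\range(x_j)$. So the relevant volumes grow only linearly in $k$. Together with Poisson-like moments this gives $\mu[\abs{\LL^k f}]\le\Vert f\Vert_\infty(C(k+1))^k$ (\Cref{lemma:estimate_powers_of_generator}).
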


Since the operator $\mathscr{L}$ is unbounded, the semigroup $(T_t)_{t \geq 0}$ is in general not of the form $T_t = \sum_{k=0}^\infty \tfrac{t^k}{k!}\mathscr{L}^k$. The core idea of the proof is that for sufficiently regular measures $\mu$ and local functions one can at least find a small interval of times $[0,t^*)$, where $t^*$ depends on both $\mu$ and the observable $f$, inside of which one can represent $\mu[T_t f]$ in terms of an exponential series. 

\begin{proposition}[Series expansion]\label{prop:small_time_exponential_series_expansion}
    Let \(\mu\in \Pcal(\Omega)\) with
    \begin{align}\label{eqn:moment-condition}
        \mu\big[N_{\Delta}^k\big]^{1/k}
        \leq \frac{c_{\mu, 3}  \, c_{\mu, 2}^{\abs{\Delta}/k} \, k}{\log(1 + k/(c_{\mu, 1} \abs{\Delta}))}
    \end{align} for some \(c_{\mu, 1}, c_{\mu, 2}, c_{\mu, 3} \geq 1\), all \(\Delta \Subset \mathbb{R}^d\) and \(k \in \mathbb{N}\).
    Let \(\Lambda\) be a bounded box in \(\mathbb{R}^d\) with \(\abs{\Lambda} \geq \abs{B_R}\).
    Then, for all bounded and \(\Lambda\)-measurable \(f \colon \Omega \to \mathbb{R}\),
    \begin{align*}
        \mu\big[T_t f\big]
        = \sum_{k = 0}^{\infty} \frac{t^k}{k!} \mu\big[\mathscr{L}^k f\big]
    \end{align*} 
    for \(t < \big(12 \e \Vert b \Vert_{\infty} c_{\mu, 1} (c_{\mu, 3}+1) c_{\mu, 2}^{\abs{\Lambda}} \abs{\Lambda}\big)^{-1}\).
\end{proposition}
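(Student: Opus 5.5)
We plan to prove the expansion by a Taylor-with-remainder argument in $t$ for the map $t\mapsto \mu[T_t f]$, and to show that both the series and the remainder are controlled by combinatorial bounds on $\mathscr{L}^k f$. The key observation is that for a bounded $\Lambda$-measurable $f$, the function $\mathscr{L} f$ is local on the enlarged window $\Lambda\oplus B(0,\range)$ by the finite range of $b$. Its size is bounded by $2\Vert f\Vert_\infty(\Vert b\Vert_\infty\abs{\Lambda} + N_\Lambda)$, since births only matter inside $\Lambda$ and deaths only matter for points in $\Lambda$. Iterating, $\mathscr{L}^k f$ is measurable with respect to $\Lambda_k:=\Lambda\oplus B(0,k\range)$. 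We will prove by induction an estimate of the form
\begin{align*}
    \abs{\mathscr{L}^k f}(\eta) \leq \Vert f\Vert_\infty\, 2^k \prod_{j=0}^{k-1}\big(\Vert b\Vert_\infty \abs{\Lambda_j} + N_{\Lambda_j}(\eta)+j\big),
\end{align*}
where the $+j$ accounts for points added by earlier birth steps. Better still, we will expand $\mathscr{L}^k f$ as a sum over words of births and deaths. A death must hit a point already relevant, so its location is tied to $\Lambda$ or to a previously created point. A birth must land within distance $\range$ of a relevant region, so its volume factor is at most $\abs{\Lambda}+(\text{number of relevant points})\abs{B_\range}$. The assumption $\abs{\Lambda}\ge\abs{B_\range}$ lets us bound every such factor by $\abs{\Lambda}$ times a counting factor. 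This should give $\abs{\mathscr{L}^k f}\lesssim \Vert f\Vert_\infty (c\Vert b\Vert_\infty\abs{\Lambda})^k\,\E[(k+N_{\Delta})^k]$-type bounds with a fixed box $\Delta$ of volume comparable to $\abs{\Lambda}$, which avoids growing windows.

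Next we take expectations and use the moment condition \eqref{eqn:moment-condition}. The quantity $\mu[(k+N_\Delta)^k]$ is bounded by $2^k(k^k+\mu[N_\Delta^k])$. The condition gives $\mu[N^k_\Delta]\le (c_{\mu,3}k)^k c_{\mu,2}^{\abs{\Delta}}/\log(1+k/(c_{\mu,1}\abs\Delta))^k$. Combined with $k^k/k!\le \e^k$, the sum $\sum_k \frac{t^k}{k!}\mu[\abs{\mathscr{L}^kf}]$ converges for $t$ below a threshold of the stated form $(12\e\Vert b\Vert_\infty c_{\mu,1}(c_{\mu,3}+1)c_{\mu,2}^{\abs\Lambda}\abs\Lambda)^{-1}$. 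Here the logarithmic denominator absorbs the combinatorial overcount from how many points can be killed, and the constant $12$ collects the factors $2$ from the above bounds.

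To identify the sum with $\mu[T_tf]$, we use \Cref{lem:domain} iteratively: $\frac{\d}{\d s}T_s g=T_s\mathscr{L}g$ for local $g$. We will extend this from bounded local $g$ to the polynomially growing local functions $\mathscr{L}^k f$ by truncation, using the moment bounds on $X_s$. These moments are uniform in $s\le t$, since $X_s$ is dominated by $\omega_s$ plus a Poisson process of intensity $\Vert b\Vert_\infty$, and the moment condition is preserved under this domination up to constants. Taylor's formula then gives
\begin{align*}
    \mu[T_tf]=\sum_{k<n}\frac{t^k}{k!}\mu[\mathscr{L}^kf]+\int_0^t\frac{(t-s)^{n-1}}{(n-1)!}\,\mu[T_s\mathscr{L}^nf]\,\d s.
\end{align*}
The remainder is bounded by $\frac{t^n}{n!}\sup_{s\le t}(\mu T_s)[\abs{\mathscr{L}^nf}]$, which tends to $0$ by the same estimate applied to $\mu T_s$.

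We expect the main obstacle to be the sharp bookkeeping in the word expansion of $\mathscr{L}^k f$, together with verifying that $\mu T_s$ satisfies \eqref{eqn:moment-condition} with comparable constants. If that verification fails, we will instead bound $\mathbb{E}^\mu[\abs{\mathscr{L}^nf}(X_s)]$ directly via the Poisson domination, with the initial points counted through $\mu$.
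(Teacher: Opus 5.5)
Your overall plan matches the paper's: expand $\mathscr{L}^k=\sum_I\mathscr{L}_I$ over birth/death words, prove $\mu[\abs{\mathscr{L}^k f}]\le\Vert f\Vert_\infty (C\abs{\Lambda}(k+1))^k$, iterate \Cref{lem:domain} into a Taylor formula, and control the remainder under $\mu T_s$ by Poisson domination of the births (this is \Cref{lemma:moments_stay_okay}, with $c_{\mu,3}\to c_{\mu,3}+1$). The gap is in the bound on the death factors, which is the step that carries the proof, and the mechanism you propose for it cannot work. In the word expansion, a death at position $i$ ranges over the points of $\eta+\sum_{j<i,\,I_j=\mathrm{birth}}\delta_{x_j}$ that lie in $\Lambda\cup\bigcup_{j>i,\,I_j=\mathrm{birth}}B_\range(x_j)$. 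It can remove \emph{any} point of $\eta$ within distance $\range$ of a birth point, and the birth points form chains reaching distance of order $k\range$ from $\Lambda$. So no fixed box $\Delta$ of volume comparable to $\abs{\Lambda}$ has $N_\Delta$ dominating the death counts.

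Both ways of forcing a fixed box fail. Taking $\Delta=\Lambda\oplus B(0,k\range)$ gives $\abs{\Delta}\sim k^d$, and the moment bound for $\mu[N_\Delta^k]$ is then of order $c_{\mu,2}^{Ck^d}(Ck^d)^k$, which $k!$ cannot compensate when $d\ge 2$; this is the paper's growing-window heuristic. Splitting $N_{\Lambda\cup\bigcup_j B_\range(x_j)}\le N_\Lambda+\sum_j N_{B_\range(x_j)}$ and applying Minkowski to boxes of volume $O(\abs{\Lambda})$ makes each death factor of order $k^2/\log k$ instead of $k$. For words with $k/2$ births and $k/2$ deaths the bound then exceeds the needed $(Ck)^k$ by $(k/\log k)^{k/2}$, and no $t>0$ makes the majorant series summable.

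The missing idea is to apply the moment condition, for fixed birth positions, directly to the union $\Delta_i=\Lambda\cup\bigcup_{j>i,\,I_j=\mathrm{birth}}B_\range(x_j)$. This set depends on the integration variables, but $\abs{\Delta_i}\le(k+1)\abs{\Lambda}$ because $\abs{\Lambda}\ge\abs{B_\range}$. The steps are then as follows.
\begin{enumerate}
\item Split the product of the $\#\mathrm{death}$ factors by Hölder.
\item Bound each factor, using $x/\log(1+x/y)\le 3(x+y)$, by $3c_{\mu,1}c_{\mu,3}c_{\mu,2}^{k\abs{\Lambda}/\#\mathrm{death}}\abs{\Lambda}(k+1)$. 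The points added by earlier births are absorbed here.
\item Only afterwards integrate out the birth positions, each over a set of volume at most $k\abs{\Lambda}$.
\end{enumerate}
This works because the relevant volume grows only linearly in $k$ along chains of balls, which a Poisson-type moment bound tolerates. The logarithmic gain in the moment condition is never used, only its linear consequence above. So, contrary to your remark, it is not what absorbs the overcount of killed points. With this replacement, the rest of your argument, including the Taylor remainder and moment preservation under $T_s$, goes through as in the paper.
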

Let us note that this will also be handy for showing that $\mathscr{R}_\theta = \mathscr{G}_\theta$, see \Cref{section:reversible_measures_are_gibbs_measures}. Next, in order to use this small-time series expansion to show \Cref{prop:not_reversible_in_finite_time}, we need to show that (a) Gibbs measures with respect to the considered interaction $H$ satisfy the moment condition \eqref{eqn:moment-condition}, and (b) any measure that is sufficiently \textit{dynamically} close to a Gibbs measure must also satisfy the moment condition.

\begin{lemma}[Moments of Gibbs measures]\label{lemma:moments_of_Area_interaction_Gibbs_measures}
    If \(\nu \in \mathscr{G}\), then
    \begin{align*}
        \nu\big[N_{\Delta}^k\big]^{1/k}
        \leq \frac{c_{\nu, 2}^{\abs{\Delta}/k} \,k}{\log(1 + k/(c_{\nu, 1} \abs{\Delta}))}
    \end{align*} 
    for some \(c_{\nu, 1}, c_{\nu, 2} > 0\), all \(\Delta \Subset \mathbb{R}^d\) and \(k \in \mathbb{N}\).
\end{lemma}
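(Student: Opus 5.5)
Since $b$ is bounded above by $\Vert b\Vert_\infty$ and below by $\Vert 1/b\Vert_\infty^{-1}$, the plan is to dominate $N_\Delta$ under $\nu$ stochastically by a Poisson variable and then estimate Poisson moments by hand. Write $z:=\Vert b\Vert_\infty$.

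\textbf{Step 1 (domination).} By the DLR equations, $\nu[N_\Delta^k]=\int\nu(\d\omega)\,\nu^{\Ssup{\omega}}_\Delta[N_\Delta^k]$. It therefore suffices to bound the finite-volume moments uniformly in the boundary condition $\omega$. The density of $\nu^{\Ssup{\omega}}_\Delta$ with respect to $\pi_\Delta$ is $Z_\Delta(\omega)^{-1}\e^{-H_\Delta(\eta_\Delta\omega_{\Delta^c})}$. Adding the points of $\eta_\Delta=\{x_1,\dots,x_n\}$ one at a time gives
\begin{align*}
\e^{-H_\Delta(\eta_\Delta\omega_{\Delta^c})}=\prod_{i=1}^n b\big(x_i,\{x_1,\dots,x_{i-1}\}\cup\omega_{\Delta^c}\big)\le z^n.
\end{align*}
This identity holds because the conditional energy telescopes; by finite range, all limits are finite. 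For the normalization, keep only the empty configuration:
\begin{align*}
Z_\Delta(\omega)\ge \pi_\Delta(\{\emptyset\})=\e^{-\abs{\Delta}}.
\end{align*}
Combining the two bounds,
\begin{align*}
\nu^{\Ssup{\omega}}_\Delta[N_\Delta^k]\le \e^{\abs{\Delta}}\,\pi_\Delta\big[N_\Delta^k z^{N_\Delta}\big]=\e^{\abs{\Delta}}\e^{(z-1)\abs{\Delta}}\,\E\big[P^k\big], \qquad P\sim\mathrm{Poi}(z\abs{\Delta}).
\end{align*}
The prefactor is $\e^{z\abs{\Delta}}$ and does not depend on $\omega$. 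After taking the $k$-th root it becomes $\big(\e^{z}\big)^{\abs{\Delta}/k}$, which is exactly the $c_{\nu,2}^{\abs{\Delta}/k}$ factor in the statement.

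\textbf{Step 2 (Poisson moments).} It remains to show that
\begin{align*}
\E[P^k]^{1/k}\lesssim \frac{k}{\log(1+k/(c\lambda))}, \qquad \lambda=z\abs{\Delta}.
\end{align*}
Use the exponential moment: for every $s>0$,
\begin{align*}
\E[P^k]\le \frac{k!}{s^k}\,\E\big[\e^{sP}\big]=\frac{k!}{s^k}\,\e^{\lambda(\e^s-1)}.
\end{align*}
Choose $s=\log(1+k/\lambda)$. Then $\lambda(\e^s-1)=k$, so
\begin{align*}
\E[P^k]^{1/k}\le \frac{(k!)^{1/k}\,\e}{\log(1+k/\lambda)}\le \frac{\e\, k}{\log(1+k/\lambda)}.
\end{align*}
The leftover factor $\e$ can be absorbed in one of two ways. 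One option is to enlarge $c_{\nu,2}$, using $\abs{\Delta}/k$ small only when needed. The cleaner option is to change constants inside the logarithm: for $c_{\nu,1}\ge 1$ one has $\log(1+k/(c_{\nu,1}\lambda))\le\log(1+k/\lambda)$, so replacing $\lambda$ by $c_{\nu,1}\lambda$ only weakens the bound, and the constant $\e$ can be traded against $c_{\nu,1}$.

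\textbf{Expected obstacle.} The main delicate point is that bookkeeping: making the final inequality hold with \emph{exactly} the displayed form, with no extra multiplicative constant, uniformly in both regimes $k\ll\abs{\Delta}$ and $k\gg\abs{\Delta}$.

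If the factor $\e$ does not absorb cleanly, I would separate two cases.
\begin{itemize}
\item For $k\le \abs{\Delta}$, use $\log(1+x)\le x$, so the right-hand side is at least $c_{\nu,1}\abs{\Delta}$. Then absorb $\e\,\lambda$-type terms by choosing $c_{\nu,1}\ge \e z$, or by using $c_{\nu,2}^{\abs{\Delta}/k}\ge c_{\nu,2}$.
\item For $k\ge\abs{\Delta}$, note that $\e^{\abs{\Delta}/k}\le \e$. Hence putting an extra factor $\e$ into $c_{\nu,2}$ covers this case.
\end{itemize}
In both cases the constants depend only on $z$ (and hence on the interaction). They do not depend on $\nu$ or on $\omega$.
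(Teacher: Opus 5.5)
Your Step 1 is the same as the paper's argument: the density bound $\d\nu_\Delta/\d\pi_\Delta\le c^{\abs{\Delta}}z^{N_\Delta}$, uniformly in the boundary condition, reduces everything to Poisson moments, and it is fine. The gap is in Step 2. The inequality $x^k\le k!\,s^{-k}\e^{sx}$ is too lossy, and it leaves you with
\[
\nu\big[N_\Delta^k\big]^{1/k}\le \e\cdot\e^{z\abs{\Delta}/k}\,\frac{k}{\log(1+k/(z\abs{\Delta}))}.
\]
This extra factor $\e$ cannot be absorbed into the required form when $u:=\abs{\Delta}/k\to0$, i.e.\ $k\to\infty$ with $\Delta$ fixed, or $\abs{\Delta}\to0$ with $k$ fixed. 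For any constants $C_1,C_2$, the ratio of your bound to $C_2^{u}\,k/\log(1+1/(C_1u))$ equals $\e\,(\e^{z}/C_2)^{u}\,\log(1+1/(C_1u))/\log(1+1/(zu))$, which tends to $\e$.

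Both of your absorption devices fail in this regime.
\begin{itemize}
\item Since $c_{\nu,2}^{\abs{\Delta}/k}\to1$, an extra factor $\e$ put into $c_{\nu,2}$ only contributes $\e^{\abs{\Delta}/k}\to1$. Your bullet for $k\ge\abs{\Delta}$ invokes $\e^{\abs{\Delta}/k}\le\e$, which points the wrong way; you would need a lower bound.
\item Changing $c_{\nu,1}$ only shifts the logarithm additively, $\log(1+x/c)=\log(1+x)-\log c+o(1)$ as $x\to\infty$. Hence $\e\log(1+x/c)\le\log(1+x)$ fails for large $x$.
\end{itemize}
Your case $k\le\abs{\Delta}$ is fine.

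The repair is one line: use the sharp pointwise bound $x^k\le\big(k/(\e s)\big)^k\e^{sx}$ instead of $k!\,s^{-k}\e^{sx}$. It follows from $\sup_{x\ge0}x^k\e^{-sx}=(k/(\e s))^k$. With $s=\log(1+k/\lambda)$ you get $\E[P^k]\le(k/(\e s))^k\e^{k}=(k/s)^k$, i.e.\ $\E[P^k]^{1/k}\le k/\log(1+k/\lambda)$ with no leftover constant. This is exactly the Poisson moment bound of Ahle that the paper cites. Combined with your Step 1, it gives the statement with $c_{\nu,1}=\Vert b\Vert_\infty$ and $c_{\nu,2}=\e^{\Vert b\Vert_\infty}$, and it makes the paper's citation self-contained.
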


Let us now make precise what we mean by being dynamically close to a Gibbs measure. 

\begin{lemma}\label{lemma:moments_cannot_be_much_worse_at_starting_point}
    Suppose there are \(c_{\widetilde{\nu}, 1}, c_{\widetilde{\nu}, 2}, c_{\widetilde{\nu}, 3} > 0\) such that for \(\widetilde{\nu} = \mu T_t\) it holds that
    \begin{align*}
        \widetilde{\nu}\big[N_{\Delta}^k\big]^{1/k}
        \leq \frac{c_{\widetilde{\nu}, 3} \,c_{\widetilde{\nu}, 2}^{\abs{\Delta}/k} \,k}{\log(1 + k/(c_{\widetilde{\nu}, 1} \abs{\Delta}))}
    \end{align*} for all \(\Delta \Subset \mathbb{R}^d\) and \(k \in \mathbb{N}\).
    Then, the same holds for \(\mu\) instead of \(\widetilde{\nu}\) just with \(c_{\widetilde{\nu}, 3}\) replaced by \({\rm e}^t c_{\widetilde{\nu}, 3}\).
\end{lemma}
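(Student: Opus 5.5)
The plan is to compare the moments of $\mu$ with those of $\widetilde\nu = \mu T_t$ through the graphical representation in \Cref{Definition:Markov_Process}. By the decomposition \eqref{eqn:separation-initial-later}, if the initial configuration is $\omega\sim\mu$, then $X_t\Ssup{\omega} = \omega_t + Y_t\Ssup{\omega}$. Here $\omega_t$ is an independent thinning of $\omega$: each point survives up to time $t$ independently with probability $p := \e^{-t}$. Since $Y_t\Ssup{\omega}\geq 0$, we get the pathwise bound
\[
N_\Delta(X_t\Ssup{\omega}) \geq N_\Delta(\omega_t),
\]
and therefore $\widetilde\nu[N_\Delta^k] \geq \mathbb{E}\big[N_\Delta(\omega_t)^k\big]$. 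Conditionally on $\omega$, the variable $N_\Delta(\omega_t)$ is $\mathrm{Bin}(N_\Delta(\omega), p)$. It therefore suffices to lower-bound $\mathbb{E}[\mathrm{Bin}(n,p)^k]^{1/k}$ by a multiple of $n$ and then average over $\omega\sim\mu$.

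\textbf{Key step (binomial moments).} I would aim for
\[
\mathbb{E}\big[\mathrm{Bin}(n,p)^k\big] \geq p^k n^k \qquad \text{for all } n,k.
\]
Jensen gives $\mathbb{E}[B^k] \geq (\mathbb{E}B)^k = (pn)^k$, because $x\mapsto x^k$ is convex on $[0,\infty)$. Conditioning on $\omega$ and integrating then yields
\[
\widetilde\nu\big[N_\Delta^k\big] \geq p^k \,\mu\big[N_\Delta^k\big],
\qquad\text{that is,}\qquad
\mu\big[N_\Delta^k\big]^{1/k} \leq \e^{t}\,\widetilde\nu\big[N_\Delta^k\big]^{1/k}.
\]
Inserting the assumed bound for $\widetilde\nu$ gives exactly the same form of estimate with $c_{\widetilde\nu,3}$ replaced by $\e^t c_{\widetilde\nu,3}$, while $c_{\widetilde\nu,1}$ and $c_{\widetilde\nu,2}$ are unchanged. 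This holds for all $\Delta$ and $k$.

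\textbf{Main obstacle: justifying the coupling.} The only delicate point is measure-theoretic: that $\mu T_t$ is the law of $\omega_t + Y_t\Ssup{\omega}$ with $\omega\sim\mu$, and that $\omega_t$ is a $\mathrm{Bin}$-thinning with survival probability $\e^{-t}$ independent across points. This follows from the construction in \Cref{Definition:Markov_Process}: the initial lifespans $\tau_i$ are i.i.d.\ unit exponentials, independent of $\mathcal{N}$, and the second term in \eqref{eq:sdeBD} counts exactly the initial points with $\tau_i > t$, so $\omega_t$ is indeed such a thinning. Nonnegativity of the birth part is immediate from \eqref{eq:sdeBD}.

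A possible subtlety is that $\mu[N_\Delta^k]$ might be infinite. The inequality above still holds in $[0,\infty]$, and the assumed finiteness of the right-hand side then forces the left-hand side to be finite as well. Since the argument is pathwise and uses only Jensen's inequality, I expect no further difficulty.
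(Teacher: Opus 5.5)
Your proposal is correct and is essentially the paper's own argument. The paper likewise writes $\mu[N_\Delta^k] = \e^{tk}\mu\big[\mathbb{E}[N_\Delta(\underline{\omega}_t)]^k\big]$, applies Jensen's inequality, and then uses the pathwise domination $N_\Delta(\underline{\omega}_t)\le N_\Delta(X_t^{(\omega)})$ to conclude $\mu[N_\Delta^k]\le \e^{tk}\,\widetilde{\nu}[N_\Delta^k]$.
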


The proof of these two technical ingredients can be found in \Cref{section:impossibility_of_finite_time_gibbs_without_gibbs_start}. 
By putting them together we can now prove \Cref{prop:not_reversible_in_finite_time}. 

\begin{proof}[Proof of \Cref{prop:not_reversible_in_finite_time}]
    Let \(\widetilde{\nu}\) be some (reversible) Gibbs measure and \(\mu \neq \widetilde{\nu}\).
    Then, there is some bounded and \(\Lambda\)-measurable, for a bounded box \(\Lambda\), function \(f\) such that \(\mu[f] \neq \widetilde{\nu}[f]\).
    Suppose there is a \(t_0 > 0\) such that \(\widetilde{\nu} = \mu T_{t_0} \).
    Then, by \Cref{prop:small_time_exponential_series_expansion} together with \Cref{lemma:moments_of_Area_interaction_Gibbs_measures} and \Cref{lemma:moments_cannot_be_much_worse_at_starting_point}, there is an \(\epsilon > 0\) such that
    \begin{align*}
        (\mu T_{t + \delta})[f]
        = \sum_{k = 0}^{\infty} \frac{\delta^k}{k!} (\mu T_{t})\big[\mathscr{L}^k f\big]
    \end{align*} for all \(t \in [0, t_0]\) and \(\delta \in [t, t + 2 \epsilon)\).
    In particular, for \(t = t_0 - \epsilon\) we get
    \begin{align*}
        (\mu T_{t_0 - \epsilon + \delta})[f] = \sum_{k = 0}^{\infty} \frac{\delta^k}{k!} (\mu T_{t_0 - \epsilon})\big[\mathscr{L}^k f\big]
    \end{align*} with
    \begin{align*}
        (\mu T_{t_0 - \epsilon + \delta})[f] = (\mu T_{\delta - \epsilon}T_{t_0})[f] = \widetilde{\nu}[f]
    \end{align*} for \(\epsilon \leq \delta < 2 \epsilon\). The identity theorem for power series yields that \((\mu T_{t_0 - \epsilon + \delta})[f] = \widetilde{\nu}[f]\) for \(0 < \delta < \epsilon\) too. Iterating this procedure yields the contradiction \(\widetilde{\nu}[f] = (\mu T_0 )[f] = \mu[f] \neq \widetilde{\nu}[f]\).
\end{proof}

\section{Domain of the generator}\label{section:domain_of_generator}

We show here that a wide class of functions is included in the domain of the generator $\mathcal{L}$ of the birth-and-death process.
Let us start by showing that the Markov process constructed via the graphical representation in terms of a driving Poisson point process as in \Cref{Definition:Markov_Process} actually has the formal generator $\mathscr{L}$ when acting on bounded and local measurable functions. The lemma below is from~\cite[Lemma~4.2]{JKSZ25}; as we will make use of its proof when showing that the domain contains further functions, we report it here.

\begin{lemma}\label{lem:domain}
    We have that
    \begin{align}\label{eq:generator_integral_for_bounded_local}
        T_t f = f + \int_{0}^{t}\d s \, T_s(\LL f),\qquad f\in\Lcal.
    \end{align}
\end{lemma}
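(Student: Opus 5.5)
We prove \eqref{eq:generator_integral_for_bounded_local} pathwise, using the graphical representation of \Cref{Definition:Markov_Process}. The identity is then obtained by taking expectations of the resulting jump-martingale decomposition. Fix $f\in\Lcal$, $\Lambda$-measurable with $\Vert f\Vert_\infty<\infty$, and a starting configuration $\omega$. We need $\omega$ such that $\LL f(\omega)$ is finite and the process is well defined; for $\Lambda$-local $f$, $\LL f(\eta)$ involves only births in $\Lambda$ and deaths of points of $\eta_\Lambda$. In particular,
\begin{equation*}
|\LL f(\eta)|\le 2\Vert f\Vert_\infty\big(\Vert b\Vert_\infty|\Lambda|+N_\Lambda(\eta)\big).
\end{equation*}

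\emph{Step 1: pathwise telescoping.} On $[0,t]$ the process restricted to $\Lambda$ has almost surely finitely many jumps. Births in $\Lambda$ are dominated by the Poisson points of $\Ncal$ in $\Lambda\times[0,\Vert b\Vert_\infty]\times[0,\infty)\times[0,t]$, which are finitely many. Deaths in $\Lambda$ are at most $N_\Lambda(\omega)$ plus the number of births. Hence
\begin{equation*}
f(X_t^{\omega})-f(\omega)=\sum_{s\le t}\big(f(X_s^{\omega})-f(X_{s-}^{\omega})\big).
\end{equation*}
We write this sum as an integral against the Poisson random measure $\Ncal$ for births, restricted to the region $u\le b(x,X_{s-})$ with $x\in\Lambda$. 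For deaths we use the death clocks: the exponential lifespans of initial points and the marks $r$ of born points. Equivalently, we view deaths as driven by independent rate-one Poisson clocks attached to each living point, which is legitimate by memorylessness of the exponential lifespans.

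\emph{Step 2: compensation.} Subtract the compensators. The birth part has compensator $\int_0^t\d s\int_\Lambda\d x\, b(x,X_{s-})\,D_xf(X_{s-})$. This uses predictability of $X_{s-}$ and the $\Fcal$-compatibility of $\Ncal$, so that the thinning indicator $\1_{[0,b(x,X_{s-})]}(u)$ is predictable. The death part has compensator $\int_0^t\d s\sum_{x\in X_{s-}\cap\Lambda}(f(X_{s-}-\delta_x)-f(X_{s-}))$. Summing, the compensators give exactly $\int_0^t \LL f(X_s)\,\d s$, because $X_{s-}=X_s$ for Lebesgue-a.e.\ $s$. The differences are local martingales with respect to $\Fcal$.

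\emph{Step 3: integrability and expectation.} This is the main obstacle: we must show that the local martingales are true martingales with zero mean. It suffices to bound the expected total variation. For births this is at most $2\Vert f\Vert_\infty\Vert b\Vert_\infty|\Lambda|t$. For deaths it is at most $2\Vert f\Vert_\infty\,\E\int_0^t N_\Lambda(X_s)\,\d s$, and $N_\Lambda(X_s)\le N_\Lambda(\omega)+\#\{\text{births in }\Lambda\text{ up to }t\}$ has finite mean $N_\Lambda(\omega)+\Vert b\Vert_\infty|\Lambda|t$. A compensated integral whose integrand has finite expected total variation has mean zero, so
\begin{equation*}
\E f(X_t^\omega)=f(\omega)+\E\int_0^t\LL f(X_s^\omega)\,\d s.
\end{equation*}
The same bound justifies Fubini, which turns the right-hand side into $\int_0^t T_s(\LL f)(\omega)\,\d s$. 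In particular, $T_s\LL f$ is well defined even though $\LL f$ is unbounded, being dominated by a linear function of $N_\Lambda$. For the randomized-lifespan process we either work directly with exponential clocks, as in Step 2, or condition on lifespans and average, which is consistent by the exponential compensator.
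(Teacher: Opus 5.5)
Your argument is sound in substance but takes a different route from the paper. The paper works only with expectations. It splits $f(X_t)-f(\omega)$ into three pieces: births in $\Lambda$, deaths of particles born after time $0$, and deaths of initial particles. Each piece is evaluated with the Mecke formula applied to the solution map $\mathscr{X}^{(\omega)}(\Ncal)$, using that $\mathscr{X}^{(\omega)}_{s-}$ does not see a point added at time $s$. The two death pieces are identified by computing right-derivatives in $t$: a second Mecke computation handles born particles, and the $\mathrm{Exp}(1)$ law of the initial lifespans handles initial particles.

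You instead compensate all jumps at once and upgrade the compensated sums to true martingales through integrable variation. This proves more: $f(X_t)-f(\omega)-\int_0^t\LL f(X_s)\,\d s$ is a martingale, i.e.\ the martingale problem holds for $f\in\Lcal$. It also treats both kinds of deaths uniformly. The paper's computation avoids filtration and compensator theory altogether and needs only the Poisson structure of $\Ncal$. Its ``Mecke with an added point'' expressions are also reused later, e.g.\ in \Cref{lemma:birth_death_process_expected_number_of_points_continuity_in_box_centers} and \Cref{le_differentiability_corr_func}.

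The one step that is wrong as written is the choice of filtration in Step~2. For the filtration $\Fcal$ of \Cref{Definition:Markov_Process}, $\Ncal(A,\cdot)$ is adapted for sets $A$ that constrain the lifespan coordinate $r$. So the lifespan of a particle born at time $s$ is already $\Fcal_s$-measurable, and its death time is predictable. The $\Fcal$-compensator of the death jumps is then the death sum itself, not $\int_0^t\sum_{x\in X_{s-}\cap\Lambda}\big(f(X_{s-}-\delta_x)-f(X_{s-})\big)\,\d s$. The same holds for initial lifespans placed in $\Fcal_0$.

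Your memorylessness remark points to the right fix, but you need to implement it through a smaller filtration. Let $\mathcal{G}_t$ be generated by the points $(x,u,s)$ of $\Ncal$ with $s\le t$, their censored lifespans $r\wedge(t-s)$, and the censored initial lifespans $\tau_i\wedge t$. With this filtration:
\begin{itemize}
\item $X_t$ is $\mathcal{G}_t$-measurable, because \eqref{eq:sdeBD} sees $r$ only through $\1_{(t-s,\infty)}(r)$. To make this rigorous you can pass through the finite-volume solution maps and \Cref{le_comparison_finite_infinite}.
\item The part of $\Ncal$ after time $t$ is still independent of $\mathcal{G}_t$, so your birth compensator is unchanged.
\item The marks are i.i.d.\ $\mathrm{Exp}(1)$ and independent of everything else, so each living point has $\mathcal{G}$-death intensity $1$.
\end{itemize}
With this filtration, Steps~2 and~3 go through as you wrote them.
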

\begin{proof}
    Let \(f\) be bounded and \(\Lambda\)-measurable, \(\Lambda \Subset \mathbb{R}^d\).
    We denote by $\birthsinL{t}$, resp.\ $\deathsinL{t}$, the set of times $s\leq t$ in which a birth, resp.\ death, occurs in $\L$. Furthermore, we split up $\deathsinL{t}$ in the deaths \(\deathsinLfromomega{t}\) coming from the initial condition \(\underline{\omega}\) and the deaths \(\deathsinLfrombirths{t}\) of particles which are not coming from the initial condition but are born in \([0, t)\). We decompose into the individual jump contributions
    \begin{align*}
        &f(X_t\Ssup{\omega}) - f(\omega)
        = \sum_{s\in \birthsinL{t}\cup \deathsinLfrombirths{t} \cup \deathsinLfromomega{t}} \big(f(X_s\Ssup{\omega}) - f(X_{s-}\Ssup{\omega})\big) \\
        &= \sum_{s\in\birthsinL{t}} \big(f(X_s\Ssup{\omega}) - f(X_{s-}\Ssup{\omega})\big) + \sum_{s\in\deathsinLfrombirths{t}} \big(f(X_s\Ssup{\omega}) - f(X_{s-}\Ssup{\omega})\big) + \sum_{s\in\deathsinLfromomega{t}} \big(f(X_s\Ssup{\omega}) - f(X_{s-}\Ssup{\omega})\big) \\
        &=: C_{B}(t) + C_{D^2}(t) + C_{D^\omega}(t).
    \end{align*}

    Denote by \(\mathscr{X}\) the solution map constructing the birth-and-death process \(X\Ssup{\omega}\) from the driving Poisson noise \(\Ncal\) as \(X_t\Ssup{\omega} = \mathscr{X}\Ssup{\omega}_t(\mathcal{N})\) and similarly \(Y_t\Ssup{\omega} = \mathscr{Y}\Ssup{\omega}_t(\mathcal{N})\) with \(\mathscr{X}\Ssup{\omega}_t(\mathcal{N}) =  \omega_t + \mathscr{Y}\Ssup{\omega}_t(\mathcal{N})\). If we want to avoid to think about the intricacies of this infinite-volume solution map, we can also consider the solution map \(\mathscr{X}^{(\omega, \widehat{\Lambda})}_{t}(\mathcal{N})\) of the analogous birth-and-death process just living in the bounded but very large volume \(\widehat{\Lambda}\).
    This solution map is easily constructed manually. Then, all calculations here can be done in the same way and the identity \Cref{eq:generator_integral_for_bounded_local} can be recovered by approximation with increasing volumes \(\widehat{\Lambda}\), using the finite-speed-of-propagation result \Cref{le_comparison_finite_infinite}.
    The solution map will be used in the following computations to apply the Mecke formula several times. We will also sometimes use the short-hand \(D_x^{-}f = f(\cdot - \delta_x) - f\).

\medskip
 \noindent
    \textit{1. Birth term \(C_B(t)\):}
    The birth term is given by
    \begin{align*}
        &\sum_{s\in\birthsinL{t}} \big(f(X_s\Ssup{\omega}) - f(X_{s-}\Ssup{\omega})\big) = \int_{\Lambda \times [0, \infty)^2\times [0, t]} \Ncal(\d x, \d u, \d r, \d s)\, \big(f(X_{s-}\Ssup{\omega} + \delta_x) - f(X_{s-}\Ssup{\omega})\big) \1_{[0, b(x, X_{s-}\Ssup{\omega})]}(u),
    \end{align*} 
    so that, by applying the Mecke formula,
    \begin{align*}
&\mathbb{E}\Big[\sum_{s\in\birthsinL{t}} \big(f(X_s\Ssup{\omega}) - f(X_{s-}\Ssup{\omega})\big)  \Big]\\
        &= \mathbb{E}\Big[\int_{\Lambda \times [0, \infty)^2\times [0, t] } \Ncal(\d x, \d u, \d r, \d s)\, \big(f(\mathscr{X}_{s-}^{(\omega)}(\Ncal) + \delta_x) - f(\mathscr{X}_{s-}^{(\omega)}(\Ncal))\big) \1_{[0, b(x, \mathscr{X}_{s-}^{(\omega)}(\Ncal))]}(u) \Big] \\
        &= \int_{0}^{t} \d s \int_{\Lambda} \d x\, \int_{0}^{\infty} \d r\ \e^{-r} \int_0^\infty \d u \, \mathbb{E}\Big[\big(\mathscr{X}_{s-}^{(\omega)}(\Ncal + \delta_{(x, s, r, u)}) + \delta_x) - f(\mathscr{X}_{s-}^{(\omega)}(\Ncal + \delta_{(x, s, r, u)}))\big)\\
        &\phantom{= \int_{0}^{t} \d s \int_{\Lambda} \d x\, \int_{0}^{\infty} \d r\ \e^{-r} \int_0^\infty \d u \, \mathbb{E}\big[\big(\mathscr{X}_{s-}^{(\omega)}(\Ncal + \delta_{(x, s, r, u)})\ }
        \1_{[0, b(x, \mathscr{X}_{s-}^{(\omega)}(\Ncal + \delta_{(x, s, r, u)}))]}(u)\Big] \\
        &= \int_{0}^{t} \d s\,  \mathbb{E}\Big[\int_{\Lambda}\d x\, b(x, X_{s-}\Ssup{\omega}) \big(f(X_{s-}\Ssup{\omega} + \delta_x) - f(X_{s-}\Ssup{\omega})\big) \Big]\\
        &= \int_{0}^{t} \d s\,  \mathbb{E}\Big[\int_{\Lambda}\d x\, b(x, X_s\Ssup{\omega}) \big(f(X_s\Ssup{\omega} + \delta_x) - f(X_s\Ssup{\omega})\big)\Big].
    \end{align*}

\medskip
 \noindent
    \textit{2. Second-order death term \(C_{D^2}(t)\):}
    We have, again by the Mecke formula,
    \begin{align*}
        C_{D^2}(t)
        &:= \mathbb{E}\Big[\sum_{s\in\deathsinLfrombirths{t}} \big(f(X_s\Ssup{\omega}) - f(X_{s-}\Ssup{\omega})\big)  \Big]\\
        &= \mathbb{E}\Big[ \int_{\Lambda \times [0, \infty)^2\times [0, t]} \Ncal(\d x, \d u, \d r, \d s)\, \big(f(X_{(s+r)-}\Ssup{\omega} - \delta_x) - f(X_{(s+r)-}\Ssup{\omega})\big) \\
        &\phantom{\mathbb{E}\Big[ \int_{\Lambda \times [0, \infty)^2\times [0, t]} \Ncal(\d x, \d u, \d r, \d s)\, \big(f(X_{(s+r)-}\Ssup{\omega} - \delta_x) -}
        \1_{[0, b(x, X_{s-}\Ssup{\omega})]}(u) \1_{s+r \leq t}\Big] \\
        &= \int_{0}^{t} \d s \int_{\Lambda} \d x\, \int_{0}^{\infty} \d r\ \e^{-r} \int_0^\infty \d u \, \mathbb{E}\Big[D_x^{-}f(\mathscr{X}_{(s+r)-}^{(\omega)}(\Ncal + \delta_{(x, s, r, u)}))\\
        &\phantom{= \int_{0}^{t} \d s \int_{\Lambda} \d x\, \int_{0}^{\infty} \d r\ \e^{-r} \int_0^\infty \d u \, \mathbb{E}\big[\big(\mathscr{X}_{s-}^{(\omega)}\quad }
        \1_{[0, b(x, \mathscr{X}_{(s+r)-}^{(\omega)}(\Ncal + \delta_{(x, s, r, u)})))]}(u)\Big] \1_{r+s \leq t}.
    \end{align*} 
    We compute, by considering the difference quotients, that
    \begin{align*}
        \rightDerivative{t} R_{D^2}(t)
        &= \int_{0}^{t} \d s \, \e^{-(t-s)} \int_{\Lambda} \d x\, \int_0^\infty \d u \, \mathbb{E}\big[D_x^{-}f(\mathscr{X}_{t-}\Ssup{\omega}(\mathcal{N} + \delta_{(x, s, t-s, u)}) ) \1_{[0, b(x, \mathscr{X}_{s-}\Ssup{\omega}(\mathcal{N} + \delta_{(x, s, t-s, u)})]}(u)  \big] \\
        &= \int_{0}^{t} \d s \, \e^{-(t-s)} \int_{\Lambda} \d x\, \int_0^\infty \d u \, \mathbb{E}\big[D_x^{-}f(\mathscr{X}_{t-}\Ssup{\omega}(\mathcal{N} + \delta_{(x, s, t-s, u)}) ) \1_{[0, b(x, \mathscr{X}_{s-}\Ssup{\omega}(\mathcal{N})]}(u)  \big] \\ 
        &= \int_{0}^{t} \d s \, \int_{\Lambda} \d x\, \int_{0}^{\infty} \d r\ \e^{-r} \int_0^\infty \d u \, \mathbb{E}\big[D_x^{-}f(\mathscr{X}_{t-}\Ssup{\omega}(\mathcal{N} + \delta_{(x, s, t-s, u)}) ) \1_{[0, b(x, \mathscr{X}_{s-}\Ssup{\omega}(\mathcal{N})]}(u)  \big] \1_{r+s > t} \\ 
        &= \int_{0}^{t} \d s \, \int_{\Lambda} \d x\, \int_{0}^{\infty} \d r\ \e^{-r} \int_0^\infty \d u \, \mathbb{E}\big[D_x^{-}f(\mathscr{X}_{t}\Ssup{\omega}(\mathcal{N} + \delta_{(x, s, r, u)}) ) \1_{[0, b(x, \mathscr{X}_{s-}\Ssup{\omega}(\mathcal{N})]}(u)  \big] \1_{r+s > t}.
    \end{align*}
    On the other hand
    \begin{align*}
        &\mathbb{E}\Big[\sum_{x \in Y_t^\omega} \big(f(X_{t}\Ssup{\omega} - \delta_x) - f(X_{t}\Ssup{\omega})\big)  \Big] \\
        &= \mathbb{E}\Big[ \int_{\Lambda \times [0, \infty)^2\times [0, t]} \Ncal(\d x, \d u, \d r, \d s)\, \big(f(X_{t}\Ssup{\omega} - \delta_x) - f(X_{t}\Ssup{\omega})\big) \1_{[0, b(x, X_{s-}\Ssup{\omega})]}(u) \1_{s+r > t} \Big] \\
        &= \int_{0}^{t} \d s \int_{\Lambda} \d x\, \int_{0}^{\infty} \d r\ \e^{-r} \int_0^\infty \d u \, \mathbb{E}\big[D_x^{-}f(\mathscr{X}_t\Ssup{\omega}(\mathcal{N} + \delta_{(x, s, r, u)})) \1_{[0, b(x, \mathscr{X}_{s-}\Ssup{\omega}(\mathcal{N}))]}(u) \big] \1_{r+s > t}
    \end{align*} too.
    We conclude
    \begin{align*}
        C_{D^2}(t)
        = \int_0^t \d s \, \mathbb{E}\Big[\sum_{x \in Y_s^\omega} \big(f(X_{s}\Ssup{\omega} - \delta_x) - f(X_{s}\Ssup{\omega})\big)  \Big].
    \end{align*}

\medskip
 \noindent
    \textit{3. First-order death term \(C_{D^\omega}(t)\):}
    For the first-order death term, writing \(\tau_x := \inf\{s \geq 0 \colon x \not\in X_s\Ssup{\omega}\} \) for \(x \in \omega_\Lambda\) and using that the lifespans of the initial condition are iid unit exponentials, we have that
    \begin{align*}
        &\rightDerivative{t} \mathbb{E}[(D_x^{-}f)(X\Ssup{\omega}_{\tau_x -}) \1_{\tau_x \in [0, t]}]
        = \mathbb{E}[(D_x^{-}f)(X\Ssup{\omega}_{t}) \1_{\tau_x > t}].
    \end{align*}  
    It follows that
    \begin{align*}
        C_{D^\omega}(t)
        &= \sum_{x \in \omega_\Lambda } \mathbb{E}[(D_x^{-}f)(X\Ssup{\omega}_{\tau_x -}) \1_{\tau_x \in [0, t]}] 
        = \sum_{x \in \omega_\Lambda } \int_0^t \d s \, \mathbb{E}[(D_x^{-}f)(X\Ssup{\omega}_{s}) \1_{\tau_x > s}] \\
        &= \int_0^t \d s \, \mathbb{E}\Big[\sum_{x \in (\omega_{s})_\Lambda} \big(f(X_{s}\Ssup{\omega} - \delta_x) - f(X_{s}\Ssup{\omega})\big)  \Big].
    \end{align*}

\medskip
 \noindent
    \textit{4. Summing up the individual contributions:} 
    Adding everything up, we have
    \begin{align*}
        &(T_t f)(\omega) - f(\omega)
        = C_B(t) + C_{D^2}(t) + C_{D^\omega}(t) \\
        &= \int_0^t \d s \, \bigg\{\mathbb{E}\Big[\int_{\Lambda}\d x\, b(x, X_s\Ssup{\omega}) \big(f(X_s\Ssup{\omega} + \delta_x) - f(X_s\Ssup{\omega})\big)\Big]
        + \mathbb{E}\Big[\sum_{x \in Y_s^\omega \cup (\omega_{s})_\Lambda} \big(f(X_{s}\Ssup{\omega} - \delta_x) - f(X_{s}\Ssup{\omega})\big)  \Big] \bigg\} \\
        &= \int_0^t \d s \, (T_s \mathscr{L}f)(\omega),
    \end{align*}
    as desired.
\end{proof}

In the following, we use the preceding lemma to show that, for regular measures \(\mu\), also \(\log (\d \mu_\Lambda/\d\nu_\Lambda)\) lies in the domain of the generator, with a \(\mu\)-integrable error term.

\begin{proof}[Proof of \Cref{lem:log_generator}]
    Write \(f = \log g_\Lambda\). From the proof of \Cref{lem:domain} or by approximation of \(f\) with bounded functions, we see that
    \begin{align*}
        T_t f = f + \int_{0}^{t}\d s \, T_s(\LL f).
    \end{align*} This approximation argument is possible by dominated convergence thanks to the assumed regularity of \(\mu\), hence bounds for \(f\) and \(\mathscr{L}f\), and the moments of Poisson point process \(\pi_\Lambda\) in \(\Lambda\).
    Then, pointwise
    \begin{align*}
        \frac{1}{t}(T_t f - f)\xrightarrow[t \downarrow 0]{} 0
    \end{align*}
  by continuity of \(s \mapsto T_s \mathscr{L} f\), which in turn is again clear from the properties of \(f\) and the bounded birth rates.
    Now, since, for any convex increasing function \(\phi\),
    \begin{align*}
     &\mu\Big[\phi\Big( \Big\vert \frac{T_t f - f}{t}  \Big\vert\Big)\Big]
        = \mu\Big[\phi\Big( \Big\vert \frac{1}{t} \int_{0}^{t} (T_s \mathscr{L} f)  \, \d s  \Big\vert\Big)\Big]\leq \mu\Big[\frac{1}{t} \int_{0}^{t} (T_s \phi(\vert \mathscr{L} f \vert ))  \, \d s  \Big] 
       = \frac{1}{t} \int_{0}^{t}\mu\big[ T_s \phi(\vert \mathscr{L} f\vert) \big] \d s,
    \end{align*} 
    we see that \((t^{-1}(T_t f - f))_{t \in (0, t_0]}\) is uniformly \(\mu\)-integrable for \(t_0 \in (0, \infty)\) as desired.
\end{proof}

\begin{remark}\label{rmk:pair}
    We see from the proof that regularity is not really necessary, much weaker bounds work too.
    For example, the same proof works when \(\mu\) is a Gibbs measure for a Hamiltonian \(H^\varphi(\omega) = \sum_{\{x,y\}\subset\omega}\varphi(\abs{x-y})\) coming from a positive, bounded potential \(\varphi\) with finite range $\range$.
    Then,
    \begin{align*}
        g_\Lambda(\omega_\Lambda) 
        = \int \mu(\d\zeta) \frac{\e^{-H^{\varphi}_{\Lambda, \zeta}(\omega_\Lambda)}}{Z^{\varphi}_{\Lambda, \zeta}}
    \end{align*} with \(\e^{-\abs{\Lambda}} \leq Z^{\varphi}_{\Lambda, \zeta} \leq 1\), and hence \(g_\Lambda(\omega_\Lambda) \leq \e^{\abs{\Lambda}}\), as well as
    \begin{align*}
        \log g_\Lambda(\omega_\Lambda) 
        &\geq -\int \mu(\d\zeta) H^{\varphi}_{\Lambda, \zeta}(\omega_\Lambda) - \int \mu(\d\zeta) \log Z^{\varphi}_{\Lambda, \zeta} \\
        &\geq -\int \mu(\d\zeta) \Big\{\Vert\varphi\Vert_\infty \abs{\omega_\Lambda}^2 + \Vert\varphi\Vert_\infty \abs{\omega_\Lambda} \abs{\zeta_{\Lambda\oplus B(0,\range)}} \Big\}\\
        &= - \Vert\varphi\Vert_\infty \abs{\omega_\Lambda}^2 - \Vert\varphi\Vert_\infty  \abs{\Lambda\oplus B(0,\range)} \mu[N_{[0,1]^d}] \abs{\omega_\Lambda},
    \end{align*} 
    i.e., there is a \(c > 0\) such that
    \begin{align*}
        g_\Lambda(\omega_\Lambda)
        \geq \e^{- c (\abs{\omega_\Lambda}^2 + \abs{\Lambda}\abs{\omega_\Lambda})}.
    \end{align*}
\end{remark}

Finally, let us observe the following simple necessary condition for \(\mathscr{L} \log(\d \mu_\Lambda/\d\nu_\Lambda)\) to be well-defined.

\begin{lemma}\label{section:well_definedness}
    Let \(\mathscr{L} \log (\d \mu_\Lambda/\d\nu_\Lambda)\) be \(\mu\)-a.e.\ well-defined.
    Then \(\mu_\Lambda\) and \(\pi_\Lambda\) are equivalent.
\end{lemma}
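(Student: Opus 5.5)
The plan is to read ``well-defined'' as follows: $\log g_\Lambda$ with $g_\Lambda = \d\mu_\Lambda/\d\nu_\Lambda$ must be finite at the configurations where the generator evaluates it, and the birth integral and death sum in $\mathscr{L}\log g_\Lambda$ must be finite for $\mu$-a.e.\ $\eta$. I will prove the two absolute continuities separately.

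\emph{Step 1: $\mu_\Lambda \ll \pi_\Lambda$.} Well-definedness of $\log g_\Lambda$ presupposes that the density $g_\Lambda$ exists, so $\mu_\Lambda \ll \nu_\Lambda$. Since $b$ is bounded above and below, the DLR equations give that each $\nu^{(\omega)}_\Lambda$ has a density with respect to $\pi_\Lambda$ which is bounded above and below by $c^{\pm N_\Lambda}$ for some constant $c$. Integrating over the boundary condition, $\nu_\Lambda$ is equivalent to $\pi_\Lambda$. Hence $\mu_\Lambda \ll \pi_\Lambda$.

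\emph{Step 2: $\pi_\Lambda \ll \mu_\Lambda$.} By Step 1 and the equivalence $\nu_\Lambda \sim \pi_\Lambda$, it suffices to show that $Z := \{\xi \in \Omega_\Lambda \colon g_\Lambda(\xi) = 0\}$ has $\pi_\Lambda(Z) = 0$. For $\mu$-a.e.\ $\eta$, well-definedness forces $g_\Lambda(\eta_\Lambda + \delta_x) > 0$ for Lebesgue-a.e.\ $x \in \Lambda$: otherwise the birth term
\[
\int_\Lambda \d x \, b(x,\eta)\big(\log g_\Lambda(\eta_\Lambda+\delta_x) - \log g_\Lambda(\eta_\Lambda)\big)
\]
would equal $-\infty$ on a set of positive measure. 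The same reasoning applied to the death term gives $g_\Lambda(\eta_\Lambda - \delta_x) > 0$ for all $x \in \eta_\Lambda$. By induction on $n$, I then show that the $n$-point layer $Z_n := Z \cap \{N_\Lambda = n\}$ is $\pi_\Lambda$-null.

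\emph{Base case $n = 0$.} If $g_\Lambda(\emptyset) = 0$ and $\mu(N_\Lambda > 0) > 0$, then deleting the points of $\eta_\Lambda$ one at a time (allowed by the death condition) yields a chain of positive values of $g_\Lambda$ ending at $\emptyset$, a contradiction. If instead $\mu_\Lambda = \delta_\emptyset$, then $g_\Lambda(\emptyset) > 0$ and the birth condition at $\eta = \emptyset$ gives $g_\Lambda(\delta_x) > 0$ for a.e.\ $x$. In that case $\mu_\Lambda(N_\Lambda = 1) = \int_\Lambda g_\Lambda(\delta_x)\,\nu_\Lambda(\cdot)\,\d x > 0$, which contradicts $\mu_\Lambda = \delta_\emptyset$.

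\emph{Inductive step.} Suppose the layers $Z_0,\dots,Z_{n}$ are null. Then $\mu_\Lambda$ charges the $n$-point layer up to a null set, because $g_\Lambda > 0$ $\pi_\Lambda$-a.e.\ there. Applying the birth condition to $\pi_\Lambda$-a.e.\ $\xi$ with $N_\Lambda(\xi) = n$, we get $g_\Lambda(\xi + \delta_x) > 0$ for a.e.\ $x$. Since
\[
\pi_\Lambda\big|_{\{N_\Lambda = n+1\}}(\d\zeta) \;\propto\; \int \pi_\Lambda\big|_{\{N_\Lambda = n\}}(\d\xi) \int_\Lambda \d x \, \1\{\xi + \delta_x \in \d\zeta\},
\]
Fubini's theorem gives $\pi_\Lambda(Z_{n+1}) = 0$.

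The main obstacle is a technical point in the inductive step. The birth condition holds only for $\mu$-a.e.\ $\eta$, whereas the step uses it for $\pi_\Lambda$-a.e.\ $n$-point $\xi$. Passing from one to the other requires knowing that $\mu_\Lambda$ and $\pi_\Lambda$ are equivalent on the $n$-point layer. This is exactly the inductive hypothesis combined with Step 1, so the induction has to be set up carefully to carry this equivalence along with the null-set statement at each level.
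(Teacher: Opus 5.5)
Your overall structure matches the paper's proof. You reduce to \(\pi_\Lambda(\{g_\Lambda = 0\}) = 0\), obtain \(g_\Lambda(\emptyset) > 0\) from the death condition, and then climb through the layers \(\{N_\Lambda = n\}\) using the birth condition and the Mecke/Fubini decomposition of the \((n+1)\)-point layer. Step~1 and the inductive step are fine. The transfer you worry about in the inductive step needs no induction at all: \(\mu_\Lambda\) and \(\pi_\Lambda\) are equivalent on \(\{g_\Lambda > 0\}\), so the birth condition holds for \(\pi_\Lambda\)-a.e.\ \(\xi\) with \(g_\Lambda(\xi) > 0\). This is exactly how the paper restates it, as the \((\d x\otimes\pi)\)-nullity of \(\{(x,\xi)\colon g_\Lambda(\xi+\delta_x) = 0 < g_\Lambda(\xi)\}\).

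The gap is in the base case. The death condition is known only on a set \(G\) of full \(\mu\)-measure. After you delete one point from a typical \(\eta\), you know \(g_\Lambda(\eta_\Lambda - \delta_x) > 0\), but you do not know that \(\eta_\Lambda - \delta_x \in G\). So the second and later deletions in your chain are unjustified. This is the same typicality issue you flagged for the birth step, left open here.

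You can close it with the tool you already use. The set \(N := \{g_\Lambda > 0\}\setminus G\) is \(\pi_\Lambda\)-null, because \(\mu_\Lambda(G^c) = 0\). The Mecke formula gives \(\pi\big[\sum_{x\in\eta_\Lambda} \1_N(\eta_\Lambda-\delta_x)\big] = \abs{\Lambda}\,\pi_\Lambda(N) = 0\), and the multivariate version gives the same for iterated deletions. Since \(\mu_\Lambda \ll \pi_\Lambda\), for \(\mu\)-a.e.\ \(\eta\) every successive deletion avoids \(N\) and therefore lies in \(G\), so the chain goes through.

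The paper takes a different route here. It uses Mecke to rewrite the death condition as the \((\d x\otimes\pi)\)-nullity of \(\{(x,\xi)\colon g_\Lambda(\xi+\delta_x) > 0 = g_\Lambda(\xi)\}\). If \(g_\Lambda(\emptyset) = 0\), this propagates \(g_\Lambda = 0\) upward through every layer \(\pi_\Lambda\)-a.e., contradicting \(\mu_\Lambda(\Omega_\Lambda) = 1\). Either fix works.
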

\begin{proof}
    Note that we can choose \(g_\Lambda = \d \mu_\Lambda/\d \nu_\Lambda\) and \(\widetilde{g}_\Lambda = \d \mu_\Lambda/\d \pi_\Lambda\) such that \(\widetilde{g}_\Lambda(\eta) = 0\) iff \(g_\Lambda(\eta) = 0\) because \(\nu_\Lambda\) and \(\pi_\Lambda\) are equivalent.
    Hence, for the \(\mu_\Lambda\)-a.e.\ well-definedness of
    \begin{align*}
        (\mathscr{L} \log g_\Lambda)(\eta),
    \end{align*} we need at least that
    \begin{align*}
        \{(x, \eta) \colon  \widetilde{g}_\Lambda(\eta +\delta_{x}) = 0 < \widetilde{g}_\Lambda(\eta)\}
    \end{align*} is a \((\d x \otimes \mu)\)-zero set and also that
    \begin{align*}
        \big\{\eta \colon \widetilde{g}_\Lambda(\eta) > 0, \text{ but } \widetilde{g}_\Lambda(\eta -\delta_{x}) = 0 \text{ for some } x \in \eta \big\}
    \end{align*} 
    is a \(\mu\)-zero set.
    The former is equivalent to the condition that
    \begin{align}\label{eq_null_set_1}
        \{(x, \eta) \colon \widetilde{g}_\Lambda(\eta +\delta_{x}) = 0 < \widetilde{g}_\Lambda(\eta)\}
    \end{align} 
    is a \((\d x \otimes \pi)\)-zero set, and the latter is indeed equivalent to the fact that
    \begin{align}\label{eq_null_set_2}
        \{(x, \eta) \colon \widetilde{g}_\Lambda(\eta +\delta_{x}) > 0 = \widetilde{g}_\Lambda(\eta)\}
    \end{align} 
    is a \((\d x \otimes \pi)\)-zero set.
    
    Let us now see that in fact \(\{\widetilde{g}_\Lambda = 0\}\) is a \(\pi_\Lambda\)-zero set then. By the fact \Cref{eq_null_set_2} it holds that \(\widetilde{g}_\Lambda(\emptyset) > 0\). 
    We therefore also have
    \begin{align*}
        \pi_\Lambda\big(\{\widetilde{g}_\Lambda = 0\} \cap \{N_\Lambda = 1\} \big)&
        \leq \pi\Big[\sum_{x \in \eta_\Lambda} \1_{\{\widetilde{g}_\Lambda = 0\}}(\eta_\Lambda) \1_{N_\Lambda = 0}(\eta_\Lambda-\delta_{x}) \Big] \\
        &= \int_{\Lambda} \d x \int \pi_\Lambda(\d \eta_\Lambda) \, \1_{\{\widetilde{g}_\Lambda = 0\}}(\eta_\Lambda +\delta_{x}) 1_{{N_\Lambda = 0}}(\eta) \\
        &= \int_{\Lambda} \d x \int \pi_\Lambda(\d \eta_\Lambda) \, \1_{\{\widetilde{g}_\Lambda = 0\}}(\eta_\Lambda +\delta_{x}) 1_{\{\emptyset\}}(\eta_\Lambda) \1_{\{\widetilde{g}_\Lambda > 0\}}(\eta_\Lambda) \\
        &\leq \int_{\Lambda} \d x \int \pi_\Lambda(\d \eta_\Lambda) \, \1_{\{(y, \zeta) \,\vert\, \widetilde{g}_\Lambda(\zeta +\delta_{y}) = 0 < \widetilde{g}_\Lambda(\zeta)\}}(x, \eta_\Lambda) = 0.
    \end{align*}
    By induction over \(k \in \mathbb{N}_0\) and using the same argument, we get that \(\pi_\Lambda\big(\{\widetilde{g}_\Lambda = 0\} \cap \{N_\Lambda = k\} \big) = 0\) for all \(k \in \mathbb{N}_0\), i.e., \(\pi_\Lambda\big(\{\widetilde{g}_\Lambda = 0\}\big) = 0\).
    Hence, there exists a version of \(\widetilde{g}_\Lambda = \d \mu_\Lambda/\d \pi_\Lambda\) that is strictly positive everywhere.
\end{proof}

\section{Density properties of the birth-and-death process}\label{section:density_properties_of_bd_process}

In this section, we will show that, for some fixed volume \(\Lambda \subseteq \mathbb{R}^d\) and for a fixed initial configuration \(\underline{\omega}\) with deterministic lifespans, the restriction \((Y_t\Ssup{\underline{\omega}})_\Lambda\) to \(\Lambda\) of the infinite-volume birth-and-death-process \(Y_t\Ssup{\underline{\omega}}\) minus the initial configuration \(\underline{\omega}\) possesses densities w.r.t.\ the Poisson point process \(\pi_\Lambda\) with good regularity in time \(t\).
We will construct these local densities from correlation functions of good regularity discussed in \Cref{section:nice_correlation_functions}. For the existence of these regular correlation functions, we need as an input some continuity properties of the distribution of \((Y_t\Ssup{\underline{\omega}})_\Lambda\) studied in \Cref{section:continuity_properties_birth_death_process}.
Our main result of this section is
\begin{proposition}\label{proposition:good_densities_of_the_birth_and_death_process}
    There exists a measurable map \((t, \underline{\omega}, \zeta_\Lambda) \mapsto \psi_{\Lambda, t}(\zeta_\Lambda \,\vert\, \underline{\omega})\) such that \(\psi_{\Lambda, t}(\cdot \,\vert\, \underline{\omega})\) is a version of the density of \((Y_t\Ssup{\underline{\omega}})_\Lambda\) w.r.t.\ the Poisson point process \(\pi_\Lambda\) in \(\Lambda\), and \(t \mapsto \psi_{\Lambda, t}(\zeta_\Lambda \,\vert\, \underline{\omega})\) is right-differentiable at every \(t \geq 0\).
    Furthermore,
    \begin{enumerate}
        \item The right-derivative \(\rightDerivative{t} \psi_{\Lambda, t}(\zeta_\Lambda \,\vert\, \underline{\omega})\) is continuous at every time \(t \not\in \{r \colon \exists (x, r) \in \underline{\omega} \text{ with } x \in \Lambda \oplus B(0, \range)\}\) which is not one of the (finitely many) deterministic lifespans of points of \(\underline{\omega}\) in \(\Lambda \oplus B(0, \range)\),
        
        \item There exist \(t_0 > 0\) and \(z_{\max} > 0\) such that, for all \(t \leq t_0\), we have
        \begin{align}
            \abs{\psi_{\Lambda, t}(\zeta_\Lambda \,\vert\, \cdot)}
            \lesssim z_{\max}^{\abs{\zeta_\Lambda}} (1-\e^{-t})^{\abs{\zeta_\Lambda}}\quad\text{and}\quad 
            \abs{\rightDerivative{t} \psi_{\Lambda, t}(\zeta_\Lambda \,\vert\, \cdot)}
            \lesssim \abs{\zeta_\Lambda} z_{\max}^{\abs{\zeta_\Lambda}-1} (1-\e^{-t})^{\abs{\zeta_\Lambda}-1},
        \end{align}
        if \(\abs{\zeta_\Lambda} \geq 1\), and
        \begin{align}
             \psi_{\Lambda, t}(\emptyset \,\vert\, \cdot) \sim 1\quad\text{and}\quad \abs{\rightDerivative{t} \psi_{\Lambda, t}(\emptyset \,\vert\, \cdot)}
            \lesssim 1.
        \end{align}
    \end{enumerate}
\end{proposition}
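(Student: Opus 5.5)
The plan is to build $\psi_{\Lambda,t}$ from correlation (Janossy-type) functions of the point process $(Y_t\Ssup{\underline\omega})_\Lambda$. I will not attempt to differentiate Radon--Nikodym derivatives directly.

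\textbf{Step 1: correlation functions.} For $\zeta_\Lambda=\{x_1,\dots,x_n\}$, define the $n$-th correlation function $\rho_{t}(x_1,\dots,x_n\,\vert\,\underline\omega)$ of $Y_t\Ssup{\underline\omega}$ restricted to $\Lambda$. Iterating the Mecke formula as in the proof of \Cref{lem:domain} gives a multivariate representation. With $\mathcal N$ the driving noise,
\begin{align*}
\rho_t(x_1,\dots,x_n\,\vert\,\underline\omega)=\int_{[0,t]^n}\d s_1\cdots\d s_n\ \prod_{i}\e^{-(t-s_i)}\ \mathbb{E}\Big[\prod_i b\big(x_i,\mathscr{X}_{s_i-}\Ssup{\underline\omega}(\mathcal N+\textstyle\sum_{j}\delta_{(x_j,s_j,r_j,u_j)})\big)\Big].
\end{align*}
Here the lifespans $r_j$ are integrated out with the constraint $r_j>t-s_j$, which produces the factors $\e^{-(t-s_i)}$, and the $u$-integrals produce the rates $b$. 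Boundedness of $b$ immediately gives
\[
\rho_t\le \big(\Vert b\Vert_\infty(1-\e^{-t})\big)^n.
\]

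\textbf{Step 2: continuity of the integrand.} I need continuity of $s\mapsto$ (law of $(Y_s\Ssup{\underline\omega})_{\Lambda\oplus B(0,\range)}$ with finitely many added points). This is the content the paper postpones to \Cref{section:continuity_properties_birth_death_process}, and it would be proven by the following coupling argument. On a short time interval the configuration in the bounded window $\Lambda\oplus B(0,\range)$ changes with probability $O(h)$. By finite speed of propagation (\Cref{le_comparison_finite_infinite}), outside influences are controlled. The deterministic deaths of points of $\underline\omega$ in the window occur at fixed times, and these are exactly where jumps can occur.

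\textbf{Step 3: define the density.} Set
\[
\psi_{\Lambda,t}(\zeta_\Lambda\,\vert\,\underline\omega)=\e^{|\Lambda|}\sum_{m\ge0}\frac{(-1)^m}{m!}\int_{\Lambda^m}\rho_t(\zeta_\Lambda,y_1,\dots,y_m)\,\d y.
\]
This is the standard inversion from correlation functions to Janossy densities. The series converges absolutely for $t\le t_0$ small, since the $m$-th term is bounded by $(\Vert b\Vert_\infty(1-\e^{-t})|\Lambda|)^m/m!$. In fact it converges for all $t$ by the same bound. Measurability in $(t,\underline\omega,\zeta)$ follows from the explicit formula. The bound $\psi\lesssim z_{\max}^{|\zeta|}(1-\e^{-t})^{|\zeta|}$ is inherited from the bound on $\rho_t$. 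The estimate $\psi(\emptyset)\sim1$ follows by comparing with the probability that the window is empty, which is at least $\e^{-\Vert b\Vert_\infty|\Lambda| t}$.

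\textbf{Step 4: right-derivative.} Differentiate $\rho_t$ in $t$ using the Leibniz rule. The boundary terms at $s_i=t$ give $n$ terms, each equal to $b(x_i,\cdot)$ times a correlation of order $n-1$; this explains the factor $|\zeta_\Lambda|(1-\e^{-t})^{|\zeta|-1}$. The factors $\e^{-(t-s_i)}$ give $-n\rho_t$. The $t$-dependence inside the expectation appears only through evaluation at time $s_i\le t$, so it contributes nothing further. Right-differentiability of the boundary terms uses right-continuity of $s\mapsto$ the law at time $s-$ from Step 2. Full continuity holds away from the deterministic death times of $\underline\omega$ in $\Lambda\oplus B(0,\range)$, which yields item (1). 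Termwise differentiation of the series is justified by the uniform bounds and dominated convergence.

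\textbf{Main obstacle.} I expect the main difficulty to be Step 2: continuity in time, uniformly in the added points $(x_j,s_j)$, of the expectations of products of rates along the infinite-volume graphical construction, while respecting the discontinuities induced by deterministic lifespans. I would first prove it for the finite-volume solution map $\mathscr{X}^{(\underline\omega,\widehat\Lambda)}$, where it is elementary, and then pass to $\widehat\Lambda\uparrow\R^d$ via \Cref{le_comparison_finite_infinite}.
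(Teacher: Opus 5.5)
Your route is correct, but it differs genuinely from the paper's. The paper defines $\rho_n^{(t,\underline{\omega})}(x_1,\dots,x_n)$ as the limit of $\abs{Q_\ell}^{-n}\mathbb{E}[\prod_i N_{Q_\ell(x_i)}(Y_t\Ssup{\underline{\omega}})]$. To make this limit exist at \emph{every} point, it first proves a spatial-continuity statement (\Cref{lemma:birth_death_process_expected_number_of_points_continuity_in_box_centers}, via the coupling of \Cref{le_continuity_wrt_translation_of_initial_conditions}, using continuity of $b$ and finite speed of propagation). It then appeals to the Jessen--Marcinkiewicz--Zygmund differentiation theorem. Right-differentiability (\Cref{le_differentiability_corr_func}) comes from explicit $O(h^2)\abs{Q_\ell}^n$ remainder bounds, uniform in $\ell$, which allow swapping $h\downarrow 0$ and $\ell\downarrow 0$.

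You instead take the multivariate Mecke integrand itself as the pointwise version of $\rho_n$, which makes all of that spatial machinery unnecessary. You get $\rho_t=\e^{-nt}\int_{[0,t]^n}\prod_i \e^{s_i}\,E(s)\,\d s$ with an integrand $E$ independent of $t$. Right-differentiability is then a Leibniz rule whose boundary terms reproduce the paper's formula: a $b(x_i,X_t\Ssup{\underline\omega})$-weighted $(n-1)$-point correlation, minus $n\rho_n$. The bounds on $\rho$, its derivative, $j_n$ and $\psi$ (including $\psi(\emptyset\,\vert\,\cdot)\sim 1$) can be read off directly, uniformly in $\underline\omega$. Your argument is shorter and does not even use continuity of $b$. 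The paper's box-limit definition has the advantage of characterizing $\rho_n$ intrinsically through the law of $Y_t\Ssup{\underline\omega}$, and its coupling estimates are recycled for the joint continuity in lifespans used in \Cref{lemma:derivative_of_g_t}.

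Two points need tightening when you write this out.

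First, the $u_j$-integrals cannot be done simultaneously, because acceptance of an earlier added atom changes $\mathscr{X}_{s_i-}$ for later $s_i$. Integrate them successively from the latest birth time to the earliest. On the acceptance events every earlier atom is present, so $E(s)=\mathbb{E}\big[\prod_i b\big(x_i,\mathscr{X}_{s_i-}\Ssup{\underline\omega}(\mathcal N+\sum_j\delta^{\mathrm{forced}}_{(x_j,s_j)})\big)\big]$, with the added points forced to be born and alive up to time $t$. This is also exactly why $E$ does not depend on the $r_j$ or on $t$.

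Second, Step 2 is not what \Cref{section:continuity_properties_birth_death_process} provides; that section concerns spatial shifts, not time. It is also not really a statement about continuity of laws. Since $E$ is a joint functional of the path at several times, what you need is a.s.\ convergence $(\mathscr{X}_{\sigma-})_W\to(\mathscr{X}_t)_W$ as $\sigma\downarrow t$ in a bounded window $W$. This is elementary: the restricted path has a.s.\ finitely many jumps on $[0,T]$, located at noise times or at the deterministic lifespans. So no finite-speed-of-propagation argument is needed. Dominated convergence with the bound $\Vert b\Vert_\infty^n$ then gives both the right-derivative and its continuity away from those lifespans.
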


The proof of \Cref{proposition:good_densities_of_the_birth_and_death_process} is given right at the end of \Cref{section:nice_correlation_functions}.
But first, we need to discuss continuity properties of the birth-and-death process in the following section.

\subsection{Continuity properties}\label{section:continuity_properties_birth_death_process}

Consider (small) disjoint bounded boxes \(Q_\ell(x_1), \dots, Q_\ell(x_n)\) of side-length \(\ell > 0\) with centers \(x_1, \dots, x_n\). The distribution of points of the birth-and-death-process \(Y_t\Ssup{\underline{\omega}}\) into \(Q_\ell(x_1), \dots, Q_\ell(x_n)\), does not change much if we individually translate the centers \(x_1, \dots, x_n\) to centers \(\widetilde{x}_1, \dots, \widetilde{x}_n\) by some small magnitude.
\begin{proposition}\label{lemma:birth_death_process_expected_number_of_points_continuity_in_box_centers}
    Fix some initial condition \(\underline{\omega}\).
    Consider \(0 < \ell \leq L \) and \(x_1, \dots, x_n, \widetilde{x}_1, \dots, \widetilde{x}_n \in \mathbb{R}^d\) with \(\vert x_i - \widetilde{x}_i \vert \leq L\) such that the boxes \(Q_\ell(x_1), \dots, Q_\ell(x_n)\) are disjoint and \(Q_\ell(\widetilde{x}_1), \dots, Q_\ell(\widetilde{x}_n)\) are disjoint too. Then, 
    \begin{align*}
        \left\vert  \mathbb{E}\Big[\prod_{i = 1}^{n} N_{Q_\ell(x_i)}(Y_t\Ssup{\underline{\omega}})\Big] - \mathbb{E}\Big[\prod_{i = 1}^{n }N_{Q_\ell(\widetilde{x}_i)}(Y_t\Ssup{\underline{\omega}})\Big] \right\vert
        \lesssim \epsilon_L \abs{Q_\ell}^n
    \end{align*} with \(\epsilon_L \to 0\) as $L \to 0$.
\end{proposition}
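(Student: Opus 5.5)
We would prove the estimate by writing the mixed moment as an integral over birth events of the driving Poisson measure $\Ncal$ and comparing the two integrands. A point of $Y_t\Ssup{\underline{\omega}}$ in $Q_\ell(x_i)$ is an atom $(y,u,r,s)$ of $\Ncal$ with $y\in Q_\ell(x_i)$, $s\le t$, $r>t-s$ and $u\le b(y,X\Ssup{\underline\omega}_{s-})$. Since the boxes are disjoint, the product $\prod_{i=1}^n N_{Q_\ell(x_i)}(Y_t\Ssup{\underline{\omega}})$ is a sum over $n$-tuples of distinct atoms, one in each box. The multivariate Mecke formula then gives
\begin{align*}
\mathbb{E}\Big[\prod_{i=1}^n N_{Q_\ell(x_i)}(Y_t\Ssup{\underline{\omega}})\Big]
= \int_{Q_\ell(x_1)\times\dots\times Q_\ell(x_n)} \d y_1\cdots \d y_n \, F_t(y_1,\dots,y_n),
\end{align*}
where
\begin{align*}
F_t(y_1,\dots,y_n) := \int_{[0,t]^n}\d s \int \e^{-r}\d r\, \1_{r_i>t-s_i} \int \d u \, \mathbb{P}\Big(u_i\le b\big(y_i,\mathscr{X}_{s_i-}(\Ncal+\textstyle\sum_j\delta_{(y_j,u_j,r_j,s_j)})\big)\ \forall i\Big).
\end{align*}
Because $b\le\Vert b\Vert_\infty$, the $u$-integrals run over a bounded range, so $F_t\le (t\Vert b\Vert_\infty)^n$. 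The $\abs{Q_\ell}^n$ factor will therefore come out of the volume of the boxes, and it remains to show that $F_t$ is uniformly continuous in $(y_1,\dots,y_n)$, with modulus $\epsilon_L$, on the relevant region. Given that, we substitute $y_i=x_i+z_i$ and $\tilde y_i=\widetilde{x}_i+z_i$ with $z_i\in Q_\ell(0)$. The difference of the two moments is then bounded by $\abs{Q_\ell}^n\sup\abs{F_t(x+z)-F_t(\widetilde{x}+z)}$, and the supremum is at most $\epsilon_L$ because $\abs{y_i-\tilde y_i}\le L$.

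For the continuity of $F_t$ we couple the two processes. Both are built from the same $\Ncal$ and the same $\underline\omega$; one has the extra atoms at locations $y_j$, the other at $\tilde y_j$, with identical marks $(u_j,r_j,s_j)$. The two integrands can differ in only two ways. The first is that one of the indicator events $u_i\le b(y_i,\cdot)$ versus $u_i\le b(\tilde y_i,\cdot)$ differs. The second is that the trajectories $\mathscr{X}$ themselves differ, because a moved inserted point changes birth rates within distance $\range$ of it, which can cascade. We will control both through two ingredients.
\begin{enumerate}
\item \emph{Continuity of $b$.} By assumption (i) and finite range, $b(\cdot,\cdot)$ is continuous. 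Restricted to configurations with bounded numbers of points in $B(y_i,2\range)$, it is uniformly continuous. Moving a point by at most $L$ changes $b$ at any location by some $o(1)$ as $L\to0$, outside an event of small probability on which many points accumulate near the $y_i$. Poisson tails bound the probability of that event.
\item \emph{Finite speed of propagation (\Cref{le_comparison_finite_infinite}).} Any discrepancy between the two coupled processes must propagate through a chain of accepted births. Each link of such a chain is a birth with $u$ lying in a window of width equal to the rate difference, which is $o(1)$. Such a chain is dominated by a subcritical branching structure over the time horizon $t$, so the probability that any discrepancy occurs in a neighbourhood of the boxes is $o(1)$ uniformly in the positions.
\end{enumerate}

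The main obstacle is making ingredient 1 uniform. Continuity of $b$ alone gives no rate in $L$, and discrepancies may sit arbitrarily close to the boundary of the acceptance region $\{u\le b\}$. We would handle this with an $\epsilon/\delta$ argument. First, fix $\delta>0$ and use the Poisson domination of $Y$, together with the local finiteness of $\underline\omega$ near $\Lambda$, to restrict to configurations with at most $K(\delta)$ points in the relevant neighbourhood. On that compact set of configurations $b$ is uniformly continuous, which yields an $L_0(\delta)$ such that the rate differences are at most $\delta$ for $L\le L_0(\delta)$. The expected number of disagreeing acceptance events is then at most $C(t,K)\,\delta$, and this gives $\epsilon_L\to0$. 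Finally, because $b$ is bounded above and below, every constant depends only on $t$, $n$, $\Vert b\Vert_\infty$ and $\underline\omega$ near $\Lambda$.
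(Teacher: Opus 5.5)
Your proof is correct and shares the paper's backbone. Both use the multivariate Mecke formula over atoms of $\Ncal$ inserted in the boxes, couple the two situations through the same driving noise, and integrate out the $u$-mark so that an acceptance discrepancy becomes a difference of birth rates. The organisation differs, though.

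The paper works as follows:
\begin{itemize}
\item it treats $n=2$ and moves one centre at a time by the triangle inequality;
\item it first passes to a finite box $\widehat\Lambda$ via \Cref{le_comparison_finite_infinite};
\item it splits the difference into a direct term, bounded by $\Vert b(\cdot,\cdot)-b(\cdot+z,\cdot)\Vert_\infty$, and an indirect term, coming from the shifted earlier atom influencing the later acceptance;
\item the indirect term vanishes for $s_2\le s_1$, and for $s_2>s_1$ it is handled by restarting the dynamics at time $s_1$ and invoking \Cref{le_continuity_wrt_translation_of_initial_conditions};
\item throughout, it takes for granted that the sup-norm moduli of continuity of $b$ in both arguments vanish as $L\to0$.
\end{itemize}
You instead shift all $n$ atoms in a single coupling and stay in infinite volume. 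You obtain the required uniformity from mere continuity, by compactness of configurations with at most $K$ points near the boxes together with Poisson tails. This is slightly more general. It only needs $b$ to be continuous on that compact set, including limits with coinciding points, rather than uniformly continuous over configurations with arbitrarily many points. The price is constants depending on $\underline{\omega}$ near the boxes, which is harmless for the use in \Cref{lemma:correlation_function_lebesgue_points}.

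One justification should be repaired: your cascade argument. It is not true that every link of a discrepancy chain has a $u$-window of width $o(1)$. As soon as a birth is accepted in one copy and rejected in the other, a point is present in one configuration and absent in the other. Rates within distance $\range$ of that point then differ by order one, so the induced branching structure is not subcritical.

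The conclusion nevertheless holds, by a simpler argument. As long as the two copies differ only by the shifted inserted points, their rates differ only within distance $\range+L$ of those points. Hence the \emph{first} genuine discrepancy requires one of two things: an inserted atom, or an atom of $\Ncal$ in a bounded space-time region, whose $u$-mark falls into a window of width at most $\delta$ on your good event. A union bound then bounds the probability of any discrepancy at all by $C(t,K,n)\,\delta$ plus the probability of the bad event. With this, \Cref{le_comparison_finite_infinite} is not needed, and your ``expected number of disagreeing acceptance events'' should be read as the probability of a first disagreement. This is the same mechanism as the stopping-time argument in the paper's proof of \Cref{le_continuity_wrt_translation_of_initial_conditions}.
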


As a consequence of the above continuity, we mainly have the following application in mind.
\begin{lemma}[All points are Lebesgue points]\label{lemma:correlation_function_lebesgue_points}
    For all \(n \in \mathbb{N}\) and distinct \(x_1, \dots, x_n \in \mathbb{R}^d\), the limit
    \begin{align*}
        \lim_{\ell \downarrow 0} \frac{\mathbb{E}\left[\prod_{i = 1}^{n }N_{Q_\ell(x_i)}(Y_t\Ssup{\underline{\omega}})\right]}{\vert Q_\ell \vert^n}
    \end{align*} exists.
\end{lemma}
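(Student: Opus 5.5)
We will deduce the existence of the limit from \Cref{lemma:birth_death_process_expected_number_of_points_continuity_in_box_centers} by a Cauchy-type argument, comparing averages at two scales. Fix distinct \(x_1,\dots,x_n\) and write
\[
F_\ell := \frac{\mathbb{E}\big[\prod_{i=1}^n N_{Q_\ell(x_i)}(Y_t\Ssup{\underline{\omega}})\big]}{|Q_\ell|^n}, \qquad m(A_1,\dots,A_n):=\mathbb{E}\Big[\prod_{i=1}^n N_{A_i}(Y_t\Ssup{\underline{\omega}})\Big].
\]
The map \((A_1,\dots,A_n)\mapsto m(A_1,\dots,A_n)\) is additive in each argument. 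For pairwise disjoint sets it is the \(n\)-th factorial moment measure evaluated on a product set. Let \(\ell_0\) be so small that the boxes \(Q_{\ell_0}(x_i)\) are pairwise disjoint, and take \(\ell'<\ell\le\ell_0\).

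\textbf{Step 1 (one box side is a multiple of the other).} First suppose \(\ell=k\ell'\) with \(k\in\mathbb{N}\). Tile each \(Q_\ell(x_i)\) by \(k^d\) subboxes \(Q_{\ell'}(y_{i,j})\) with \(|y_{i,j}-x_i|\le \sqrt d\,\ell\). Multilinearity gives
\[
m\big(Q_\ell(x_1),\dots,Q_\ell(x_n)\big)=\sum_{j_1,\dots,j_n} m\big(Q_{\ell'}(y_{1,j_1}),\dots,Q_{\ell'}(y_{n,j_n})\big).
\]
Every product of subboxes is pairwise disjoint across \(i\), because the big boxes are. We apply \Cref{lemma:birth_death_process_expected_number_of_points_continuity_in_box_centers} with \(L=\sqrt d\,\ell\): each summand equals \(m(Q_{\ell'}(x_1),\dots,Q_{\ell'}(x_n))\) up to an error \(\lesssim\epsilon_{\sqrt d\ell}|Q_{\ell'}|^n\). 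There are \(k^{dn}\) summands and \(k^{dn}|Q_{\ell'}|^n=|Q_\ell|^n\), so
\[
|F_\ell-F_{\ell'}|\lesssim \epsilon_{\sqrt d\,\ell}.
\]
Along the dyadic sequence \(\ell_0 2^{-m}\), the values \(F\) therefore form a Cauchy sequence and converge to some limit \(\rho\). The crucial point is that the bound does not depend on \(k\).

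\textbf{Step 2 (general \(\ell\to0\)).} For arbitrary small \(\ell'\), pick \(\ell=\ell_0 2^{-m}\) with \(\ell\ge\ell'\) and \(\ell/\ell'\) large. Tile \(Q_\ell(x_i)\) by \(\lfloor \ell/\ell'\rfloor^d\) disjoint \(\ell'\)-boxes, leaving a boundary layer of volume \(O(\ell^{d-1}\ell')\). The main terms are handled exactly as in Step 1. The boundary layer needs an a priori bound \(m(A_1,\dots,A_n)\lesssim \prod_i|A_i|\). We will obtain it from the graphical construction: \(Y_t\Ssup{\underline{\omega}}\) is dominated by the Poisson process of all space–time birth marks with \(u\le\|b\|_\infty\), \(s\le t\), \(r>t-s\). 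This is a Poisson process with intensity at most \(\|b\|_\infty t\), so its mixed moments on disjoint sets are bounded by \(C\prod_i|A_i|\). The boundary contribution divided by \(|Q_\ell|^n\) is then \(O(\ell'/\ell)\). Hence
\[
|F_{\ell'}-F_\ell|\lesssim \epsilon_{\sqrt d\ell}+\ell'/\ell .
\]
Letting \(\ell'\to0\) first and then \(\ell\to0\) gives \(F_{\ell'}\to\rho\).

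The main obstacle is getting uniformity in the number of subboxes. This is exactly what the \(|Q_\ell|^n\) scaling of the error in \Cref{lemma:birth_death_process_expected_number_of_points_continuity_in_box_centers} provides, together with the Poisson domination needed for the moment bound in Step 2. A minor point is that for fixed \(i\), two different subboxes never appear in the same product, so disjointness across different \(i\) suffices to apply the proposition.
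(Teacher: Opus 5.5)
Your argument is correct and is essentially the paper's proof: you tile the larger boxes \(Q_\ell(x_i)\) by translates of the smaller ones, use multilinearity, and apply \Cref{lemma:birth_death_process_expected_number_of_points_continuity_in_box_centers} to each summand, so that the errors of order \(\epsilon_L|Q_{\ell'}|^n\) add up to \(O(\epsilon_L)|Q_\ell|^n\) uniformly in the number of subboxes and the Cauchy property follows. Your Step~2, which controls the boundary layer by the Poisson domination bound \(m(A_1,\dots,A_n)\lesssim\prod_i|A_i|\), makes explicit the non-commensurable case that the paper's short proof passes over silently.
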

\begin{proof}
    Assume that none of \(x_1, \dots, x_n\) is contained in \(\omega\); otherwise add up those point separately.
    We show that the sequence \(\left(\vert Q_\ell \vert^{-n}\mathbb{E}\left[\prod_{i = 1}^{n }N_{Q_\ell(x_i)}(Y_t\Ssup{\underline{\omega}})\right] \right)_{\ell > 0}\) has the Cauchy property.
    Fix some (small) \(L > 0\) and subdivide \(Q_\ell(x_i)\) into disjoint translates of \(Q_\ell\) with centers \(x_i^{(j)}\), such that
    \begin{align*}
        \frac{\mathbb{E}\left[\prod_{i = 1}^{n }N_{Q_\ell(x_i)}(Y_t\Ssup{\underline{\omega}})\right]}{\vert Q_\ell \vert^n}
        = \frac{1}{\vert Q_\ell \vert^n} \sum_{j_1} \dots \sum_{j_n} \mathbb{E}\left[\prod_{i = 1}^{n }N_{Q_\ell(x_{i}^{(j_i)})}(Y_t\Ssup{\underline{\omega}})\right].
    \end{align*}
    Since, by \Cref{lemma:birth_death_process_expected_number_of_points_continuity_in_box_centers},
    \begin{align*}
        \left\vert  \mathbb{E}\Bigg[\prod_{i = 1}^{n} N_{Q_\ell(x_{i}^{(j_i)})}(Y_t\Ssup{\underline{\omega}})\Bigg] - \mathbb{E}\Bigg[\prod_{i = 1}^{n }N_{Q_\ell(x_{i})}(Y_t\Ssup{\underline{\omega}})\Bigg] \right\vert
        \lesssim \epsilon_L \abs{Q_\ell}^n,
    \end{align*} 
    for some \(\epsilon_L \to 0\), as $L \to 0$, we have
    \begin{align*}
        \frac{\mathbb{E}\left[\prod_{i = 1}^{n }N_{Q_\ell(x_i)}(Y_t\Ssup{\underline{\omega}})\right]}{\vert Q_\ell \vert^n}
        &= \frac{1}{\vert Q_\ell \vert^n} \left(\frac{\vert Q_\ell \vert}{\vert Q_\ell \vert}\right)^n \left( \mathbb{E}\Bigg[\prod_{i = 1}^{n }N_{Q_\ell(x_{i})}(Y_t\Ssup{\underline{\omega}})\Bigg] + \mathcal{O}(\epsilon_L) \abs{Q_\ell}^n\right) \\
        &= \frac{\mathbb{E}\left[\prod_{i = 1}^{n }N_{Q_\ell(x_i)}(Y_t\Ssup{\underline{\omega}})\right]}{\vert Q_\ell \vert^n} + \mathcal{O}(\epsilon_L),
    \end{align*}
    hence the sequence is Cauchy.
\end{proof}

In the course of the proof of \Cref{lemma:birth_death_process_expected_number_of_points_continuity_in_box_centers} we will rely heavily on the representation of our birth-and-death process \(Y\Ssup{\underline{\omega}}\) via the driving Poisson noise \(\mathcal{N}\) as in \Cref{Definition:Markov_Process}, which gives us good global controls and a framework to easily couple certain variations of the birth-and-death process. In fact, it seems hard to find a proof for the statement with \(n > 1\), which would rely only on standard tools like inductively using the generator plus some version of Grönwall's inequality. Note that here the statement immediately becomes wrong if we would instead consider the expectation of the distance instead of the distance of expectations. 
We will also make use of the finite speed of propagation result below, in which we compare the original dynamics with that of a Markov process in $\Lambda$; its proof follows in the same way as~\cite[Lemma~4.4]{JKSZ25}.

We define the following dynamics in $\Lambda$ with empty boundary condition.
\begin{definition}[Local dynamics]\label{Def:modified_dynamics_with_stochastic_boundary_conditions}
Let $\L\Subset\R^d$ and denote by \((\Tl_t)_{t \geq 0}\) the semigroup associated to the following (formal) generator
\begin{align*}
                &(\LL_\Lambda f)(\eta_\Lambda)
                = \int_{\Lambda} \d x\, b(x, \eta_\Lambda) \big(f(\eta_\Lambda+\delta_x) - f(\eta_\Lambda)\big) + \sum_{x \in \eta_\Lambda} \big(f(\eta_\Lambda -\delta_x) - f(\eta_\Lambda)\big).
\end{align*} 
We denote by \(X\Ssup{\omega,\L}\) the corresponding Markov process with initial condition $\omega$.
\end{definition}
Analogously to the infinite-volume case, we denote by \(Y^{(\omega,\Lambda)}\) and \(Y^{(\underline\omega, \Lambda)}\) the processes without the points of the initial condition $\omega$.
For $\Lambda\subset\widehat\Lambda\Subset\R^d$, note the following distinction between the two restrictions \(Y_{\cdot, \Lambda}^{(\omega)}\) and \(Y_{\cdot, \Lambda}^{(\omega, \widehat{\Lambda})}\) to the observation window $\widehat\Lambda$: while the first one comes from the infinite-volume dynamics, which is influenced by the configuration on the entire space $\R^d$, the second one represents the finite-volume dynamics that evolves purely inside $\widehat\Lambda$, with empty boundary conditions outside. The lemma below quantifies the error coming from this approximation, showing that the probability that the influence from outside $\widehat\Lambda$ reaches $\Lambda$ within time $T$ decays super-exponentially with the distance between $\Lambda$ and $\widehat\Lambda$.
\begin{lemma}\label{le_comparison_finite_infinite}
    For any bounded \(\Lambda\subset\R^d\), let $\widehat\Lambda\Subset\R^d$, with \(\widehat\Lambda\supset\Lambda\) and \(m := \mathrm{dist}(\Lambda, \widehat{\Lambda}^{c})\). We have
    \begin{align*}
\mathbb{P}\Big(\exists t \in [0, T] \colon Y_{t, \Lambda}^{(\omega, \widehat{\Lambda})} \neq Y_{t, \Lambda}^{(\omega)} \Big)
        \lesssim_{\Lambda, d, \range, T, \Vert b \Vert_\infty} 
        \e^{-c m f(m)}
    \end{align*}
    for some \(f\) with \(f(m) \to \infty\), as $m \to \infty$, and $c=c(\Lambda, d, \range, T, \Vert b \Vert_\infty)$.
\end{lemma}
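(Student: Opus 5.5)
We will couple the two dynamics through the same driving Poisson noise $\mathcal{N}$ and the same initial lifespans, as in \Cref{Definition:Markov_Process}, and then bound the probability of a discrepancy by the probability that a chain of interactions crosses from $\widehat{\Lambda}^c$ into $\Lambda$ within $[0,T]$. The two processes differ only through births outside $\widehat{\Lambda}$. These births are absent in $X^{(\omega,\widehat{\Lambda})}$ and possibly present in $X^{(\omega)}$, and they change birth rates only within distance $\range$. A discrepancy at a point $x$ at time $s$ can therefore only be caused by a discrepancy within distance $\range$ of $x$ at an earlier time. Since the thinning uses the mark $u\le b(x,\cdot)\le\Vert b\Vert_\infty$, we will dominate everything by the Poisson process $\mathcal{N}^{\le} := \mathcal{N}\1_{\{u\le \Vert b\Vert_\infty\}}$ of \emph{potential births}. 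This process has intensity $\Vert b\Vert_\infty\,\d x\otimes \e^{-r}\d r\otimes \d s$ and does not depend on the configuration. The initial points $\omega$ are common to both processes and do not create discrepancies themselves.

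We build an oriented ``influence graph'' on the potential births in $\R^d\times[0,T]$ together with the initial points. An edge runs from a potential birth $(y,s,r)$ to a later potential birth $(x,s')$ whenever $|x-y|\le\range$ and $s<s'<s+r$, i.e.\ the point at $y$ is alive when $x$ tries to be born. The key step is the deterministic claim that if $Y^{(\omega,\widehat{\Lambda})}_{t,\Lambda}\neq Y^{(\omega)}_{t,\Lambda}$ for some $t\le T$, then there is an increasing-in-time path in this graph from a potential birth in $\widehat{\Lambda}^c$ to a potential birth in $\Lambda$. Such a path must consist of at least $k\ge m/\range$ steps. We prove the claim by induction along the (a.s.\ locally finite, after localisation) time-ordered sequence of events, since a discrepancy in the acceptance of a potential birth at $x$ requires a discrepant living point within $\range$ of $x$. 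To avoid infinite-volume subtleties we first run the argument with $X^{(\omega,\widetilde\Lambda)}$ for a huge $\widetilde\Lambda\supset\widehat\Lambda$ and pass to the limit, in the same spirit as the proof of \Cref{lem:domain}.

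Next we bound the expected number of such paths of length $k$ by the multivariate Mecke formula. A path $(x_0,s_0),\dots,(x_k,s_k)$ with $x_k\in\Lambda$, increasing times in $[0,T]$ and $|x_i-x_{i-1}|\le\range$ contributes at most
\begin{align*}
\Vert b\Vert_\infty^{k+1}\,|\Lambda|\,|B_\range|^{k}\,\frac{T^{k+1}}{(k+1)!},
\end{align*}
where the lifespan constraints are simply bounded by $1$ and the factor $1/(k+1)!$ comes from the volume of ordered time simplices. Summing over $k\ge m/\range$ gives a bound of order $(C T)^{k}/k!\approx \e^{-c\,m\log m}$, with $C=\Vert b\Vert_\infty |B_\range|$. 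This yields the claim with $f(m)=\log m$, and the constant depends on $\Lambda,d,\range,T,\Vert b\Vert_\infty$.

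The main obstacle is making the deterministic path claim rigorous in infinite volume. The event ``some discrepancy reaches $\Lambda$'' has to be shown to imply an oriented path, and this requires well-foundedness of the causal ordering, i.e.\ no infinite backward chains in finite time. This well-foundedness follows from the same path-counting estimate, since infinite chains have zero probability because the expected number of chains of length $k$ is at most $(CT)^k/k!\to 0$. We would first establish it on the event that all chains are finite, exactly as in the construction of \cite[Theorem~2.13]{garcia2006spatial} and \cite[Lemma~4.4]{JKSZ25}.
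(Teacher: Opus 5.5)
Your argument is correct. The paper gives no proof of its own here and only defers to \cite[Lemma~4.4]{JKSZ25}. Your route is the standard finite-speed-of-propagation argument for such estimates: dominate by the potential-birth process, extract a backward causal chain of at least $m/\range$ time-ordered, pairwise $\range$-close potential births ending in $\Lambda$ (each alive when the next is proposed), and bound the number of such chains via the multivariate Mecke formula by $|\Lambda|\,(\Vert b\Vert_\infty |B_\range| T)^k/k!$. This yields $f(m)=\log m$ uniformly in $\omega$ and $\widehat\Lambda$.

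One small correction is needed. With empty boundary conditions, initial points of $\omega$ in $\widehat\Lambda^c$ also seed discrepancies, not only births there, because they enter $b(x,\cdot)$ for the infinite dynamics but not for the finite one. So you should let the chain start at a discrepant potential birth within distance $\range$ of $\widehat\Lambda^c$; this costs one step in the count and keeps the bound free of any $\omega$-dependence.

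Built backwards from $\Lambda$, this chain stays in $\widehat\Lambda\times[0,T]$, which a.s.\ contains only finitely many potential births. Hence the well-foundedness issue you single out as the main obstacle is automatic, and no passage $\widetilde\Lambda\uparrow\R^d$ is needed.
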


\begin{proof}[Proof of \Cref{lemma:birth_death_process_expected_number_of_points_continuity_in_box_centers}]
For the sake of readability, we only show this in the case \(n = 2\), which already contains all the difficulty of the general case \(n \geq 2\). We will show that, for \(\epsilon_L \to 0\), as $L \to 0$,
    \begin{align*}
        \big\vert  \mathbb{E}\bigl[N_{Q_\ell(x_{1})}(Y_t\Ssup{\underline{\omega}})N_{Q_\ell(x_{2})}(Y_t\Ssup{\underline{\omega}}) \bigr] - \mathbb{E}\bigl[N_{Q_\ell(\widetilde{x}_{1})}(Y_t\Ssup{\underline{\omega}})N_{Q_\ell(\widetilde{x}_{2})}(Y_t\Ssup{\underline{\omega}}) \bigr] \big\vert
        \lesssim \epsilon_L \vert Q_\ell\vert^2.
    \end{align*}
    First, by the triangle inequality and symmetry, we only have to show
    \begin{align*}
        \big\vert  \mathbb{E}\bigl[N_{Q_\ell(x_{1})}(Y_t\Ssup{\underline{\omega}})N_{Q_\ell(x_{2})}(Y_t\Ssup{\underline{\omega}}) \bigr] - \mathbb{E}\bigl[N_{Q_\ell(\widetilde{x}_{1})}(Y_t\Ssup{\underline{\omega}})N_{Q_\ell(x_{2})}(Y_t\Ssup{\underline{\omega}}) \bigr] \big\vert
        \lesssim \epsilon_L \vert Q_\ell\vert^2,
    \end{align*} 
    i.e., we can pretend that \(x_2 = \widetilde{x}_2\).
    Using \Cref{le_comparison_finite_infinite}, we can also just work with a birth-and-death process in a large (where ``large'' will depend on \(L\)) but finite box \(\widehat{\Lambda}\) containing \(\Lambda\), because then
    \begin{align*}
        &\big\vert  \mathbb{E}\bigl[N_{Q_\ell(x_{1})}(Y_t\Ssup{\underline{\omega}})N_{Q_\ell(x_{2})}(Y_t\Ssup{\underline{\omega}}) \bigr] - \mathbb{E}\bigl[N_{Q_\ell(\widetilde{x}_{1})}(Y_t\Ssup{\underline{\omega}})N_{Q_\ell(x_{2})}(Y_t\Ssup{\underline{\omega}}) \bigr] \big\vert \\
        &= \big\vert \mathbb{P}\left(N_{Q_\ell(x_{1})}(Y_t\Ssup{\underline{\omega}}) = 1, N_{Q_\ell(x_{2})}(Y_t\Ssup{\underline{\omega}}) = 1 \right) 
        - \mathbb{P}\left(N_{Q_\ell(\widetilde{x}_{1})}(Y_t\Ssup{\underline{\omega}}) = 1, N_{Q_\ell(\widetilde{x}_{2})}(Y_t\Ssup{\underline{\omega}}) = 1 \right)  \big\vert + \mathcal{O}(\vert Q_\ell\vert^3) \\
        &= \big\vert \mathbb{P}\big(N_{Q_\ell(x_{1})}(Y_t^{\widehat{\Lambda},\underline{\omega}}) = 1, N_{Q_\ell(x_{2})}(Y_t^{\widehat{\Lambda},\underline{\omega}}) = 1 \big)  
        - \mathbb{P}\big(N_{Q_\ell(\widetilde{x}_{1})}(Y_t^{\widehat{\Lambda},\underline{\omega}}) = 1, N_{Q_\ell(\widetilde{x}_{2})}(Y_t^{\widehat{\Lambda}, \underline{\omega}}) = 1 \big)  \big\vert + \mathcal{O}(\vert Q_\ell\vert^3) \\
        &= \big\vert \mathbb{E}\bigl[N_{Q_\ell(x_{1})}(Y_t^{\widehat{\Lambda},\underline{\omega}})N_{Q_\ell(x_{2})}(Y_t^{\widehat{\Lambda}, \underline{\omega}}) \bigr] - \mathbb{E}\bigl[N_{Q_\ell(\widetilde{x}_{1})}(Y_t^{\widehat{\Lambda},\underline{\omega}})N_{Q_\ell(x_{2})}(Y_t^{\widehat{\Lambda},\underline{\omega}}) \bigr]\big\vert + \mathcal{O}(\vert Q_\ell\vert^3),
    \end{align*}
    at least if we choose \(\widehat{\Lambda}\) large enough so that the error given by \Cref{le_comparison_finite_infinite} is \(\mathcal{O}(\vert Q_\ell\vert^3)\) too, and as long as the other errors do not depend on the size of \(\widehat{\Lambda}\).

    It is straightforward to manually construct a ``solution map'' \(\mathscr{Y}\) that takes an initial condition \(\underline{\eta}\) (points with lifespans) and a driving Poisson noise \(\mathcal{N}\), and returns the desired birth-and-death process \(t \mapsto \mathscr{Y}^{\underline{\eta}}_{t}(\mathcal{N})\) in \(\widehat{\Lambda}\).
    Using this solution map \(\mathscr{Y}\) now to represent \(Y^{\widehat{\Lambda}, \underline{\omega}}\) and writing \(Y^{\widehat{\Lambda}, \underline{\omega}}_t = \mathscr{Y}_{t}(\mathcal{N})\),
    \begin{align*}
        &N_{Q_\ell(x_{i})}(Y_t^{\widehat{\Lambda}})
        = \sum_{(y_i, s_i, r_i, u_i) \in \mathcal{N}} \1_{Q_\ell(x_i)}(y_i) \1_{[0, t]}(s_i)  \1_{(t-s_i, \infty)}(r_i) \1_{[u_i, \infty)}(b(y_i, \mathscr{Y}_{s_i-}(\mathcal{N})))
    \end{align*}
    and the multivariate Mecke formula gives
    \begin{align*}
        \mathbb{E}\bigl[N_{Q_\ell(x_{1})}(Y_t^{\widehat{\Lambda}, \underline{\omega}})N_{Q_\ell(x_{2})}(Y_t^{\widehat{\Lambda}, \underline{\omega}}) \bigr]
        &= \int_{Q_\ell(x_1)} \d y_1 \, \int_{Q_\ell(x_2)} \d y_2 \, \int_{0}^{t} \d s_1 \, \int_{0}^{t} \d s_2 \, \int_{t-s_1}^{\infty} \d r_1 \, \int_{t-s_2}^{\infty} \d r_2 \, \\
        &\quad\quad \int_{0}^{\Vert b \Vert_\infty} \d u_1 \, \int_{0}^{\Vert b \Vert_\infty} \d u_2 \,\, F((y_1, s_1, r_1, u_1), (y_2, s_2, r_2, u_2))
    \end{align*} with
    \begin{align*}
        &F((y_1, s_1, r_1, u_1), (y_2, s_2, r_2, u_2)):= \mathbb{E}\big[\1_{[u_1, \infty)}\bigl(b(y_1, \mathscr{Y}_{s_1-}(\mathcal{N} + \delta_{(y_1, s_1, r_1, u_1)} + \delta_{(y_2, s_2, r_2, u_2)}) \bigr)\\
        &\hspace{7cm}
        \1_{[u_2, \infty)}\bigl(b(y_2, \mathscr{Y}_{s_2-}(\mathcal{N} + \delta_{(y_1, s_1, r_1, u_1)} + \delta_{(y_2, s_2, r_2, u_2)}) \bigr) \big] \\
        &= \mathbb{E}\big[\1_{[u_1, \infty)}\bigl(b(y_1, \mathscr{Y}_{s_1-}(\mathcal{N}  + \delta_{(y_2, s_2, r_2, u_2)}) \bigr) \1_{[u_2, \infty)}\bigl(b(y_2, \mathscr{Y}_{s_2-}(\mathcal{N} + \delta_{(y_1, s_1, r_1, u_1)}) \bigr) \big],
    \end{align*} 
    where the last equality is evident from the fact that the solution map cannot see points from the future.

    It follows that
    \begin{align*}
        &\Big\vert  \mathbb{E}\bigl[N_{Q_\ell(x_{1})}(Y_t^{\widehat{\Lambda}, \underline{\omega}})N_{Q_\ell(x_{2})}(Y_t^{\widehat{\Lambda}, \underline{\omega}}) \bigr] - \mathbb{E}\bigl[N_{Q_\ell(\widetilde{x}_{1})}(Y_t^{\widehat{\Lambda}, \underline{\omega}})N_{Q_\ell(x_{2})}(Y_t^{\underline{\widehat{\Lambda}, \omega}}) \bigr] \Big\vert \\
        &\leq 
         \int_{Q_\ell(x_1)} \d y_1 \, \int_{Q_\ell(x_2)} \d y_2 \, \int_{0}^{t} \d s_1 \, \int_{0}^{t} \d s_2 \, \int_{t-s_1}^{\infty} \d r_1 \, \int_{t-s_2}^{\infty} \d r_2 \int_{0}^{\Vert b \Vert_\infty} \d u_1 \, \int_{0}^{\Vert b \Vert_\infty} \d u_2  \\
        &\quad\quad \, \Big\{ \mathbb{E}\big[\big\vert \1_{[u_1, \infty)}\bigl(b(y_1, \mathscr{Y}_{s_1-}(\mathcal{N}  + \delta_{(y_2, s_2, r_2, u_2)}) \bigr) - \1_{[u_1, \infty)}\bigl(b(y_1 + z, \mathscr{Y}_{s_1-}(\mathcal{N}  + \delta_{(y_2, s_2, r_2, u_2)}) \bigr) \big\vert \big] \\
        &\quad\quad + \mathbb{E}\big[\big\vert \1_{[u_2, \infty)}\bigl(b(y_2, \mathscr{Y}_{s_2-}(\mathcal{N} + \delta_{(y_1, s_1, r_1, u_1)}) \bigr) - \1_{[u_2, \infty)}\bigl(b(y_2, \mathscr{Y}_{s_2-}(\mathcal{N} + \delta_{(y_1 + z, s_1, r_1, u_1)}) \bigr) \big\vert \big] \Big\},
    \end{align*} 
    with \(z := \widetilde{x}_1 - x_1\). Now,
    \begin{align*}
         &\int_{0}^{\Vert b \Vert_\infty} \d u_1 \,  \mathbb{E}\big[\big\vert \1_{[u_1, \infty)}\bigl(b(y_1, \mathscr{Y}_{s_1-}(\mathcal{N}  + \delta_{(y_2, s_2, r_2, u_2)}) \bigr)
         - \1_{[u_1, \infty)}\bigl(b(y_1 + z, \mathscr{Y}_{s_1-}(\mathcal{N}  + \delta_{(y_2, s_2, r_2, u_2)}) \bigr) \big\vert \big] \\
         &= \mathbb{E}\Big[ \int_{0}^{\Vert b \Vert_\infty} \d u_1 \,  \Bigl\vert \1_{[u_1, \infty)}\bigl(b(y_1, \mathscr{Y}_{s_1-}(\mathcal{N}  + \delta_{(y_2, s_2, r_2, u_2)}) \bigr) 
         - \1_{[u_1, \infty)}\bigl(b(y_1 + z, \mathscr{Y}_{s_1-}(\mathcal{N}  + \delta_{(y_2, s_2, r_2, u_2)}) \bigr) \Bigr\vert \Big] \\
         &= \mathbb{E}\Big[\Bigl\vert b(y_1, \mathscr{Y}_{s_1-}(\mathcal{N}  + \delta_{(y_2, s_2, r_2, u_2)}) \bigr) - b(y_1 + z, \mathscr{Y}_{s_1-}(\mathcal{N}  + \delta_{(y_2, s_2, r_2, u_2)})\Bigr\vert \Big] \\
         &\leq \Vert b(\cdot, \cdot) - b(\cdot + z, \cdot)  \Vert_\infty.
    \end{align*}
    For the other summand, we first see that
    \begin{align*}
        &\mathbb{E}\Big[\Bigl\vert \1_{[u_2, \infty)}\bigl(b(y_2, \mathscr{Y}_{s_2-}(\mathcal{N} + \delta_{(y_1, s_1, r_1, u_1)}) \bigr) - \1_{[u_2, \infty)}\bigl(b(y_2, \mathscr{Y}_{s_2-}(\mathcal{N} + \delta_{(y_1 + z, s_1, r_1, u_1)}) \bigr) \Bigr\vert \Big] \\
        &= \mathbb{E}\Big[\Bigl\vert \1_{[u_2, \infty)}\bigl(b(y_2, \mathscr{Y}_{s_2-}(\mathcal{N}) \bigr) - \1_{[u_2, \infty)}\bigl(b(y_2, \mathscr{Y}_{s_2-}(\mathcal{N}) \bigr) \Bigr\vert \Big]= 0,
    \end{align*} 
    if \(s_2 \leq s_1\).
    For \(s_2 > s_1\), considering separately the evolution from time \(0\) to \(s_1\) and from \(s_1\) to \(s_2\), 
    \begin{align*}
        &\mathbb{E}\Big[\Bigl\vert \1_{[u_2, \infty)}\bigl(b(y_2, \mathscr{Y}_{s_2-}(\mathcal{N} + \delta_{(y_1, s_1, r_1, u_1)}) \bigr) - \1_{[u_2, \infty)}\bigl(b(y_2, \mathscr{Y}_{s_2-}(\mathcal{N} + \delta_{(y_1 + z, s_1, r_1, u_1)}) \bigr) \Bigr\vert \Big] \\
        &= \mathbb{E}^{\mathcal{N}_{[0, s_1]}}\Big[G(\underline{\mathscr{Y}}_{s_1}(\mathcal{N}_{[0, s_1]}) + \delta_{(y_1, r_1)}, \underline{\mathscr{Y}}_{s_1}(\mathcal{N}_{[0, s_1]}) + \delta_{(y_1 + z, r_1)}) \Big]
    \end{align*} with
    \begin{align*}
        G(\underline{\eta}, \underline{\widetilde{\eta}})
        &:=
        \mathbb{E}^{\mathcal{N}_{[s_1, \infty)}}\Big[\Bigl\vert \1_{[u_2, \infty)}\bigl(b(y_2, \mathscr{Y}_{(s_2-s_1)-}^{\underline{\eta}}(\mathcal{N}_{[s_1, \infty)}) \bigr) - \1_{[u_2, \infty)}\bigl(b(y_2, \mathscr{Y}_{(s_2-s_1)-}^{\underline{\widetilde{\eta}}}(\mathcal{N}_{[s_1, \infty)}) \bigr) \Bigr\vert \Big] \\
        &\leq \mathbb{P}^{\mathcal{N}_{[s_1, \infty)}}\left( \Big( \mathscr{Y}_{(s_2-s_1)-}^{\underline{\eta}}(\mathcal{N}_{[s_1, \infty)}) \Big)_{\widetilde{\Lambda}} \neq  \Big( \mathscr{Y}_{(s_2-s_1)-}^{\underline{\widetilde{\eta}}}(\mathcal{N}_{[s_1, \infty)}) \Big)_{\widetilde{\Lambda}} \right),
    \end{align*} 
    and using the notation \(\underline{\mathscr{Y}}\) for the version of \(\mathscr{Y}\) with associated remaining lifespans, as well as \(\widetilde{\Lambda} = \L\oplus B(0,\range)= \{x\in\R^d\colon \dist(x,\L)\leq \range\}\).
    By \Cref{le_continuity_wrt_translation_of_initial_conditions}, 
    \begin{align*}
         \mathbb{P}^{\mathcal{N}_{[s_1, \infty)}}\Big( \big( \mathscr{Y}_{(s_2-s_1)-}^{\underline{\eta}}&(\mathcal{N}_{[s_1, \infty)}) \big)_{\widetilde{\Lambda}} \neq  \big( \mathscr{Y}_{(s_2-s_1)-}^{\underline{\widetilde{\eta}}}(\mathcal{N}_{[s_1, \infty)}) \big)_{\widetilde{\Lambda}} \Big)\\
         &
         \lesssim f\Big(\sup_{\vert z \vert \leq L}\sup_{x \in \mathbb{R}^d} \Vert b(\cdot, \cdot + \delta_{x}) - b(\cdot, \cdot + \delta_{x + z}) \Vert_\infty\Big),
    \end{align*} 
    for, e.g., \(f(x) = \sqrt{x}\) (the exact decay in \(L \xrightarrow[]{} 0\) does not matter here) and the statement  follows.
\end{proof}

We finish this section with the result needed above, that in the setting of two coupled birth-and-death-processes \(Y\Ssup{\omega},  Y^{(\widetilde{\omega})}\) sharing the same driving Poisson noise \(\mathcal{N}\), there is a strong continuity in the distribution of these processes regarding translations of the initial condition in the sense that \(Y\Ssup{\omega},  Y^{(\widetilde{\omega})}\) agree on a whole time interval \([0, S]\) with very high probability if the translation of \(\omega\) to \(\widetilde{\omega}\) is not too large in magnitude.
\begin{lemma}[Continuity w.r.t.\ translation of initial condition]\label{le_continuity_wrt_translation_of_initial_conditions}
    For all initial configurations \(\omega\) and \(\widetilde{\omega}\) that agree in all points except one, which is shifted by a length \(\leq L\) in \(\widetilde{\omega}\), we have for all $S\ge 0$,
    \begin{align*}
        \mathbb{P}\Big(\forall t \in [0, S]\colon Y_{t, \Lambda}^{(\omega, \widehat{\Lambda})} = Y_{t, \Lambda}^{(\widetilde{\omega}, \widehat{\Lambda})} \Big)
        \geq (1 - \delta_L)^n - \mathbb{P}(\mathrm{Poisson}(c_{\range} S \vert \Lambda\vert \Vert b \Vert_\infty) \geq n),
    \end{align*} 
    uniformly in \(\widehat{\Lambda}\) and for all \(n \in \mathbb{N}\), with
    \begin{align*}
        \delta_L
        \sim_{\range, \Lambda, \Vert b \Vert_\infty} \Big(\sup_{\vert z \vert \leq L}\sup_{x \in \mathbb{R}^d} \Vert b(\cdot, \cdot + \delta_{x}) - b(\cdot, \cdot + \delta_{x + z}) \Vert_\infty\Big).
    \end{align*}
    In particular,
    \begin{align*}
        \mathbb{P}\Bigl(\exists t \in [0, S] \colon Y_{t, \Lambda}^{(\omega, \widehat{\Lambda})} \neq Y_{t, \Lambda}^{(\widetilde{\omega}, \widehat{\Lambda})} \Bigr)
        \lesssim_S f(\delta_L)
    \end{align*} for every \(f \colon \mathbb{R}_+ \to \mathbb{R}_+\) with \(f(x)/x \to 0\), as $x \to \infty$.
\end{lemma}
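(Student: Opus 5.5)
The plan is to run the two finite-volume processes \(Y^{(\omega, \widehat{\Lambda})}\) and \(Y^{(\widetilde{\omega}, \widehat{\Lambda})}\) on the same driving noise \(\mathcal{N}\) and show that, with high probability, the single discrepancy never spreads to \(\Lambda\). Concretely, let \(x_0\in\omega\) be the differing point, \(\widetilde{x}_0\) its shifted copy, and \(\tau_0\) its shared lifespan. By the graphical construction, the two processes can only disagree after some atom \((y,s,r,u)\in\mathcal{N}\) is accepted in one process and rejected in the other. This happens only when \(u\) lies between \(b(y,\mathscr{Y}^{\underline\omega}_{s-})\) and \(b(y,\mathscr{Y}^{\underline{\widetilde\omega}}_{s-})\).

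\textbf{First step.} As long as the current configurations differ only through \(x_0\leftrightarrow\widetilde{x}_0\), the acceptance probability of such a disagreement is bounded by
\[
\delta := \sup_{\vert z \vert \leq L}\sup_{x \in \mathbb{R}^d} \Vert b(\cdot, \cdot + \delta_{x}) - b(\cdot, \cdot + \delta_{x + z}) \Vert_\infty ,
\]
since both birth rates evaluate \(b(y,\eta+\delta_{x_0})\) versus \(b(y,\eta+\delta_{\widetilde{x}_0})\) for a common \(\eta\). Moreover, by finite range, only atoms with \(y\in B(x_0,\range+L)\) can be affected at all.

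\textbf{Second step (main obstacle).} A first disagreement creates a point present in one process and absent in the other. That is no longer a small perturbation: afterwards the rates differ by \(O(1)\), and the discrepancy can propagate. So a naive bound on the first mismatch does not directly control the event in the statement. I would organise the estimate differently. Consider only the "relevant" atoms: those of \(\mathcal{N}\) with \(u\le\Vert b\Vert_\infty\), time in \([0,S]\), and location in the \(\range\)-neighbourhood of the region that could causally influence \(\Lambda\). The idea is to note that any chain of influence reaching \(\Lambda\) must start with a disagreeing atom within distance \(\range+L\) of \(x_0\). If \(x_0\) is far from \(\Lambda\), finite speed of propagation in the spirit of \Cref{le_comparison_finite_infinite} handles this case.

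\textbf{Third step.} Count the relevant atoms, i.e.\ those in a bounded space-time region around \(\Lambda\) (enlarged by \(\range\)). Their number is Poisson with mean \(c_{\range}S\vert\Lambda\vert\Vert b\Vert_\infty\). On the event that there are fewer than \(n\) of them, process them in time order and condition on the past at each step. Each atom is then, with conditional probability at least \(1-\delta_L\) with \(\delta_L\sim\delta/\Vert b\Vert_\infty\), accepted or rejected identically in both processes. Given that, the processes coincide in the relevant window up to the next atom; the shifted initial point itself contributes identically to rates away from its immediate neighbourhood, and near it the difference is covered by \(\delta\). Multiplying the conditional probabilities gives \((1-\delta_L)^n\). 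Subtracting the probability of the complement event, \(\mathbb{P}(\mathrm{Poisson}\ge n)\), gives the claimed bound, uniformly in \(\widehat{\Lambda}\) because only the window matters.

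\textbf{Final step.} Choose \(n=n(\delta_L)\to\infty\) slowly, for instance so that \(n\delta_L\to0\) while the Poisson tail decays superexponentially in \(n\). Then \(1-(1-\delta_L)^n\le n\delta_L\), and this yields the bound \(\lesssim_S f(\delta_L)\) for any \(f\) decaying slower than linear at zero.

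The delicate part is making the step-two causality reduction rigorous. One must show that a mismatch outside the window cannot re-enter \(\Lambda\) without itself being one of the counted atoms. I would handle this by choosing the window as \(\Lambda\) enlarged by \(\range\), with the counted atoms exactly those in that window.
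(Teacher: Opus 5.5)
Your route is essentially the paper's. You localize to a window around \(\Lambda\) (the paper uses the \(10\range\)-neighbourhood), count the atoms of \(\mathcal{N}\) there with \(u\le\Vert b\Vert_\infty\), argue that each is accepted or rejected identically in both processes with conditional probability at least \(1-\delta_L\), multiply, and subtract \(\mathbb{P}(\mathrm{Poisson}\ge n)\).

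The gap is exactly the step you call delicate, and your fix does not close it. The bound \(1-\delta_L\) at a counted atom at \(y\) requires the two configurations in \(B(y,\range)\) to differ at most by \(x_0\leftrightarrow\widetilde{x}_0\). But \(B(y,\range)\) sticks out of the window \(\Lambda\oplus B(0,\range)\). An uncounted atom near \(x_0\) outside the window can create a point present in only one process, and such mismatches can cascade inward along chains of uncounted atoms. At the next counted atom near the window boundary the two rates then differ by \(O(1)\). So ``the processes coincide in the relevant window up to the next atom'' does not follow, for any fixed window. Finite speed of propagation does not rescue the case of a distant \(x_0\) either: it gives smallness in the distance, not in \(\delta\). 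The paper's proof passes over the same point by asserting that, before \(\tau_1\), everything outside \(\Delta\) ``changes in the same way''.

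The missing idea is already in your first step; your second step discards it on a false premise. You never need to control how a mismatch propagates. Let \(T\) be the first time at which some atom of \(\mathcal{N}\) anywhere in \(\widehat\Lambda\) is accepted in one process and rejected in the other. There are a.s.\ finitely many atoms in \(\widehat\Lambda\times[0,S]\times\mathbb{R}_+\times[0,\Vert b\Vert_\infty]\), so \(T\) is well defined. The event in the statement is contained in \(\{T\le S\}\).

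On \([0,T)\) the \(Y\)-parts coincide everywhere. So the full configurations differ only by \(x_0\leftrightarrow\widetilde{x}_0\), and coincide after its death. Hence at each time \(s\le T\) the predictable set of bad \((y,u)\) has Lebesgue measure at most \(\abs{B_\range(x_0)\cup B_\range(\widetilde{x}_0)}\,\delta\le 2\abs{B_\range}\,\delta\). The compensator of \(\mathcal{N}\) then gives \(\mathbb{P}(T\le S)\le 2\abs{B_\range}\,\delta\,S\), uniformly in \(\widehat\Lambda\), \(\Lambda\) and the position of \(x_0\). This is the ``in particular'' bound, even linear in \(\delta\), and it is all that is used in \Cref{lemma:birth_death_process_expected_number_of_points_continuity_in_box_centers}.

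The first display follows as well. Let \(A\) be the compensator of \(\1_{\{T\le t\}}\). The martingale \(\1_{\{T>t\}}\e^{A_t}\) gives \(\mathbb{P}(T>S)\ge \e^{-2\abs{B_\range}\delta S}\). For \(X\sim\mathrm{Poisson}(\lambda)\) one has \((1-\delta_L)^n-\mathbb{P}(X\ge n)\le \mathbb{E}[(1-\delta_L)^X]=\e^{-\lambda\delta_L}\). Now choose \(\delta_L\) with \(\lambda\delta_L=2\abs{B_\range}\,\delta\, S\); this \(\delta_L\) is independent of \(S\) and comparable to \(\delta\) with constants depending only on \(\range\), \(\Lambda\) and \(\Vert b\Vert_\infty\).
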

\begin{proof}
    Consider \(\Delta := \bigl\{x \in \mathbb{R}^d \colon  \mathrm{dist}(x, \Lambda) \leq 10 \range \bigr\}\),
    and let
    \begin{align*}
        \tau_1
        := \inf\bigl\{t \geq 0 \colon  \mathcal{N}\bigl(\Delta \times [0, t] \times \mathbb{R}_+ \times [0, \Vert b \Vert_\infty]\bigr) = 1 \bigr\}
    \end{align*} 
    be the first time \(\mathcal{N}\) could potentially put a particle into \(\Delta\).
    Recursively define
    \begin{align*}
        \tau_j
        := \inf\bigl\{t > \tau_{j-1} \colon \mathcal{N}\bigl(\Delta \times (\tau_{j-1}, t] \times \mathbb{R}_+ \times [0, \Vert b \Vert_\infty]\bigr) = 1 \bigr\}
    \end{align*} for \(j \geq 2\) and put \(\tau_0 := 0\).
    Then, the \((\tau_j - \tau_{j-1})_{j \geq 1}\) are iid \(\mathrm{Exp}(\vert\Delta\vert \Vert b\Vert_\infty)\)-distributed random variables. 
    Also,
    \begin{align*}
        \mathbb{P}\Big(\forall t \in [0, S]\colon Y_{t, \Lambda}^{(\omega, \widehat{\Lambda})} = Y_{t, \Lambda}^{(\widetilde{\omega}, \widehat{\Lambda})} \Big)
        &\geq \mathbb{P}\Big(\forall t \in [0, S]\colon Y_{t, \Delta}^{(\omega, \widehat{\Lambda})} = Y_{t, \Delta}^{(\widetilde{\omega}, \widehat{\Lambda})} , \tau_n \leq S \Big) \\
        &= \mathbb{P}\Big(\forall j \in \{1, \dots, n\}\colon Y_{\tau_j, \Delta}^{(\omega, \widehat{\Lambda})} = Y_{\tau_j, \Delta}^{(\widetilde{\omega}, \widehat{\Lambda})},\, \tau_n \leq S \biggr) \\
        &\geq \mathbb{P}\Big(\forall j \in \{1, \dots, n\}\colon Y_{\tau_j, \Delta}^{(\omega, \widehat{\Lambda})} = Y_{\tau_j, \Delta}^{(\widetilde{\omega}, \widehat{\Lambda})}\Big) - \mathbb{P}(\tau_n > S).
    \end{align*}
    Let us now show that
    \begin{align*}
        \mathbb{P}\Big(Y_{\tau_1, \Delta}^{(\omega, \widehat{\Lambda})} = Y_{\tau_1, \Delta}^{(\widetilde{\omega}, \widehat{\Lambda})} \Big)
        \geq 1 - \delta_L.
    \end{align*}
    Until \(\tau_1\), nothing changes inside \(\Delta\) except for maybe the dying initial condition and everything outside \(\Delta\) changes in the same way. By Fubini--Tonelli, we can consider
    \begin{align*}
        L(x, t) 
        := b(x, X_{t}\Ssup{\omega}) \land b(x, X_{t}^{(\widetilde{\omega})})\quad \text{ and }\quad
        U(x, t)
        := b(x, X_{t}\Ssup{\omega}) \lor b(x, X_{t}^{(\widetilde{\omega})}) 
    \end{align*} 
    to be deterministic functions (until \(\tau_1\)).
    Now, \(Y_{\tau_1, \Delta}^{(\omega, \widehat{\Lambda})} \neq Y_{\tau_1, \Delta}^{(\widetilde{\omega}, \widehat{\Lambda})}\) can only happen if
    \begin{align*}
        u \in
        \bigl[L(x, \tau_1), U(x, \tau_1) \bigr]
    \end{align*} for the chance parameter \(u\) at \(\tau_1\).
    Correspondingly, we define
    \begin{align*}
        E_t
        := \big\{(x, s, r, u) \in \Delta \times [0, t] \times \mathbb{R}_+ \times [0, \Vert b \Vert_\infty] \colon u \in [L(x, s), U(x, s)] \big\},
    \end{align*} and
    \begin{align*}
        E_t^c
        := (\Delta \times [0, t] \times \mathbb{R}_+ \times [0, \Vert b \Vert_\infty]) \setminus E_t,
    \end{align*} 
    as well as their associated stopping times
    \begin{align*}
        \sigma_1
        := \inf\bigl\{t \geq 0 \colon \mathcal{N}\bigl(E_t\bigr) = 1 \bigr\} \quad\text{and}\quad \sigma_1^c
        := \inf\bigl\{t \geq 0 \colon \mathcal{N}\bigl(E_t^c\bigr) = 1 \bigr\}.
    \end{align*}
    We find that 
    \begin{align*}
        \mathbb{P}\bigl(\sigma_1 > t,  \sigma_1^c > s\bigr)
        &= \mathbb{P}\bigl(\mathcal{N}(E_t) = 0, \mathcal{N}(E_s^c) = 0\bigr)
        = \mathbb{P}\bigl(\mathcal{N}(E_t) = 0)\mathbb{P}\bigl(\mathcal{N}(E_s^c) = 0\bigr)= \e^{-\vert E_t\vert} \e^{-\vert E_s^c \vert},
    \end{align*} i.e., \(\sigma_1, \sigma_1^c\) have densities
    \begin{align*}
        &f_{\sigma_1}(t)
        = G(t) \exp(-\int_{0}^{t} G(s) \d s)\quad \text{and}\quad f_{\sigma_1^c}(t)
        = (\vert\Delta\vert \Vert b \Vert_\infty - G(t)) \exp(-\int_{0}^{t} (\vert\Delta\vert \Vert b \Vert_\infty - G(s)) \d s),
    \end{align*} with \(G(t) = \int_{\Delta} \d x \int_{0}^{\Vert b \Vert_\infty} \1_{[L(x, t), U(x, t)]}(u)\).
    Therefore,
    \begin{align*}
        \mathbb{P}\bigl(\sigma_1^c > \sigma_1\bigr)
        &= \int_{0}^{\infty} \d t \, f_{\sigma_1}(t) \, \int_{t}^{\infty} \d s \, f_{\sigma_1}(s)\leq \int_{0}^{\infty} \d t \, \vert\Delta\vert  \epsilon \, \int_{t}^{\infty} \d s \, \vert\Delta\vert \Vert b \Vert_\infty \e^{-(\Vert b \Vert_\infty - \epsilon)\vert\Delta\vert s}=\frac{\epsilon \vert \Delta \vert \Vert b \Vert_\infty}{(\Vert b \Vert_\infty - \epsilon)^2},
    \end{align*} 
    where \(\epsilon := \sup_{\vert z \vert \leq L}\sup_{x \in \mathbb{R}^d} \Vert b(\cdot, \cdot + \delta_{x}) - b(\cdot, \cdot + \delta_{x + z}) \Vert_\infty\).
    Using this, we have
    \begin{align*}
        &\mathbb{P}\Big(Y_{\tau_1, \Delta}^{(\omega, \widehat{\Lambda})} = Y_{\tau_1, \Delta}^{(\widetilde{\omega}, \widehat{\Lambda})} \Big)
        \geq \mathbb{P}\Big(\sigma_1 > \sigma_1^c\Big) \geq 1 - \delta_L.
    \end{align*}
    By repeated conditioning, we also find
    \begin{align*}
        \mathbb{P}\Big(\forall j \in \{1, \dots, n\}\colon Y_{\tau_j, \Delta}^{(\omega, \widehat{\Lambda})} = Y_{\tau_j, \Delta}^{(\widetilde{\omega}, \widehat{\Lambda})}\Big)
        \geq (1 - \delta_L)^n,
    \end{align*}
    as desired. 
\end{proof}

\subsection{Janossy densities and correlation functions}\label{section:nice_correlation_functions}

In order to prove \Cref{proposition:good_densities_of_the_birth_and_death_process}, we will show that the following function, written in terms of Janossy densities \(j_{n, \Lambda}^{(t, \underline{\omega})}\), is precisely a regular enough density of \((Y_t\Ssup{\underline{\omega}})_\Lambda\) w.r.t.\ the Poisson point process \(\pi_\Lambda\) in \(\Lambda\) we are looking for:
    \begin{equation}\label{eq:psi_janossy}
    \begin{split}
        &\psi_{t, \Lambda}(\zeta_\Lambda \,\vert\, \underline{\omega}) \\
        &:= \e^{\abs{\Lambda}} \Big(\sum_{n = 1}^{\infty} j_{n, \Lambda}^{(t, \underline{\omega})}(\zeta_\Lambda) \mathbbm{1}_{N_{\Lambda}(\zeta_\Lambda) = n} + \Big(1 - \sum_{n = 1}^{\infty} \frac{1}{n!} \int_{\Lambda^n} \, j_{n, \Lambda}^{(t, \underline{\omega})}(x_1, \dots, x_n) \, \d x_1 \dots \d x_n \Big) \mathbbm{1}_{N_{\Lambda}(\zeta_\Lambda) = 0}\Big).
    \end{split}
    \end{equation} 
    Here, for all \(n \in \mathbb{N}\) and \(x_1, \dots, x_n \in \mathbb{R}^d\), we construct the Janossy densities by the standard inversion formula (see e.g.~\cite[(5.4.14)]{DaleyVereJones2003})
    \begin{align*}
        j_{n, \Lambda}^{(t, \underline{\omega})}(x_1, \dots, x_n)
        = \sum_{k = 0}^{\infty} \frac{(-1)^k}{k!} \int_{\Lambda^k} \,\rho^{(t, \underline{\omega})}_{n+k}(x_1, \dots, x_{n+k}) \, \d x_{n+1} \dots \d x_{n+k},
    \end{align*} from the the {\em $n$-point correlation functions} \(\rho^{(t, \underline{\omega})}_n\) of \(Y_{t}\Ssup{\underline{\omega}}\).
    To be precise, we use 
    \begin{align*}
    \rho^{(t, \underline{\omega})}_n(x_1, \dots, x_n)
    = \begin{cases}
        \lim_{\ell \downarrow 0} \abs{Q_\ell}^{-n} \mathbb{E}[\prod_{i = 1}^{n} N_{Q_\ell(x_i))}(Y_t\Ssup{\underline{\omega}})], &\text{if } x_i \neq x_j \text{ for all } i \neq j, \\
        0, &\text{ otherwise},
    \end{cases}
\end{align*} where the limit is over boxes \(Q_\ell(x_i)\) with side-length \(\ell > 0\) and center \(x_i\).
The above limit exists by \Cref{lemma:correlation_function_lebesgue_points}, and \((x_1, \dots, x_n) \mapsto \rho_n^{(t, \underline{\omega})}(x_1, \dots, x_n)\) is actually (a version of) the $n$-point correlation function of \(Y_{t}\Ssup{\underline{\omega}}\), thanks to the Lebesgue differentiation theorem (in the precise form of~\cite[Theorem~B]{Jessen1934NoteOT}, the reason being that we have to use boxes instead of balls here), together with the fact that the \(Q_\ell(x_i)\) are pairwise disjoint for \(\ell > 0\) small enough. Moreover, by an argument similar to the proof of \Cref{lemma:birth_death_process_expected_number_of_points_continuity_in_box_centers}, we have constants \(z_{\min}, z_{\max} > 0\) such that
\begin{align}
    z_{\min}^n \, p_t^n (1-p_t)^n\lesssim 
    \rho_n^{(t, \cdot)} 
    \lesssim z_{\max}^n \, p_t^n (1-p_t)^n
\end{align} with \(p_t := \e^{-t}\).
We discuss differentiability in time \(t\) of \(\rho_n^{(t, \underline{\omega})}(x_1, \dots, x_n)\) in the following.
\begin{lemma}\label{le_differentiability_corr_func}
    For fixed \(x_1, \dots, x_n\), we have that \(t \mapsto \rho^{(t, \underline{\omega})}_n(x_1, \dots, x_n)\) is right-differentiable at every \(t \geq 0\) and the right-derivative is given by
    \begin{align*}
        &\rightDerivative{t} \rho_n^{(t, \underline{\omega})}(x_1, \dots, x_n) \\
        &= \sum_{i = 1}^{n} \lim_{\ell \downarrow 0}\abs{Q_\ell}^{-n} \mathbb{E}\Big[\Big(\int_{Q_\ell(x_i)} \, b(x, X_t\Ssup{\underline{\omega}}) \, \d x\Big) \prod_{j \neq i} N_{Q_\ell(x_j)}(Y_t\Ssup{\underline{\omega}}) \Big]  - n \, \rho_n^{(t, \underline{\omega})}(x_1, \dots, x_n),
    \end{align*} 
    and in particular,
    \begin{enumerate}
        \item the right-derivative \(q\Ssup{\underline{\omega}}_{x_1, \dots, x_n}(t)\) is continuous at every time \(t \not\in \{r \colon  \exists (x, r) \in \underline{\omega} \text{ with }x \in \Lambda \oplus B(0, \range)\}\) that is not one of the (finitely many) deterministic lifespans of points of \(\underline{\omega}\) in \(\Lambda \oplus B(0, \range)\), and we have 
        \item the bounds \begin{align*}
        \abs{\rightDerivative{t} \rho_n^{(t, \underline{\omega})}(x_1, \dots, x_n)}
        &\leq  \Vert \birth\Vert_\infty \sum_{i = 1}^{n} \rho_{n-1}^{(t, \underline{\omega})}(x_1, \dots, x_{i-1}, x_{i+1}, \dots, x_n) + n \, \rho_n^{(t, \underline{\omega})}(x_1, \dots, x_n) \\
        &\lesssim n z_{\max}^{n-1} \, (1-p_t)^{n-1}.
    \end{align*}
    \end{enumerate}
\end{lemma}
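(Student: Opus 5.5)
We work at the level of the box-approximations $\abs{Q_\ell}^{-n}\mathbb{E}[\prod_{i} N_{Q_\ell(x_i)}(Y_t\Ssup{\underline{\omega}})]$ and pass to the limit $\ell\downarrow 0$ at the end. Fix distinct $x_1,\dots,x_n$ and $\ell$ small enough that the boxes are disjoint and contain no points of $\underline\omega$. The product $F_\ell(\eta):=\prod_i N_{Q_\ell(x_i)}(\eta)$ is local but unbounded. We apply the computation from the proof of \Cref{lem:domain} (birth term via Mecke, death term from points born after time $0$) to $F_\ell$ evaluated on $Y$, truncating $F_\ell\wedge M$ and letting $M\to\infty$ by monotone convergence. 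The Poisson domination of births ($\Vert b\Vert_\infty$) gives all moments. Since the boxes avoid $\underline\omega$, deaths of initial points do not change $F_\ell(Y)$. We obtain
\begin{align*}
\mathbb{E}[F_\ell(Y_t)]=\int_0^t \mathbb{E}\Big[\sum_{i}\Big(\int_{Q_\ell(x_i)} b(x,X_s)\,\d x\Big)\prod_{j\neq i}N_{Q_\ell(x_j)}(Y_s) - \sum_i N_{Q_\ell(x_i)}(Y_s)\prod_{j\neq i}N_{Q_\ell(x_j)}(Y_s)\Big]\d s,
\end{align*}
using $D_xF_\ell=\prod_{j\ne i}N_{Q_\ell(x_j)}$ for $x\in Q_\ell(x_i)$ and $\sum_{y\in Y}D^-_yF_\ell=-\sum_i F_\ell$. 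Up to an error $\mathcal O(\abs{Q_\ell}^{n+1})$ from multiple points in one box, the last term is $n\,\mathbb{E}[F_\ell(Y_s)]$.

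Next we divide by $\abs{Q_\ell}^n$ and define
\[
a_\ell^i(s):=\abs{Q_\ell}^{-n}\mathbb{E}\Big[\Big(\int_{Q_\ell(x_i)}b(x,X_s)\,\d x\Big)\prod_{j\neq i}N_{Q_\ell(x_j)}(Y_s)\Big].
\]
The key claims are:
\begin{enumerate}
\item[(a)] $a^i_\ell(s)$ converges as $\ell\downarrow0$, locally uniformly in $s$;
\item[(b)] the limit is right-continuous in $s$, and continuous away from lifespans of $\underline\omega$ in $\Lambda\oplus B(0,\range)$;
\item[(c)] the bounds in (ii) hold.
\end{enumerate}
For (c), bound $b\le\Vert b\Vert_\infty$ to get $a^i_\ell\le \Vert b\Vert_\infty\,\abs{Q_\ell}^{-(n-1)}\mathbb{E}[\prod_{j\ne i}N]$, which tends to $\Vert b\Vert_\infty\rho_{n-1}$; combine this with the stated bound $\rho_n\lesssim z_{\max}^n(1-p_t)^n$ and drop the factor $p_t\le 1$. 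For (a), we reuse the proof of \Cref{lemma:birth_death_process_expected_number_of_points_continuity_in_box_centers} almost verbatim: the extra factor $b(x,X_s)$ with $x\in Q_\ell(x_i)$ is handled by continuity of $b$ in $x$ (the term $\Vert b(\cdot,\cdot)-b(\cdot+z,\cdot)\Vert_\infty$), together with the coupling of \Cref{le_continuity_wrt_translation_of_initial_conditions} for the dependence through the configuration. This gives a Cauchy property as in \Cref{lemma:correlation_function_lebesgue_points}, with errors $\epsilon_L$ uniform in $s\in[0,T]$.

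For (b), we couple $X_s$ and $X_{s'}$ through the same noise $\mathcal N$. For $s'\downarrow s$, the probability that a birth occurs in $(\Lambda\oplus B(0,\range))\oplus$ (a finite-speed neighbourhood), or that a non-initial death occurs there, during $[s,s']$ is $\mathcal O(s'-s)$ after normalisation. The main obstacle is making this uniform in $\ell$. For this we use the Mecke representation of the proof of \Cref{lemma:birth_death_process_expected_number_of_points_continuity_in_box_centers} with $n-1$ marked points: the integrand differs only on an event of probability $\lesssim (s'-s)$, plus the $\mathcal O(\abs{s'-s})$ change in the integration domain $[0,s]$. Deterministic deaths of $\underline\omega$ inside the window are the only source of jumps, and these happen exactly at the listed lifespans. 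Right-continuity holds everywhere because the process is càdlàg.

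Given (a)–(c), we pass $\ell\downarrow0$ in the integral identity by dominated convergence, obtaining $\rho_n(t)=\rho_n(0)+\int_0^t q(s)\,\d s$ with $q$ right-continuous. Right-differentiability with right-derivative $q(t)$ then follows from the fundamental theorem of calculus for right-continuous, locally bounded integrands.
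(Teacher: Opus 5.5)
Your proposal is correct and follows the paper's proof in substance. Both arguments rest on one key estimate: the time increments of $s\mapsto \abs{Q_\ell}^{-n}\mathbb{E}\big[\big(\int_{Q_\ell(x_i)} b(x,X_s\Ssup{\underline{\omega}})\,\d x\big)\prod_{j\neq i}N_{Q_\ell(x_j)}(Y_s\Ssup{\underline{\omega}})\big]$ must be bounded uniformly in $\ell$ on time intervals in which no initial point of the window dies.

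The paper proves this in its estimate $(*)$. It dominates by the Poisson process of potential births and splits the window into the boxes and the rest. Your Mecke representation with $n-1$ marked points gives the same $\lesssim (s'-s)$ bound somewhat more cleanly. The acceptance indicators of the marked points depend only on their own birth times, and the integration domain changes by measure $\mathcal{O}(s'-s)$. On the common domain, $b(x,\cdot)$ can only change through a birth or non-initial death in $B_\range(x)$ during $(s,s']$.

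The two proofs are organised differently.
\begin{itemize}
\item The paper expands the increment over $[t,t+h]$ directly. It computes the death contribution as $(1-\e^{-h})$ times the current expectation, plus explicit $\mathcal{O}(h^2)\abs{Q_\ell}^n$ errors from multiple deaths and from newborns that die. It then swaps $h\downarrow 0$ with $\ell\downarrow 0$.
\item You write the Dynkin identity on $[0,t]$, let $\ell\downarrow0$ under the time integral by dominated convergence, and conclude with the fundamental theorem of calculus for a right-continuous, locally bounded integrand.
\end{itemize}
Your route gets the death part for free: $\sum_{y\in Y}D^-_yF_\ell=-nF_\ell$ holds exactly, so the multiple-point error you mention does not occur. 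Continuity of $s\mapsto\rho_n^{(s,\underline{\omega})}$ then follows from the identity itself. The paper's route gives an explicit rate for the difference quotient at each fixed $t$, and needs the limit of the birth term only at that single time.

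Two cosmetic corrections:
\begin{itemize}
\item The boxes need not avoid $\underline{\omega}$. Deaths of initial points never change $F_\ell(Y\Ssup{\underline{\omega}})$, so this also covers the case $x_i\in\omega$, which your restriction as written excludes.
\item Removing the truncation $F_\ell\wedge M$ needs dominated convergence rather than monotone convergence, because the death increments are signed.
\end{itemize}
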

\begin{proof}
    Let \(x_1, \dots, x_n \in \mathbb{R}^d\) be pairwise distinct (otherwise there is not much to show by definition of \(\rho^{(t, \underline{\omega})}_n\)). 
    Denote \(f_\ell(\zeta) = \prod_{i = 1}^{n} N_{Q_\ell(x_i)}(\zeta)\).
    We will compute and bound
    \begin{align*}
        &\mathbb{E}\big[f_\ell(Y_{t+h}\Ssup{\underline{\omega}}) \big] - \mathbb{E}\big[f_\ell(Y_{t}\Ssup{\underline{\omega}}) \big]\\
        &= h \Big(\Big( \sum_{i = 1}^{n} \mathbb{E}\Big[ \Big(\int_{Q_\ell(x_i)}  b(x, X_{t}\Ssup{\underline{\omega}}) \d x\Big) \prod_{j \neq i} N_{Q_\ell(x_j)}(Y_{t}\Ssup{\underline{\omega}}) \Big] \Big) - n \,\mathbb{E}\big[f_\ell(Y_{t}\Ssup{\underline{\omega}}) \big] \Big) + O(h^2) \abs{Q_\ell}^n \\
        &=: h\, A_\ell + O(h^2) \abs{Q_\ell}^n.
    \end{align*}
    Using this, we can easily commute and compute limits as follows
    \begin{align*}
        \lim_{h \downarrow 0} \frac{\rho_n^{(t+h, \underline{\omega})}(x_1, \dots, x_n) - \rho_n^{(t, \underline{\omega})}(x_1, \dots, x_n)}{h} 
        &= \lim_{h \downarrow 0} \limsup_{\ell \downarrow 0} \frac{1}{h\abs{Q_\ell}^n } \big(\mathbb{E}\big[f_\ell(Y_{t+h}\Ssup{\underline{\omega}}) \big] - \mathbb{E}\big[f_\ell(Y_{t}\Ssup{\underline{\omega}}) \big] \big) \\
        &= \lim_{h \downarrow 0} \Big( \lim_{\ell \downarrow 0} \frac{A_l}{\abs{Q_\ell}^n} + O(h^2) \Big)
        = \lim_{\ell \downarrow 0} \frac{A_\ell}{\abs{Q_\ell}^n}.
    \end{align*} The existence of \(\lim_{\ell \downarrow 0} A_\ell/\abs{Q_\ell}^n\) can be proved very similar to \Cref{lemma:correlation_function_lebesgue_points}.

    We denote by $D_{Q_\ell(x_i)}(t)$ the set of times $s\leq t$ in which a death occurs in $Q_\ell(x_i)$ and have, as in \Cref{lem:domain},
    \begin{equation}\label{eq:difference t and t+h}
    \begin{split}
        \mathbb{E}\left[f_\ell(Y_{t+h}\Ssup{\underline{\omega}}) \right] - \mathbb{E}\left[f_\ell(Y_{t}\Ssup{\underline{\omega}}) \right] 
        &= \sum_{i = 1}^{n} \int_{t}^{t+h} \d s \,    \int_{Q_\ell(x_i)} \d x \, \mathbb{E}\left[\birth(x, X_{s}\Ssup{\underline{\omega}}) \{f_\ell(Y_{s}\Ssup{\underline{\omega}} + \delta_x) - f_\ell(Y_{s}\Ssup{\underline{\omega}}\} \right] \\
        &\qquad + \sum_{i = 1}^{n} \, \mathbb{E}\Big[\sum_{s\in D_{Q_\ell(x_i)}(t+h)\setminus D_{Q_\ell(x_i)}(t)} \{f_\ell(Y_{s}\Ssup{\underline{\omega}}) - f_\ell(Y_{s-}\Ssup{\underline{\omega}})\}\Big].
    \end{split}
    \end{equation}
    For the first summand in \Cref{eq:difference t and t+h}, setting 
    \begin{align*}
        G(s) :&= 
        \int_{Q_\ell(x_i)} \d x \, \mathbb{E}\left[\birth(x, X_{s}\Ssup{\underline{\omega}}) \{f_\ell(Y_{s}\Ssup{\underline{\omega}} + \delta_x) - f_\ell(Y_{s}\Ssup{\underline{\omega}}\} \right]= \int_{Q_\ell(x_i)} \d x \, \mathbb{E}\Big[\birth(x, X_{s}\Ssup{\underline{\omega}}) \prod_{j \neq i} N_{Q_\ell(x_j)}(Y_{s}\Ssup{\underline{\omega}}) \Big],
    \end{align*} 
    we see that
    \begin{equation}
    \begin{split}\label{eq:estimate_remainder_derivative_birth_part}
        &\Big\vert \int_{t}^{t+h}  \,  G(s) \d s - h G(t) \Big\vert
        \leq \int_{0}^{h} \, \d s \left\vert G(t+h) - G(t) \right\vert \\
        &\leq \int_{0}^{h} \, \d s \, \int_{Q_\ell(x_i)} \d x \, \Big\vert \mathbb{E}\Big[\birth(x, X_{t+h}\Ssup{\underline{\omega}}) \prod_{j \neq i} N_{Q_\ell(x_j)}(Y_{t+h}\Ssup{\underline{\omega}}) \Big] - \mathbb{E}\Big[\birth(x, X_{t}\Ssup{\underline{\omega}}) \prod_{j \neq i} N_{Q_\ell(x_j)}(Y_{t}\Ssup{\underline{\omega}}) \Big] \Big\vert \\
&\stackrel{\mathclap{(*)}}{\leq} h^2 \abs{Q_\ell}^{n} 2 \Vert b \Vert_\infty (\Vert \birth\Vert_\infty (t+h))^{n-1} \\
        &\quad \Big[(n-1)\Big( \frac{1 - \e^{-h}}{h} + \frac{1-\e^{-\abs{Q_\ell}(t+h)(1-\e^{-h})}}{h} + 1 + \Vert \birth\Vert_\infty t \abs{Q_\ell} \Big) + \abs{Q_\ell} \Vert \birth \Vert_\infty \Big((t+h) \frac{1-\e^{-h}}{h} +  1 \Big) \Big] \\
        &\lesssim_{n, t, \birth} h^2 \abs{Q_\ell}^{n},
    \end{split}
    \end{equation} 
    where $(*)$ is proved as follows.    For a fixed bounded region \(\Lambda\) and \(h > 0\) small enough, there will be no death in \(\underline{\omega}_{\Lambda}\), therefore only \(Y\Ssup{\underline{\omega}}\) is responsible for a possibly non-zero difference \(\birth(x, X_{t+h}\Ssup{\underline{\omega}}) - \birth(x, X_{t}\Ssup{\underline{\omega}})\).
    Therefore, the difference \(\birth(x, X_{t+h}\Ssup{\underline{\omega}}) \prod_{j \neq i} N_{Q_\ell(x_j)}(Y_{t+h}\Ssup{\underline{\omega}}) - \birth(x, X_{t}\Ssup{\underline{\omega}}) \prod_{j \neq i} N_{Q_\ell(x_j)}(Y_{t}\Ssup{\underline{\omega}})\) can only be non-zero if the dominating Poisson point process \(\zeta\) (coming from \(\mathcal{N}\) in the stochastic integral) on \(\mathbb{R}^d \times \mathbb{R}_+ \times \mathbb{R}_+\) (birth location, birth time, lifespan) with intensity measure \(\Vert \birth\Vert_\infty \d x \otimes \d s \otimes \e^{-r} \d r\) births points in \([t, t+h]\) or has born points in \([0, t+h]\) which die in \([t, t+h]\). Hence,
    \begin{align*}
        &\Big\vert \birth(x, X_{t+h}\Ssup{\underline{\omega}}) \prod_{j \neq i} N_{Q_\ell(x_j)}(Y_{t+h}\Ssup{\underline{\omega}}) - \birth(x, X_{t}\Ssup{\underline{\omega}}) \prod_{j \neq i} N_{Q_\ell(x_j)}(Y_{t}\Ssup{\underline{\omega}}) \Big\vert \\
        &\leq 2 \Vert \birth \Vert_\infty \Big(\prod_{j \neq i} N_{Q_\ell(x_j) \times [0, t+h] \times \mathbb{R}_+}(\zeta) \Big) \1_{N_{\Lambda \times [0, t+h] \times [0, h]} > 0 \text{ or } N_{\Lambda \times [t, t+h] \times \mathbb{R}_+} > 0}(\zeta),
    \end{align*} 
    with \(\Lambda = B_{\range}(x) \cup \bigcup_{j \neq i} Q_\ell(x_j)\), where we recall that \(\range\) is the range of the interaction.
    The product on the right-hand side can be bounded and expanded as
    \begin{align*}
        &\Big(\prod_{j \neq i} N_{Q_\ell(x_j) \times [0, t+h] \times \mathbb{R}_+}(\zeta) \Big) \1_{N_{\Lambda \times [0, t+h] \times [0, h]} > 0 \text{ or } N_{\Lambda \times [t, t+h] \times \mathbb{R}_+} > 0}(\zeta) \\
        &\leq \Big(\prod_{j \neq i} N_{Q_\ell(x_j) \times [0, t+h] \times \mathbb{R}_+}(\zeta) \Big) \left(\1_{N_{\Lambda \times [0, t+h] \times [0, h]} > 0}(\zeta) + \1_{N_{\Lambda \times [t, t+h] \times \mathbb{R}_+} > 0}(\zeta) \right) \\
        &\leq \sum_{k = 1}^{n} \Big(\prod_{j \neq i} N_{Q_\ell(x_j) \times [0, t+h] \times \mathbb{R}_+}(\zeta) \Big) \left(\1_{N_{\Lambda_k \times [0, t+h] \times [0, h]} > 0}(\zeta) + \1_{N_{\Lambda_k \times [t, t+h] \times \mathbb{R}_+} > 0}(\zeta) \right),
    \end{align*} 
    where \(\Lambda_k = Q_\ell(x_k)\) for \(k \neq i\) and \(\Lambda_i = B_{\range}(x) \setminus \bigcup_{k \neq i} Q_\ell(x_k)\).
    
    For \(k \neq i\), by independence, we have
    \begin{align*}
        &\mathbb{E}\Big[\prod_{j \neq i} N_{Q_\ell(x_j) \times [0, t+h] \times \mathbb{R}_+}(\zeta) \1_{N_{\Lambda_k \times [0, t+h] \times [0, h]} > 0}(\zeta)  \Big] 
        = \mathbb{E}\Big[\prod_{j \neq i,k} N_{Q_\ell(x_j) \times [0, t+h] \times \mathbb{R}_+}(\zeta) \Big]\\
        &\qquad\qquad \times\mathbb{E}\left[\left(N_{Q_\ell(x_j) \times [0, t+h] \times [0, h]}(\zeta) + N_{Q_\ell(x_j) \times [0, t+h] \times [0, h]^c}(\zeta)\right) \1_{N_{Q_\ell(x_k) \times [0, t+h] \times [0, h]} > 0}(\zeta)  \right] \\
        &= (\Vert \birth\Vert_\infty (t+h) \abs{Q_\ell})^{n-2}  \biggl( \mathbb{E}\left[N_{Q_\ell(x_j) \times [0, t+h] \times [0, h]}(\zeta) \1_{N_{Q_\ell(x_k) \times [0, t+h] \times [0, h]} > 0}(\zeta) \right] \\
        &\qquad\qquad + \mathbb{E}\left[N_{Q_\ell(x_j) \times [0, t+h] \times [0, h]^c}(\zeta)\right] \mathbb{P}(N_{Q_\ell(x_k) \times [0, t+h] \times [0, h]} (\zeta) > 0) \biggr) \\
        &\leq (\Vert \birth\Vert_\infty (t+h) \abs{Q_\ell})^{n-2}
        \left(\Vert \birth\Vert_\infty \abs{Q_\ell} (t+h)  (1 - \e^{-h}) + \Vert\birth\Vert_\infty \abs{Q_\ell} (t+h) (1-\e^{-\abs{Q_\ell}(t+h)(1-\e^{-h})}) \right) \\
        &= (\Vert \birth\Vert_\infty (t+h) \abs{Q_\ell})^{n-1} \Big( (1 - \e^{-h}) + (1-\e^{-\abs{Q_\ell}(t+h)(1-\e^{-h})})  \Big).
    \end{align*}
    In the case \(k = i\), we have
    \begin{align*}
        &\mathbb{E}\Big[\prod_{j \neq i} N_{Q_\ell(x_j) \times [0, t+h] \times \mathbb{R}_+}(\zeta) \1_{N_{\Lambda_i \times [0, t+h] \times [0, h]} > 0}(\zeta)  \Big] \\
        &\leq \mathbb{E}\Big[\prod_{j \neq i} N_{Q_\ell(x_j) \times [0, t+h] \times \mathbb{R}_+}(\zeta) \Big] \mathbb{E}\left[N_{\Lambda_i \times [0, t+h] \times [0, h]}(\zeta) \right] \\
        &\leq (\Vert \birth\Vert_\infty (t+h))^{n} \abs{Q_\ell}^{n-1} \abs{Q_\ell} (1-\e^{-h}).
    \end{align*}
    Analogously, we see that, for \(k\neq i\),
    \begin{align*}
        &\mathbb{E}\Big[\prod_{j \neq i} N_{Q_\ell(x_j) \times [0, t+h] \times \mathbb{R}_+}(\zeta) \1_{N_{\Lambda_k \times [t, t+h] \times \mathbb{R}_+} > 0}(\zeta)  \Big] \leq (\Vert \birth\Vert_\infty (t+h) \abs{Q_\ell})^{n-1} (1 + \Vert \birth\Vert_\infty t \abs{Q_\ell}) h,
    \end{align*} 
    and for \(k = i\),
    \begin{align*}
        &\mathbb{E}\Big[\prod_{j \neq i} N_{Q_\ell(x_j) \times [0, t+h] \times \mathbb{R}_+}(\zeta) \1_{N_{\Lambda_i \times [t, t+h] \times \mathbb{R}_+} > 0}(\zeta)  \Big]\leq (\Vert \birth\Vert_\infty (t+h) \abs{Q_\ell})^{n-1} \Vert \birth\Vert_\infty  \abs{B_L} h.
    \end{align*}
    It then follows that
    \begin{align*}
        &\Big\vert \mathbb{E}\Big[\birth(x, X_{t+h}\Ssup{\underline{\omega}}) \prod_{j \neq i} N_{Q_\ell(x_j)}(Y_{t+h}\Ssup{\underline{\omega}}) \Big] - \mathbb{E}\Big[\birth(x, X_{t}\Ssup{\underline{\omega}}) \prod_{j \neq i} N_{Q_\ell(x_j)}(Y_{t}\Ssup{\underline{\omega}}) \Big] \Big\vert \\
        &\leq \abs{Q_\ell}^{n-1} 2 \Vert b \Vert_\infty (\Vert \birth\Vert_\infty (t+h))^{n-1} \\
        &\qquad \Big[(n-1)\Big( (1 - \e^{-h}) + (1-\e^{-\abs{Q_\ell}(t+h)(1-\e^{-h})}) + (1 + \Vert \birth\Vert_\infty t \abs{Q_\ell}) h \Big) \\
        &\qquad\qquad\qquad + \abs{Q_\ell} \Vert \birth \Vert_\infty \Big((t+h) (1-\e^{-h}) +  h \Big) \Big],
    \end{align*} 
    proving $(*)$ in \Cref{eq:estimate_remainder_derivative_birth_part}.

    Let us now look at the death part of \Cref{eq:difference t and t+h}.
    We have
    \begin{align*}
        &\mathbb{E}\Big[\sum_{s\in D_{Q_\ell(x_i)}(t+h)\setminus D_{Q_\ell(x_i)}(t)} \{f_l(Y_{s}\Ssup{\underline{\omega}}) - f_l(Y_{s-}\Ssup{\underline{\omega}})\}\Big]
        = - \mathbb{E}\Big[\sum_{s\in D_{Q_\ell(x_i)}(t+h)\setminus D_{Q_\ell(x_i)}(t)} a \prod_{i \neq j} N_{Q_\ell(x_j)}(Y_{s-}\Ssup{\underline{\omega}}) \Big] \\
        &= - \mathbb{E}\Big[\Big(\sum_{s\in D_{Q_\ell(x_i)}(t+h)\setminus D_{Q_\ell(x_i)}(t)} \quad \prod_{i \neq j} N_{Q_\ell(x_j)}(Y_{s-}\Ssup{\underline{\omega}})\Big) \1_{\subalign{&\text{in $Q_\ell(x_i)$, during $(t,t+h]$,}\\ &\text{at most one particle dies,}\\ &\text{and it was born in $[0, t]$}}} \Big] \\
        &\qquad - \mathbb{E}\Big[\Big(\sum_{s\in D_{Q_\ell(x_i)}(t+h)\setminus D_{Q_\ell(x_i)}(t)} \quad \prod_{i \neq j} N_{Q_\ell(x_j)}(Y_{s-}\Ssup{\underline{\omega}})\Big) \1_{\subalign{&\text{in $Q_\ell(x_i)$, during $(t,t+h]$,}\\ &\text{more than one particle dies}  \\ &\text{or a particle born in $(t, t+h]$ dies}}}  \Big].
    \end{align*}
    For the first summand, we see as in \Cref{lem:domain} that
    \begin{align*}
        &\mathbb{E}\Big[\Big(\sum_{s\in D_{Q_\ell(x_i)}(t+h)\setminus D_{Q_\ell(x_i)}(t)} \quad \prod_{i \neq j} N_{Q_\ell(x_j)}(Y_{s-}\Ssup{\underline{\omega}})\Big) \1_{\subalign{&\text{in $Q_\ell(x_i)$, during $(t,t+h]$,}\\ &\text{at most one particle dies,}\\ &\text{and it was born in $[0, t]$}}} \Big] \\
        &= \mathbb{E}\Big[\sum_{x \in (Y_{t}\Ssup{\underline{\omega}})_{Q_\ell(x_i)}}  \Big(\prod_{j \neq i} N_{Q_\ell(x_j)}(Y_{t}\Ssup{\underline{\omega}})\Big) \1_{\inf\{s \geq 0 \colon x \not\in Y_t\Ssup{\underline{\omega}}\} \in (t, t+h]} \Big] \\
        &= (1-\e^{-h}) \mathbb{E}\Big[\sum_{x \in (Y_{t}\Ssup{\underline{\omega}})_{Q_\ell(x_i)}} \left(\prod_{j \neq i} N_{Q_\ell(x_j)}(Y_{t}\Ssup{\underline{\omega}})\right) \Big] = (1-\e^{-h}) \mathbb{E}\Big[\prod_{j = 1}^{n} N_{Q_\ell(x_j)}(Y_{t}\Ssup{\underline{\omega}}) \Big].
    \end{align*}
    For the remaining term, we have
    \begin{align*}
        &\mathbb{E}\Big[\Big(\sum_{s\in D_{Q_\ell(x_i)}(t+h)\setminus D_{Q_\ell(x_i)}(t)} \quad \prod_{i \neq j} N_{Q_\ell(x_j)}(Y_{s-}\Ssup{\underline{\omega}})\Big) \1_{\subalign{&\text{in $Q_\ell(x_i)$, during $(t,t+h]$,}\\ &\text{more than one particle dies}  \\ &\text{or a particle born in $(t, t+h]$ dies}}}\Big] \\
        &\leq \mathbb{E}\Big[\Big(\sum_{s\in D_{Q_\ell(x_i)}(t+h)\setminus D_{Q_\ell(x_i)}(t)} \quad \prod_{i \neq j} N_{Q_\ell(x_j)}(Y_{s-}\Ssup{\underline{\omega}})\Big) \1_{\subalign{&\text{in $Q_\ell(x_i)$, during $(t,t+h]$,}\\ &\text{more than one particle dies}}}\Big] \\
        &\qquad + \mathbb{E}\Big[\Big(\sum_{s\in D_{Q_\ell(x_i)}(t+h)\setminus D_{Q_\ell(x_i)}(t)} \quad \prod_{i \neq j} N_{Q_\ell(x_j)}(Y_{s-}\Ssup{\underline{\omega}})\Big) \1_{\subalign{&\text{in $Q_\ell(x_i)$, during $(t,t+h]$,}\\ &\text{a particle born in $(t, t+h]$ dies}}}\Big].
    \end{align*} 
    Herein, $
        \prod_{i \neq j} N_{Q_\ell(x_j)}(Y_{s-}\Ssup{\underline{\omega}})
        \leq \prod_{i \neq j} N_{Q_\ell(x_j) \times [0, t+h] \times \mathbb{R}_+}(\zeta)$.  It follows, by independence,
    \begin{align*}
        &\mathbb{E}\Big[\Big(\sum_{s\in D_{Q_\ell(x_i)}(t+h)\setminus D_{Q_\ell(x_i)}(t)} \quad \prod_{i \neq j} N_{Q_\ell(x_j)}(Y_{s-}\Ssup{\underline{\omega}})\Big) \1_{\subalign{&\text{in $Q_\ell(x_i)$, during $(t,t+h]$,}\\ &\text{a particle born in $(t, t+h]$ dies}}}\Big] \\
        &\leq \mathbb{E}\Big[\Big(\prod_{i \neq j} N_{Q_\ell(x_j) \times [0, t+h] \times \mathbb{R}_+}(\zeta)\Big) \mathbbm{1}_{N_{Q_\ell(x_i) \times (t, t+h] \times [0, h]} > 0}(\zeta) \Big] \leq \Vert b \Vert_\infty^n (t+h)^{n-1} \abs{Q_\ell}^n h (\e^h - 1).
    \end{align*}
    Similarly, 
    \begin{align*}
        &\mathbb{E}\Big[\Big(\sum_{s\in D_{Q_\ell(x_i)}(t+h)\setminus D_{Q_\ell(x_i)}(t)} \quad \prod_{i \neq j} N_{Q_\ell(x_j)}(Y_{s-}\Ssup{\underline{\omega}})\Big) \1_{\subalign{&\text{in $Q_\ell(x_i)$, during $(t,t+h]$,}\\ &\text{more than one particle dies}}}\Big] \\
        &\leq \mathbb{E}\Big[\Big(\prod_{i \neq j} N_{Q_\ell(x_j) \times [0, t+h] \times \mathbb{R}_+}(\zeta)\Big)  \1_{N_{Q_\ell(x_i) \times E} \geq 2}(\zeta)\Big] \\
        &\leq \Vert \birth \Vert_\infty^{n-1} (t+h)^{n-1} \abs{Q_\ell}^{n-1} \mathbb{P}(N_{Q_\ell(x_i) \times E}(\zeta) \geq 2)\leq \Vert \birth \Vert_\infty^{n-1} (t+h)^{n-1} \abs{Q_\ell}^{n-1} (h(1-\e^{-(t+1)}) \abs{Q_\ell})^2,
    \end{align*} 
    where \(E := \{(s, r) \in \mathbb{R}_+^2 \colon s+r \in [t, t+h] \}\) with \((\d s \otimes \e^{-r}\d r)(E) = h - \e^{-t}(1-\e^{-h}) \leq h(1-\e^{-(t+1)})\).
    Summing up,
    \begin{align*}
        &\mathbb{E}\Big[\sum_{s\in D_{Q_\ell(x_i)}(t+h)\setminus D_{Q_\ell(x_i)}(t)} \big(f_l(Y_{s}\Ssup{\underline{\omega}}) - f_l(Y_{s-}\Ssup{\underline{\omega}})\big)\Big] 
        = (1-\e^{-h}) \mathbb{E}\Big[\prod_{j = 1}^{n} N_{Q_\ell(x_j)}(Y_{t}\Ssup{\underline{\omega}}) \Big] + \mathrm{const}_{n, t, \birth} h^2 \abs{Q_\ell}^n.
    \end{align*}
    Putting it all together, we now see
    \begin{align*}
        &\lim_{h \downarrow 0} \frac{1}{h}\big(\rho_n^{(t+h)}(x_1, \dots, x_n) - \rho_n^{(t)}(x_1, \dots, x_n)\big) \\
        &= \sum_{i = 1}^{n} \lim_{\ell \downarrow 0} \abs{Q_\ell}^{-n}\mathbb{E}\Big[\Big(\int_{Q_\ell(x_i)} \, b(x, X_{t}\Ssup{\underline{\omega}}) \, \d x\Big) \prod_{j \neq i} N_{Q_\ell(x_j)} \Big]   - n \, \rho_n^{(t)}(x_1, \dots, x_n),
    \end{align*}
    as desired. 
\end{proof}
The regularity properties of the correlation functions easily transfer to properties of the Janossy densities, in the following precise sense. 
\begin{lemma}\label{le_differentiability_janossys}
    For every \(n \in \mathbb{N}\) there is a measurable version \((t, \underline{\omega}, x_1, \dots, x_n) \mapsto j^{(t, \underline{\omega})}_{n, \Lambda}(x_1, \dots, x_n)\) of the \(n\)-th Janossy density of \((Y_t\Ssup{\underline{\omega}})_\Lambda\), such that 
    \begin{align*}
        t \mapsto j^{(t, \underline{\omega})}_{n, \Lambda}(x_1, \dots, x_n)
    \end{align*} is right-differentiable in \(t \geq 0\) for distinct \(x_1, \dots, x_n\).
    Furthermore, 
    \begin{enumerate}
        \item the right-derivative \(\rightDerivative{t} j^{(t, \underline{\omega})}_{n, \Lambda}(x_1, \dots, x_n)\) is continuous at every time \(t \not\in \{r \colon \exists (x, r) \in \underline{\omega} \text{ with } x \in \Lambda \oplus B(0, \range)\}\) that is not one of the (finitely many) deterministic lifespans of points of \(\underline{\omega}\) in \(\Lambda \oplus B(0, \range)\),
        \item there exists a \(t_0 > 0\) such that for all \(t \leq t_0\) we have the bounds
    \begin{align*}
        j^{(t, \cdot)}_{n, \Lambda}
        \lesssim
        z_{\max}^n \, (1-p_t)^n
    \end{align*}
    and
    \begin{align*}
        \abs{\rightDerivative{t} \, j^{(t, \underline{\omega})}_{n, \Lambda}(x_1, \dots, x_n)}
        \lesssim 
        n z_{\max}^{n-1} \, (1-p_t)^{n-1}.
    \end{align*}
    \end{enumerate}
\end{lemma}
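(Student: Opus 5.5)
We define the Janossy densities directly through the inversion formula
\[
j_{n, \Lambda}^{(t, \underline{\omega})}(x_1, \dots, x_n)
= \sum_{k = 0}^{\infty} \frac{(-1)^k}{k!} \int_{\Lambda^k} \rho^{(t, \underline{\omega})}_{n+k}(x_1, \dots, x_{n+k}) \, \d x_{n+1} \dots \d x_{n+k},
\]
with the pointwise version of $\rho$ built from \Cref{lemma:correlation_function_lebesgue_points}, and derive every property termwise.

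\emph{Measurability.} For fixed $\ell$, the map $(t,\underline{\omega},x_1,\dots,x_n)\mapsto \abs{Q_\ell}^{-n}\mathbb{E}[\prod_i N_{Q_\ell(x_i)}(Y_t\Ssup{\underline{\omega}})]$ is jointly measurable; this follows from the measurability of the solution map and Fubini. Taking $\ell=1/m$, $m\to\infty$, gives joint measurability of $\rho_n$ as a pointwise limit. The inversion series is then measurable as well, being an integral followed by a limit of partial sums.

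\emph{Convergence and the identification as Janossy density.} The bound $\rho_{m}^{(t,\cdot)}\lesssim z_{\max}^{m}(1-p_t)^{m}$ gives that the $k$-th term is bounded by
\[
\frac{C^{n+k}\,z_{\max}^{n+k}(1-p_t)^{n+k}\abs{\Lambda}^k}{k!}.
\]
Here we must check that the implicit constant in $\lesssim$ grows at most geometrically in $n$, as the Mecke-type argument bounding $\rho_n$ by the dominating Poisson intensity $\Vert b\Vert_\infty$ shows. The series therefore converges absolutely and uniformly, and
\[
\abs{j_{n,\Lambda}^{(t, \cdot)}}\le (Cz_{\max}(1-p_t))^n\,\e^{Cz_{\max}\abs{\Lambda}}\lesssim z_{\max}^n(1-p_t)^n
\]
after renaming $z_{\max}$ (the constant depends on $\Lambda$). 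The standard identity \cite[(5.4.14)]{DaleyVereJones2003} holds whenever $\sum_k \frac{1}{k!}\int_{\Lambda^k}\rho_k<\infty$, which is the case here. Hence the series equals a version of the Janossy density, and since our $\rho$'s are genuine versions of the correlation functions, so is $j$.

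\emph{Differentiability.} We differentiate the series termwise using \Cref{le_differentiability_corr_func}. For fixed distinct $x_1,\dots,x_n$ and $k\geq 0$, $\rho_{n+k}$ is right-differentiable at every $(x_{n+1},\dots,x_{n+k})$ outside the Lebesgue-null diagonal. Its right-derivative is bounded by
\[
\Vert b\Vert_\infty\sum_i\rho_{n+k-1}+ (n+k)\,\rho_{n+k}\lesssim (n+k)\,z_{\max}^{n+k-1}(1-p_t)^{n+k-1},
\]
uniformly in $t\le t_0$, and the bound in fact holds for the difference quotients as well, by the same estimates used in that proof. Dominated convergence lets us interchange the right-derivative with $\int_{\Lambda^k}$, and the summable majorant
\[
\sum_k \frac{(n+k)(Cz_{\max})^{n+k-1}\abs{\Lambda}^k}{k!}
\]
lets us interchange it with the sum. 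This yields right-differentiability together with the bound $n z_{\max}^{n-1}(1-p_t)^{n-1}$; the $k$-dependent factors are absorbed into the exponential constant, and for $n\ge1$ the leading factor is linear in $n$. Continuity of the right-derivative at times that are not lifespans of points of $\underline{\omega}$ in $\Lambda\oplus B(0,\range)$ follows in the same way, again by dominated convergence: the integrand converges pointwise by \Cref{le_differentiability_corr_func}(1), since $x_{n+1},\dots,x_{n+k}\in\Lambda$.

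\emph{Main obstacle.} The delicate step is obtaining the bounds on $\rho_{m}$ and on its difference quotients with constants that grow at most geometrically in $m$ and are uniform in $\underline{\omega}$ and $t\le t_0$. The stated bounds on $\rho$ hide the $m$-dependence of the implicit constant. We would recheck it via the dominating Poisson process $\zeta$: this gives $\rho_m\le (\Vert b\Vert_\infty(1-p_t))^m$ directly, since each born point alive at time $t$ is a point of $\zeta$ with lifespan exceeding the remaining time. For the difference quotients we would reuse the $O(h^2)\abs{Q_\ell}^n$ estimates from the proof of \Cref{le_differentiability_corr_func}, tracking their $n$-dependence, which is polynomial times geometric.
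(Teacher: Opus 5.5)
Your proposal is correct and follows the paper's proof. Like the paper, you define $j^{(t,\underline{\omega})}_{n,\Lambda}$ through the inversion series and differentiate termwise, justifying the interchange of right-derivative, integral and sum by dominated convergence with the bounds of \Cref{le_differentiability_corr_func}. The paper's argument is only a short sketch, and your additions supply the details it leaves implicit: the joint measurability, the explicit geometric bound $\rho_m \le (\Vert b\Vert_\infty(1-p_t))^m$ obtained by domination with the driving Poisson noise, and uniform bounds on the difference quotients rather than only on the right-derivatives.
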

\begin{proof}
    Recalling that
    \begin{align*}
        j_{n, \Lambda}^{(t, \underline{\omega})}(x_1, \dots, x_n)
        = \sum_{k = 0}^{\infty} \frac{(-1)^k}{k!} \int_{\Lambda^k} \,\rho^{(t, \underline{\omega})}_{n+k}(x_1, \dots, x_{n+k}) \, \d x_{n+1} \dots \d x_{n+k},
    \end{align*} 
    the differentiability now follows from the differentiability of the \(\rho^{(t, \underline{\omega})}_m\), shown in \Cref{le_differentiability_corr_func}, and dominated convergence using the bound given in the same lemma. The bounds also follows using the bounds given for the corresponding correlation functions and their derivatives.
\end{proof}

\begin{proof}[Proof of \Cref{proposition:good_densities_of_the_birth_and_death_process}]
    Recalling that
    \begin{align*}
        &\psi_{t, \Lambda}(\zeta_\Lambda \,\vert\, \underline{\omega}) = \e^{\abs{\Lambda}} \Big(\sum_{n = 1}^{\infty} j_{n, \Lambda}^{(t, \underline{\omega})}(\zeta_\Lambda) \mathbbm{1}_{N_{\Lambda}(\zeta_\Lambda) = n} + \Big(1 - \sum_{n = 1}^{\infty} \frac{1}{n!} \int_{\Lambda^n} \, j_{n, \Lambda}^{(t, \underline{\omega})}(x_1, \dots, x_n) \, \d x_1 \dots \d x_n \Big) \mathbbm{1}_{N_{\Lambda}(\zeta_\Lambda) = 0}\Big),
    \end{align*} 
    the first part of this lemma follows from \Cref{le_differentiability_janossys}, together with dominated convergence, while the second part, concerning \(\psi_{\Lambda, t}(\emptyset \,\vert\, \underline{\omega}) = \e^{\abs{\Lambda}} \mathbb{P}\big((Y_{t}\Ssup{\underline{\omega}})_\Lambda = \emptyset\big) \sim 1\), follows by considering the driving Poisson noise, similarly to \Cref{lemma:birth_death_process_expected_number_of_points_continuity_in_box_centers}.
\end{proof}

\section{Entropy dissipation}\label{section:entropy_dissipation}

We now finally discuss the time-evolution of the specific relative entropy 
$
    \SpecEnt(\mu \,\vert\, \nu) = \lim_{\Lambda \uparrow \mathbb{R}^d} \RelEnt_\Lambda(\mu \,\vert\, \nu)
$
under our dynamics \((T_t)_{t \geq 0}\).
In \Cref{section:entropy_dissipation_local_equality} we justify the local computation 
\begin{equation*}
    \frac{\d}{\d t}\Big\vert_{t=0} \RelEnt_\Lambda(\mu T_t \,\vert\, \nu) = \mu\Big[\mathscr{L} \log\frac{\d\mu_\Lambda}{\d \nu_\Lambda}\Big],
\end{equation*}
which will be used in conjunction with the corresponding thermodynamic limit 
established in \Cref{section:entropy_dissipation_thermodynamic_limit} and a limit-interchange to prove the de~Bruijn-type identity \Cref{eq:de_bruijn} in our main \Cref{thm:main_thm} in \Cref{sec:de_bruijn_proof}.

\subsection{Entropy dissipation in finite volumes} \label{section:entropy_dissipation_local_equality}

In this section, we will carry out the proof of \Cref{theorem_derivative_of_the_local_entropy}. 
We start with the rigorous analysis of the behavior, in particular existence and differentiability properties, of the local densities \(\d(\mu T_t)_\Lambda/\d \nu_\Lambda\), which appear in the definition of \(\RelEnt_{\Lambda}(\mu T_t \,\vert\, \nu)\).

\subsubsection{Evolution of Local Densities}\label{section:evolution_of_local_densities}

The proof of \Cref{prop:differentiability-of-densities} is split into the three lemmas below.
First, we need a small probabilistic input to later separate the contributions of the dying initial condition \(\underline{\omega}_t\) and the newly-born-but-maybe-already-dead points \(Y_t\Ssup{\underline{\omega}}\) to the birth-and-death-process \(X_t\Ssup{\underline{\omega}}\).
In the next lemma and correspondingly in all other formulas of this section, one always has to do the following interpretations:
\begin{align*}
    \sum_{x \in \emptyset} \dots = 0 \quad\text{and}\quad \int (\d s_x)_{x \in \emptyset} \dots
    = 0.
\end{align*}

\begin{lemma}[Poisson variable change]\label{le_variable_change_poisson_point_process}
    Let \(t > 0\) be fixed and \(\Lambda \Subset \mathbb{R}^d\).
    Consider two independent Poisson point processes \(\omega_\Lambda, \zeta_\Lambda\) of intensity \(1\) and the iid marked version \(\underline{\omega}_\Lambda\) of \(\omega_\Lambda\), where the marks are \(\mathrm{Exp}(1)\)-distributed random variables. Let
    \begin{align*}
        \chi_\Lambda = \underline{\omega}^{(t)}_\Lambda + \zeta_{\Lambda},\quad \text{with }\ 
        \underline{\omega}^{(t)}_\Lambda
        = \sum_{(x, R_x) \in \underline{\omega}_\Lambda} \1_{R_x > t} \, \delta_{x}.
    \end{align*}
    Now consider the following parallel construction. Let \(\widetilde{\chi}_\Lambda\) be a Poisson point process of intensity \(1 + p_t\), with \(p_t := \e^{-t}\). 
    Define
    \begin{align*}
        \widetilde{\zeta}_\Lambda
        &:= \widetilde{\chi}_\Lambda^{((\frac{1}{1+p_t})-\mathrm{Thinned})}
    \end{align*} to be the resulting point process if we independently delete points from \(\chi_\Lambda\) with probability \(1 - 1/(1+p_t)\) and denote
    \begin{align*}
        \widehat{\omega}_\Lambda &:= \widetilde{\chi}_\Lambda \setminus  \widetilde{\zeta}_\Lambda.
    \end{align*}
    Now, let \(\underline{\widehat{\omega}}_\Lambda\) be the marked point process obtained by iid marking \(\widehat{\omega}_\Lambda\) with \(\mathrm{Exp}(1)\)-random variables conditioned to be  \(> t\) and \(\underline{\widehat{\beta}}_{\Lambda}\) be an auxiliary Poisson point process of intensity \(1 - p_t\) iid marked with  \(\mathrm{Exp}(1)\)-random variables conditioned to be \(\le t\).
    Finally, let
    \begin{align*}
        \underline{\widetilde{\omega}}_\Lambda 
        = \underline{\widehat{\omega}}_\Lambda + \underline{\widehat{\beta}}_\Lambda.
    \end{align*}
    Then, the following equality in distribution holds:
    \begin{align*}
    \left(\underline{\omega}_\Lambda, \, \zeta_\Lambda, \, \chi_\Lambda \right)
        \stackrel{\text{d}}{=}
        \big(\underline{\widetilde{\omega}}_{\Lambda}, \, \widetilde{\zeta}_\Lambda, \, \widetilde{\chi}_\Lambda\big).
    \end{align*} 
    Equivalently, for all non-negative measurable functions \(F\), we have
    \begin{align*}
        \int \pi_\Lambda(\d \omega_\Lambda) \int &M(\d\underline{\omega}_\Lambda \,\vert\, \omega_\Lambda) \int \pi_\Lambda(\d \zeta_\Lambda) \, F(\underline{\omega}_\Lambda, \zeta_\Lambda, \underline{\omega}_\Lambda^{(t)} \cup \zeta_\Lambda) \\
        &= \int \pi^{1 + p_t}(\d \chi_\Lambda) \int K_t(\d \underline{\omega}_{\Lambda}, \d \zeta_\Lambda \,\vert\, \chi_\Lambda)  \, F(\underline{\omega}_\Lambda, \zeta_\Lambda, \chi_\Lambda),
    \end{align*} 
    where \(M\) is the iid marking with  \(\mathrm{Exp}(1)\)-distributed random variables and the kernel \(K_t\) is given by
    \begin{align*}
        &\int K_t(\d \underline{\omega}_{\Lambda}, \d \zeta_\Lambda \,\vert\, \chi_\Lambda) \, f(\underline{\omega}_\Lambda, \zeta_\Lambda)
        := \int \mathrm{Thin}^{\frac{1}{1+p_t}}(\d \zeta_\Lambda \,\vert\, \chi_\Lambda) \int \pi^{1-p_t}(\d \beta_\Lambda) \int M^{> t}(\d \underline{\chi_\Lambda \setminus \zeta_\Lambda} \,\vert\, \chi_\Lambda \setminus \zeta_\Lambda) \\
        &\hspace{7cm}
        \int M^{\leq t}(\underline{\beta}_\Lambda \,\vert\, \beta_\Lambda) \, f(\underline{\chi_\Lambda \setminus \zeta_\Lambda} \cup \underline{\beta}_\Lambda , \zeta_\Lambda) \\
        &:= \sum_{\zeta_\Lambda \subseteq \chi_\Lambda} \Big(\frac{1}{1+p_t}\Big)^{\abs{\zeta_\Lambda}} \Big(1 - \frac{1}{1+p_t}\Big)^{\abs{\chi_\Lambda \setminus \zeta_\Lambda}} 
        \int \pi(\d \beta_\Lambda) \e^{p_t \abs{\Lambda}} (1-p_t)^{\abs{\beta_\Lambda}} \\
        &\qquad
        \int_{(t, \infty)^{\chi_\Lambda \setminus \zeta_\Lambda}}  \d (s_x)_{x \in \chi_\Lambda \setminus \zeta_\Lambda} \,  \e^{- \sum_{x \in \chi_\Lambda \setminus \zeta_\Lambda} s_x} p_t^{- \abs{\chi_\Lambda \setminus \zeta_\Lambda}}\int_{[0,t]^{\beta_\Lambda}} \d (s_x)_{x \in \beta_\Lambda} \,  \e^{- \sum_{x \in \beta_\Lambda} s_x}(1-p_t)^{-\abs{\beta_\Lambda}} \\ 
        &\qquad \qquad f\Big(\sum_{x \in \chi_\Lambda \setminus \zeta_\Lambda} \delta_{(x, s_x)} + \sum_{y \in \beta_\Lambda} \delta_{(y, s_y)}, \, \zeta_\Lambda \Big),\qquad f\ge 0.
    \end{align*}
\end{lemma}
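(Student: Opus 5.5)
We will prove the equality in distribution by decomposing both sides into independent Poisson processes, using the superposition, thinning and independent-marking theorems for Poisson point processes.

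\emph{Left-hand side.} Start from \((\underline{\omega}_\Lambda,\zeta_\Lambda)\), where \(\underline{\omega}_\Lambda\) is a unit-intensity Poisson process with iid \(\mathrm{Exp}(1)\) marks. By the marking theorem, \(\underline{\omega}_\Lambda\) is a Poisson process on \(\Lambda\times[0,\infty)\) with intensity \(\d x\otimes \e^{-s}\d s\). Split it by the disjoint mark sets \((t,\infty)\) and \([0,t]\). This gives two independent marked Poisson processes:
\begin{itemize}
\item \(\underline{\omega}^{>t}_\Lambda\), with ground intensity \(p_t\) and marks distributed as \(\mathrm{Exp}(1)\) conditioned on \(>t\);
\item \(\underline{\omega}^{\le t}_\Lambda\), with ground intensity \(1-p_t\) and marks distributed as \(\mathrm{Exp}(1)\) conditioned on \(\le t\).
\end{itemize}
Both are independent of \(\zeta_\Lambda\). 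The unmarked projection of \(\underline{\omega}^{>t}_\Lambda\) is exactly \(\underline{\omega}^{(t)}_\Lambda\), so \(\chi_\Lambda=\underline{\omega}^{(t)}_\Lambda+\zeta_\Lambda\) is the superposition of independent Poisson processes of intensities \(p_t\) and \(1\). Hence \(\chi_\Lambda\) is Poisson with intensity \(1+p_t\), and almost surely \(\underline{\omega}^{(t)}_\Lambda\cap\zeta_\Lambda=\emptyset\).

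\emph{Key structural fact.} Given a superposition of two independent Poisson processes with intensities \(\lambda_1,\lambda_2\), the conditional law of the labelling of the points is independent Bernoulli\((\lambda_2/(\lambda_1+\lambda_2))\) labels. Equivalently, if \(\widetilde\chi_\Lambda\) is Poisson with intensity \(\lambda_1+\lambda_2\), independent thinning with retention probability \(\lambda_2/(\lambda_1+\lambda_2)\) produces two independent Poisson processes with intensities \(\lambda_2\) and \(\lambda_1\). Applied with \(\lambda_2=1\) and \(\lambda_1=p_t\), this shows that \((\widetilde{\zeta}_\Lambda,\widehat{\omega}_\Lambda)\) are independent Poisson processes of intensities \(1\) and \(p_t\). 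This is the main point, and it is a standard fact: compute the joint Laplace functional, or the joint Janossy densities as below.

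\emph{Right-hand side.} Marking \(\widehat{\omega}_\Lambda\) iid by the conditional law on \((t,\infty)\) gives a copy of \(\underline{\omega}^{>t}_\Lambda\). Adding the independent \(\underline{\widehat{\beta}}_\Lambda\) gives a copy of \(\underline{\omega}^{\le t}_\Lambda\). So \((\underline{\widetilde{\omega}}_\Lambda,\widetilde{\zeta}_\Lambda)\) has the same joint law as \((\underline{\omega}_\Lambda,\zeta_\Lambda)\). Moreover, \(\widetilde{\chi}_\Lambda=\widehat{\omega}_\Lambda+\widetilde{\zeta}_\Lambda\) is the same deterministic function of \((\underline{\widetilde\omega}_\Lambda,\widetilde\zeta_\Lambda)\) as \(\chi_\Lambda\) is of \((\underline\omega_\Lambda,\zeta_\Lambda)\): take the points with mark \(>t\) and add the second component. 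Hence the triples agree in distribution.

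\emph{Explicit kernel formula.} To check the formula for \(K_t\) directly, expand both sides over the Poisson densities, for instance by Janossy densities with respect to \(\pi_\Lambda\). On the left, the joint density of \((\omega_\Lambda,\zeta_\Lambda)\) is \(\e^{-2|\Lambda|}\) times Lebesgue measure on the points, together with mark densities \(\e^{-s_x}\). On the right:
\begin{itemize}
\item \(\pi^{1+p_t}\) contributes \(\e^{-(1+p_t)|\Lambda|}(1+p_t)^{|\chi|}\);
\item the thinning weights \((1+p_t)^{-|\chi|}p_t^{|\chi\setminus\zeta|}\) cancel the \((1+p_t)^{|\chi|}\);
\item the factor \(p_t^{-|\chi\setminus\zeta|}\) in the conditioned marks cancels \(p_t^{|\chi\setminus\zeta|}\);
\item \(\pi^{1-p_t}\), written as \(\e^{p_t|\Lambda|}(1-p_t)^{|\beta|}\) against \(\pi\), together with the factor \((1-p_t)^{-|\beta|}\) from the marks, leaves \(\e^{-|\Lambda|}\e^{p_t|\Lambda|}\).
\end{itemize}
Multiplying, the total constant is \(\e^{-(1+p_t)|\Lambda|}\e^{-|\Lambda|}\e^{p_t|\Lambda|}=\e^{-2|\Lambda|}\), matching the left-hand side. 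Summing over the decompositions \(\omega=(\chi\setminus\zeta)\cup\beta\) recovers the left-hand integral. The only delicate point is the combinatorics of identifying the configuration \(\chi\) with the pair \((\chi\setminus\zeta,\zeta)\); this is a bijection almost surely, since the two parts are a.s. disjoint.
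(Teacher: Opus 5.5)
Your proposal is correct and follows essentially the same route as the paper: drop $\chi_\Lambda$ and $\widetilde{\chi}_\Lambda$ because each is the same measurable function of the respective pair, then identify both pairs as built from the same three independent Poisson pieces (marks $>t$ at intensity $p_t$, marks $\le t$ at intensity $1-p_t$, and a unit-intensity process), via mark-splitting on the left and thinning on the right. Your extra density bookkeeping verifying the explicit form of $K_t$, which the paper leaves implicit, is also correct; one wording slip is that $\underline{\widehat{\beta}}_\Lambda$ itself is the copy of $\underline{\omega}^{\le t}_\Lambda$, so adding it to $\underline{\widehat{\omega}}_\Lambda$ yields a copy of $\underline{\omega}_\Lambda$.
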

\begin{proof}
    Note that, in order to prove the above statement, one can drop \(\chi_\Lambda, \widetilde{\chi}_\Lambda\), as both are the same (measurable) function of \(\underline{\omega}_\Lambda, \zeta_\Lambda\), resp.\ of \(\underline{\widetilde{\omega}}_\Lambda, \widetilde{\zeta}_\Lambda\). This means that we only have to show that
        \begin{align*}
        (\underline{\omega}_\Lambda, \, \zeta_\Lambda )
        \stackrel{\text{d}}{=}
        \big(\underline{\widetilde{\omega}}_{\Lambda}, \, \widetilde{\zeta}_\Lambda \big).
    \end{align*} 
    Now,
    \begin{align*}
        (\underline{\omega}_\Lambda, \, \zeta_\Lambda)
        = \Big(\underline{\omega}_\Lambda^{(\text{retained})} + \underline{\omega}_\Lambda^{(\text{discarded})}, \, \zeta_\Lambda\Big),
    \end{align*} 
    where \(\underline{\omega}_\Lambda^{(\text{retained})}, \underline{\omega}_\Lambda^{(\text{discarded})}, \zeta_\Lambda\) are independent, \(\underline{\omega}_\Lambda^{(\text{retained})}\) is a Poisson point process of intensity \(p_t\) with independent marks coming from an \(\mathrm{Exp}(1)\)-distribution conditioned to be \(> t\), \(\underline{\omega}_\Lambda^{(\text{discarded})}\) is a Poisson point process of intensity \(1 - p_t\) with independent marks coming from an \(\mathrm{Exp}(1)\)-distribution conditioned to be \(\leq t\), and \(\zeta_\Lambda\) is a Poisson point process of intensity \(1\).
    On the other hand,
    \begin{align*}
        \big(\underline{\widetilde{\omega}}_{\Lambda}, \, \widetilde{\zeta}_\Lambda \big)
        = \big(\underline{\widehat{\omega}}_\Lambda + \underline{\widehat{\beta}}_\Lambda, \, \widetilde{\zeta}_\Lambda \big),
    \end{align*} with completely analogous statements for \(\underline{\widehat{\omega}}_\Lambda, \underline{\widehat{\beta}}_\Lambda, \widetilde{\zeta}_\Lambda\).
\end{proof}

Using the above lemma, we can now semi-explicitly compute \(\d (\mu T_t)_\Lambda/\d \nu_\Lambda\). 
\begin{lemma}\label{lemma:local_densities_of_evolved_measure}
    Fix a version \(\widetilde{g}_\Lambda\) of \(\d \mu_\Lambda/\d \pi_\Lambda\).
    Then, the following function \(g_{\Lambda, t}\) 
    is a version of 
    \(\d (\mu T_t)_\Lambda/\d \nu_\Lambda\):
    \begin{align*}
        &g_{\Lambda, t}(\chi_\Lambda) := \frac{\d \pi_\Lambda}{\d \nu_\Lambda}(\chi_\Lambda) \, \widetilde{g}_{\Lambda, t}(\chi_\Lambda),
    \end{align*} 
    where
\begin{equation}\label{eq:gtilde}
\begin{split}
        \widetilde{g}_{\Lambda, t}(\chi_\Lambda)
        &:= \sum_{\zeta_\Lambda \subseteq \chi_\Lambda} 
        \int \pi(\d \beta_\Lambda) \, \widetilde{g}_\Lambda(\chi_\Lambda\setminus\zeta_\Lambda \cup \beta_\Lambda) \int_{(t, \infty)^{\chi_\Lambda \setminus \zeta_\Lambda}}  \d (s_x)_{x \in \chi_\Lambda \setminus \zeta_\Lambda} \,  \e^{- \sum_{x \in \chi_\Lambda \setminus \zeta_\Lambda} s_x} 
        \\
        &\qquad \qquad \int_{[0,t]^{\beta_\Lambda}} \d (s_x)_{x \in \beta_\Lambda} \,  \e^{- \sum_{x \in \beta_\Lambda} s_x}  \widetilde{\psi}_{\Lambda, t}\Big(\zeta_\Lambda \,\Big\vert\, \sum_{x \in \chi_\Lambda\setminus\zeta_\Lambda} \delta_{(x, s_x)} + \sum_{x \in \beta_\Lambda} \delta_{(x, s_x)} \Big),
    \end{split}
    \end{equation}
    and \(\widetilde{\psi}_{\Lambda, t}(\zeta_\Lambda \,\vert\, \omega_\Lambda) = \int \mu_{\Lambda^c \,\vert\, \Lambda}(\d \omega_{\Lambda^c} \,\vert\, \omega_\Lambda) \int M(\d\underline{\omega}_{\Lambda^c} \,\vert\, \omega_{\Lambda^c})  \, \psi_{\Lambda, t}(\zeta_\Lambda \,\vert\, \underline{\omega}_\Lambda \underline{\omega}_{\Lambda^c} )\).
\end{lemma}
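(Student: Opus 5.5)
The strategy is to show directly that \(\widetilde g_{\Lambda,t}\) is a version of \(\d(\mu T_t)_\Lambda/\d\pi_\Lambda\). The claim for \(g_{\Lambda,t}\) then follows at once, because \(\nu_\Lambda\) and \(\pi_\Lambda\) are equivalent and \(\d(\mu T_t)_\Lambda/\d\nu_\Lambda = \frac{\d\pi_\Lambda}{\d\nu_\Lambda}\,\frac{\d(\mu T_t)_\Lambda}{\d\pi_\Lambda}\). So we fix a bounded measurable \(\Lambda\)-local \(f\ge 0\) and aim to prove \((\mu T_t)[f] = \int\pi_\Lambda(\d\chi_\Lambda)\,\widetilde g_{\Lambda,t}(\chi_\Lambda)\, f(\chi_\Lambda)\).

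\textbf{Step 1 (disintegration).} By the graphical representation and \eqref{eqn:separation-initial-later},
\[(\mu T_t)[f]=\int\mu(\d\omega)\int M(\d\underline\omega\,\vert\,\omega)\,\mathbb E\big[f(\underline\omega_t{}_\Lambda + (Y_t\Ssup{\underline\omega})_\Lambda)\big].\]
We split \(\underline\omega=\underline\omega_\Lambda\underline\omega_{\Lambda^c}\). Inside \(\Lambda\), the survivors of the initial configuration are \(\underline\omega^{(t)}_\Lambda\), and this is a deterministic function of the marks. For the fresh points we use \Cref{proposition:good_densities_of_the_birth_and_death_process}, which lets us replace the law of \((Y_t\Ssup{\underline\omega})_\Lambda\) by \(\psi_{\Lambda,t}(\zeta_\Lambda\,\vert\,\underline\omega)\,\pi_\Lambda(\d\zeta_\Lambda)\). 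Next we disintegrate \(\mu\) as \(\mu_\Lambda(\d\omega_\Lambda)\,\mu_{\Lambda^c\vert\Lambda}(\d\omega_{\Lambda^c}\vert\omega_\Lambda)\), and note that the marks on \(\Lambda\) and on \(\Lambda^c\) are independent. Integrating out \(\omega_{\Lambda^c}\) and its marks produces exactly \(\widetilde\psi_{\Lambda,t}(\zeta_\Lambda\,\vert\,\underline\omega_\Lambda)\). Writing \(\mu_\Lambda=\widetilde g_\Lambda\pi_\Lambda\) (the marks enter only through \(\underline\omega_\Lambda\), and \(\widetilde g_\Lambda\) depends only on the positions) we arrive at
\[(\mu T_t)[f]=\int\pi_\Lambda(\d\omega_\Lambda)\int M(\d\underline\omega_\Lambda\vert\omega_\Lambda)\int\pi_\Lambda(\d\zeta_\Lambda)\,\widetilde g_\Lambda(\omega_\Lambda)\,\widetilde\psi_{\Lambda,t}(\zeta_\Lambda\vert\underline\omega_\Lambda)\,f(\underline\omega^{(t)}_\Lambda\cup\zeta_\Lambda).\]
A.s. the sets \(\underline\omega^{(t)}_\Lambda\) and \(\zeta_\Lambda\) are disjoint, so the union is simple.

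\textbf{Step 2 (change of variables).} We apply \Cref{le_variable_change_poisson_point_process} with \(F(\underline\omega_\Lambda,\zeta_\Lambda,\chi_\Lambda)=\widetilde g_\Lambda(\omega_\Lambda)\,\widetilde\psi_{\Lambda,t}(\zeta_\Lambda\vert\underline\omega_\Lambda)\,f(\chi_\Lambda)\). This rewrites the right-hand side as an integral against \(\pi^{1+p_t}(\d\chi_\Lambda)\,K_t(\cdot\vert\chi_\Lambda)\). It remains to convert \(\pi^{1+p_t}_\Lambda\) to \(\pi_\Lambda\) using the density \(\e^{-p_t|\Lambda|}(1+p_t)^{|\chi_\Lambda|}\), and to expand \(K_t\) explicitly. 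The normalizing factors then cancel:
\begin{itemize}
\item \((1+p_t)^{|\chi|}\) cancels against \((1+p_t)^{-|\zeta|}\big(p_t/(1+p_t)\big)^{|\chi\setminus\zeta|}\), leaving \(p_t^{|\chi\setminus\zeta|}\);
\item \(p_t^{|\chi\setminus\zeta|}\) cancels against \(p_t^{-|\chi\setminus\zeta|}\) from \(M^{>t}\);
\item \(\e^{-p_t|\Lambda|}\) cancels against \(\e^{p_t|\Lambda|}\) from \(\pi^{1-p_t}\);
\item \((1-p_t)^{|\beta|}\) cancels against \((1-p_t)^{-|\beta|}\) from \(M^{\le t}\).
\end{itemize}
What remains is exactly the integrand in \eqref{eq:gtilde}, integrated against \(\pi_\Lambda(\d\chi_\Lambda)\,f(\chi_\Lambda)\). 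In this representation the positions of the original configuration are \(\omega_\Lambda=(\chi_\Lambda\setminus\zeta_\Lambda)\cup\beta_\Lambda\), and its marks are the \(s_x\); this explains the argument of \(\widetilde g_\Lambda\) in \eqref{eq:gtilde}.

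\textbf{Main obstacle.} The delicate point is measurability, together with the legitimacy of replacing the conditional law of \((Y_t\Ssup{\underline\omega})_\Lambda\) by \(\psi_{\Lambda,t}\) \emph{inside} the \(\mu\)-integral. This requires \((t,\underline\omega,\zeta)\mapsto\psi_{\Lambda,t}(\zeta\vert\underline\omega)\) to be jointly measurable and to be a density for every \(\underline\omega\), not merely for almost every one. Both properties are provided by \Cref{proposition:good_densities_of_the_birth_and_death_process}. Given this, Fubini--Tonelli applies to the nonnegative integrands and justifies every interchange above. Since \(f\) was arbitrary, this identifies \(\widetilde g_{\Lambda,t}\) as the density of \((\mu T_t)_\Lambda\) with respect to \(\pi_\Lambda\). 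In particular \(\widetilde g_{\Lambda,t}\) is finite \(\pi_\Lambda\)-a.e., since its \(\pi_\Lambda\)-integral equals \(1\) (take \(f\equiv1\)).
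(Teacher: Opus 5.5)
Your proposal is correct and follows the paper's proof essentially step by step. Like the paper, you disintegrate \(\mu\) over \(\Lambda\) and integrate out the exterior configuration with its marks to obtain \(\widetilde{\psi}_{\Lambda,t}\), apply \Cref{le_variable_change_poisson_point_process}, and cancel the normalizing factors of \(K_t\) against the density of \(\pi^{1+p_t}_\Lambda\) with respect to \(\pi_\Lambda\); the only cosmetic difference is that you identify the \(\pi_\Lambda\)-density first and then multiply by \(\d\pi_\Lambda/\d\nu_\Lambda\), whereas the paper writes the final integral directly against \(\nu_\Lambda\).
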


Before we carry on to prove this lemma, let us show immediately how helpful it can be.
\begin{corollary}\label{corollary:regular_measures_stay_regular_under_evolution}
    Suppose \(\mu\) is a regular starting measure.
    Then, \(\mu T_t\) is regular too.
\end{corollary}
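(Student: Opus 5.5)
We work directly with the explicit version \(\widetilde{g}_{\Lambda, t}\) of \(\d(\mu T_t)_\Lambda/\d\pi_\Lambda\) from \eqref{eq:gtilde} in \Cref{lemma:local_densities_of_evolved_measure}. We show that, for fixed \(t\) and \(\Lambda\), there is a \(z = z_{\Lambda,t}\) with
\begin{align*}
z^{-N_\Lambda} \lesssim_{\Lambda,t} \widetilde{g}_{\Lambda, t} \lesssim_{\Lambda,t} z^{N_\Lambda}.
\end{align*}
Since \(\nu\) is Gibbs with bounded rates, \(\d\nu_\Lambda/\d\pi_\Lambda\) is bounded above and below by \(c_\Lambda^{\pm N_\Lambda}\). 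So the choice of reference measure \(\pi_\Lambda\) versus \(\nu_\Lambda\) only changes \(z\) and the constants.

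For the \textbf{upper bound}, insert \(\widetilde{g}_\Lambda(\chi_\Lambda\setminus\zeta_\Lambda\cup\beta_\Lambda) \lesssim z_\Lambda^{\abs{\chi_\Lambda\setminus\zeta_\Lambda}+\abs{\beta_\Lambda}}\) from the regularity of \(\mu\). Then use the uniform bound \(\widetilde{\psi}_{\Lambda,t}(\zeta_\Lambda\,\vert\,\cdot) \lesssim z_{\max}^{\abs{\zeta_\Lambda}}\). This bound comes from \Cref{proposition:good_densities_of_the_birth_and_death_process} for small \(t\). For general \(t\), we would instead dominate \(Y_t\) by the free Poisson process of intensity \(\Vert b\Vert_\infty\) driving \(\mathcal{N}\), which bounds the Janossy densities by \(\e^{\abs{\Lambda}}\Vert b\Vert_\infty^{n}\) up to a constant. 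Both bounds are uniform in \(\underline{\omega}\), so they pass through the average over \(\mu_{\Lambda^c\vert\Lambda}\).

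The lifespan integrals give at most \(p_t^{\abs{\chi\setminus\zeta}}\) and \((1-p_t)^{\abs{\beta}}\). Integrating over \(\beta_\Lambda\) gives \(\int\pi(\d\beta)\, (z_\Lambda(1-p_t))^{\abs\beta} \le \e^{(z_\Lambda-1)\abs\Lambda}\), which is a constant. The sum over subsets \(\zeta_\Lambda\subseteq\chi_\Lambda\) then contributes at most \((z_\Lambda p_t + z_{\max})^{N_\Lambda(\chi_\Lambda)}\), as required.

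For the \textbf{lower bound}, all summands in \eqref{eq:gtilde} are nonnegative. We keep only the term \(\zeta_\Lambda=\emptyset\) and restrict the \(\beta\)-integral to \(\beta_\Lambda=\emptyset\), which has \(\pi\)-mass \(\e^{-\abs\Lambda}\). What remains is
\begin{align*}
\e^{-\abs{\Lambda}}\,\widetilde{g}_\Lambda(\chi_\Lambda)\int_{(t,\infty)^{\chi_\Lambda}} \e^{-\sum s_x}\, \widetilde{\psi}_{\Lambda,t}\big(\emptyset\,\big\vert\,\cdot\big)\,\d s
\gtrsim z_\Lambda^{-N_\Lambda} p_t^{N_\Lambda}\, \inf\widetilde{\psi}_{\Lambda,t}(\emptyset\,\vert\,\cdot).
\end{align*}
The key input is therefore a lower bound \(\psi_{\Lambda,t}(\emptyset\,\vert\,\underline{\omega}) = \e^{\abs\Lambda}\mathbb{P}((Y^{(\underline\omega)}_t)_\Lambda=\emptyset)\gtrsim 1\) that is uniform in \(\underline{\omega}\) and valid for every \(t\). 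We get it from the driving noise. The event that \(\mathcal{N}\) has no atom in \(\Lambda\times[0,\infty)\times(0,\infty)\times[0,\Vert b\Vert_\infty]\) alive at time \(t\) has probability \(\e^{-\Vert b\Vert_\infty\abs\Lambda(1-p_t)}\). On this event \((Y_t)_\Lambda=\emptyset\) whatever the initial condition is. This gives \(z^{-N_\Lambda}\) with \(z = z_\Lambda/p_t\).

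The \textbf{main obstacle} is the upper bound on \(\widetilde\psi\) for all times \(t\), not only for \(t\le t_0\). We would handle it either by the Poisson domination argument above, or by using the semigroup property to iterate over time steps of length \(t_0\). In the second approach, we must check that regularity with a new constant propagates through each step, which is exactly what the one-step argument provides.
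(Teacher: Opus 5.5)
Your proposal is correct and follows the same route as the paper. The paper's proof just cites the explicit formula \eqref{eq:gtilde} for \(\widetilde{g}_{\Lambda,t}\) together with the uniform bounds on \(\psi_{\Lambda,t}\) (hence on \(\widetilde{\psi}_{\Lambda,t}\)) from \Cref{proposition:good_densities_of_the_birth_and_death_process}; your version fills in the details the paper leaves implicit: the subset/Poisson summation for the upper bound, the \(\zeta_\Lambda=\beta_\Lambda=\emptyset\) term combined with the noise-based bound \(\psi_{\Lambda,t}(\emptyset\,\vert\,\cdot)\geq \e^{(1-\Vert b\Vert_\infty)\abs{\Lambda}}\) for the lower bound (which requires a nonnegative version of \(\psi_{\Lambda,t}\)), and the extension beyond \(t\le t_0\) via domination or iteration.
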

\begin{proof}
    This follows immediately from the formula for \(\widetilde{g}_{\Lambda, t} = \d (T_t \mu)_\Lambda/\d \pi_\Lambda\) given in \Cref{lemma:local_densities_of_evolved_measure} and the bounds from \Cref{proposition:good_densities_of_the_birth_and_death_process} for \(\psi_{\Lambda, t}\), and hence \(\widetilde{\psi}_{\Lambda, t}\).
\end{proof}

\begin{proof}[Proof of \Cref{lemma:local_densities_of_evolved_measure}]

    Let \(f\) be \(\Lambda\)-measurable and \(M(\d\underline\omega\vert\omega)\) the iid marking of a configuration $\omega$ by Exp(1)-distributed random variables.
    Recall that, by \Cref{proposition:good_densities_of_the_birth_and_death_process}, there exists a measurable map \((t, \underline{\omega}, \zeta_\Lambda) \mapsto \psi_{\Lambda, t}(\zeta_\Lambda \,\vert\, \underline{\omega})\) such that \(\psi_{\Lambda, t}(\cdot \,\vert\, \underline{\omega})\) is a version of the density of \((Y_t\Ssup{\underline{\omega}})_\Lambda\) w.r.t.\ \(\pi_\Lambda\). Then, we can write
    \begin{align*}
        (T_t f)(\omega)
        = \int M(\d\underline{\omega} \,\vert\, \omega) \int \pi_\Lambda(\d\zeta_\Lambda) \, \psi_{\Lambda, t}(\zeta_\Lambda \,\vert\, \underline{\omega} ) \, f(\underline{\omega}^{(t)}_\Lambda + \zeta_\Lambda).
    \end{align*}
    Hence,
    \begin{align*}
        \int \mu_t(\d \omega) \, f(\omega)
       & := \int \mu(\d \omega) (T_t f)(\omega)\\
        &= \int \mu(\d \omega) \int M(\d\underline{\omega} \,\vert\, \omega) \int \pi_\Lambda(\d\zeta_\Lambda) \, \psi_{\Lambda, t}(\zeta_\Lambda \,\vert\, \underline{\omega} ) \, f(\underline{\omega}^{(t)}_\Lambda + \zeta_\Lambda) \\
        &= \int \pi(\d \omega_\Lambda) \widetilde{g}_\Lambda(\omega_\Lambda) \int \mu_{\Lambda^c \,\vert\, \Lambda}(\d \omega_{\Lambda^c} \,\vert\, \omega_\Lambda) \int M(\d\underline{\omega} \,\vert\, \omega) \int \pi_\Lambda(\d\zeta_\Lambda) \, \psi_{\Lambda, t}(\zeta_\Lambda \,\vert\, \underline{\omega} ) \, f(\underline{\omega}^{(t)}_\Lambda + \zeta_\Lambda) \\
        &= \int \pi(\d \omega_\Lambda) \int M(\d\underline{\omega}_\Lambda \,\vert\, \omega_\Lambda) \int \pi_\Lambda(\d\zeta_\Lambda)  \, F(\underline{\omega}_\Lambda, \zeta_\Lambda, \underline{\omega}_{\Lambda}^{(t)} + \zeta_\Lambda),
    \end{align*} with
    \begin{align*}
        F(\underline{\omega}_\Lambda, \zeta_\Lambda, \chi_\Lambda)
        := \widetilde{g}_\Lambda(\omega_\Lambda) \int \mu_{\Lambda^c \,\vert\, \Lambda}(\d \omega_{\Lambda^c} \,\vert\, \omega_\Lambda) \int M(\d\underline{\omega}_{\Lambda^c} \,\vert\, \omega_{\Lambda^c})  \, \psi_{\Lambda, t}(\zeta_\Lambda \,\vert\, \underline{\omega}_\Lambda \underline{\omega}_{\Lambda^c} ) \, f(\chi_\Lambda). 
    \end{align*}
    Now \Cref{le_variable_change_poisson_point_process} implies
    \begin{align*}
        &\int \pi(\d \omega_\Lambda) \int M(\d\underline{\omega}_\Lambda \,\vert\, \omega_\Lambda) \int \pi_\Lambda(\d\zeta_\Lambda)  \, F(\underline{\omega}_\Lambda, \zeta_\Lambda, \underline{\omega}_{\Lambda}^{(t)} + \zeta_\Lambda) \\
        &= \int \pi^{1 + p_t}(\d \chi_\Lambda) \int K_t(\d \underline{\omega}_{\Lambda}, \d \zeta_\Lambda \,\vert\, \chi_\Lambda)  \, F(\underline{\omega}_\Lambda, \zeta_\Lambda, \chi_\Lambda) = \int \nu(\d \chi_\Lambda) \, g_{\Lambda, t}(\chi_\Lambda) f(\chi_\Lambda),
    \end{align*} with
    \begin{align*}
        &g_{\Lambda, t}(\chi_\Lambda):= \e^{-p_t \abs{\Lambda}} (1+p_t)^{N_\Lambda(\chi_\Lambda)} \frac{\d \pi_\Lambda}{\d \nu_\Lambda}(\chi_\Lambda) \int K_t(\d \underline{\omega}_{\Lambda}, \d \zeta_\Lambda \,\vert\, \chi_\Lambda) \, \widetilde{g}_\Lambda(\omega_\Lambda) \widetilde{\psi}_{\Lambda, t}(\zeta_\Lambda \,\vert\, \underline{\omega}_\Lambda ), 
    \end{align*} where
    \begin{align*}
        \widetilde{\psi}_{\Lambda, t}(\zeta_\Lambda \,\vert\, \underline{\omega}_\Lambda ) 
        :=
        \int \mu_{\Lambda^c \,\vert\, \Lambda}(\d \omega_{\Lambda^c} \,\vert\, \omega_\Lambda) \int M(\d\underline{\omega}_{\Lambda^c} \,\vert\, \omega_{\Lambda^c})  \, \psi_{\Lambda, t}(\zeta_\Lambda \,\vert\, \underline{\omega}_\Lambda \underline{\omega}_{\Lambda^c} ).
    \end{align*}
    By the explicit form of the kernel \(K_t\), given in \Cref{le_variable_change_poisson_point_process}, we get
    \begin{align*}
        g_{\Lambda, t}(\chi_\Lambda)
        &= \e^{-p_t \abs{\Lambda}} (1+p_t)^{N_\Lambda(\chi_\Lambda)} \frac{\d \pi_\Lambda}{\d \nu_\Lambda}(\chi_\Lambda) \int K_t(\d \underline{\omega}_{\Lambda}, \d \zeta_\Lambda \,\vert\, \chi_\Lambda) \, \widetilde{g}_\Lambda(\omega_\Lambda) \widetilde{\psi}_{\Lambda, t}(\zeta_\Lambda \,\vert\, \underline{\omega}_\Lambda ) \\
        &= \frac{\d \pi_\Lambda}{\d \nu_\Lambda}(\chi_\Lambda) \sum_{\zeta_\Lambda \subseteq \chi_\Lambda} 
        \int \pi(\d \beta_\Lambda) \, \widetilde{g}_\Lambda(\chi_\Lambda\setminus\zeta_\Lambda \cup \beta_\Lambda) \int_{(t, \infty)^{\chi_\Lambda \setminus \zeta_\Lambda}}  \d (s_x)_{x \in \chi_\Lambda \setminus \zeta_\Lambda} \,  \e^{- \sum_{x \in \chi_\Lambda \setminus \zeta_\Lambda} s_x} 
        \\
        &\qquad\int_{[0,t]^{\beta_\Lambda}} \d (s_x)_{x \in \beta_\Lambda} \,  \e^{- \sum_{x \in \beta_\Lambda} s_x} \widetilde{\psi}_{\Lambda, t}\Big(\zeta_\Lambda \,\bigg\vert\, \sum_{x \in \chi_\Lambda\setminus\zeta_\Lambda} \delta_{(x, s_x)} + \sum_{x \in \beta_\Lambda} \delta_{(x, s_x)} \Big),
    \end{align*}
    as desired. 
\end{proof}

Finally, we compute the time-\(t\)-derivative of \(g_{\Lambda, t} =\d (\mu T_t)_\Lambda/\d \nu_\Lambda\).
\begin{lemma}\label{lemma:derivative_of_g_t}
    The derivative  in \(t > 0\) of \(g_{\Lambda, t}\) from \Cref{lemma:local_densities_of_evolved_measure} is given by
    \begin{align*}
        \frac{\d}{\d t} g_{\Lambda, t}(\chi_\Lambda) 
        &= \frac{\d \pi_\Lambda}{\d \nu_\Lambda}(\chi_\Lambda) \sum_{\zeta_\Lambda \subseteq \chi_\Lambda} 
        \int \pi(\d \beta_\Lambda) \, \widetilde{g}_\Lambda(\chi_\Lambda\setminus\zeta_\Lambda \cup \beta_\Lambda) \Bigg\{ 
        \int_{(t, \infty)^{\chi_\Lambda \setminus \zeta_\Lambda}} \d (s_x)_{x \in \chi_\Lambda \setminus \zeta_\Lambda} 
        \int_{[0, t]^{\beta_\Lambda}} \d (s_x)_{x \in \beta_\Lambda}\\
        &\qquad\qquad 
        \e^{-\sum_{x \in \chi_\Lambda\setminus\zeta_\Lambda} s_x - \sum_{x \in \beta_\Lambda} s_x}  
        \Big(\frac{\d}{\d t} \widetilde{\psi}_{\Lambda, t}\Big) \Big(\zeta_\Lambda \,\Big\vert\, \sum_{x \in \chi_\Lambda\setminus\zeta_\Lambda} \delta_{(x, s_x)} + \sum_{x \in \beta_\Lambda} \delta_{(x, s_x)} \Big)  \\
        &\qquad 
        - \sum_{x \in \chi_\Lambda \setminus \zeta_\Lambda} \int_{(t, \infty)^{(\chi_\Lambda \setminus \zeta_\Lambda)-\delta_{x}}} \d (s_y)_{y \in (\chi_\Lambda \setminus \zeta_\Lambda)-\delta_{x}} \int_{[0, t]^{\beta_\Lambda}} \d (s_y)_{y \in \beta_\Lambda}\\
        &\qquad\qquad
        \e^{-t -\sum_{y \in (\chi_\Lambda\setminus\zeta_\Lambda)-\delta_{x}} s_y - \sum_{y \in \beta_\Lambda} s_y} 
        \widetilde{\psi}_{\Lambda, t}\Big(\zeta_\Lambda \,\Big\vert\, \delta_{(x, t)} + \sum_{y \in (\chi_\Lambda\setminus\zeta_\Lambda)-\delta_{x}} \delta_{(y, s_y)} + \sum_{y \in \beta_\Lambda} \delta_{(y, s_y)} \Big) \\
        &\qquad
        + \sum_{x \in \beta_\Lambda} \int_{(t, \infty)^{\chi_\Lambda\setminus\zeta_\Lambda}} \d (s_y)_{y \in \chi_\Lambda \setminus \zeta_\Lambda} 
        \int_{[0, t]^{\beta_\Lambda-\delta_{x}}} \d (s_y)_{y \in \beta_\Lambda-\delta_{x}}  \\
        &\qquad\qquad  
        \e^{-t -\sum_{y \in \chi_\Lambda\setminus\zeta_\Lambda} s_y - \sum_{y \in \beta_\Lambda-\delta_{x}} s_y}
        \widetilde{\psi}_{\Lambda, t}\Big(\zeta_\Lambda \,\Big\vert\, \delta_{(x, t)} + \sum_{y \in \chi_\Lambda\setminus\zeta_\Lambda} \delta_{(y, s_y)} + \sum_{y \in \beta_\Lambda-\delta_{x}} \delta_{(y, s_y)} \Big)
        \Bigg\}.
    \end{align*} 
\end{lemma}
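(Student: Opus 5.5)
The plan is to differentiate the explicit formula \eqref{eq:gtilde} for \(\widetilde{g}_{\Lambda, t}\) term by term; the prefactor \(\d\pi_\Lambda/\d\nu_\Lambda(\chi_\Lambda)\) does not depend on \(t\). For fixed \(\chi_\Lambda\), the sum over \(\zeta_\Lambda \subseteq \chi_\Lambda\) is finite. So it suffices to differentiate, for each fixed \(\zeta_\Lambda\), the map
\begin{align*}
t \mapsto \int \pi(\d \beta_\Lambda)\, \widetilde{g}_\Lambda(\chi_\Lambda\setminus\zeta_\Lambda \cup \beta_\Lambda)\, \Phi_t(\beta_\Lambda),
\end{align*}
where \(\Phi_t(\beta_\Lambda)\) is the iterated time integral. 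Inside \(\Phi_t\), the parameter \(t\) enters in three places: the lower limits \(t\) of the integrals over \(s_x\), \(x\in\chi_\Lambda\setminus\zeta_\Lambda\); the upper limits \(t\) of the integrals over \(s_x\), \(x\in\beta_\Lambda\); and the integrand \(\widetilde{\psi}_{\Lambda, t}\) itself. Applying the Leibniz rule to the product-type domain \((t,\infty)^{\chi_\Lambda\setminus\zeta_\Lambda}\times[0,t]^{\beta_\Lambda}\) gives exactly the three groups of terms in the claim.

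\emph{Boundary terms.} A lower limit \(s_x=t\) gives the term with a minus sign, and the integrand is evaluated at the mark \((x,t)\) with weight \(\e^{-t}\). An upper limit gives the term with a plus sign. This step is elementary once the integrand is continuous in \(s_x\) at \(s_x=t\), or at least in an averaged sense.

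\emph{Interior term.} Here one needs \(\frac{\d}{\d t}\widetilde{\psi}_{\Lambda,t}(\zeta_\Lambda\,\vert\,\underline\omega_\Lambda)\). This is obtained by differentiating under \(\mu_{\Lambda^c\vert\Lambda}\) and \(M(\d\underline\omega_{\Lambda^c}\vert\omega_{\Lambda^c})\) in the definition of \(\widetilde\psi\), using \Cref{proposition:good_densities_of_the_birth_and_death_process}. That result supplies right-differentiability of \(\psi_{\Lambda,t}(\zeta_\Lambda\vert\underline\omega)\) together with the uniform bound \(\abs{\partial_t^+\psi}\lesssim \abs{\zeta_\Lambda} z_{\max}^{\abs{\zeta_\Lambda}-1}\). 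This bound permits dominated convergence in the outer integrals.

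\emph{Obtaining a two-sided derivative.} To pass from a right derivative to a genuine derivative, the plan is to use the fact that a continuous function with a right derivative that is continuous outside a null set, and locally bounded, is differentiable with that derivative. Concretely, one writes \(\psi_{\Lambda,t}=\psi_{\Lambda,0}+\int_0^t \partial_s^+\psi_{\Lambda,s}\,\d s\). By \Cref{proposition:good_densities_of_the_birth_and_death_process}(1), the right derivative is discontinuous only at the finitely many lifespans of \(\underline\omega\) in \(\Lambda\oplus B(0,\range)\). After integrating over the random marks \(M\), which have a density, and over \(s_x\), the set of exceptional \(\underline\omega\) for a given \(t\) has measure zero. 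Hence \(\widetilde\psi_{\Lambda,t}\), and also the integrated expressions, have a continuous derivative in \(t\), and the fundamental theorem of calculus yields two-sided differentiability.

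\emph{Justifying the interchanges.} Differentiation must be exchanged with \(\int\pi(\d\beta_\Lambda)\) and with the mark integrals. For this, regularity of \(\mu\) gives \(\widetilde g_\Lambda(\chi_\Lambda\setminus\zeta_\Lambda\cup\beta_\Lambda)\lesssim z_\Lambda^{\abs{\chi_\Lambda}+\abs{\beta_\Lambda}}\). The factors \(\e^{-s}\) make the time integrals at most \(1\) each. With the bounds of \Cref{proposition:good_densities_of_the_birth_and_death_process}, the integrand is dominated by \(C(\chi_\Lambda)\, z_\Lambda^{\abs{\beta_\Lambda}}\), which is \(\pi\)-integrable because \(\pi[z^{N_\Lambda}]=\e^{(z-1)\abs{\Lambda}}<\infty\). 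This bound is uniform for \(t\) in compact subsets of \((0,\infty)\). Dominated convergence applied to difference quotients then finishes the argument.

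\emph{Main obstacle.} I expect the hardest step to be the one just described for the interior term: upgrading the one-sided, piecewise-continuous derivative of \(\psi\) into an honest derivative of \(\widetilde\psi\) uniformly enough to commute with the \(\beta_\Lambda\)- and mark-integrals. In particular, the jump times of \(\partial_t^+\psi\) sit exactly at lifespans \(s_x\) that are themselves being integrated near \(t\), which is where the boundary terms arise. I would handle this by first proving the statement in integrated form, \(g_{\Lambda,t}-g_{\Lambda,0}=\int_0^t(\text{claimed RHS})\,\d r\), via Fubini, and then showing continuity of the claimed right-hand side in \(t\). Differentiability then follows from the fundamental theorem of calculus.
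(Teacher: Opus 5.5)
Your proposal follows the paper's own route: it differentiates the integrand of \eqref{eq:gtilde} formally, with the moving limits of the \(s_x\)-integrals giving the two boundary sums and \(\widetilde\psi_{\Lambda,t}\) giving the interior term. It justifies the interchanges by difference quotients and domination, and it upgrades the right-differentiability from \Cref{proposition:good_densities_of_the_birth_and_death_process} to differentiability by averaging over the lifespans, exactly as the paper does. One caveat: the continuity you need for the boundary terms (and for continuity of the right-hand side in your integrated-form argument) is a joint continuity of \(\widetilde\psi_{\Lambda,t}\) in \(t\) and the lifespans, which \Cref{proposition:good_densities_of_the_birth_and_death_process} does not supply and which the paper only asserts via the coupling arguments of \Cref{lemma:birth_death_process_expected_number_of_points_continuity_in_box_centers} and \Cref{le_continuity_wrt_translation_of_initial_conditions}.
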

\begin{proof}
    Recalling that $g_{\Lambda, t}(\chi_\Lambda) := \frac{\d \pi_\Lambda}{\d \nu_\Lambda}(\chi_\Lambda) \, \widetilde{g}_{\Lambda, t}(\chi_\Lambda)$, we can formally calculate the derivative of the integrand in~\eqref{eq:gtilde} for $\widetilde g$, yielding
    \begin{align*}
        &\frac{\d}{\d t} \Big( \prod_{x \in \chi_\Lambda \setminus \zeta_\Lambda} \1_{(t, \infty)}(s_x) \prod_{x \in \beta_\Lambda} \1_{[0, t]}(s_x)  \, \widetilde{\psi}_{\Lambda, t}\Big(\zeta_\Lambda \,\Big\vert\, \sum_{x \in \chi_\Lambda\setminus\zeta_\Lambda} \delta_{(x, s_x)} + \sum_{x \in \beta_\Lambda} \delta_{(x, s_x)} \Big)  \Big) \\
        &= \prod_{x \in \chi_\Lambda \setminus \zeta_\Lambda} \1_{(t, \infty)}(s_x) \prod_{x \in \beta_\Lambda} \1_{[0, t]}(s_x)  \, \Big(\frac{\d}{\d t} \widetilde{\psi}_{\Lambda, t}\Big(\zeta_\Lambda \,\bigg\vert\, \sum_{x \in \chi_\Lambda\setminus\zeta_\Lambda} \delta_{(x, s_x)} + \sum_{x \in \beta_\Lambda} \delta_{(x, s_x)} \Big)  \Big) \\
        &\qquad + \Big( \sum_{x \in \chi_\Lambda \setminus \zeta_\Lambda} \delta_t(s_x)  \prod_{y \in (\chi_\Lambda \setminus \zeta_\Lambda) -\delta_{x}} \1_{(t, \infty)}(s_y) \prod_{y \in \beta_\Lambda} \1_{[0, t]}(s_y) \\
        &\qquad\qquad  - \sum_{x \in \beta_\Lambda} \delta_t(s_x)  \prod_{y \in \chi_\Lambda \setminus \zeta_\Lambda} \1_{(t, \infty)}(s_y) \prod_{y \in \beta_\Lambda-\delta_{x}} \1_{[0, t]}(s_y) \Big) \widetilde{\psi}_{\Lambda, t}\Big(\zeta_\Lambda \,\Big\vert\, \sum_{x \in \chi_\Lambda\setminus\zeta_\Lambda} \delta_{(x, s_x)} + \sum_{x \in \beta_\Lambda} \delta_{(x, s_x)} \Big).
    \end{align*}
    By interchanging limit and integral we arrive at
    \begin{align*}
        &\frac{\d}{\d t} g_{\Lambda, t}(\chi_\Lambda)
        = \frac{\d \pi_\Lambda}{\d \nu_\Lambda}(\chi_\Lambda)\sum_{\zeta_\Lambda \subseteq \chi_\Lambda} 
        \int \pi(\d \beta_\Lambda) \, \widetilde{g}_\Lambda(\chi_\Lambda\setminus\zeta_\Lambda \cup \beta_\Lambda) 
        \int \d (s_x)_{x \in \chi_\Lambda \setminus \zeta_\Lambda} 
        \int \d (s_x)_{x \in \beta_\Lambda}\\
        &
        \e^{-\sum_{x \in \chi_\Lambda\setminus\zeta_\Lambda} s_x - \sum_{x \in \beta_\Lambda} s_x} 
        \Bigg\{ \prod_{x \in \chi_\Lambda \setminus \zeta_\Lambda} \1_{(t, \infty)}(s_x) \prod_{x \in \beta_\Lambda} \1_{[0, t]}(s_x)  \, \Big(\frac{\d}{\d t} \widetilde{\psi}_{\Lambda, t}\Big(\zeta_\Lambda \,\Big\vert\, \sum_{x \in \chi_\Lambda\setminus\zeta_\Lambda} \delta_{(x, s_x)} + \sum_{x \in \beta_\Lambda} \delta_{(x, s_x)} \Big)  \Big) \\
        &\quad
        + \Big( - \sum_{x \in \chi_\Lambda \setminus \zeta_\Lambda} \delta_t(s_x)  \prod_{y \in (\chi_\Lambda \setminus \zeta_\Lambda) -\delta_{x}} \1_{(t, \infty)}(s_y) \prod_{y \in \beta_\Lambda} \1_{[0, t]}(s_y) \\
        &\qquad\quad  + \sum_{x \in \beta_\Lambda} \delta_t(s_x)  \prod_{y \in \chi_\Lambda \setminus \zeta_\Lambda} \1_{(t, \infty)}(s_y) \prod_{y \in \beta_\Lambda-\delta_{x}} \1_{[0, t]}(s_y) \Big) \widetilde{\psi}_{\Lambda, t}\Big(\zeta_\Lambda \,\Big\vert\, \sum_{x \in \chi_\Lambda\setminus\zeta_\Lambda} \delta_{(x, s_x)} + \sum_{x \in \beta_\Lambda} \delta_{(x, s_x)} \Big) \Bigg\} \\
        &=: A_t(\chi_\Lambda) + B_t(\chi_\Lambda)  + C_t(\chi_\Lambda), 
    \end{align*} 
    where
    \begin{align*}
    A_t(\chi_\Lambda) 
    &    := \frac{\d \pi_\Lambda}{\d \nu_\Lambda}(\chi_\Lambda) \sum_{\zeta_\Lambda \subseteq \chi_\Lambda} 
        \int \pi(\d \beta_\Lambda) \, \widetilde{g}_\Lambda(\chi_\Lambda\setminus\zeta_\Lambda \cup \beta_\Lambda) \int_{(t, \infty)^{\chi_\Lambda \setminus \zeta_\Lambda}} \d (s_x)_{x \in \chi_\Lambda \setminus \zeta_\Lambda} 
        \int_{[0, t]^{\beta_\Lambda}} \d (s_x)_{x \in \beta_\Lambda}\\
        &\qquad\e^{-\sum_{x \in \chi_\Lambda\setminus\zeta_\Lambda} s_x - \sum_{x \in \beta_\Lambda} s_x} \Big(\frac{\d}{\d t} \widetilde{\psi}_{\Lambda, t}\Big) \Big(\zeta_\Lambda \,\Big\vert\, \sum_{x \in \chi_\Lambda\setminus\zeta_\Lambda} \delta_{(x, s_x)} + \sum_{x \in \beta_\Lambda} \delta_{(x, s_x)} \Big),
    \end{align*} 
    and
    \begin{align*}
        &B_t(\chi_\Lambda)  
        := - \frac{\d \pi_\Lambda}{\d \nu_\Lambda}(\chi_\Lambda) \sum_{\zeta_\Lambda \subseteq \chi_\Lambda} 
        \int \pi(\d \beta_\Lambda) \, \widetilde{g}_\Lambda(\chi_\Lambda\setminus\zeta_\Lambda \cup \beta_\Lambda) \sum_{x \in \chi_\Lambda \setminus \zeta_\Lambda} \int_{(t, \infty)^{(\chi_\Lambda \setminus \zeta_\Lambda)-\delta_{x}}} \d (s_y)_{y \in (\chi_\Lambda \setminus \zeta_\Lambda)-\delta_{x}}\\ 
        &\quad\int_{[0, t]^{\beta_\Lambda}} \d (s_y)_{y \in \beta_\Lambda} \e^{-t -\sum_{y \in (\chi_\Lambda\setminus\zeta_\Lambda)-\delta_{x}} s_y - \sum_{y \in \beta_\Lambda} s_y}  \widetilde{\psi}_{\Lambda, t}\Big(\zeta_\Lambda \,\Big\vert\, \delta_{(x, t)} + \sum_{y \in (\chi_\Lambda\setminus\zeta_\Lambda)-\delta_{x}} \delta_{(y, s_y)} + \sum_{y \in \beta_\Lambda} \delta_{(y, s_y)} \Big),
    \end{align*} 
    and
    \begin{align*}
        &C_t(\chi_\Lambda) 
        := \frac{\d \pi_\Lambda}{\d \nu_\Lambda}(\chi_\Lambda) \sum_{\zeta_\Lambda \subseteq \chi_\Lambda} 
        \int \pi(\d \beta_\Lambda) \, \widetilde{g}_\Lambda(\chi_\Lambda\setminus\zeta_\Lambda \cup \beta_\Lambda)\sum_{x \in \beta_\Lambda} \int_{(t, \infty)^{\chi_\Lambda\setminus\zeta_\Lambda}} \d (s_y)_{y \in \chi_\Lambda \setminus \zeta_\Lambda}  \\ 
        &
        \int_{[0, t]^{\beta_\Lambda-\delta_{x}}} \d (s_y)_{y \in \beta_\Lambda-\delta_{x}} \e^{-t -\sum_{y \in \chi_\Lambda\setminus\zeta_\Lambda} s_y - \sum_{y \in \beta_\Lambda-\delta_{x}} s_y}  \widetilde{\psi}_{\Lambda, t}\Big(\zeta_\Lambda \,\Big\vert\, \delta_{(x, t)} + \sum_{y \in \chi_\Lambda\setminus\zeta_\Lambda} \delta_{(y, s_y)} + \sum_{y \in \beta_\Lambda-\delta_{x}} \delta_{(y, s_y)} \Big).
    \end{align*}
    These formal calculations can be made rigorous without further difficulties except for some bookkeeping, achieved by looking at the corresponding difference quotients and some routine justifications of limit interchanging, using the fact that there is some joint continuity of \(\widetilde{\psi}_{\Lambda, t}(\zeta_\Lambda \,\vert\, \underline{\omega}_\Lambda )\) in \(t\) and the lifespans of points in \(\underline{\omega}_\Lambda\), which can again be proven very similarly to the technical input for the existence of correlation functions \Cref{lemma:birth_death_process_expected_number_of_points_continuity_in_box_centers} and the continuity in spatial translation of initial conditions in \Cref{le_continuity_wrt_translation_of_initial_conditions}.
    Note that the right-differentiability in \(t\) of \(\psi_{\Lambda, t}\) provided by \Cref{proposition:good_densities_of_the_birth_and_death_process} is here upgraded to differentiability at any \(t > 0\) by averaging over the lifespans in the initial configuration.
\end{proof}

\subsubsection{Proof of \Cref{theorem_derivative_of_the_local_entropy}}\label{sec:proof-theorem-derivative-local-entropy}

Using one last lemma on uniform integrability, the proof of which we postpone until immediately after, we are now in the position to give a proof of \Cref{theorem_derivative_of_the_local_entropy}.
\begin{lemma}\label{lemma:uniform_integrability_estimates}
    If the moments
    \begin{align*}
        \pi_\Lambda\big[N_\Lambda \, \widetilde{g}_\Lambda \log \widetilde{g}_\Lambda \big]\quad\text{and}\quad \pi_\Lambda\big[1/\widetilde{g}_\Lambda\big]
    \end{align*}
    exist, then the three families 
    \begin{equation*}
        \frac{\d }{\d t}\big(\widetilde{g}_{\Lambda, t} \log \widetilde{g}_{\Lambda, t} \big),\quad 
        \Big(\frac{\d }{\d t}\widetilde{g}_{\Lambda, t}\Big) \log \widetilde{g}_\Lambda,\quad
        \frac{\d }{\d t}\widetilde{g}_{\Lambda, t},
    \end{equation*}
    derived from \(\widetilde{g}_{\Lambda, t}\) in \Cref{lemma:local_densities_of_evolved_measure}, are uniformly integrable w.r.t.\ \(\pi_\Lambda\), for \(t \in (0, T)\).
\end{lemma}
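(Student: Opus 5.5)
The plan is to find a single $\pi_\Lambda$-integrable dominating function $G_\Lambda$, independent of $t \in (0,T)$, for each of the three families. Domination by one integrable function already gives uniform integrability. The starting point is the explicit formulas for $\widetilde g_{\Lambda,t}$ in \Cref{lemma:local_densities_of_evolved_measure} and for its derivative, i.e.\ the three terms $A_t,B_t,C_t$ in \Cref{lemma:derivative_of_g_t} divided by $\d\pi_\Lambda/\d\nu_\Lambda$. In all three terms the only $t$-dependent ingredients are $\widetilde\psi_{\Lambda,t}$, its derivative, and exponential factors bounded by $1$. By \Cref{proposition:good_densities_of_the_birth_and_death_process} (averaged over the exterior initial condition to pass from $\psi$ to $\widetilde\psi$) we have, on $(0,T)$,
\[
|\widetilde\psi_{\Lambda,t}(\zeta_\Lambda\,|\,\cdot)| + \Big|\tfrac{\d}{\d t}\widetilde\psi_{\Lambda,t}(\zeta_\Lambda\,|\,\cdot)\Big| \lesssim (1+|\zeta_\Lambda|)\, z^{|\zeta_\Lambda|}.
\]
Beyond $t_0$ we use the semigroup property to restart, so that $T$ is covered.

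Inserting these bounds gives
\[
\Big|\tfrac{\d}{\d t}\widetilde g_{\Lambda,t}(\chi_\Lambda)\Big| \lesssim \sum_{\zeta_\Lambda\subseteq\chi_\Lambda}(1+|\zeta_\Lambda|)z^{|\zeta_\Lambda|}\int\pi(\d\beta_\Lambda)\,(1+|\chi_\Lambda|+|\beta_\Lambda|)\,\widetilde g_\Lambda(\chi_\Lambda\setminus\zeta_\Lambda\cup\beta_\Lambda) =: G_\Lambda(\chi_\Lambda).
\]
The factor $|\chi_\Lambda\setminus\zeta_\Lambda|+|\beta_\Lambda|$ comes from the sums over $x$ in $B_t$ and $C_t$. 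The same argument gives $\widetilde g_{\Lambda,t}\lesssim G_\Lambda$. To show $\pi_\Lambda[G_\Lambda]<\infty$, we apply the Mecke formula to the sum over subsets $\zeta_\Lambda$, equivalently the variable change of \Cref{le_variable_change_poisson_point_process} read backwards. This turns $\pi_\Lambda[G_\Lambda]$ into an integral of $\widetilde g_\Lambda(\omega_\Lambda)(1+N_\Lambda(\omega_\Lambda))$ against $\pi_\Lambda$, times a finite Poisson moment $\sum_k (1+k)z^k|\Lambda|^k/k!$. The result is finite because $\pi_\Lambda[N_\Lambda\widetilde g_\Lambda]\le \pi_\Lambda[N_\Lambda\widetilde g_\Lambda\log\widetilde g_\Lambda]+\pi_\Lambda[N_\Lambda]$. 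This handles the third family directly.

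For the second family, $|\frac{\d}{\d t}\widetilde g_{\Lambda,t}|\,|\log\widetilde g_\Lambda|$, we split according to the sign of $\log\widetilde g_\Lambda$. On $\{\widetilde g_\Lambda\ge1\}$ we would like to bound by $G_\Lambda\log\widetilde g_\Lambda$. This is not immediately comparable, because $\widetilde g_\Lambda$ in the log is evaluated at $\chi_\Lambda$, whereas $G_\Lambda$ contains $\widetilde g_\Lambda$ at other configurations. We therefore use the elementary inequality $ab\le a\log a + e^{b-1}$ with $a = G_\Lambda(\chi_\Lambda)/C$ and $b=\log\widetilde g_\Lambda(\chi_\Lambda)\wedge$ something, or more simply Young's inequality $xy\le x\log x+e^{y}$ applied pointwise. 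This reduces matters to $\pi_\Lambda[G_\Lambda\log^+G_\Lambda]$ and $\pi_\Lambda[\widetilde g_\Lambda]$. The quantity $G_\Lambda\log^+G_\Lambda$ is controlled by convexity of $x\log x$ (Jensen over the normalized mixture defining $G_\Lambda$), which brings us back to $\pi_\Lambda[N_\Lambda\widetilde g_\Lambda\log\widetilde g_\Lambda]$ plus Poisson moments. On $\{\widetilde g_\Lambda<1\}$ we have $|\log\widetilde g_\Lambda|\le 1/\widetilde g_\Lambda$. By Cauchy--Schwarz-type splitting, or again Young's inequality, we bound $G_\Lambda\log(1/\widetilde g_\Lambda)\le G_\Lambda\log^+G_\Lambda + 1/\widetilde g_\Lambda$, and the latter is integrable by the hypothesis $\pi_\Lambda[1/\widetilde g_\Lambda]<\infty$.

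For the first family we write $\frac{\d}{\d t}(\widetilde g_{\Lambda,t}\log\widetilde g_{\Lambda,t})=(\log\widetilde g_{\Lambda,t}+1)\frac{\d}{\d t}\widetilde g_{\Lambda,t}$, so the task is to control $|\log\widetilde g_{\Lambda,t}|$ uniformly in $t$. The upper side uses $\widetilde g_{\Lambda,t}\lesssim G_\Lambda$, and then proceeds exactly as above. \textbf{This lower bound on $\widetilde g_{\Lambda,t}$ is the step I expect to be the main obstacle.} The plan is to keep only the term $\zeta_\Lambda=\emptyset$, $\beta_\Lambda=\emptyset$ in \eqref{eq:gtilde}, together with $\widetilde\psi_{\Lambda,t}(\emptyset\,|\,\cdot)\sim1$ from \Cref{proposition:good_densities_of_the_birth_and_death_process}. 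This yields
\[
\widetilde g_{\Lambda,t}(\chi_\Lambda)\gtrsim e^{-|\Lambda|}\,p_t^{|\chi_\Lambda|}\,\widetilde g_\Lambda(\chi_\Lambda),
\]
hence $\log(1/\widetilde g_{\Lambda,t})\lesssim 1+T|\chi_\Lambda|+\log^+(1/\widetilde g_\Lambda)$. The product with $G_\Lambda$ is then handled by the Young-inequality splitting already used, with $N_\Lambda G_\Lambda$ integrable by the same Mecke computation, using one more moment supplied by the hypothesis.
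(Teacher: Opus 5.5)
There is a genuine gap: your majorant $G_\Lambda$ is not $\pi_\Lambda$-integrable under the hypotheses of the lemma. When you bound the time integrals $\int_{(t,\infty)}\e^{-s}\,\d s=p_t$ and $\int_{[0,t]}\e^{-s}\,\d s=1-p_t$ by $1$, you discard the weights $p_t^{\abs{\chi_\Lambda\setminus\zeta_\Lambda}}(1-p_t)^{\abs{\beta_\Lambda}}$. These weights are what make $(\chi_\Lambda\setminus\zeta_\Lambda)\cup\beta_\Lambda$ a superposition of Poisson processes of intensities $p_t$ and $1-p_t$, hence again of intensity one, and reading \Cref{le_variable_change_poisson_point_process} backwards needs exactly that. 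Without them the superposition has intensity $2$. Already the $\zeta_\Lambda=\emptyset$ part of $G_\Lambda$ gives
\[
\int\pi_\Lambda(\d\chi_\Lambda)\int\pi(\d\beta_\Lambda)\,\bigl(1+\abs{\chi_\Lambda}+\abs{\beta_\Lambda}\bigr)\,\widetilde g_\Lambda(\chi_\Lambda\cup\beta_\Lambda)
=\e^{-\abs{\Lambda}}\,\pi_\Lambda\bigl[2^{N_\Lambda}(1+N_\Lambda)\,\widetilde g_\Lambda\bigr]
=\e^{-\abs{\Lambda}}\,\mu\bigl[2^{N_\Lambda}(1+N_\Lambda)\bigr].
\]
This is an exponential moment of $N_\Lambda$ under $\mu$, not $\pi_\Lambda[(1+N_\Lambda)\widetilde g_\Lambda]$. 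The hypotheses do not control it. If $\widetilde g_\Lambda$ depends only on $N_\Lambda$ and $\mu(N_\Lambda=n)\propto(n+1)^{-5}$, then both $\pi_\Lambda[N_\Lambda\widetilde g_\Lambda\log\widetilde g_\Lambda]$ (which behaves like $\sum_n n^{-4}\log n!$) and $\pi_\Lambda[1/\widetilde g_\Lambda]$ are finite, yet $\mu[2^{N_\Lambda}]=\infty$. The same defect affects your other steps. $G_\Lambda$ is not a normalized mixture, since its total weight grows like $(1+z)^{\abs{\chi_\Lambda}}$, so Jensen for $G_\Lambda\log^+G_\Lambda$ leaves an extra $\abs{\chi_\Lambda}G_\Lambda$ term. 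Likewise the term $T\abs{\chi_\Lambda}G_\Lambda$ arising from your lower bound $\widetilde g_{\Lambda,t}\gtrsim\e^{-\abs{\Lambda}}p_t^{\abs{\chi_\Lambda}}\widetilde g_\Lambda$ is again integrated against intensity $2$. That lower bound itself is correct, and more careful than the paper's reduction step, which drops the factor $p_t^{\abs{\chi_\Lambda}}$. Your Young-inequality treatment of $\log^-\widetilde g_\Lambda$ is fine and matches the paper's Fenchel--Young step.

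There are two ways to repair this.

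\emph{Keep the weights.} The bound then depends on $t$. For $A_t$ and $C_t$ it even carries a factor $(1-p_t)^{-1}$, which is only compensated after integrating $\abs{\zeta_\Lambda}$, resp.\ $\abs{\beta_\Lambda}$, against a Poisson process of intensity $1-p_t$. So the natural route is to argue uniformly in $t$ rather than through one majorant, and this is what the paper does. It applies Jensen to the correctly normalized mixture, with thinning weights $p_t^{\abs{\chi_\Lambda\setminus\zeta_\Lambda}}(1-p_t)^{\abs{\zeta_\Lambda}}$ and $\pi^{1-p_t}(\d\beta_\Lambda)$, so that after integration everything is an average against $\pi_\Lambda$. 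It then proves $\sup_t\pi_\Lambda[\Phi(G_t\log G_t)]<\infty$ with the superlinear, submultiplicative $\Phi$ of \Cref{lemma:special_de_la_vallee_poussin}, which is what splits off the factor $z_{\max}^{\abs{\zeta_\Lambda}}(\abs{\chi_\Lambda}+1)$.

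\emph{Restrict to regular $\mu$.} This is the only case in which the lemma is used, namely \Cref{theorem_derivative_of_the_local_entropy}. There $\widetilde g_\Lambda\lesssim z_\Lambda^{N_\Lambda}$ makes all the exponential moments above finite and gives $\log^+G_\Lambda\lesssim 1+N_\Lambda$. Your single-majorant argument then goes through and is simpler than the paper's, but it proves less than the lemma as stated.

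Finally, the restart beyond $t_0$ does not work as written. It would require verifying the hypotheses for $\widetilde g_{\Lambda,t_0}$, and the second family contains the time-zero density $\widetilde g_\Lambda$. Only small $t$ is needed in the application anyway.
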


As mentioned, we now provide the following proof.
\begin{proof}[Proof of \Cref{theorem_derivative_of_the_local_entropy}]
    Using the fact that $g_{\L,t} = \d(\mu T_t)_\Lambda/\d\nu_\Lambda$ thanks to \Cref{proposition:good_densities_of_the_birth_and_death_process}, and $g_\L = \d\mu_\Lambda/\d\nu_\Lambda$, we have
    \begin{equation*}
    \begin{split}
        \frac{1}{t} \big(\RelEnt_{\Lambda}(\mu T_t \,\vert\, \nu) - \RelEnt_{\Lambda}(\mu \,\vert\, \nu) \big)
        &= \nu_\L\Big[ \, \frac{g_{\Lambda, t} \log g_{\Lambda, t} - g_{\Lambda} \log g_{\Lambda}}{t} \bigg] \\
        &= \nu_\L\Big[ \, \frac{g_{\Lambda, t} - g_{\Lambda}}{t} \log g_{\Lambda} \Big] + \nu_\L\Big[ \, g_{\Lambda, t} \frac{\log g_{\Lambda, t} - \log g_{\Lambda}}{t} \Big] \\
        &= \nu_\L\Big[ \, \frac{g_{\Lambda, t} - g_{\Lambda}}{t} \log g_{\Lambda} \Big] + \pi_\L\Big[\widetilde{g}_{\Lambda, t} \frac{\log \widetilde{g}_{\Lambda, t} - \log \widetilde{g}_{\Lambda}}{t} \Big],
    \end{split}
    \end{equation*}
    since $\d\nu_\L\, g_{\Lambda, t} = \d\pi_\L\, \widetilde{g}_{\Lambda, t}$.
    For the first summand, we have, by \Cref{lem:log_generator}, that
    \begin{align*}
        \nu_\L\Big[  \frac{g_{\Lambda, t} - g_{\Lambda}}{t} \log g_{\Lambda} \Big]
        = \frac{\mu_t[\log g_{\Lambda}] - \mu[\log g_{\Lambda}]}{t}
        \xrightarrow[t \downarrow 0]{}
        \mu\big[\mathscr{L} \log g_{\Lambda}\big].
    \end{align*}
    For the second summand,
    using the pointwise convergences
    \begin{equation*}
       \frac{\log \widetilde{g}_{\Lambda, t} - \log \widetilde{g}_{\Lambda}}{\widetilde{g}_{\Lambda, t} - \widetilde{g}_{\Lambda}} \xrightarrow[t \downarrow 0]{} \frac{1}{\widetilde{g}_{\Lambda}} \quad \text{ and }\quad 
       \widetilde{g}_{\Lambda, t} \xrightarrow[t \downarrow 0]{}
       \widetilde{g}_\Lambda,
    \end{equation*}
    together with the fact that
    \begin{align*}
        &\Big\vert \frac{\widetilde{g}_{\Lambda, t} - \widetilde{g}_{\Lambda}}{t} \Big( \frac{\log \widetilde{g}_{\Lambda, t} - \log \widetilde{g}_{\Lambda}}{\widetilde{g}_{\Lambda, t} - \widetilde{g}_{\Lambda}} \widetilde{g}_{\Lambda, t} - 1 \Big)  \Big\vert
        \leq \Big\vert \frac{\widetilde{g}_{\Lambda, t} \log \widetilde{g}_{\Lambda, t} - \widetilde{g}_{\Lambda} \log \widetilde{g}_{\Lambda}}{t} \Big\vert + \Big\vert \frac{\widetilde{g}_{\Lambda, t} - \widetilde{g}_{\Lambda}}{t} \log \widetilde{g}_{\Lambda} \Big\vert + \bigg\vert \frac{\widetilde{g}_{\Lambda, t} - \widetilde{g}_{\Lambda}}{t} \Big\vert,
    \end{align*}
    the mean-value theorem and the uniform integrability provided by \Cref{lemma:uniform_integrability_estimates} yield
    \begin{align*}
        \pi\Big[ \, \widetilde{g}_{\Lambda, t} \frac{\log \widetilde{g}_{\Lambda, t} - \log \widetilde{g}_{\Lambda}}{t} \Big]
       & = \pi\Big[ \, \widetilde{g}_{\Lambda, t} \frac{\log \widetilde{g}_{\Lambda, t} - \log \widetilde{g}_{\Lambda}}{t} \Big] + \pi\Big[ \frac{\widetilde{g}_{\Lambda, t} - \widetilde{g}_{\Lambda}}{t} \Big] \\
        &= \pi\Big[ \frac{\widetilde{g}_{\Lambda, t} - \widetilde{g}_{\Lambda}}{t} \Big( \frac{\log \widetilde{g}_{\Lambda, t} - \log \widetilde{g}_{\Lambda}}{\widetilde{g}_{\Lambda, t} - \widetilde{g}_{\Lambda}} \widetilde{g}_{\Lambda, t} - 1 \Big) \Big]
        \xrightarrow[t \downarrow 0]{} 0,
    \end{align*} 
    as desired. 
\end{proof}

Eventually we provide the postponed proof.
\begin{proof}[Proof of \Cref{lemma:uniform_integrability_estimates}]
    We have \(\frac{\d }{\d t}\big(\widetilde{g}_{\Lambda, t} \log \widetilde{g}_{\Lambda, t} \big) = \big(\frac{\d}{\d t} \widetilde{g}_{\Lambda, t}) \big(\log \widetilde{g}_{\Lambda, t}  + 1\big)\), so we have to show uniform integrability of the families 
    \begin{equation*}
       \Big(\frac{\d}{\d t} \widetilde{g}_{\Lambda, t}\Big) \log \widetilde{g}_{\Lambda, t},\quad \Big(\frac{\d }{\d t}\widetilde{g}_{\Lambda, t}\Big) \log \widetilde{g}_\Lambda, \quad \text{ and }\quad \frac{\d }{\d t}\widetilde{g}_{\Lambda, t}. 
    \end{equation*} 
    We do this in three separate steps. 
    \medbreak
    
    \noindent
    \textit{1. Reduction}: 
    Since, by \Cref{{proposition:good_densities_of_the_birth_and_death_process}}, for small \(t\), \(\psi_{\Lambda, t}(\emptyset\,\vert\, \cdot) \sim 1\), for some fixed \(c_1 \in (0, 1)\), we have that \(\widetilde{g}_{\Lambda, t} \geq c_1 \widetilde{g}_{\Lambda}\) from the definition of \(\widetilde{g}_{\Lambda, t}\) in \Cref{lemma:local_densities_of_evolved_measure} and hence
    \begin{align*}
        - \log_{-}(\widetilde{g}_{\Lambda, t})
        \geq \log(c_1) - \log_{-}(\widetilde{g}_\Lambda).
    \end{align*}
    Using the same argument, we have for a \(c_2 \geq 1\), that
    \begin{align*}
        \log_{+}(\widetilde{g}_{\Lambda})
        \leq \log_{+}(\widetilde{g}_{\Lambda, t}) + \log(c_2).
    \end{align*}
    It follows that
    \begin{align*}
        \abs{\Big(\frac{\d}{\d t} \widetilde{g}_{\Lambda, t}\Big)\log \widetilde{g}_{\Lambda, t}} 
        \leq \abs{\frac{\d}{\d t} \widetilde{g}_{\Lambda, t}} \Big\{ \log_{-}(\widetilde{g}_{\Lambda}) +  \log_{+}(\widetilde{g}_{\Lambda, t}) + \log(1/c_1) \Big\}
    \end{align*} and
    \begin{align*}
        \abs{\Big(\frac{\d}{\d t} \widetilde{g}_{\Lambda, t}\Big)\log \widetilde{g}_{\Lambda}} 
        \leq \abs{\frac{\d}{\d t} \widetilde{g}_{\Lambda, t}} \Big\{ \log_{-}(\widetilde{g}_{\Lambda}) +  \log_{+}(\widetilde{g}_{\Lambda, t}) + \log(c_2) \Big\}.
    \end{align*}
    This means we only have to show uniform integrability of the families \(\frac{\d}{\d t} \widetilde{g}_{\Lambda, t}\), \(\abs{\frac{\d}{\d t} \widetilde{g}_{\Lambda, t}} \log_{-}(\widetilde{g}_{\Lambda})\) and \(\abs{\frac{\d}{\d t} \widetilde{g}_{\Lambda, t}} \log_{+}(\widetilde{g}_{\Lambda, t})\). We will not explicitly show the uniform integrability of  \(\frac{\d}{\d t} \widetilde{g}_{\Lambda, t}\), as it can be shown, in a very similar but easier way to the uniform integrability of \(\abs{\frac{\d}{\d t} \widetilde{g}_{\Lambda, t}} \log_{+}(\widetilde{g}_{\Lambda, t})\), that even \(\abs{\frac{\d}{\d t} \widetilde{g}_{\Lambda, t}} \log(\abs{\frac{\d}{\d t} \widetilde{g}_{\Lambda, t}})\) is uniformly integrable.
    \medbreak
    
    \noindent 
    \textit{2. Uniform integrability of \(\abs{\frac{\d}{\d t} \widetilde{g}_{\Lambda, t}} \log_{-}(\widetilde{g}_{\Lambda})\)}: 
    By the Fenchel--Young inequality, we have
    \begin{align*}
        \abs{\frac{\d}{\d t} \widetilde{g}_{\Lambda, t}} \log_{-}(\widetilde{g}_{\Lambda})
        \leq f\bigg(\abs{\frac{\d}{\d t} \widetilde{g}_{\Lambda, t}}\bigg) + f^*\big(\log_{-}(\widetilde{g}_{\Lambda})\big),
    \end{align*} 
    pointwise, for any convex function $f$ and its convex conjugate $f^*$. Choosing $f(x) = x \log(x) - x$ so that \(f^*(x) = \e^x\) and \(f(x) \leq x \log x\), we can use the uniform integrability of \(\abs{\frac{\d}{\d t} \widetilde{g}_{\Lambda, t}} \log(\abs{\frac{\d}{\d t} \widetilde{g}_{\Lambda, t}})\) mentioned above and the moment assumptions for \(f^*\big(\log_{-}(\widetilde{g}_{\Lambda})\big) \leq 1/\widetilde{g}_{\Lambda}\).
    \medbreak 

    \noindent
    \textit{3. Uniform integrability of \(\abs{\frac{\d}{\d t} \widetilde{g}_{\Lambda, t}}\log_{+}(\widetilde{g}_{\Lambda, t})\)}: 
    Remember the decomposition
    \begin{align*}
        \frac{\d}{\d t} \widetilde{g}_{\Lambda, t}
        = A_t + B_t + C_t
    \end{align*} from the proof of \Cref{lemma:derivative_of_g_t}.
    We will only show the uniform integrability of \(\abs{A_t} \log_{+}(\widetilde{g}_{\Lambda, t})\), because the uniform integrability of \(\abs{B_t} \log_{+}(\widetilde{g}_{\Lambda, t})\) and  \(\abs{C_t} \log_{+}(\widetilde{g}_{\Lambda, t})\) follows similarly but slightly easier.
    Note that if we can upper bound
    \begin{align*}
        \abs{A_t} \log(\widetilde{g}_{\Lambda, t})
        \leq x \log x + y
    \end{align*} for some \(x,y > 0\), then
    \begin{align*}
        \abs{A_t} \log_+(\widetilde{g}_{\Lambda, t})
        \leq x \log x + 1/\e + y,
    \end{align*} so we can ignore the positive part on the logarithm if this is the case.
    We have, by the bounds in \Cref{proposition:good_densities_of_the_birth_and_death_process},
    \begin{align*}
        &\abs{A_t(\chi_\Lambda)}
        \leq 
        \sum_{\zeta_\Lambda \subseteq \chi_\Lambda} 
        \int \pi(\d \beta_\Lambda) \, \widetilde{g}_\Lambda(\chi_\Lambda\setminus\zeta_\Lambda \cup \beta_\Lambda) \int_{(t, \infty)^{\chi_\Lambda \setminus \zeta_\Lambda}} \d (s_x)_{x \in \chi_\Lambda \setminus \zeta_\Lambda} \int_{[0, t]^{\beta_\Lambda}} \d (s_x)_{x \in \beta_\Lambda}  
        \\&\qquad\qquad\qquad\qquad
        \e^{-\sum_{x \in \chi_\Lambda\setminus\zeta_\Lambda} s_x - \sum_{x \in \beta_\Lambda} s_x}
        \Big\lvert \Big(\frac{\d}{\d t} \widetilde{\psi}_{\Lambda, t}\Big) \Big(\zeta_\Lambda \,\big\vert\, \sum_{x \in \chi_\Lambda\setminus\zeta_\Lambda} \delta_{(x, s_x)} + \sum_{x \in \beta_\Lambda} \delta_{(x, s_x)} \Big)\Big\rvert \\ 
        &\lesssim  \sum_{\zeta_\Lambda \subseteq \chi_\Lambda} \int \pi(\d \beta_\Lambda) \, \Big\{ \widetilde{g}_\Lambda(\chi_\Lambda\setminus\zeta_\Lambda \cup \beta_\Lambda) p_t^{\abs{\chi_\Lambda\setminus\zeta_\Lambda}} (1-p_t)^{\abs{\beta_\Lambda}}
        \big( \abs{\zeta_\Lambda} z_{\max}^{\abs{\zeta_\Lambda}} (1-p_t)^{\abs{\zeta_\Lambda}-1}\1_{\zeta_\Lambda \neq \emptyset}  + \1_{\zeta_\Lambda = \emptyset} \big) \Big\} \\
        &\lesssim \frac{1}{1-p_t} \sum_{\zeta_\Lambda \subseteq \chi_\Lambda} p_t^{\abs{\chi_\Lambda\setminus\zeta_\Lambda}} (1-p_t)^{\abs{\zeta_\Lambda}} \big(\abs{\zeta_\Lambda} + (1-p_t)\1_{\zeta_\Lambda =\emptyset}\big)
        \int \pi^{1-p_t}(\d \beta_\Lambda) \, \Big\{ \widetilde{g}_\Lambda(\chi_\Lambda\setminus\zeta_\Lambda \cup \beta_\Lambda)   z_{\max}^{\abs{\zeta_\Lambda}} \Big\}.
    \end{align*}
    We also have
    \begin{align*}
        \log(\widetilde{g}_{\Lambda, t}(\chi_\Lambda))
        &\leq \log(\mathrm{const}) + \log\Big(\sum_{\zeta_\Lambda \subseteq \chi_\Lambda} p_t^{\abs{\chi_\Lambda\setminus\zeta_\Lambda}} (1-p_t)^{\abs{\zeta_\Lambda}} \int \pi^{1-p_t}(\d \beta_\Lambda) \, \Big\{ \widetilde{g}_\Lambda(\chi_\Lambda\setminus\zeta_\Lambda \cup \beta_\Lambda)   z_{\max}^{\abs{\zeta_\Lambda}} \Big\}\Big) \\
        &\leq \log(\mathrm{const}) + \log\big(G_t(\chi_\Lambda)\big),
    \end{align*} 
    with
    \begin{align*}
        G_t(\chi_\Lambda)
        &:= \frac{1}{(1-p_t)\abs{\chi_\Lambda} + (1-p_t) p_t^{\abs{\chi_\Lambda}}} \sum_{\zeta_\Lambda \subseteq \chi_\Lambda} p_t^{\abs{\chi_\Lambda\setminus\zeta_\Lambda}} (1-p_t)^{\abs{\zeta_\Lambda}} \big(\abs{\zeta_\Lambda} + (1-p_t)\1_{\zeta_\Lambda =\emptyset}\big)\\
        &\qquad \int \pi^{1-p_t}(\d \beta_\Lambda) \, \Big\{ \widetilde{g}_\Lambda(\chi_\Lambda\setminus\zeta_\Lambda \cup \beta_\Lambda)   z_{\max}^{\abs{\zeta_\Lambda}} (\abs{\chi_\Lambda} + 1) \Big\}.
    \end{align*} 
    Hence, $\abs{A_t(\chi_\Lambda)}\log(\widetilde{g}_{\Lambda, t}(\chi_\Lambda))
        \leq  G_t(\chi_\Lambda) \big(\log(\mathrm{const}) + \log \big(G_t(\chi_\Lambda)\big)  \big)$. 
    We only show the uniform integrability of \(G_t \log (G_t)\), as that of \(G_t\) is easier to show in a similar way.
    Denote
    \begin{align*}
        C_t(\chi_\Lambda)
        := \frac{1}{(1-p_t)\abs{\chi_\Lambda} + (1-p_t) p_t^{\abs{\chi_\Lambda}}}.
    \end{align*}
    Now consider \(\phi(x) = x \log x\) and an increasing convex function \(\Phi\) such that 
    \begin{itemize}
        \item \(\pi_\Lambda[\Phi( (\abs{\omega}+1) \phi(\widetilde{g}_\Lambda)) ] < +\infty\), 
        \item \((\Phi\circ \phi)(x) \lesssim x^2\),
        \item \(\Phi(x)/x \to \infty\) as \(x\to\infty\)
        \item \((\Phi\circ \phi)(xy) \lesssim  (\Phi\circ \phi)(x) (\Phi\circ \phi)(y) \) for large enough \(x, y\).
    \end{itemize}
    Such a function exists by \Cref{lemma:special_de_la_vallee_poussin}. In applying the lemma, the random variable \(X\) is \(N_\Lambda \phi(\widetilde{g}_\Lambda)\), the function \(f\) is \(\Phi\) and the function \(g\) is \(g(x) = (\phi^{-1}(x))^2\).
    Then, by convexity and the above properties of \(\Phi\),
    \begin{align*}
        &\Phi\big( G_t(\chi_\Lambda)  \log G_t(\chi_\Lambda)  \big) 
        = (\Phi \circ \phi)\big( G_t(\chi_\Lambda) \big) \\
        &\leq 
        \sum_{\zeta_\Lambda \subseteq \chi_\Lambda} p_t^{\abs{\chi_\Lambda\setminus\zeta_\Lambda}} (1-p_t)^{\abs{\zeta_\Lambda}} \tfrac{\abs{\zeta_\Lambda} \lor (1-p_t)}{C_t(\chi_\Lambda)^{-1}}
        \int \pi^{1-p_t}(\d \beta_\Lambda) \, \big\{ (\Phi \circ \phi)\big( \widetilde{g}_\Lambda(\chi_\Lambda\setminus\zeta_\Lambda \cup \beta_\Lambda)  z_{\max}^{\abs{\zeta_\Lambda}} (\abs{\chi_\Lambda} + 1) \big) \big\} \\
        &\lesssim
        \sum_{\zeta_\Lambda \subseteq \chi_\Lambda} p_t^{\abs{\chi_\Lambda\setminus\zeta_\Lambda}} (1-p_t)^{\abs{\zeta_\Lambda}} \tfrac{\abs{\zeta_\Lambda} \lor (1-p_t)}{C_t(\chi_\Lambda)^{-1}}  
        \int \pi^{1-p_t}(\d \beta_\Lambda) \, \big\{ (\Phi \circ \phi)\big( \widetilde{g}_\Lambda(\chi_\Lambda\setminus\zeta_\Lambda \cup \beta_\Lambda)\big) (\Phi \circ \phi)\big(z_{\max}^{\abs{\zeta_\Lambda}} (\abs{\chi_\Lambda} + 1) \big) \big\}.
    \end{align*}
    We also have \(C_t(\chi_\Lambda) (\Phi \circ \phi)\big( \abs{\chi_\Lambda} + 1 \big) \lesssim (\abs{\chi_\Lambda} + 1)/(1-p_t) \), since \((\Phi \circ \phi)(x) \lesssim x^2\).
    It follows that
    \begin{align*}
        &\Phi\big( G_t(\chi_\Lambda)  \log G_t(\chi_\Lambda)  \big) 
        = (\Phi \circ \phi)\big( G_t(\chi_\Lambda) \big) \\
        &\lesssim 
        \sum_{\zeta_\Lambda \subseteq \chi_\Lambda} p_t^{\abs{\chi_\Lambda\setminus\zeta_\Lambda}} (1-p_t)^{\abs{\zeta_\Lambda}}
        \int \pi^{1-p_t}(\d \beta_\Lambda) \, \Big\{ (\Phi \circ \phi)\Big( \widetilde{g}_\Lambda(\chi_\Lambda\setminus\zeta_\Lambda \cup \beta_\Lambda)\Big) \tfrac{\abs{\zeta_\Lambda} \lor (1-p_t)}{1-p_t} z_{\max}^{2\abs{\zeta_\Lambda}} (\abs{\chi_\Lambda} + 1) \Big) \Big\}.
    \end{align*}
    Taking expectations,
    \begin{align*}
        \pi_\Lambda\Big[ \Phi\big( G_t\log G_t \big)  \Big]
        &\lesssim \int \pi_\Lambda(\d \omega_\Lambda) \, (\Phi \circ \phi)\big( \widetilde{g}_\Lambda(\omega_\Lambda)\big) \Big\{ \int \pi^{1-p_t}(\d\zeta_\Lambda) \tfrac{\abs{\zeta_\Lambda} \lor (1-p_t)}{1-p_t} z^{2\abs{\zeta_\Lambda}} (\abs{\omega_\Lambda} + \abs{\zeta_\Lambda} + 1) \Big\} \\
        &\lesssim \int \pi_\Lambda(\d \omega_\Lambda) \,  (\abs{\omega_\Lambda}  + 1)\, (\Phi \circ \phi)\big( \widetilde{g}_\Lambda(\omega_\Lambda)\big) \\
        &\leq \int \pi_\Lambda(\d \omega_\Lambda) \, (\Phi \circ \phi)\big( (\abs{\omega_\Lambda}  + 1)\widetilde{g}_\Lambda(\omega_\Lambda)\big)
        < \infty.
    \end{align*}
    Putting everything together, we have that \(\abs{A_t} \log_{+}(\widetilde{g}_{\Lambda, t})\) is uniformly integrable.
\end{proof}

\begin{remark}
    By using similar techniques as in the above proofs, we can show that, if \(\mu = \delta_\emptyset\), then \(\frac{\d}{\d t}\big\vert_{t = 0} \RelEnt_{\Lambda}(\mu T_t \,\vert\, \nu) = -\infty\).
\end{remark}

\subsection{Pointwise and averaged thermodynamic limit of entropy dissipation}\label{section:entropy_dissipation_thermodynamic_limit}

In order to prove \Cref{theorem:main_theorem_HANDA} we first need to establish the pointwise identity and estimates on the error term as stated in \Cref{lemma:rest_term}
The following observation is the key ingredient for the computation in the proof of \Cref{lemma:rest_term}.

\begin{lemma}\label{lemma:existence_of_nice_local_papangelou_intensities}
    Let \(\mu\in \Pcal_\theta\) with finite first moment and such that \(\mu_\Lambda\) and \(\pi_\Lambda\) are equivalent for any \(\Lambda \Subset \mathbb{R}^d\).
    Then, for any \(\Lambda \Subset \mathbb{R}^d\), there is a strictly positive measurable function \((x, \eta) \mapsto \widetilde{\varrho}_\Lambda(x, \eta)\) such that, for every version of \(\d\mu_\Lambda/\d\pi_\Lambda\),
    \begin{align*}
        \widetilde{\varrho}_\Lambda(x, \theta_{x}\eta) = \frac{\frac{\d\mu_\Lambda} {\d\pi_\Lambda}(\eta +\delta_{x})}{\frac{\d\mu_\Lambda} {\d\pi_\Lambda}(\eta)}
    \end{align*} 
    for \((\d x \otimes \mu)\)-a.e \((x, \eta)\) and
    \begin{align*}
        \sum_{x \in \eta} \log \widetilde{\varrho}_\Lambda(x, \theta_{x}(\eta -\delta_{x})) = \sum_{x \in \eta} \log \frac{\frac{\d\mu_\Lambda}{\d\pi_\Lambda}(\eta)}{\frac{\d\mu_\Lambda}{\d\pi_\Lambda}(\eta -\delta_{x})}
    \end{align*} for \(\mu\)-a.e.\ \(\eta\)
    and such that
    \begin{align*}
        \widetilde{\varrho}_\Lambda(x, \cdot) = \frac{\d(\mu_0^!)_{\Lambda -x}}{\d \mu_{\Lambda -x}}\qquad \text{ and }\qquad
        \frac{1}{\widetilde{\varrho}_\Lambda(x, \cdot)} = \frac{\d \mu_{\Lambda -x}}{\d(\mu_0^!)_{\Lambda -x}}
    \end{align*} 
    (as in ''is a version of'') for \(\mathrm{d}x\)-a.e.\ \(x\).
\end{lemma}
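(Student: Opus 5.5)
The plan is to define $\widetilde{\varrho}_\Lambda$ directly through the Palm measure, and then check the two pointwise identities against any version of the density. Fix a version $\widetilde g_\Lambda$ of $\d\mu_\Lambda/\d\pi_\Lambda$. We know $\mu_\Lambda\sim\pi_\Lambda$, and the argument of \Cref{section:well_definedness} lets us take $\widetilde g_\Lambda>0$ everywhere.

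The first step uses the Mecke formula for $\pi_\Lambda$ together with the definition of the Palm measure. For nonnegative $f$ and $B\subset\R^d$ we compute
\begin{align*}
\int_B \d x\, (\mu_0^!)\big[f(x,\cdot_{\Lambda-x})\big]
= \mu\Big[\sum_{x\in\eta,\,x\in B} f\big(x,\theta_x(\eta-\delta_x)_{\Lambda-x}\big)\Big].
\end{align*}
For $x\in\Lambda$, apply the Mecke formula to the $\Lambda$-part, with density $\widetilde g_\Lambda$. This gives
\begin{align*}
\int_{B\cap\Lambda}\d x\int\pi_\Lambda(\d\eta)\,\widetilde g_\Lambda(\eta+\delta_x)\,f(x,\theta_x\eta).
\end{align*}
The right-hand side can be rewritten as $\int\mu_\Lambda(\d\eta)\,\frac{\widetilde g_\Lambda(\eta+\delta_x)}{\widetilde g_\Lambda(\eta)}f(x,\theta_x\eta)$.

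Comparing with $\int \d x\,\mu[f(x,\theta_x\cdot)]$, and using translation invariance $\mu\circ\theta_x^{-1}=\mu$, we get for $\d x$-a.e.\ $x$ that $(\mu_0^!)_{\Lambda-x}\ll\mu_{\Lambda-x}$. The density is $\eta\mapsto \widetilde g_\Lambda(\theta_{-x}\eta+\delta_x)/\widetilde g_\Lambda(\theta_{-x}\eta)$, which is strictly positive. We therefore \emph{define}
\begin{align*}
\widetilde\varrho_\Lambda(x,\eta):=\frac{\widetilde g_\Lambda(\theta_{-x}\eta+\delta_x)}{\widetilde g_\Lambda(\theta_{-x}\eta)},
\end{align*}
which is jointly measurable and strictly positive. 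Since it is positive, the reciprocal is the reverse density, which gives the last claim. For $x\notin\Lambda$ we set $\widetilde\varrho_\Lambda\equiv1$. This is consistent because $\mu^!_0$ restricted to $\Lambda-x$, which does not contain $0$, coincides with $\mu$ there. To see this, apply the Palm formula to $f$ depending only on configurations away from $x$.

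The second step checks version-independence. Let $\widehat g_\Lambda$ be another version, so $\widehat g_\Lambda=\widetilde g_\Lambda$ off a $\pi_\Lambda$-null set $N$. The first identity fails only on $\{(x,\eta):\eta\in N\text{ or }\eta+\delta_x\in N\}$. This set is $(\d x\otimes\pi)$-null by the Mecke formula, since $\int\d x\,\pi[\1_N(\eta+\delta_x)]=\pi[N_\Lambda\1_N]=0$. It is then $(\d x\otimes\mu)$-null because $\mu_\Lambda\sim\pi_\Lambda$. For the sum identity, note that the sum is finite, ranging over $x\in\eta_\Lambda$; points outside $\Lambda$ contribute $\log1=0$ on both sides. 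So we need $\mu\{\eta:\exists x\in\eta,\ \eta-\delta_x\in N\}=0$. By the Mecke formula this equals $\int_\Lambda\d x\,\pi[\widetilde g_\Lambda(\eta+\delta_x)\1_N(\eta)]$ up to a bound, which is $0$.

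The main obstacle I expect is the measure-theoretic bookkeeping that turns an a.e.\ statement in $x$ into a single jointly measurable object valid for all versions at once. The explicit formula above handles this. One must also keep track of the finite-first-moment assumption, which guarantees that the Palm measure exists and that all Campbell-type integrals are $\sigma$-finite.
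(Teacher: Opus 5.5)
Your treatment of $x\notin\Lambda$ is wrong. It is not true that $(\mu_0^!)_{\Lambda-x}=\mu_{\Lambda-x}$ when $0\notin\Lambda-x$.

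\begin{itemize}
\item With the paper's unnormalised Palm measure, $\mu_0^!$ has total mass equal to the intensity $\mu[N_{[0,1]^d}]$. So the two measures cannot coincide unless the intensity is $1$.
\item Even after normalising, the reduced Palm distribution agrees with $\mu$ away from the origin only in the uncorrelated (Poisson, Slivnyak--Mecke) case. The Palm formula with a test function of $\zeta_{\Lambda-x}$ produces $\mu\big[\sum_{x\in\eta\cap B}h(\theta_x\eta_\Lambda)\big]$. This factors as $\int_B\d x\,\mu[h(\theta_x\,\cdot)]$ only if the points in $B$ are independent of $\eta_\Lambda$. For a Gibbs measure one has $\mu_0^!=b(0,\cdot)\mu$ by GNZ, so $(\mu_0^!)_{\Lambda-x}$ has density $\mu[b(0,\cdot)\,\vert\,\mathcal{F}_{\Lambda-x}]$, which is not $1$.
\item The claim cannot be rescued. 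Since $\d\mu_\Lambda/\d\pi_\Lambda$ does not see points outside $\Lambda$, the first identity forces $\widetilde\varrho_\Lambda(x,\cdot)=1$ for a.e.\ $x\notin\Lambda$. So the density identity genuinely fails there.
\end{itemize}

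The last assertion of the lemma has to be read for a.e.\ $x\in\Lambda$. That is what the paper's proof actually establishes, since its mollifier argument needs $0$ inside $\Lambda-x$. It is also all that is used later, where only $\int_\Lambda\d x$ and $\sum_{x\in\eta_\Lambda}$ appear. Drop that sentence rather than try to justify it.

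For $x\in\Lambda$ your argument is correct and somewhat more direct than the paper's.
\begin{itemize}
\item The paper fixes $x$ and a test function supported strictly inside $\Lambda-x$. It inserts a mollifier $\varphi_\epsilon$ into the Palm formula and applies Mecke and translation invariance of $\pi$. It then recovers the value at $x$ by Lebesgue differentiation, with a monotone-convergence step to remove the support restriction.
\item You instead apply the Palm formula to test functions depending jointly on $(x,\zeta)$, integrate over $x\in B\subset\Lambda$, and compare integrands. The resulting functional is automatically $\mathcal{F}_\Lambda$-measurable, so no mollifier or support restriction is needed.
\end{itemize}

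Both routes need a countable-class step to obtain a single $\d x$-null set valid for all test functions. For instance, take $f(x,\zeta)=\1_B(x)h(\theta_{-x}\zeta)$ with $B$ ranging over rational boxes and $h$ over a countable $\cap$-stable generator of $\mathcal{F}_\Lambda$, and use finiteness of $\mu_0^!$. Your explicit formula gives joint measurability but not this, so spell it out. Your Mecke null-set argument for independence of the chosen version is correct, and it supplies a step the paper leaves implicit.
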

\begin{proof}
    Let us first acknowledge that it is at least intuitively plausible from the raison d'être of Palm measure, noting that \(0 \in \Lambda - x\) for \(x \in \Lambda\), we have
    \begin{align*}
        \frac{\d(\mu_0^!)_{\Lambda -x}}{\d \mu_{\Lambda -x}}(\eta)
        &= \frac{\mu_{\Lambda -x}(\eta +\delta_{0})}{\mu_{\Lambda -x}(\eta)}
        = \frac{\mu_{\Lambda -x}(\eta +\delta_{0})}{\mu_{\Lambda -x}(\eta)} \frac{\pi_{\Lambda -x}(\eta)}{\pi_{\Lambda -x}(\eta  +\delta_{0} )}
        = \frac{\widetilde{g}_{\Lambda -x}(\eta +\delta_{0})}{\widetilde{g}_{\Lambda -x}(\eta)}
        = \frac{\widetilde{g}_\Lambda(\theta_{-x}\eta +\delta_{x})}{\widetilde{g}_{\Lambda}(\theta_{-x}\eta)}.
    \end{align*}

    Now fix a strictly positive version \(\widetilde{g}_\Lambda\) of \(\d\mu_\Lambda/\d \pi_\Lambda\) and define
    \begin{align*}
         \widetilde{\varrho}_\Lambda(x, \eta)
         := \frac{\widetilde{g}_\Lambda(\theta_{-x}\eta +\delta_{x})}{\widetilde{g}_\Lambda(\theta_{-x}\eta)} > 0.
    \end{align*}

    We have to show that, for \(\d x\)-almost all \(x\), for every non-negative \(\mathcal{F}_{\Lambda-x}\)-measurable function \(f\), 
    \begin{align*}
        (\mu_0^!)_{\Lambda - x}\big[f \big]
        = \mu_{\Lambda-x}\Big[f  \frac{\widetilde{g}_\Lambda(\theta_{-x}\cdot +\delta_{x})}{\widetilde{g}_\Lambda(\theta_{-x}\cdot)} \Big].
    \end{align*}
    By general measure theoretic arguments, it suffices to check this for countably many \(f\) of the form \(f = \1_{N_{[a_1, b_1] \times \cdots \times [a_d, b_d]} = n}\).
    Furthermore, by monotone convergence, using that \(\mu_{\Lambda-x}\) and \((\mu_0^!)_{\Lambda -x}\) both cannot put mass on \(\partial (\Lambda-x)\), we can even restrict ourselves to those \(f\) with \(\supp f \subseteq \{y \in \Lambda \,\vert\, \dist(y, \partial(\Lambda - x)) > \epsilon \}\) for some \(\epsilon = \epsilon(f) > 0\).
    Hence, it is also enough to show that for all \(f\) in this collection, the equality holds for \(\mathrm{d}x\)-a.a.\ \(x\) such that the support of \(f\) lies strictly inside \(\Lambda - x\).

    Now, fix one such \(f\) such that the support of \(f\) lies strictly inside \(\Lambda - x\) and therefore also inside \(\Lambda - z\) for \(z\) from some small ball around \(x\).
    Then, by definition of the Palm measure,
    \begin{equation*}
    \begin{split}
        (\mu_0^!)_{\Lambda - x}\big[f \big]
        = \mu\Big[\sum_{y \in \eta} f(\theta_y (\eta -\delta_{y})) \varphi(y) \Big]
    \end{split} 
    \end{equation*} for every non-negative function \(\varphi\) that integrates to \(1\).
    Now fix some non-negative radial test function \(\varphi\) with support in a \(1\)-ball around \(0\) that integrates to \(1\).
    Define \(\varphi_\epsilon = \epsilon^{-d} \varphi(\epsilon^{-d} \cdot)\) for \(\epsilon > 0\). Then, the \((\varphi_\epsilon)_{\epsilon > 0}\) still integrate to \(1\), \(\varphi_\epsilon\) has support inside an \(\epsilon\)-ball around \(0\) and \(\varphi_\epsilon \to\delta_0\), as $\epsilon \to 0$.
    
    Note that the support of \(f\), lying strictly inside \(\Lambda - x\), implies that the random variable
    \begin{align*}
        \sum_{y \in \eta} f(\theta_y (\eta -\delta_{y})) \varphi_\epsilon(y)
    \end{align*} is \((\Lambda - x)\)-measurable for small enough \(\epsilon\).
    Hence, by the Mecke formula,
    \begin{equation*}
    \begin{split}
        (\mu_0^!)_{\Lambda - x}\big[f \big]
        &= \mu\Big[\sum_{y \in \eta} f(\theta_y (\eta -\delta_{y})) \varphi_\epsilon(y) \Big]\\
        &= \pi\Big[\widetilde{g}_{\Lambda - x}(\eta) \sum_{y \in \eta} f(\theta_y (\eta -\delta_{y})) \varphi_\epsilon(y) \Big] \\
        &= \pi\Big[\widetilde{g}_{\Lambda}(\theta_{-x}\eta) \sum_{y \in \eta} f(\theta_y (\eta -\delta_{y})) \varphi_\epsilon(y) \Big] \\
        &= \int \, \d y \, \varphi_\epsilon(y) \int \pi(\d\eta) \,  \widetilde{g}_\Lambda(\theta_{-x}\eta + \delta_{x+y}) \, f(\theta_y \eta) \\
        &= \int \, \d y \, \varphi_\epsilon(y) \int \pi(\d\eta) \,  \widetilde{g}_\Lambda(\theta_{-(x+y)}\eta + \delta_{x+y}) \, f(\eta) \\
        &= \int \, \d z \, \varphi_\epsilon(z-x) \int \pi(\d\eta) \,  \widetilde{g}_\Lambda(\theta_{-z}\eta + \delta_z) \, f(\eta).
    \end{split}
    \end{equation*}
    
Next, by the finiteness of the left-hand side, we see that
    \begin{align*}
        z \mapsto \int \pi(\d\eta) \,\widetilde{g}_\Lambda(\theta_{-z}\eta + \delta_z) \, f(\eta) 
    \end{align*} 
    is locally integrable (w.r.t.\ Lebesgue measure).
    By Lebesgue's differentiation theorem, we get
    \begin{align*}
        (\mu_0^!)_{\Lambda - x}\big[f \big]
        &= \lim_{\epsilon \downarrow 0} \int \, \d z \, \varphi_\epsilon(z-x) \int \pi(\d\eta) \,  \widetilde{g}_\Lambda(\theta_{-z}\eta + \delta_z) \, f(\eta)  \\
        &= \int \pi(\d\eta) \, f(\eta) \,\widetilde{g}_\Lambda(\theta_{-x}\eta +\delta_{x}) 
        = \mu_{\Lambda-x}\Big[f  \frac{\widetilde{g}_\Lambda(\theta_{-x}\cdot +\delta_{x})}{\widetilde{g}_\Lambda(\theta_{-x}\cdot)} \Big]
    \end{align*} for \(\d x\)-almost all \(x\).
\end{proof}

\begin{proof}[Proof of \Cref{lemma:rest_term}]\label{proof:proof_of_lemma_rest_term}
    For any $P\in \Pcal(\Omega)$, denote
    \begin{align*}
        b\Ssup{P}_\Lambda(x, \eta)
        := \int P(\d \zeta_{\Lambda^c} \vert \eta_\Lambda) \, b(x, \eta_\Lambda \zeta_{\Lambda^c}).
    \end{align*}
    Set \(g_\Lambda := \d \mu_\Lambda/\d \nu_\Lambda\) and, with \(\widetilde{\varrho}_\Lambda\) as in \Cref{lemma:existence_of_nice_local_papangelou_intensities},
    \begin{align*}
        \varrho_{\Lambda}(x, \cdot) := \frac{\widetilde{\varrho}_\Lambda(x, \cdot)}{b\Ssup{\mu}_\Lambda(x, \theta_{-x} \cdot)}.
    \end{align*}  
    Using that the Papangelou intensity of \(\nu\) is given by \(b\) (GNZ equations), \Cref{lemma:existence_of_nice_local_papangelou_intensities} implies
    \begin{align*}
             \frac{g_\Lambda(\eta)}{g_\Lambda(\eta +\delta_{x})}
             = \frac{b\Ssup{\nu}_\Lambda(x, \eta)}{\widetilde{\varrho}_\Lambda(x, \theta_{x}\eta)}
             =  \frac{b\Ssup{\nu}_\Lambda(x, \eta)}{b\Ssup{\mu}_\Lambda(x, \eta)} \frac{1}{\varrho_\Lambda(x, \theta_x \eta)}
    \end{align*} for \((\d x \otimes \mu)\)-a.e.\ \((x, \eta)\) and
    \begin{align*}
        \sum_{x \in \eta} \log \frac{g_\Lambda(\eta)}{g_\Lambda(\eta -\delta_{x})}
        = \sum_{x \in \eta} \log \frac{\widetilde{\varrho}_\Lambda(x, \theta_{x}(\eta -\delta_{x}))}{b\Ssup{\nu}(x, \eta-\delta_{x}) }
        = \sum_{x \in \eta} \log \Big( \frac{b\Ssup{\mu}(x, \eta-\delta_{x})} {b\Ssup{\nu}(x, \eta-\delta_{x})}\varrho_\Lambda(x, \theta_x (\eta -\delta_{x})) \Big)
    \end{align*} 
    for \(\mu\)-a.e.\ \(\eta\)
    and such that
    \begin{align*}
        \frac{\d(\mu_0^!)_{\Lambda -x}}{\d (b(0, \cdot) \mu)_{\Lambda -x}}
        = \frac{\widetilde{\varrho}_\Lambda(x, \cdot)}{b\Ssup{\mu}_\Lambda(x, \theta_{-x} \cdot)}
        = \varrho_\Lambda(x, \cdot)
    \end{align*} and
    \begin{align*}
        \frac{\d (b(0, \cdot) \mu)_{\Lambda -x}}{\d(\mu_0^!)_{\Lambda -x}}
        = \frac{b\Ssup{\mu}_\Lambda(x, \theta_{-x} \cdot)}{\widetilde{\varrho}_\Lambda(x, \cdot)}
        = \frac{1}{\varrho_\Lambda(x, \cdot)}
    \end{align*} (as in ``is a version of'') for \(\mathrm{d}x\)-a.e.\ \(x\).
    Hence,
    \begin{align*}
         &\Big\vert (-\mathscr{L}) \log\frac{\d \mu_\Lambda}{\d \nu_\Lambda} (\eta)
        - \Big( \int_{\Lambda} \, b(0, \theta_x \eta) \, \log \frac{1}{\varrho_{\Lambda}(x, \theta_x \eta)}
        + \sum_{x \in \eta_\Lambda} \log \varrho_\Lambda(x, \theta_x(\eta -\delta_{x})) \Big) \Big\vert\\
        &\quad\leq \int_{\Lambda} \, b(x, \eta) \Big\vert\log \frac{b\Ssup{\nu}_\Lambda(x, \eta)}{b\Ssup{\mu}_\Lambda(x, \eta)} \Big\vert \, \d x + \sum_{x \in \eta_{\Lambda}} \Big\vert\log \frac{b\Ssup{\mu}_\Lambda(x, \eta-\delta_{x})}{b\Ssup{\nu}_\Lambda(x, \eta-\delta_{x})} \Big\vert.
    \end{align*} 
    Note that, due to the finite-range property and boundedness from above and away from zero of \(b\), 
    \begin{align*}
        &\mu\Big[\int_{\Lambda} \, b(x, \eta) \Big\vert\log \frac{b\Ssup{\nu}_\Lambda(x, \eta)}{b\Ssup{\mu}_\Lambda(x, \eta)} \Big\vert \, \d x + \sum_{x \in \eta_{\Lambda}} \Big\vert\log \frac{b\Ssup{\mu}_\Lambda(x, \eta-\delta_{x})}{b\Ssup{\nu}_\Lambda(x, \eta-\delta_{x})} \Big\vert\Big] \\
        &\leq \big\vert \{x \in \Lambda \,\vert\, d(x, \partial \Lambda) \leq \range \}\big\vert \cdot \log\big(\Vert b\Vert_\infty \Vert 1/b\Vert_\infty\big) \Big(\Vert b\Vert_\infty + \mu\big[N_{[0,1 ]^d}\big] \Big).
    \end{align*} The right-hand side multiplied with \(1/\abs{\Lambda}\) goes to zero in \(L^1(\mu)\) as \(\Lambda \uparrow \mathbb{R}^d\).
\end{proof}

\subsection{Proof of the ergodic theorem}
As a last preparation for the proof of \Cref{theorem:main_theorem_HANDA} we now provide the proof of the ergodic theorem we rely on. 
\begin{proof}[Proof of \Cref{lemma:ergodic_theorem}]
    The existence of \(\zeta^\mu\) is the content of~\cite[Lemma~13.4.II]{DaleyVereJones2008} and the rest of our statement here follows completely analogous to~\cite[Theorem~13.4.III, Theorem~12.2.IV]{DaleyVereJones2008}, but the \(L^1(\mu)\)-convergence is for some reason not explicitly stated there and we check the details.

    Let \(h \in L^1(\mu_0^!)\) and without loss of generality assume that \(h \geq 0\).
    Fix some \(\epsilon > 0\) and let $\varphi_\epsilon$ be a continuous function on $\R^d$ such that $\varphi_\epsilon\geq 0$, $\int\varphi_\epsilon(u)\d u=1$, and whose support is contained in $B(0,\epsilon)$.
    
    Defining
    \begin{align*}
        f(\eta) = \sum_{u \in \eta} h(\theta_u (\eta -\delta_{u})) \varphi_\epsilon(u),
    \end{align*}
    we have
    \begin{align*}
        f(\theta_x \eta)
        = \sum_{y \in \eta} h(\theta_{y}(\eta -\delta_{y})) \,  \varphi_\epsilon(y-x).
    \end{align*} 
    Denoting \(\Lambda_n^{-\epsilon}:= \{x\in\R^d\colon  B(0,\epsilon)\subset \Lambda_n\}\) and \(\Lambda_n^{\epsilon}:= \bigcup_{x\in\Lambda_n}B(x,\epsilon)\), and letting \(L_n := \int_{\Lambda_n^{-\epsilon}} \, f(\theta_x \eta) \, \d x\) and \(U_n := \int_{\Lambda_n^{\epsilon}} \, f(\theta_x \eta) \, \d x \), we obtain
    \begin{align*}
        L_n
        \leq \sum_{y \in \eta_{\Lambda_n}} h(\theta_{y}(\eta -\delta_{y})) 
        \leq  U_n,
    \end{align*} 
    thanks to the following pointwise sandwiching
    \begin{align*}
        \int_{\Lambda_n^{-\epsilon}} \,  \varphi_\epsilon(y-x) \, \d x 
        \leq \1_{\Lambda_n}(y)
        \leq \int_{\Lambda_n^{\epsilon}} \,  \varphi_\epsilon(y-x) \, \d x.
    \end{align*}

    By~\cite[Proposition~12.2.II]{DaleyVereJones2008}, we have \(L_n/ \abs{\Lambda_n^{-\epsilon}} \to \mathbb{E}^{\mu}[f \,\vert\, \mathscr{S}]\) and \( U_n / \abs{\Lambda_n^{\epsilon}}\to \mathbb{E}^{\mu}[f \,\vert\, \mathscr{S}]\), as $n \to \infty$, \(\mu\)-a.s.\ and in \(L^1(\mu)\).
    Since, denoting \(Y := \mathbb{E}^{\mu}[f \,\vert\, \mathscr{S}] \), 
    \begin{align*}
        &\Big\vert \frac{1}{\abs{\Lambda_n}}\sum_{y \in \eta_{\Lambda_n}} h(\theta_{y}(\eta -\delta_{y})) -  \mathbb{E}^{\mu}[f \,\vert\, \mathscr{S}] \Big\vert \\
        &\leq \frac{\abs{\Lambda_n^{-\epsilon}}}{\abs{\Lambda_n}} \Big\vert \frac{L_n}{\abs{\Lambda_n^{-\epsilon}}} - Y \Big\vert + \Big(1 - \frac{\abs{\Lambda_n^{-\epsilon}}}{\abs{\Lambda_n}}\Big) \vert Y\vert + \frac{\abs{\Lambda_n^{\epsilon}}}{\abs{\Lambda_n}} \Big\vert \frac{U_n}{\abs{\Lambda_n^{\epsilon}}} - Y \Big\vert + \Big(1 - \frac{\abs{\Lambda_n^{\epsilon}}}{\abs{\Lambda_n}}\Big) \vert Y\vert,
    \end{align*} the convergence
    \begin{align*}
        \frac{1}{\abs{\Lambda_n}}\sum_{y \in \eta_{\Lambda_n}} h(\theta_{y}(\eta -\delta_{y})) 
        \xrightarrow[n \to \infty]{}
        \mathbb{E}^{\mu}[f \,\vert\, \mathscr{S}] 
    \end{align*} \(\mu\)-a.s.\ and in \(L^1(\mu)\) follows.
    Since we have
    \begin{align*}
        \mathbb{E}^{\mu}[f \,\vert\, \mathscr{S}] 
        = \int \d x \int \zeta^\mu(\d \eta \,\vert\, \cdot) \, h(\eta) \varphi_\epsilon(x) 
        = \int \zeta^\mu(\d \eta \,\vert\, \cdot) \, h(\eta),
    \end{align*} the second statement of this lemma is proved.
\end{proof}

\subsection{Proof of the thermodynamic limit}\label{sec:proof-main-result-handa}

We can now establish the thermodynamic limit of the relative entropy loss. 
\begin{proof}[Proof of \Cref{theorem:main_theorem_HANDA}]\label{proof:proof_of_main_theorem_HANDA}
    Let us first show that
    \begin{align*}
        \sigma(\mu)
        = \RelEnt\big(b(0, \cdot) \mu \,\vert\, \mu_0^!\big) + \RelEnt\big(\mu_0^!\,\vert\, b(0, \cdot) \mu \big).
    \end{align*}
    Thanks to \Cref{lemma:rest_term}, it is enough to consider the map $\L\mapsto \widetilde{\sigma}_\Lambda(\mu)$, with
    \begin{equation*}\label{eq:sigma_tilde_lambda_mu}
    \begin{split}
        \widetilde{\sigma}_\Lambda(\mu)
        &:= \mu\Big[ \int_{\Lambda} \, b(0, \theta_x \eta) \, \log\frac{\d(b(0, \cdot) \mu)_{\Lambda - x}}{\d (\mu_0^!)_{\Lambda - x}} \, \d x 
        + \sum_{x \in \eta_\Lambda} \log\frac{\d (\mu_0^!)_{\Lambda - x}}{\d(b(0, \cdot) \mu)_{\Lambda - x}}(\theta_x(\eta -\delta_{x}))  \Big]\\
        &= \int_\Lambda \, \big\{ \RelEnt_{\Lambda - x}\big(b(0, \cdot) \mu \,\big\vert\, \mu_0^! \big) +  \RelEnt_{\Lambda - x}\big(\mu_0^! \,\big\vert\, b(0, \cdot) \mu \big)  \big\} \, \d x.
    \end{split}
    \end{equation*}
    Since it is super-additive, its thermodynamic limit exists and is given by
\begin{align}\label{eq:sigma_as_sup}
        \lim_{\Lambda \uparrow \mathbb{R}^d}\frac{\widetilde{\sigma}_\Lambda(\mu)}{\vert\Lambda\vert}= \sup_{\Lambda \text{ cube}}\frac{\widetilde{\sigma}_\Lambda(\mu)}{\vert\Lambda\vert} = \lim_{\Lambda \uparrow \mathbb{R}^d} \frac{1}{\vert\Lambda\vert} \mu\Big[(-\mathscr{L}) \log\frac{\d \mu_\Lambda}{\d \nu_\Lambda}\Big] = \sigma(\mu).
    \end{align}
    Moreover, since
    \begin{align*}
        \widetilde{\sigma}_\Lambda(\mu)
        \leq \abs{\Lambda} \Big(  \RelEnt\big(b(0, \cdot) \mu \,\vert\, \mu_0^!\big) + \RelEnt\big(\mu_0^! \,\vert\, b(0, \cdot) \mu \big) \Big),
    \end{align*}
    it holds that
    \begin{align*}
        \sigma(\mu)
        \leq \RelEnt\big(b(0, \cdot) \mu \,\vert\, \mu_0^!\big) + \RelEnt\big(\mu_0^! \,\vert\, b(0, \cdot) \mu \big),
    \end{align*}
    and \(\RelEnt\big(b(0, \cdot) \mu \,\vert\, \mu_0^! \big) + \RelEnt\big(\mu_0^! \,\vert\, b(0, \cdot) \mu \big) = \infty\) if \(\sigma(\mu) = \infty\).

    Let us now assume \(\sigma(\mu) < \infty\). Then, since the relative entropy is increasing in the volume, we have that, for every \(\Delta \Subset \mathbb{R}^d\),
    \begin{equation*}
    \begin{split}
        \RelEnt_\Delta\big(b(0, \cdot) \mu \,\vert\, \mu_0^!\big) + \RelEnt_\Delta\big(\mu_0^! \,\vert\, b(0, \cdot) \mu \big)
        &= \Big(\limsup_{\Lambda \uparrow \mathbb{R}^d} \frac{1}{\abs{\Lambda}} \int_{\Lambda} \, \1_{\Delta \subseteq \Lambda -x} \, \d x \Big) \Big(\RelEnt_\Delta\big(b(0, \cdot) \mu \,\vert\, \mu_0^!\big) + \RelEnt_\Delta\big(\mu_0^! \,\vert\, b(0, \cdot) \mu \big) \Big) \\
        &\leq \limsup_{\Lambda \uparrow \mathbb{R}^d} \frac{1}{\abs{\Lambda}} \int_{\Lambda} \,  \Big(\RelEnt_{\Lambda - x}\big(b(0, \cdot) \mu \,\vert\, \mu_0^!\big) + \RelEnt_{\Lambda - x}\big(\mu_0^! \,\vert\, b(0, \cdot) \mu \big) \Big)= \sigma(\mu),
    \end{split}
    \end{equation*}
    by \Cref{eq:sigma_as_sup}.
    We can then apply~\cite[Theorem~6.6]{nguyen1979ergodic}, to get that \(b(0, \cdot)\mu \ll \mu_0^!\) and \(\mu_0^! \ll b(0, \cdot)\mu\),  and
    \begin{equation*}
    \begin{split}
        &\RelEnt\big(b(0, \cdot) \mu \,\vert\, \mu_0^!\big) + \RelEnt\big(\mu_0^!\,\vert\, b(0, \cdot) \mu \big)= \sup_{\Lambda} \Big( \RelEnt_\Lambda\big(b(0, \cdot) \mu \,\vert\, \mu_0^!\big) + \RelEnt_\Lambda\big(\mu_0^! \,\vert\, b(0, \cdot) \mu \big) \Big),
    \end{split}
    \end{equation*} 
    and that the convergences 
    \begin{align*}
        \log \frac{\d (b(0, \cdot)\mu)_\Lambda}{\d (\mu_0^!)_\Lambda}
        \xrightarrow[\Lambda \uparrow \mathbb{R}^d]{}
        \log \frac{\d (b(0, \cdot)\mu)}{\d \mu_0^!} \quad\text{in } L^1(b(0, \cdot)\mu)
    \end{align*} 
    and
\begin{align}\label{eq:log_RN_derivative_convergence}
        \log \frac{\d (\mu_0^!)_\Lambda}{\d (b(0, \cdot)\mu)_\Lambda}
        \xrightarrow[\Lambda \uparrow \mathbb{R}^d]{}
        \log \frac{\d \mu_0^!}{\d (b(0, \cdot)\mu)}\quad\text{in } L^1(\mu_0^!)
    \end{align} 
    hold. 
    In particular, it follows that \(\RelEnt\big(b(0, \cdot) \mu \,\vert\, \mu_0^!\big) + \RelEnt\big(\mu_0^! \,\vert\, b(0, \cdot) \mu \big) \leq \sigma(\mu) \), and hence \(\RelEnt\big(b(0, \cdot) \mu \,\vert\, \mu_0^!\big) + \RelEnt\big(\mu_0^! \,\vert\, b(0, \cdot) \mu \big) = \sigma(\mu)\). 

    Let us now establish the stated \(L^1(\mu)\)-convergence of \(\abs{\Lambda}^{-1}(-\mathscr{L})\log(\d \mu_\Lambda/\d \nu_\Lambda)\).
    By the ergodic theorem of \Cref{lemma:ergodic_theorem}, below, we have that
    \begin{equation}\label{eq:convergence}
    \begin{split}
        &\frac{1}{\abs{\Lambda }} \int_{\Lambda} \, b(0, \theta_x \eta) \, \log \frac{\d(b(0, \cdot) \mu)}{\d \mu_0^!}(\theta_x \eta) \, \d x 
        + \frac{1}{\abs{\Lambda }}\sum_{x \in \eta_\Lambda} \log \frac{\d \mu_0^!}{\d(b(0, \cdot) \mu)}(\theta_x(\eta -\delta_{x})) \\
        &\xrightarrow[\Lambda \uparrow \mathbb{R}^d]{}
        \mathbb{E}^{\mu}\Big[b(0, \cdot)\,\log \frac{\d(b(0, \cdot) \mu)}{\d \mu_0^!} \,\Big\vert\, \mathscr{S} \Big] 
        + \mathbb{E}^{\mu_0}\Big[\log \frac{\d \mu_0^!}{\d(b(0, \cdot) \mu)} \circ \gamma_0 \,\Big\vert\, \mathscr{S} \Big].
    \end{split}
    \end{equation}
    Recall that, from \Cref{lemma:rest_term} above, for the measurable and strictly positive version \(\varrho_\Lambda\) of \((x, \eta) \mapsto \frac{\d(\mu_0^!)_{\Lambda -x}}{\d (b(0, \cdot) \mu)_{\Lambda -x}}(\eta)\), we have that
    \begin{align*}
        (-\mathscr{L}) \log\frac{\d \mu_\Lambda}{\d \nu_\Lambda} (\eta)
        = \int_{\Lambda} \, b(0, \theta_x \eta) \, \log \frac{1}{\varrho_{\Lambda}(x, \theta_x \eta)}
        + \sum_{x \in \eta_\Lambda} \log \varrho_\Lambda(x, \theta_x(\eta -\delta_{x}))
    + R_{\Lambda}(\eta),
    \end{align*} 
    with \(R_{\Lambda}/\abs{\Lambda} \to 0\), in $L^1(\mu)$, as \(\Lambda \uparrow \mathbb{R}^d\). It is then enough to see that we can replace the global Radon--Nikodym derivatives by the right local ones in \Cref{eq:convergence}.
    This is indeed the case, since
    \begin{equation*}
    \begin{split}
        &\mu\Big[\Big\vert \frac{1}{\abs{\Lambda}} \sum_{x \in \cdot_\Lambda} \log \varrho_\Lambda(x, \theta_x(\cdot -\delta_{x})) - \frac{1}{\abs{\Lambda}} \sum_{x \in \cdot_\Lambda} \log\frac{\d \mu_0^!}{\d(b(0, \cdot) \mu)}(\theta_x(\cdot -\delta_{x}))  \Big\vert \Big] \\
        &\leq \frac{1}{\abs{\Lambda}} \mu\Big[\sum_{x \in \cdot_\Lambda} \Big\vert \log \varrho_\Lambda(x, \theta_x(\cdot -\delta_{x})) - \log\frac{\d \mu_0^!}{\d(b(0, \cdot) \mu)}(\theta_x(\cdot -\delta_{x}))  \Big\vert \Big] \\
        &= \frac{1}{\abs{\Lambda}} \int_{\Lambda} \, \mu_0^!\Big[\Big\vert \log \frac{\d (\mu_0^!)_{\Lambda - x}}{\d(b(0, \cdot) \mu)_{\Lambda - x}}  - \log \frac{\d \mu_0^!}{\d(b(0, \cdot) \mu)}  \Big\vert \Big]  \, \d x
        \xrightarrow[\Lambda \uparrow \mathbb{R}^d]{} 0
    \end{split}
    \end{equation*} 
    by \Cref{eq:log_RN_derivative_convergence}, and since \(\mu_0^![\vert \log \frac{\d (\mu_0^!)_{\Lambda - x}}{\d(b(0, \cdot) \mu)_{\Lambda - x}}\vert]  + \mu_0^![\vert  \log \frac{\d \mu_0^!}{\d(b(0, \cdot) \mu)}  \vert]\) is bounded thanks to the general inequality of \Cref{lemma:abs_log_entropy_inequality}.
    By the same argument,
    \begin{align*}
       &\mu\Big[\Big\vert \frac{1}{\abs{\Lambda}} \int_{\Lambda} \, b(0, \theta_x \cdot) \, \log \frac{1}{\varrho_{\Lambda}(x, \theta_x \cdot)} \, \d x  - \frac{1}{\abs{\Lambda}} \int_{\Lambda} \, b(0, \theta_x \cdot) \, \log \frac{\d(b(0, \cdot) \mu)}{\d \mu_0^!}(\theta_x \cdot) \, \d x  \Big\vert  \Big]
        \xrightarrow[\Lambda \uparrow \mathbb{R}^d]{} 0,
    \end{align*}
    concluding the proof.
\end{proof}

\section{Gibbs measures and reversible measures}\label{section:gibbs_and_reversible_measures}

In~\cite{JKSZ25} we used the following definition of a {\em reversible} measure, which we will here now call {\em infinitesimally reversible} to avoid confusion.
\begin{definition}
    \(\mu \in \Pcal(\Omega)\) is called {\em infinitesimally reversible} if 
    \begin{equation*}
        \mu\big[(\mathscr{L} f) \, g\big] = \mu\big[(f \, (\mathscr{L} g)\big],\qquad  \forall f, g \in \Lcal.
    \end{equation*} 
   
\end{definition}

In~\cite{JKSZ25}, we prove the following statement.
\begin{proposition}[\cite{JKSZ25}, Proposition~3.1]
    \(\mu \in \Pcal_\theta\) is infinitesimally reversible iff \(\mu \in \GG_\theta\).
\end{proposition}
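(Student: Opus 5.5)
The statement characterizes infinitesimal reversibility, $\mu[(\mathscr{L}f)\,g]=\mu[f\,(\mathscr{L}g)]$ for all $f,g\in\Lcal$, by the GNZ equations with Papangelou intensity $b$. The plan is to prove the two directions separately, in both cases reducing to the Mecke/GNZ formula.

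\emph{Direction $\Leftarrow$.} Let $\mu\in\GG_\theta$. Since $\mu$ is Gibbs for $H$, it satisfies the GNZ equation
\[
\mu\Big[\sum_{x\in\eta}F(x,\eta-\delta_x)\Big]=\int\d x\,\mu\big[b(x,\cdot)F(x,\cdot)\big]
\]
for nonnegative measurable $F$. We expand $\mu[(\mathscr{L}f)g]$ into its birth part $\int\d x\,\mu[b(x,\eta)(f(\eta+\delta_x)-f(\eta))g(\eta)]$ and its death part $\mu[\sum_{x\in\eta}(f(\eta-\delta_x)-f(\eta))g(\eta)]$. Applying GNZ to the death part with $F(x,\eta)=(f(\eta)-f(\eta+\delta_x))g(\eta+\delta_x)$ turns it into $\int\d x\,\mu[b(x,\cdot)(f-f(\cdot+\delta_x))g(\cdot+\delta_x)]$. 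Adding this to the birth part gives
\[
-\int\d x\,\mu\big[b(x,\cdot)\,D_xf\,D_xg\big],
\]
which is symmetric in $f$ and $g$. Integrability holds because $f,g\in\Lcal$ are local, so $D_xf=0$ for $x$ outside their localization box. Together with $b\le\Vert b\Vert_\infty$ and the finite first moment of $\mu$ (Gibbs measures have all moments by \Cref{lemma:moments_of_Area_interaction_Gibbs_measures}), this justifies the splitting and the use of GNZ for signed integrands.

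\emph{Direction $\Rightarrow$.} Take $g=\1$, so $\mathscr{L}g=0$ and reversibility gives $\mu[\mathscr{L}f]=0$, i.e.\ $\mu$ is infinitesimally invariant. This is not enough by itself, so we use general $g$. Choose $f\in\Lcal$ supported in a box $\Delta$ and $g=\1_{\{N_\Delta=0\}}\varphi(\eta_{\Delta^c})$ with $\varphi$ bounded local. Then, evaluated where $g\neq0$, the terms of $\mathscr{L}g$ and $f\,\mathscr{L}g$ involving points of $\Delta$ reduce to explicit expressions, and the symmetry identity becomes a relation between $\int_\Delta\d x\,\mu[b(x,\cdot)\,\cdot\,]$ and $\mu[\sum_{x\in\eta_\Delta}\cdot\,]$. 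More systematically, we compare the two sides of reversibility with $f=h(\eta_\Delta)$ and $g$ arbitrary local. The aim is to show that the Campbell measure $C(\d x,\d\eta)=\mu[\sum_{x\in\eta}\delta_{(x,\eta-\delta_x)}]$ coincides with $b(x,\eta)\,\d x\,\mu(\d\eta)$ on products of local sets. A monotone class argument then gives GNZ, and the Nguyen--Zessin characterization~\cite{xanh1979integral} yields $\mu\in\GG$; translation invariance is assumed. An inductive argument over the number of points in $\Delta$, similar to the induction in the proof of \Cref{section:well_definedness}, should isolate the single-point contribution.

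The \textbf{main obstacle} is this isolation step: extracting the exact GNZ identity from the bilinear symmetry. The difficulty is that $\mathscr{L}$ mixes birth and death contributions, and that $\mu$ is not known a priori to have local densities. The first thing to try is the choice $f=\1_{\{N_{\Delta}=1\}}u(\eta_\Delta)$, $g=\1_{\{N_\Delta=0\}}v(\eta_{\Delta^c})$ with $\Delta$ shrinking. The first-order terms in $|\Delta|$ then give precisely
\[
\int_\Delta\d x\,\mu\big[b(x,\cdot)\,u(\delta_x)\,v\,\1_{N_\Delta=0}\big]=\mu\big[u(\eta_\Delta)\,v\,\1_{N_\Delta=1}\big],
\]
which is the GNZ identity restricted to configurations with a single point in $\Delta$. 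Iterating over disjoint boxes and taking limits should recover the full Campbell identity.
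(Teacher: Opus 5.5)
Your $\Leftarrow$ direction is fine. For $\Rightarrow$, the test pair you end up with, $f=\1_{\{N_\Delta=1\}}u(\eta_\Delta)$ and $g=\1_{\{N_\Delta=0\}}v(\eta_{\Delta^c})$, is the right idea. The paper does not reprove this statement; it cites \cite{JKSZ25}, and its remark in the proof of $\RR_\theta\subseteq\GG_\theta$ (the symmetry need only be checked on a restricted class of local test functions) is consistent with your route. However, the step you call the main obstacle is not one, and your justification of the key identity is off.

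There is no expansion in $|\Delta|$ and no need to shrink $\Delta$. Since $fg=0$, the computation is exact:
on $\{N_\Delta=0\}$ the death part of $\mathscr{L}f$ vanishes and $\mathscr{L}f=\int_\Delta b(x,\cdot)\,u(\delta_x)\,\d x$;
on $\{N_\Delta=1\}$ the birth part of $\mathscr{L}g$ vanishes and $\mathscr{L}g=v(\eta_{\Delta^c})$.
So your displayed identity holds exactly for every box $\Delta$. Because $g\,\mathscr{L}f$ and $f\,\mathscr{L}g$ are bounded, no moment assumption on $\mu$ enters.

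Likewise, no induction over the number of points is needed to reach the full GNZ equation. Fix a box $\Delta$, a bounded $u\ge 0$ and a bounded local $w\ge 0$. Partition $\Delta$ into sub-boxes $\Delta^m_k$ with mesh tending to $0$, and apply the exact identity in each $\Delta^m_k$ with $v=w(\eta_{(\Delta^m_k)^c})$. Then sum over $k$.

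The left side becomes $\mu[S_m]$ with $S_m=\sum_k \1_{\{N_{\Delta^m_k}=1\}}u(\eta_{\Delta^m_k})\,w(\eta_{(\Delta^m_k)^c})$. By simplicity and local finiteness, $S_m$ converges pointwise to $\sum_{x\in\eta_\Delta}u(\delta_x)\,w(\eta-\delta_x)$.

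The right side equals $\int_\Delta \d x\,\mu[b(x,\cdot)\,u(\delta_x)\,w\,\1_{\{N_{\Delta^m(x)}=0\}}]$, where $\Delta^m(x)$ is the sub-box containing $x$. It converges by dominated convergence to $\int_\Delta\d x\,\mu[b(x,\cdot)\,u(\delta_x)\,w]$, since $\int_\Delta \mu(x\in\eta)\,\d x=0$.

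To pass to the limit on the left, first apply Fatou with $u=w=1$, which gives $\mu[N_\Delta]\le\Vert b\Vert_\infty|\Delta|$. Then the bound $S_m\le \Vert u\Vert_\infty\Vert w\Vert_\infty N_\Delta$ justifies dominated convergence. The result is GNZ for $F(x,\zeta)=\1_\Delta(x)\,u(\delta_x)\,w(\zeta)$.

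A monotone class argument extends this to all $F\ge 0$: on $\Delta\times\Omega$ both sides are finite measures agreeing on rectangles with local events. Nguyen--Zessin \cite{xanh1979integral} then gives $\mu\in\GG_\theta$. With these two repairs your proof is complete.
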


The right definition of reversibility for our studies here is the following.
\begin{definition}
    \(\mu \in \Pcal(\Omega)\) is called {\em reversible} if \(\mu[(T_t f) \, g] = \mu[(f \, (T_t g)]\) for every \(f, g \in \Lcal\). The set of translation-invariant reversible probability measures is denoted by \(\RR_\theta\).
\end{definition}

That these two definitions actually coincide here and that we still have $\mathbf{\GG_\theta = \RR_\theta}$ as promised in \Cref{prop:reversible_equals_gibbs_with_the_right_definition} will be proved in \Cref{section:reversible_measures_are_gibbs_measures}.

\subsection{Small-time exponential series expansion for local and bounded functions under Poisson-like moment conditions}\label{sec:small_time_exponential_series_expansion}

For the proof of the series expansion \Cref{prop:small_time_exponential_series_expansion}, we need the following technical bounds on remainders in the corresponding exponential series.
\begin{lemma}[Remainder estimate]\label{lemma:estimate_powers_of_generator}
    Let \(\mu\in \Pcal(\Omega)\) with
    \begin{align*}
        \mu\big[N_{\Delta}^k\big]^{1/k}
        \leq \frac{c_{\mu, 3}  \, c_{\mu, 2}^{\abs{\Delta}/k} \, k}{\log(1 + k/(c_{\mu, 1} \abs{\Delta}))}
    \end{align*} for some \(c_{\mu, 1}, c_{\mu, 2}, c_{\mu, 3} \geq 1\), all \(\Delta \Subset \mathbb{R}^d\) and \(k \in \mathbb{N}\).
    Let \(\Lambda\) be a bounded box in \(\mathbb{R}^d\) with \(\abs{\Lambda} \geq \abs{B_\range}\).
    Then, for all bounded and \(\Lambda\)-measurable \(f \colon \Omega \to \mathbb{R}\),
    \begin{align*}
        \mu\big[\big\vert \mathscr{L}^k f\big\vert \big]
        &\leq \Vert f \Vert_{\infty} \big(12 \Vert b \Vert_{\infty}c_{\mu, 1} c_{\mu, 3} c_{\mu, 2}^{\abs{\Lambda}} \abs{\Lambda} (k+1)\big)^{k}. 
    \end{align*}
\end{lemma}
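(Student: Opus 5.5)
The idea is to expand $\mathscr{L}^k f$ as a sum over ordered sequences of $k$ elementary moves (births and deaths), bound each term pointwise by a polynomial in point counts on a growing window, and then insert the moment hypothesis.

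\emph{Step 1: growth of the support.} If $g$ is bounded and $\Delta$-measurable, then $\mathscr{L} g$ is $(\Delta\oplus B(0,\range))$-measurable, because the birth rate $b(x,\eta)$ with $x\in\Delta$ only sees $\eta_{B(x,\range)}$. Moreover
\begin{align*}
    \abs{\mathscr{L} g} \le 2\Vert g\Vert_{\infty}\big(\Vert b\Vert_\infty\abs{\Delta} + N_\Delta\big).
\end{align*}
Iterating, $\mathscr{L}^j f$ is $\Lambda^{(j)}$-measurable with $\Lambda^{(j)}:=\Lambda\oplus B(0,j\range)$. The supremum norm is not the right quantity to propagate, since $N_\Delta$ is unbounded. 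Instead I would write $\mathscr{L}^k f(\eta)$ as a sum over words $w\in\{\mathrm{birth},\mathrm{death}\}^k$. Each word contributes terms of the form
\begin{align*}
    \int\!\cdots\!\int \prod b(\ldots)\,\sum\cdots\, (\pm) f(\eta \pm \text{points}),
\end{align*}
with $2^k$ sign choices. A word containing $m$ deaths and $k-m$ births is bounded in absolute value by
\begin{align*}
    \Vert f\Vert_\infty\, 2^k\,(\Vert b\Vert_\infty \abs{\Lambda^{(k)}})^{k-m}\,(N_{\Lambda^{(k)}}+k)^m,
\end{align*}
since each death sums over at most $N+k$ points of the current configuration. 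Summing over words gives
\begin{align*}
    \abs{\mathscr{L}^k f}\le \Vert f\Vert_\infty\, 4^k\,\big(\Vert b\Vert_\infty\abs{\Lambda^{(k)}} + N_{\Lambda^{(k)}} + k\big)^k.
\end{align*}

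\emph{Step 2: taking expectations.} Using $(a+b)^k\le 2^{k-1}(a^k+b^k)$ together with the hypothesis on $\mu[N_{\Delta}^k]$ for $\Delta=\Lambda^{(k)}$, I get
\begin{align*}
    \mu[N_{\Delta}^k]\le \frac{c_{\mu,3}^k\, c_{\mu,2}^{\abs{\Delta}}\, k^k}{\log(1+k/(c_{\mu,1}\abs{\Delta}))^k}.
\end{align*}

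\emph{Step 3: the main obstacle.} The volume $\abs{\Lambda^{(k)}}\sim k^d\abs{\Lambda}$ grows polynomially in $k$. This ruins the claimed bound twice over: the birth term becomes $k^{dk}$, and $c_{\mu,2}^{\abs{\Lambda^{(k)}}}$ becomes super-exponential. So the naive support-growth bookkeeping is too lossy, and the stated bound (only $c_{\mu,2}^{\abs{\Lambda}}$ and $(k+1)^k$) requires a finer argument.

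The first thing I would try is to exploit the fact that the action of $\mathscr{L}^k$ on $f$ only involves points that actually affect $f$. Writing $\mathscr{L}=\mathscr{L}_{\mathrm{in}}+\mathscr{L}_{\mathrm{out}}$, where $\mathscr{L}_{\mathrm{in}}$ contains births and deaths inside $\Lambda$, the out-part of a birth or death term at $x\notin\Lambda$ acting on a $\Lambda$-measurable function vanishes. The delicate point is that the birth rates of later $\Lambda$-births depend on points in $\Lambda\oplus B(0,\range)$. Here I would bound $b\le\Vert b\Vert_\infty$ crudely, but keep the $\eta$-dependence only through the configuration in $\Lambda$. Concretely, expand $\mathscr{L}^k f$ keeping the terms in their exact form $b(x,\eta)(g(\eta+\delta_x)-g(\eta))$. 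Then $\mathscr{L}^{j}f$ depends on $\eta$ outside $\Lambda$ only through rate factors. Differentiating a rate factor under a further $\mathscr{L}$ produces differences $b(x,\eta+\delta_y)-b(x,\eta)$ with $y$ within distance $\range$ of a previously born point of $\Lambda$. Such a $y$ lives in a region of volume $\lesssim\abs{\Lambda}$, using $\abs{\Lambda}\ge\abs{B_\range}$ to absorb the constant into the factor $12$.

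Organizing the expansion this way, each step contributes a factor at most $3\Vert b\Vert_\infty\cdot 2\abs{\Lambda}$ or a count $N_{\Lambda\oplus B(0,\range)}+k$. This yields
\begin{align*}
    \mu[\abs{\mathscr{L}^k f}]\le \Vert f\Vert_\infty\, 6^k\, \mu\big[(2\Vert b\Vert_\infty\abs{\Lambda}+N_{\Lambda'}+k)^k\big], \qquad \abs{\Lambda'}\le 2^d\abs{\Lambda}.
\end{align*}
Inserting the moment bound, using $\log(1+k/(c_{\mu,1}\abs{\Lambda'}))\ge$ const$/(c_{\mu,1}\abs{\Lambda})$ for $k\ge1$, and collecting constants then gives the form $(12\Vert b\Vert_\infty c_{\mu,1}c_{\mu,3}c_{\mu,2}^{\abs{\Lambda}}\abs{\Lambda}(k+1))^k$, up to adjusting how $c_{\mu,2}^{\abs{\Lambda'}}$ is absorbed. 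Making this localization rigorous is the step where I expect the real work to lie.
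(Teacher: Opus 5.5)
\textbf{There is a genuine gap in the localization step.} You correctly see in Step~3 that propagating the support as $\Lambda\oplus B(0,k\range)$ is too lossy; this matches the heuristic the paper gives before its proof. But your fix is false. It is not true that each birth variable ranges over a set of volume $\lesssim\abs{\Lambda}$ and each death sum runs over the fixed window $\Lambda'=\Lambda\oplus B(0,\range)$.

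Suppose a point $y$ is born outside $\Lambda$, within $\range$ of a previously born point. Then $y$ brings its own rate factor $b(y,\cdot)$, and a subsequent birth or death at $z$ with $\abs{z-y}\le\range$ changes that factor. So the dependence moves outward by one interaction range per birth, up to distance $k\range$ from $\Lambda$.

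Already in $\mathscr{L}_{\mathrm{death}}\mathscr{L}_{\mathrm{birth}}\mathscr{L}_{\mathrm{birth}}f$ the death sum runs over points of $\eta$ within $\range$ of a birth at some $y\in\Lambda\oplus B(0,\range)$. These points lie up to distance $2\range$ from $\Lambda$, and $N_{\Lambda'}$ does not count them. Hence your displayed bound, with $N_{\Lambda'}$ and a factor $2\abs{\Lambda}$ per birth, is not justified. It would even be stronger than the lemma, with no $(k+1)$ coming from births and only $c_{\mu,2}^{\abs{\Lambda'}}$. The step you defer as ``the real work'' is exactly the missing argument. Separately, $\abs{\Lambda\oplus B(0,\range)}\le 2^d\abs{\Lambda}$ does not follow from $\abs{\Lambda}\ge\abs{B_\range}$.

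\textbf{What is needed is a localization that follows the chain of births.}
\begin{itemize}
\item Expand $\mathscr{L}^k=\sum_{I\in\{\mathrm{birth},\mathrm{death}\}^k}\mathscr{L}_I$ and write $\mathscr{L}_I f(\eta)=\int\kappa^{(I)}(\d x_1\cdots\d x_k\,\vert\,\eta)\,F(x_1,\dots,x_k,\eta)$. Here $\kappa^{(I)}$ is Lebesgue measure for births and counting measure on the current configuration for deaths.
\item Take $F$ to be the \emph{fully expanded} signed sum over $J\in\{0,1\}^k$ of products of rates times values of $f$. Bound rates only at this final stage, since $\mathscr{L}$ takes differences; this gives $\Vert F\Vert_\infty\le 2^k\Vert b\Vert_\infty^k\Vert f\Vert_\infty$.
\item Cancellation in the $J_i$-sum gives $F=0$ unless $x_i\in\Lambda\cup\bigcup_{j>i,\,I_j=\mathrm{birth}}B_\range(x_j)$ for every $i$. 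These regions depend on the birth locations, but have volume at most $(1+\#\mathrm{birth})\abs{\Lambda}$ because $\abs{\Lambda}\ge\abs{B_\range}$. That is linear in $k$, not $k^d$.
\item For fixed birth locations, bound each death sum by $N$ of its region plus the number of earlier births. Apply H\"older to the product, then the moment hypothesis together with $x/\log(1+x/y)\le 3(x+y)$. Each factor is then of order $c_{\mu,1}c_{\mu,3}\,c_{\mu,2}^{k\abs{\Lambda}/\#\mathrm{death}}\abs{\Lambda}(k+1)$.
\item Only after this, integrate out the birth variables, each over a set of volume at most $k\abs{\Lambda}$.
\end{itemize}
This gives $\big(3c_{\mu,1}c_{\mu,3}c_{\mu,2}^{\abs{\Lambda}}\abs{\Lambda}(k+1)\big)^k$ per word. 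The factor $2^k\Vert b\Vert_\infty^k$ from $\Vert F\Vert_\infty$ and the $2^k$ words then produce the constant $12$.
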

Before providing the proof of the above statement, we wish to give a heuristic idea of the situation. If we want the exponential series 
\begin{align*}
    \sum_{k = 0}^{\infty} \frac{t^k}{k!} \mu[\lvert\mathscr{L}^k f\rvert]
\end{align*}
to converge for small \(t > 0\), the terms \(\mu[\lvert\mathscr{L}^k f\rvert]\) are not allowed to grow much faster than \(k^k \sim k! e^k/\sqrt{2\pi k}\).
Assuming, without loss of generality that \(\Vert f \Vert_\infty \leq 1\), we have that
\begin{align*}
    \vert \mathscr{L} f\vert \leq 2 \Vert b\Vert_\infty \abs{\Lambda} + 2 N_\Lambda
\end{align*} 
and the ``support" of \(\mathscr{L}f\) is now contained in \(\Lambda \oplus B(0, \range)\).
Similarly,
\begin{align*}
    \vert \mathscr{L}^2 f\vert 
    &\leq 2\Vert b\Vert_\infty \abs{\Lambda^{\oplus 1}} \big\{ 2 \Vert b\Vert_\infty \abs{\Lambda} + 2 N_\Lambda \big\}  + 2 N_{\Lambda^{\oplus 1}} \big\{ 2 \Vert b\Vert_\infty \abs{\Lambda} + 2 N_\Lambda  \big\} \\
    &= 2^2 \Vert b \Vert_\infty^2 \abs{\Lambda^{\oplus 1}} \abs{\Lambda} +  2^2 \Vert b \Vert_\infty \abs{\Lambda^{\oplus 1}} N_\Lambda + 2^2 \Vert b \Vert_\infty \abs{\Lambda} N_{\Lambda^{\oplus 1}} + 2^2 N_{\Lambda^{\oplus 1}} N_{\Lambda},
\end{align*} 
and now \(\mathscr{L}^2f\) has a ``support'' of \(\Lambda^{\oplus 1} := \Lambda \oplus B(0, \range)\).
Continuing this rough estimates, we see that \(\mathscr{L}^k f\) now has a support of \(\Lambda^{\oplus k} := \Lambda^{\oplus (k-1)} \oplus B(0, \range)\) and an upper bound involving exponentially many (in \(k\)) summands of the form \(\prod_{i = 1}^{k} A_i\), where \(A_i \in \{\lvert\Lambda^{\oplus i}\rvert, N_{\Lambda^{\oplus i}}\}\).
The number of such summands is not an issue, because small \(t > 0\) beats the corresponding exponential growth, but the products are troublesome. Indeed, recall that for the Poisson point process \(\pi\) we have, rather sharply~\cite{Ahle2022}, that
\begin{align*}
    \pi\big[N_{\Delta}^k\big]
    \leq \Big(\frac{k}{\log(1 + k/\abs{\Delta})}\Big)^{k}
\end{align*} for \(\Delta \Subset \mathbb{R}^d\), and in particular
\begin{align*}
    \pi\Big[\prod_{i = 1}^{k} N_{\Lambda^{\oplus i}} \Big] \leq
    \pi\big[N_{\Lambda^{\oplus (k/2)}}^{k/2}\big]
    \leq \Big(\frac{k/2}{\log\big(1 + k/(2\abs{\Lambda^{\oplus (k/2))}})\big)}\Big)^{k / 2},
\end{align*} 
where the right-hand side, for large even \(k\), grows too fast for our intended application:
\begin{align*}
    k^{- k} \Big(\frac{k/2}{\log\big(1 + k/(2\abs{\Lambda^{\oplus (k/2)}})\big)}\Big)^{k / 2}
    &= \Big(\frac{1/2}{k \log\big(1 + k/(2\abs{\Lambda^{\oplus (k/2)}})\big)}\Big)^{k / 2}\\
    &\geq \bigg(\frac{1/2}{k \log\big(1 + 1/k!\big)}\bigg)^{k / 2}
    \geq \big((k-1)!/2\big)^{k / 2}.
\end{align*}
Hence, we need to estimate \(\vert \mathscr{L}^k f\vert\) with more care. The key point here is that, in the above calculations, we wrongly incorporated many fictitious non-zero contributions by always looking at the whole ``support'' of \(\mathscr{L}^{k-1} f\), whereas they actually have to come only from ``chains of balls'' \(B(0, \range)\) due to the finite-range property of \(b\).

\begin{proof}
    We have
    \begin{align*}
        \mathscr{L}^k
        = (\mathscr{L}_{\text{birth}} + \mathscr{L}_{\text{death}})^k
        = \sum_{I \in \{\text{birth}, \text{death}\}^k} \mathscr{L}_I
    \end{align*} with \(\mathscr{L}_I := \prod_{r = 1}^{k} \mathscr{L}_{I_r}\)
    and the birth- and death-parts of the formal generators
    \begin{align*}
        (\mathscr{L}_{\text{birth}} g)(\eta)
        = \int_{\mathbb{R}^d} b(x, \eta) \, (g(\eta + \delta_x) - g(\eta)) \, \d x\quad \text{and}\quad (\mathscr{L}_{\text{death}} g)(\eta)
        = \sum_{x \in \eta} \, (g(\eta - \delta_x) - g(\eta)).
    \end{align*}
    For any given \(I \in \{\text{birth}, \text{death}\}^k\) we can write
    \begin{align*}
        (\mathscr{L}_I f)(\eta) 
        = \int \, \kappa^{(I)}(\mathrm{d}x_1 \dots \mathrm{d}x_k \,\vert\, \eta) \, F(x_1, \dots, x_k, \eta)
    \end{align*} with the measure
    \begin{align*}
        &\kappa^{(I)}(\mathrm{d}x_1 \dots \mathrm{d}x_k \,\vert\, \eta) =
        \kappa^{(I_1)}(\mathrm{d}x_1 \,\vert\, \eta) \, \kappa^{(I_2)}\Big(\mathrm{d}x_2 \,\vert\, \eta + \sum_{\substack{1 \leq i \leq 1 \\ I_i = \text{birth}}} \delta_{x_i} \Big) \dots \, \kappa^{(I_k)}\Big(\mathrm{d}x_k \,\vert\, \eta + \sum_{\substack{1 \leq i \leq k-1 \\ I_i = \text{birth}}} \delta_{x_i} \Big),
    \end{align*} where \(\kappa^{(\text{birth})}(\mathrm{d}x \, \vert \, \zeta)\) is just Lebesgue measure on \(\mathbb{R}^d\),  \(\kappa^{(\text{death})}(\mathrm{d}x \, \vert \, \zeta)\) is the counting measure on \(\zeta\), and
    \begin{align*}
        &F(x_1, \dots, x_k, \eta)= \sum_{J \in \{0, 1\}^k} (-1)^{\sum_{j = 1}^{k} J_j} \Big(\prod_{i = 1}^{k} b^{(I_i)}\bigl(x_i, \eta + \sum_{j = 1}^{i-1} J_j \sgn(I_j) \delta_{x_j}\bigr)\Big) f\Big(\eta + \sum_{j = 1}^{k} J_j \sgn(I_j) \delta_{x_j} \Big),
    \end{align*} 
    where \(b^{(\text{birth})} = b\) and \(b^{(\text{death})} \equiv 1\). 

    From \(f = f(\cdot_\Lambda)\), as well as the finite-range property of \(b\), and resulting symmetry, we see that \(F(x_1, \dots, x_k, \eta) = 0\) unless \(x_k \in \Lambda\) and \(x_i \in \Lambda \cup \bigcup_{\substack{i < j \leq k, I_j = \text{birth}}} B_\range(x_j)\) for all \(1 \leq i < k\).
    Therefore,
    \begin{align*}
        \mu\big[\big\vert \mathscr{L}_I f \big\vert \Big]
        &\leq \Vert F \Vert_{\infty} \, \mu\Big[ \int \, \kappa^{(I)}(\mathrm{d}x_1 \dots \mathrm{d}x_k \,\vert\, \eta) \, \1_{\forall 1 \leq i \leq k \colon x_i \in \Lambda \cup \bigcup_{\substack{i < j \leq k, I_j = \text{birth}}} B_R(x_j)} \Big] \\
        &\leq \Vert f \Vert_{\infty} \big(6 \Vert b \Vert_{\infty}c_{\mu, 1} c_{\mu, 3} c_{\mu, 2}^{\abs{\Lambda}} \abs{\Lambda} (k+1)\big)^{k},
    \end{align*} 
    where we used the fact that
 $
        \Vert F \Vert_{\infty}
        \leq 2^k \,\Vert b \Vert_{\infty}^k \, \Vert f \Vert_{\infty}
$
    and
    \begin{align*}
        \mu\Big[ \int \, \kappa^{(I)}(\mathrm{d}x_1 \dots \mathrm{d}x_k \,\vert\, \eta) \, \1_{\forall 1 \leq i \leq k \colon x_i \in \Lambda \cup \bigcup_{\substack{i < j \leq k, I_j = \text{birth}}} B_\range(x_j)} \Big] 
        \leq \big(3 c_{\mu, 1} c_{\mu, 3} c_{\mu, 2}^{\abs{\Lambda}} \abs{\Lambda} (k+1)\big)^{k}. 
    \end{align*} 
    Let us now show the latter bound to be true. First, let \(\#\text{birth}\), resp.\ \(\#\text{death}\), be the respective number of occurrences of \(\text{birth}\), resp.\ \(\text{death}\) in \(I\) and let \(i^{(\text{b})}_j, i^{(\text{d})}_l\) be the index of the \(j\)-th, resp. \(l\)-th, occurrence of \(\text{birth}\), resp.\ \(\text{death}\), in \(I\) (read from left to right). Then
    \begin{align*}
        &\mu\Big[ \int \, \kappa^{(I)}(\mathrm{d}x_1 \dots \mathrm{d}x_k \,\vert\, \eta) \, \1_{\forall 1 \leq i \leq k \colon x_i \in \Lambda \cup \bigcup_{\substack{i < j \leq k\colon I_j = \text{birth}}} B_R(x_j)} \Big] \\
        &= \int_{\Lambda} \d x_{i^{(\text{b})}_{\#\text{birth}}} \, \dots \, \int_{\Lambda \cup \bigcup_{1 < j \leq \#\text{birth}} B_R(x_{i^{(\text{b})}_j})} \, \d x_{i^{(\text{b})}_1}  \\
        &\quad
        \mu\Bigg[\sum_{x_{i^{(\text{d})}_1} \in \Big(\eta + \sum_{\substack{j < i_1^{(\text{d})}\colon \\ I_j = \text{birth}}} \delta_{x_j}\Big)_{\Lambda \cup \bigcup_{\substack{j > i_1^{(\text{d})}\colon \\ I_j = \text{birth}}} B_R(x_j)}}
        \dots \sum_{x_{i^{(\text{d})}_{\#\text{death}}} \in \Big(\eta + \sum_{\substack{j < i^{(\text{d})}_{\#\text{death}}\colon \\ I_j = \text{birth}}} \delta_{x_j}\Big)_{\Lambda \cup \bigcup_{\substack{j > i^{(\text{d})}_{\#\text{death}}\colon \\ I_j = \text{birth}}} B_\range(x_j)}} 1 \Bigg].
    \end{align*}
    The expectation can be bounded as
    \begin{align*}
        &\mu\Bigg[\sum_{x_{i^{(\text{d})}_1} \in \Big(\eta + \sum_{\substack{j < i_1^{(\text{d})}\colon \\ I_j = \text{birth}}} \delta_{x_j}\Big)_{\Lambda \cup \bigcup_{\substack{j > i_1^{(\text{d})}\colon \\ I_j = \text{birth}}} B_\range(x_j)}} \dots \sum_{x_{i^{(\text{d})}_{\#\text{death}}} \in \Big(\eta + \sum_{\substack{j < i^{(\text{d})}_{\#\text{death}}\colon \\ I_j = \text{birth}}} \delta_{x_j}\Big)_{\Lambda \cup \bigcup_{\substack{j > i^{(\text{d})}_{\#\text{death}}\colon \\ I_j = \text{birth}}} B_\range(x_j)}} 1 \Bigg] \\
        &\leq \mu\Big[\Big(N_{\Lambda \cup \bigcup_{\substack{j > i_1^{(\text{d})}\colon I_j = \text{birth}}} B_\range(x_j)}(\eta) + \#\{j < i^{(\text{d})}_{1} \,\vert\, I_j = \text{birth}\}\Big) \\ 
        &\qquad\quad \cdots \Big(N_{\Lambda \cup \bigcup_{\substack{j > i^{(\text{d})}_{\#\text{death}}\colon I_j = \text{birth}}} B_\range(x_j)}(\eta) + \#\{j < i^{(\text{d})}_{\#\text{death}} \,\vert\, I_j = \text{birth}\}\Big) \Big] \\
        &\leq \prod_{l = 1}^{\#\text{death}} \,\mu\Big[\Big(N_{\Lambda \cup \bigcup_{\substack{j > i_l^{(\text{d})}\colon I_j = \text{birth}}} B_\range(x_j)}(\eta) + \#\{j < i^{(\text{d})}_{l} \,\vert\, I_j = \text{birth}\}\Big)^{\#\text{death}} \Big]^{1/\#\text{death}}. 
    \end{align*}
    Looking now at fixed \(l\),
    \begin{align*}
        &\mu\Big[\Big(N_{\Lambda \cup \bigcup_{\substack{j > i_l^{(\text{d})}\colon I_j = \text{birth}}} B_\range(x_j)}(\eta) + \#\{j < i^{(\text{d})}_{l} \,\vert\, I_j = \text{birth}\}\Big)^{\#\text{death}} \Big]^{1/\#\text{death}}\\
        &\leq \mu\Big[\Big(N_{\Lambda \cup \bigcup_{\substack{j > i_l^{(\text{d})}\colon I_j = \text{birth}}} B_\range(x_j)}(\eta)\Big)^{\#\text{death}} \Big]^{1/\#\text{death}} + \, \#\{j < i^{(\text{d})}_{l} \,\vert\, I_j = \text{birth}\}.
    \end{align*} 
    By assumption on \(\mu\),
    \begin{align*}
        \mu\Big[\Big(N_{\Lambda \cup \bigcup_{\substack{j > i_l^{(\text{d})}\colon I_j = \text{birth}}} B_\range(x_j)}(\eta)\Big)^{\#\text{death}} \Big]^{1/\#\text{death}}
        \leq \frac{c_{\mu, 3} \, c_{\mu, 2}^{L / \#\text{death}} \, \#\text{death}}{\log\left(1 + \#\text{death}/L \right)}
    \end{align*} 
    where \(L = c_{\mu, 1} \big\vert \Lambda \cup \bigcup_{\substack{j > i_l^{(\text{d})}\colon  I_j = \text{birth}}} B_\range(x_j) \big\vert\).
    But
    \begin{align*}
        \frac{x}{\log(1 + x/y)}
        \leq \max\left\{ \frac{1}{\log(3/2)} x, \e^{1/4} y \right\}
        \leq 3(x+y)
    \end{align*} 
    for \(x, y > 0\), and therefore
    \begin{align*}
        &\mu\Big[\Big(N_{\Lambda \cup \bigcup_{\substack{j > i_l^{(\text{death})}\colon I_j = \text{birth}}} B_\range(x_j)}(\eta) + \#\{j < i^{(\text{d})}_{l} \,\vert\, I_j = \text{birth}\}\Big)^{\#\text{death}} \Big]^{1/\#\text{death}}\\
        &\leq 3 c_{\mu, 1}c_{\mu, 3} c_{\mu, 2}^{k \abs{\Lambda} / \#\text{death}} \abs{\Lambda} \Big(\#\text{death} + 1  + \#\{j > i^{(\text{d})}_{l} \,\vert\, I_j = \text{birth}\}
        + \#\{j < i^{(\text{d})}_{l} \,\vert\, I_j = \text{birth}\} \Big) \\
        &\leq 3 c_{\mu, 1}c_{\mu, 3} c_{\mu, 2}^{k \abs{\Lambda} / \#\text{death}}  \abs{\Lambda} (k+1).
    \end{align*}
    It now follows that
    \begin{align*}
        &\mu\Big[ \int \, \kappa^{(I)}(\mathrm{d}x_1 \dots \mathrm{d}x_k \,\vert\, \eta) \, \1_{\forall 1 \leq i \leq k \colon x_i \in \Lambda \cup \bigcup_{\substack{i < j \leq k,\colon I_j = \text{birth}}} B_\range(x_j)} \Big] \\
        &\leq \int_{\Lambda} \d x_{i^{(\text{b})}_{\#\text{birth}}} \, \dots \, \int_{\Lambda \cup \bigcup_{1 < j \leq \#\text{birth}} B_R(x_{i^{(\text{b})}_j})}  \d x_{i^{(\text{b})}_1} \,  c_{\mu, 2}^{k\abs{\Lambda}} \big(3 c_{\mu, 1} c_{\mu, 3} \abs{\Lambda} (k+1)\big)^{\#\text{death}} \\
        &\leq  \big(\#\text{birth} \cdot \abs{\Lambda}\big)^{\#\text{birth}} c_{\mu, 2}^{k\abs{\Lambda}} \big(3 c_{\mu, 1} c_{\mu, 3} \abs{\Lambda} (k+1)\big)^{\#\text{death}} \\
        &\leq \big(3 c_{\mu, 1} c_{\mu, 3} c_{\mu, 2}^{\abs{\Lambda}} \abs{\Lambda} (k+1)\big)^{k}, 
    \end{align*}
    as desired. 
\end{proof}

Let us check that the moment requirements for \Cref{lemma:estimate_powers_of_generator} stay fulfilled under time evolution.

\begin{lemma}\label{lemma:moments_stay_okay}
    Let \(\mu\in\Pcal(\Omega)\) with
    \begin{align*}
        \mu\big[N_{\Delta}^k\big]^{1/k}
        \leq \frac{c_{\mu, 3}  \, c_{\mu, 2}^{\abs{\Delta}/k} \, k}{\log(1 + k/(c_{\mu, 1} \abs{\Delta}))}
    \end{align*} for some \(c_{\mu, 1}, c_{\mu, 2}, c_{\mu, 3} \geq 1\), all \(\Delta \Subset \mathbb{R}^d\) and \(k \in \mathbb{N}\).
    Then \(\mu T_t\) fulfills the same bounds with \(c_{{\mu T_t}, 3} = 1 + c_{\mu, 3}\), \(c_{{\mu T_t}, 2} = c_{\mu, 2}\) and  \(c_{{\mu T_t}, 1} = \max\{c_{\mu, 1}, t \Vert \birth \Vert_{\infty}\}\).
\end{lemma}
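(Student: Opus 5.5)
We will use the graphical representation and the splitting \(X_t\Ssup{\omega} = \omega_t + Y_t\Ssup{\omega}\) from \eqref{eqn:separation-initial-later}. For every \(\Delta \Subset \mathbb{R}^d\) this gives
\begin{align*}
    N_\Delta(X_t\Ssup{\omega}) \leq N_\Delta(\omega) + N_\Delta(Y_t\Ssup{\omega}).
\end{align*}
Since all birth rates are bounded by \(\Vert \birth\Vert_\infty\), the process \(Y_t\Ssup{\omega}\) is dominated by the projection of the Poisson random measure \(\Ncal\) restricted to \(\{u \leq \Vert\birth\Vert_\infty\}\times\{s\leq t\}\). Concretely, \(N_\Delta(Y_t\Ssup{\omega}) \leq \widehat N_\Delta\), where \(\widehat N_\Delta\) is Poisson distributed with mean \(t\Vert\birth\Vert_\infty\abs{\Delta}\). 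This mean is \(\leq c\,\abs{\Delta}\) with \(c := \max\{c_{\mu,1}, t\Vert\birth\Vert_\infty\}\). The dominating variable does not depend on \(\omega\), and \(N_\Delta(\omega_t)\leq N_\Delta(\omega)\) holds pathwise. Minkowski's inequality in \(L^k(\mu\otimes\mathbb{P})\) therefore gives
\begin{align*}
    (\mu T_t)\big[N_\Delta^k\big]^{1/k} \leq \mu\big[N_\Delta^k\big]^{1/k} + \mathbb{E}\big[\widehat N_\Delta^k\big]^{1/k}.
\end{align*}

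For the Poisson term we use the sharp moment bound from~\cite{Ahle2022}, already quoted before \Cref{lemma:estimate_powers_of_generator}. For a Poisson variable with mean \(\lambda\) it gives \(\mathbb{E}[\mathrm{Poi}(\lambda)^k]^{1/k} \leq k/\log(1+k/\lambda)\). Since \(\lambda \leq c\abs{\Delta}\), monotonicity yields
\begin{align*}
    \mathbb{E}\big[\widehat N_\Delta^k\big]^{1/k} \leq \frac{k}{\log(1+k/(c\abs{\Delta}))}.
\end{align*}
The right-hand side is at most \(c_{\mu,2}^{\abs{\Delta}/k}\,k/\log(1+k/(c\abs{\Delta}))\), because \(c_{\mu,2}\geq 1\).

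For the \(\mu\)-term, the assumed bound uses \(c_{\mu,1}\) in the denominator. Replacing \(c_{\mu,1}\) by the larger constant \(c\) only decreases \(\log(1+k/(c\abs{\Delta}))\) and hence only increases the bound, so the hypothesis remains valid with \(c\) in place of \(c_{\mu,1}\). Adding the two estimates gives
\begin{align*}
    (\mu T_t)\big[N_\Delta^k\big]^{1/k} \leq \frac{(1 + c_{\mu,3})\, c_{\mu,2}^{\abs{\Delta}/k}\, k}{\log(1+k/(c\abs{\Delta}))},
\end{align*}
which is exactly the claim with \(c_{\mu T_t,3} = 1+c_{\mu,3}\), \(c_{\mu T_t,2} = c_{\mu,2}\) and \(c_{\mu T_t,1} = c\).

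The main point needing care is the domination of \(Y_t\Ssup{\omega}\) in \(\Delta\) by a Poisson count that is independent of the initial condition. This is immediate from \eqref{eq:sdeBD}: every point of \(Y_t\Ssup{\omega}\) inside \(\Delta\) is an atom of \(\Ncal\) in \(\Delta\times[0,\Vert\birth\Vert_\infty]\times[0,\infty)\times[0,t]\), because \(b\leq\Vert\birth\Vert_\infty\). The mean of this count is \(t\Vert\birth\Vert_\infty\abs{\Delta}\). Discarding the lifespan constraint only enlarges the dominating variable, so no further care is needed beyond checking the direction of monotonicity in the Poisson bound, which is the standard fact that Poisson moments increase in the mean.
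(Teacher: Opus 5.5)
Your proof is correct and follows essentially the same route as the paper. You dominate \(N_\Delta(X_t\Ssup{\omega})\) by \(N_\Delta(\omega)\) plus the number of atoms of \(\Ncal\) in \(\Delta\times[0,\Vert\birth\Vert_\infty]\times[0,\infty)\times[0,t]\), apply Minkowski's inequality, bound the Poisson term via~\cite{Ahle2022}, and absorb the constants using \(c_{\mu,2}\geq 1\) and the monotonicity of \(\lambda\mapsto k/\log(1+k/\lambda)\); this is exactly the paper's chain of estimates, written out more carefully (the paper's first displayed line omits the exponent \(1/k\)).
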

\begin{proof}
    We have, using the graphical representation,    
    \begin{align*}
        (\mu T_t)[N_{\Delta}^k ]^{1/k}
        &= \Big(\int \mu(\d \omega) \, \mathbb{E}\big[N_{\Delta}^k(X_t\Ssup{\omega})\big]\Big) \\ 
        &\leq \Big(\int \mu(\d \omega) \, \mathbb{E}\Big[\Big(\int_{\Delta \times [0,t]\times[0,+\infty) \times [0, \Vert \birth \Vert_{\infty}]} N(\d x,\d s,\d r,\d u) + N_{\Delta}(\omega)\Big)^k\Big]\Big)^{1/k} \\
        &\leq \Big(\int \mu(\d \omega) N_{\Delta}(\omega)^k\Big)^{1/k} + \mathbb{E}\Big[\Big(\int_{\Delta \times [0,t]\times[0,+\infty) \times [0, \Vert \birth \Vert_{\infty}]} N(\d x,\d s,\d r,\d u)\Big)^k\Big]^{1/k} \\
        &\leq \frac{c_{\mu, 3}  \, c_{\mu, 2}^{\abs{\Delta}/k} \, k}{\log(1 + k/(c_{\mu, 1} \abs{\Delta}))} 
        + \frac{k}{\log(1 + k/(t \Vert \birth \Vert_{\infty} \abs{\Delta}))}\leq \frac{c_{{\mu T_t}, 3}  \, c_{{\mu T_t}, 2}^{\abs{\Delta}/k} \, k}{\log(1 + k/(c_{{\mu T_t}, 1} \abs{\Delta}))},
    \end{align*}
    with \(c_{{\mu T_t}, 3} = 1 + c_{\mu, 3}\), \(c_{{\mu T_t}, 2} = c_{\mu, 2}\) and  \(c_{{\mu T_t}, 1} = \max\{c_{\mu, 1}, t \Vert \birth \Vert_{\infty}\}\).
\end{proof}
Using the technical lemmas from above, we can establish the series expansion for $\mu[T_t f]$. 
\begin{proof}[Proof of \Cref{prop:small_time_exponential_series_expansion}] 
    We have
    \begin{align*}
        (T_t f)(\eta)
        = \sum_{k = 0}^{N} \frac{t^k}{k!} (\mathscr{L}^k f)(\eta) + \int_{0}^{t} \d s \, \frac{s^N}{N!} T_s(\mathscr{L}^{N+1} f)(\eta),
    \end{align*} 
    by iteration of the identity 
    \begin{align*}
        T_t g
        = g + \int_{0}^{t} \d s \, T_s\mathscr{L} g
    \end{align*} 
    for local \(g\) with good moments. Note herein that \(g = \mathscr{L}^{N} f\) stays local (but not \(\Lambda\)-local).
    It follows that
    \begin{align*}
        \mu\big[T_t f \big]
        = \sum_{k = 0}^{N} \frac{t^k}{k!} \mu\big[\mathscr{L}^k f \big]
        + \mu\Big[  \int_{0}^{t} \d s \, \frac{s^N}{N!} T_s(\mathscr{L}^{N+1} f) \Big].
    \end{align*} By \Cref{lemma:estimate_powers_of_generator} and \Cref{lemma:moments_stay_okay},
    \begin{align*}
        &\Big\vert \mu\Big[  \int_{0}^{t} \d s \, \frac{s^N}{N!} T_s(\mathscr{L}^{N+1} f) \Big] \Big\vert 
        \leq \int_{0}^{t} \d s \, \frac{s^N}{N!} (\mu T_s)\big[\big\vert \mathscr{L}^{N+1}f \big\vert \big] \\
        &\leq  \Vert f \Vert_{\infty} \int_{0}^{t} \d s \, \frac{s^N}{N!} \big(12 \Vert b \Vert_{\infty} \max\{c_{\mu, 1}, t \Vert b\Vert_\infty\} (c_{\mu, 3}+1) c_{\mu, 2}^{\abs{\Lambda}} \abs{\Lambda}\big)^{N+1} (N+2)^{N+1}.
    \end{align*} Stirling's approximation implies, for \(N\) large enough,
    \begin{align*}
        &\int_{0}^{t} \d s \, \frac{s^N}{N!} \big(12 \Vert b \Vert_{\infty}c_{\mu, 1} c_{\mu, 3} c_{\mu, 2}^{\abs{\Lambda}} \abs{\Lambda}\big)^{N+1} (N+2)^{N+1}\leq  (\e \widetilde{c} t)^{N+1},
    \end{align*} with \(\widetilde{c} = 12 \Vert b \Vert_{\infty} \max\{c_{\mu, 1}, t \Vert b\Vert_\infty\} (c_{\mu, 3}+1) c_{\mu, 2}^{\abs{\Lambda}} \abs{\Lambda}\).
    Hence,
    \begin{align*}
        \mu\big[T_t f\big]
        = \sum_{k = 0}^{\infty} \frac{t^k}{k!} \mu\big[\mathscr{L}^k f\big]
    \end{align*} if \(t < \frac{1}{c}\) for \(c = 12 \e \Vert b \Vert_{\infty} c_{\mu, 1} (c_{\mu, 3}+1) c_{\mu, 2}^{\abs{\Lambda}} \abs{\Lambda}\).
\end{proof}

\subsection{Non-Reversible (-Gibbs) to reversible (Gibbs) in finite time is impossible}\label{section:impossibility_of_finite_time_gibbs_without_gibbs_start}

Let us now check that the moments of Gibbs measures with respect to the considered interaction $H$ are good enough to apply the small-time exponential series expansion derived above.

\begin{proof}[Proof of \Cref{lemma:moments_of_Area_interaction_Gibbs_measures}]
    For a Poisson point process \(\pi^{z}\) with intensity \(z > 0\) the counting variables are simply Poisson variables, so it holds that
    \begin{align*}
        \pi^z\big[N_{\Delta}^k\big]^{1/k}
        \leq \frac{k}{\log(1 + k/(z \abs{\Delta}))},
    \end{align*}
    see, e.g.,~\cite{Ahle2022}.
    The result then follows by simply applying the crude bound
    \begin{align*}
        \frac{\d \nu_\Delta}{\d \pi_\Delta}
        \leq c_{\nu}^{\abs{\Delta}} z_{\nu}^{N_\Delta}.
    \end{align*}
    This proves the result. 
\end{proof}

Next, we show that if we started with an initial distribution \(\mu\) and the evolved measure \(\mu T_t\) satisfies some moment conditions, then the original measure \(\mu\) has to satisfy similar moment conditions.

\begin{proof}[Proof of \Cref{lemma:moments_cannot_be_much_worse_at_starting_point}]
    Note that, since the lifespan of points is exponentially distributed with parameter $1$, we have that the expectation of the number of initial points \(\omega\) that are still alive at time \(t\) is given by \(\E[N_\Delta(\underline\omega_t)] = \e^{-t} N_\Delta(\omega)\), and
    \begin{align*}
        \mu\big[N_{\Delta}^k\big]
        &= \e^{tk} \, \mu\big[(\e^{-t} N_{\Delta})^k\big]
        = \e^{tk} \mu\big[\mathbb{E}[N_{\Delta}(\underline{\omega}_t)]^k\big] \leq \e^{tk} \mu\big[\mathbb{E}[N_{\Delta}(\underline{\omega}_t)^k]\big]
       \\ & \leq \e^{tk} \mu\big[\mathbb{E}[N_{\Delta}(X_t\Ssup{\omega})^k]\big]= \e^{tk} (\mu T_t)\big[N_{\Delta}^k\big]
        = \e^{tk} \widetilde{\nu}\big[N_{\Delta}^k\big],
    \end{align*}
    as desired. 
\end{proof}

\subsection{Equivalence of reversible and Gibbs measures}\label{section:reversible_measures_are_gibbs_measures}

In this section we present the proof of \Cref{prop:reversible_equals_gibbs_with_the_right_definition}, split into the two inclusions below.
\begin{proposition}
    We have \(\GG_\theta \subseteq \RR_\theta\).
\end{proposition}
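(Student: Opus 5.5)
We must show that every $\nu\in\GG_\theta$ satisfies $\nu[(T_tf)\,g]=\nu[f\,(T_tg)]$ for all $f,g\in\Lcal$. The plan is to reduce reversibility of the semigroup to infinitesimal reversibility, which is available from \cite[Proposition~3.1]{JKSZ25}: every $\nu\in\GG_\theta$ satisfies $\nu[(\mathscr{L}f)\,g]=\nu[f\,(\mathscr{L}g)]$ for $f,g\in\Lcal$. Iterating this identity gives $\nu[(\mathscr{L}^kf)\,g]=\nu[f\,(\mathscr{L}^kg)]$ for every $k$. For this we need the infinitesimal identity not only on bounded local functions but on the class of local functions with polynomial growth in the particle number, to which $\mathscr{L}^{j}f$ belongs. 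I would extend it by truncation, using the GNZ equations directly: the identity is exactly the GNZ/Mecke relation with Papangelou intensity $b$. Dominated convergence then applies because $\nu$ has all moments by \Cref{lemma:moments_of_Area_interaction_Gibbs_measures}.

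Next I would use the small-time expansion \Cref{prop:small_time_exponential_series_expansion}, applied to the signed measure $g\,\nu$ in place of $\mu$. Since $g$ is bounded and local, the relevant moment bounds for $|g|\nu$ are inherited from \Cref{lemma:moments_of_Area_interaction_Gibbs_measures}, up to a factor $\Vert g\Vert_\infty$. The cleanest route is to decompose $g=g^+-g^-$, add a constant to make both parts strictly positive, and normalize, so that each becomes a probability density with respect to $\nu$. The proof of \Cref{prop:small_time_exponential_series_expansion} only uses $\mu[|\mathscr{L}^kf|]$-type bounds through \Cref{lemma:estimate_powers_of_generator} and \Cref{lemma:moments_stay_okay}. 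These go through for $h\nu$ with $0\le h\le C$: the moment bounds transfer immediately, and for the evolved measure one applies \Cref{lemma:moments_stay_okay} to $h\nu$. This yields, for $t<t^*(f,g)$,
\begin{align*}
\nu[(T_tf)\,g]=\sum_{k\ge0}\frac{t^k}{k!}\nu[(\mathscr{L}^kf)\,g]=\sum_{k\ge0}\frac{t^k}{k!}\nu[f\,(\mathscr{L}^kg)]=\nu[f\,(T_tg)],
\end{align*}
where the last equality is the same expansion with the roles of $f$ and $g$ swapped. Here $t^*$ depends on the sizes of the localization boxes of both $f$ and $g$; one may take a common box $\Lambda$ containing both supports.

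To pass to all $t>0$, I would use the semigroup property together with stationarity of $\nu$ under the dynamics, which is the case $g\equiv1$ above. The difficulty is that $T_sf$ is no longer local, so the small-time step cannot simply be iterated. I would handle this by approximating $T_sf$ by local functions via the finite-speed-of-propagation \Cref{le_comparison_finite_infinite}: replacing the dynamics by the one in a large box $\widehat\Lambda$ makes $T_s^{\widehat\Lambda}f$ local, with an error that is super-exponentially small, uniformly in bounded $s$. The expansion constant then grows with $|\widehat\Lambda|$, so this needs care.

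An alternative, which I would try first, is analyticity. The function $t\mapsto\nu[(T_tf)g]-\nu[f(T_tg)]$ vanishes on $[0,t^*)$. I would show it is real-analytic on $(0,\infty)$ by repeating the expansion around each $t_0$ with the measure $(g\nu)T_{t_0}$, whose moments remain controlled by \Cref{lemma:moments_stay_okay} with constants uniform on compacts. The catch is that expanding around $t_0$ requires the dual expression, and $\nu[f\,T_{t_0+\delta}g]$ is not obviously of the same form. I expect this extension from small to all times to be the main obstacle. If needed, I would fall back on Markov-process duality for the finite-volume dynamics in $\widehat\Lambda$, whose reversibility with respect to the finite-volume Gibbs measure with boundary condition is elementary. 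I would then pass to the limit $\widehat\Lambda\uparrow\R^d$ using \Cref{le_comparison_finite_infinite} together with the DLR equations.
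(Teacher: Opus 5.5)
Your small-time step is the paper's: iterate the GNZ symmetry $\nu[(\mathscr{L}f)\,g]=\nu[f\,(\mathscr{L}g)]$, extended to the polynomially growing local functions $\mathscr{L}^jf$. Then apply \Cref{prop:small_time_exponential_series_expansion} to the weighted measures $g\nu$ and $f\nu$, split into bounded nonnegative parts and normalised. The genuine gap is the passage from $[0,t^*)$ to all $t>0$, which you leave open. Moreover, the ``catch'' that stops your preferred analyticity route is not real. You never need to relate $\nu[f\,(T_{t_0+s}g)]$ to $\nu[(T_{t_0+s}f)\,g]$ for $t_0>0$. It suffices that $\phi(t):=\nu[(T_tf)\,g]$ and $\psi(t):=\nu[f\,(T_tg)]$ each admit a forward power-series expansion at every $t_0\ge 0$. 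And $\psi$ has exactly the structure of $\phi$ with $f$ and $g$ interchanged: by the semigroup property, $\phi(t_0+s)=\big((g\nu)T_{t_0}\big)[T_sf]$ and $\psi(t_0+s)=\big((f\nu)T_{t_0}\big)[T_sg]$. The observables stay local and only the measure is evolved. After decomposing and normalising, \Cref{lemma:moments_of_Area_interaction_Gibbs_measures} and \Cref{lemma:moments_stay_okay} give the moment condition for these evolved measures, with constants uniform for $t_0$ in compact sets. Hence \Cref{prop:small_time_exponential_series_expansion} expands both $\phi(t_0+s)$ and $\psi(t_0+s)$ as convergent power series for $s\in[0,r)$, with $r>0$ uniform on compacts.

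Forward expansions are enough. Suppose $\phi=\psi$ on $[0,T)$ with $T\ge t^*$, and let $r\le T$ be a uniform radius on $[0,T+1]$. Expand both functions at $t_0=T-r/2$. The difference is a power series in $s$ vanishing on $[0,r/2)$, hence identically zero, so $\phi=\psi$ on $[0,T+r/2)$. The set of agreement therefore cannot have a finite supremum. This is the same forward-continuation mechanism the paper uses in the proof of \Cref{prop:not_reversible_in_finite_time}. The paper proves the statement differently, by iterating $\nu[(T_{k\delta}f)\,g]=\nu[(T_{(k-1)\delta}f)\,(T_\delta g)]$. That step applies the small-$\delta$ identity to the non-local function $T_{(k-1)\delta}f$, and the paper justifies it only by a brief monotone-convergence remark. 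Since the admissible radius in \Cref{prop:small_time_exponential_series_expansion} shrinks as the localisation volume grows, your doubt about iterating on $T_sf$ is well founded, and the argument above sidesteps it. Your finite-volume fallback, which the paper also mentions, can work too. Note, however, that \Cref{le_comparison_finite_infinite} concerns empty boundary conditions, whose reversible measure is $\nu^{(\emptyset)}_{\widehat\Lambda}$. To combine reversibility with the DLR equations for $\nu$, you need boundary conditions frozen at, or sampled from, the outside configuration under $\nu$, together with the corresponding finite-speed comparison.
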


\begin{proof}
    Let \(\nu \in \GG_\theta\). Then, $\nu[f\LL g]=\nu[g\LL f]$ holds for any pair $f,g\in \Lcal$ by the GNZ equations.
    The (proof of the) series expansion from \Cref{prop:small_time_exponential_series_expansion} together with \Cref{lemma:moments_of_Area_interaction_Gibbs_measures} imply that the corresponding moment assumptions hold and thus gives at small \(t > 0\) that
    \begin{align*}
        \nu\big[ (T_t f) g \big]
        = \sum_{k = 0}^{\infty} \frac{t^k}{k!} \nu\big[ (\mathscr{L}^k f) g \big]
        = \sum_{k = 0}^{\infty} \frac{t^k}{k!} \nu\big[ f (\mathscr{L}^k g) \big]
        = \nu\big[ f (T_t g) \big]
    \end{align*} for any \(f, g \in \Lcal\). E.g. the monotone convergence theorem extends the identity of left- and right-hand side to all \(f, g\) of sufficient integrability. Hence, we can extend the identity to all \(t > 0\) inductively by virtue of
    \begin{align*}
        \nu\big[ (T_{k \delta} f) g \big]
        = \nu\big[ (T_{\delta} (T_{(k-1)\delta}f)) \, g \big]
        = \nu\big[ (T_{(k-1)\delta}f) \, (T_\delta g) \big]
        = \dots
        = \nu\big[ f (T_{k \delta} g) \big],
    \end{align*} with \(\delta > 0\) small and \(k \in \mathbb{N}\).
\end{proof}

An alternative proof is also possible via approximation with the local evolutions \((\Tl_t)_{t \geq 0}\) from \cite{JKSZ25}, which admit reversible measures \(\nu_\Lambda\).

\begin{proposition}
    We have \(\RR_\theta \subseteq \GG_\theta\).
\end{proposition}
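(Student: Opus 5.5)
The plan is to show that every \(\mu \in \RR_\theta\) is infinitesimally reversible. Then \cite[Proposition~3.1]{JKSZ25}, recalled above, gives \(\mu \in \GG_\theta\). So fix \(\mu \in \RR_\theta\) and \(f, g \in \Lcal\). Reversibility gives \(\mu[(T_t f)\, g] = \mu[f\, (T_t g)]\) for all \(t \ge 0\). Subtracting the \(t=0\) identity and dividing by \(t\) yields
\begin{align*}
    \mu\Big[\frac{T_t f - f}{t}\, g\Big] = \mu\Big[f\, \frac{T_t g - g}{t}\Big].
\end{align*}
It then suffices to pass to the limit \(t \downarrow 0\) on both sides and obtain \(\mu[(\LL f) g] = \mu[f (\LL g)]\).

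For the limit I would use \Cref{lem:domain}: \((T_t f - f)/t = t^{-1}\int_0^t T_s \LL f \, \d s\). For \(f\) bounded and \(\Lambda\)-local, \(|\LL f| \le 2\Vert f\Vert_\infty(\Vert b\Vert_\infty |\Lambda| + N_\Lambda)\). By the right-continuity of \(s \mapsto X_s^{(\omega)}\) in the graphical representation and bounded convergence (the number of jumps in \(\Lambda \oplus B(0,\range)\) before a small time is dominated by an integrable Poisson count plus \(N_{\Lambda}(\omega)\)), we get \(T_s \LL f(\omega) \to \LL f(\omega)\) pointwise as \(s \downarrow 0\). Hence the difference quotients converge pointwise to \(\LL f\).

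To interchange limit and \(\mu\)-integral I need uniform integrability. The key bound, read off the graphical construction as in the proof of \Cref{lemma:moments_stay_okay}, is \(|T_s \LL f|(\omega) \lesssim |\Lambda| + N_\Lambda(\omega) + s\Vert b\Vert_\infty|\Lambda|\). So the family \(\{(T_tf - f)/t\}_{t\le 1}\) is dominated by \(C(1 + N_\Lambda)\), and dominated convergence applies, \textbf{provided} \(\mu[N_\Lambda] < \infty\).

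This finite-intensity requirement is the main obstacle, since elements of \(\RR_\theta\) are a priori only translation-invariant. I would try to derive finite intensity from reversibility itself. Take \(f = \1_{\{N_\Lambda = 0\}}\) and \(g = \1_{\{N_\Lambda \le m\}}\) (or truncations \(N_\Lambda \wedge m\)). The death part of the dynamics removes points at rate equal to their number, and the birth part adds at most a Poisson\((\Vert b\Vert_\infty |\Lambda| t)\) number. Comparing \(\mu[(T_t f) g]\) with \(\mu[f (T_t g)]\) for small \(t\) then gives bounds on \(\mu(N_\Lambda = k)\) that decay like a Poisson-type tail, uniformly in \(m\). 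This should force \(\mu[N_\Lambda] < \infty\), indeed all moments.

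If that route is too messy, a fallback is to work with truncated test functions \(f\1_{\{N_{\Lambda\oplus B(0,\range)} \le m\}}\). One shows the identity \(\mu[(\LL f)g\,\1_{\ldots}] = \mu[f(\LL g)\1_{\ldots}] + \text{error}\), controlling the error by the probability that a jump changes the truncation event. Finally one lets \(m \to \infty\), which only requires \(\LL f\, g\) to be \(\mu\)-integrable on the events \(\{N \le m\}\).
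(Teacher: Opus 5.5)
Your proof is correct, but it handles the integrability problem differently from the paper.

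\textbf{The paper's route.} The paper never shows that a reversible $\mu$ has finite intensity. It bounds $\mu[\phi(\abs{(T_tf-f)/t})]\le \mu[\phi(\abs{\LL f})]$ for convex increasing $\phi$, using Jensen and the stationarity $\mu T_s=\mu$ (reversibility with $g\equiv1$). This gives uniform integrability of the difference quotients whenever $\LL f$ is integrable. It then restricts to $f,g\in\Lcal$ with $\LL f,\LL g$ \emph{bounded}, claiming that the \emph{proof}, not the statement, of \cite[Proposition~3.1]{JKSZ25} only needs infinitesimal reversibility on such a class.

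\textbf{Your route.} You first show that reversible measures have Poisson-type counts. The pointwise dominator $C(1+N_\Lambda)\Vert g\Vert_\infty$ then gives infinitesimal reversibility on all of $\Lcal$, and the cited proposition can be used as a black box.

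\textbf{Making your finite-intensity step precise.} It is only sketched, but it works; the clean choice is $f=\1_{\{N_\Lambda=0\}}$, $g=\1_{\{N_\Lambda=k\}}$. By the graphical representation, starting with $k$ points in $\Lambda$, the probability that $\Lambda$ is empty at time $t$ is at least $(1-\e^{-t})^k\e^{-\Vert b\Vert_\infty\abs{\Lambda}t}$: all initial points die, and no birth proposals land in $\Lambda$. Starting from an empty $\Lambda$, the probability of $k$ points at time $t$ is at most $(\Vert b\Vert_\infty\abs{\Lambda}t)^k/k!$. Reversibility then yields
\begin{align*}
\mu(N_\Lambda=k)\le \e^{\Vert b\Vert_\infty\abs{\Lambda}t}\Big(\frac{t}{1-\e^{-t}}\Big)^k\frac{(\Vert b\Vert_\infty\abs{\Lambda})^k}{k!}\,\mu(N_\Lambda=0)\xrightarrow[t\downarrow 0]{}\frac{(\Vert b\Vert_\infty\abs{\Lambda})^k}{k!}\,\mu(N_\Lambda=0).
\end{align*}
Alternatively, stationarity alone plus $t\to\infty$ shows that $N_\Lambda$ is stochastically dominated by a Poisson$(\Vert b\Vert_\infty\abs{\Lambda})$ variable. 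Your truncation fallback is therefore unnecessary.

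\textbf{Trade-off.} The paper's argument is shorter and needs no moment information, but it relies on the internals of the cited proof. Yours only uses the cited statement. As a by-product it gives all moments of reversible measures, indeed bounds of the type assumed in \Cref{prop:small_time_exponential_series_expansion}.
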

\begin{proof}
    Let \(\mu \in \RR_\theta\), meaning it holds
    \begin{align*}
        \mu[(T_t f) g]
        = \mu[f (T_t g)]
    \end{align*} for all \(f, g\) with sufficient \(\mu\)-integrability.
    For bounded \(f, g \in \mathcal{L}\) we have
    \begin{align*}
        \frac{T_t f - f}{t}
        \xrightarrow[t \downarrow 0]{} \mathscr{L}f
    \end{align*} pointwise.
    Letting \(\phi\) denote any convex increasing function, we have by Jensen's inequality,
    \begin{align*}
        \mu\Big[\phi\Big( \Big\vert \frac{T_t f - f}{t}  \Big\vert\Big)\Big]
       & = \mu\Big[\phi\Big( \Big\vert \frac{1}{t} \int_{0}^{t} (T_s \mathscr{L} f)  \, \d s  \Big\vert\Big)\Big] \leq \mu\Big[\frac{1}{t} \int_{0}^{t} (T_s \phi(\vert \mathscr{L} f \vert ))  \, \d s  \Big] \\
      & = \frac{1}{t} \int_{0}^{t}\mu\big[ T_s \phi(\vert \mathscr{L} f) \vert\big] \d s = \,\mu\big[ \phi(\vert \mathscr{L} f \vert )\big]. 
    \end{align*} 
    Hence, if \(\mathscr{L} f\) and \(\mathscr{L} g\) are (uniformly) integrable, so are \(\big((T_t f - f)/t\big)_{t \geq 0}\) and \(\big((T_t g - f)/t\big)_{t \geq 0}\) and it holds by reversibility of \(\mu\) that
    \begin{align*}
        \mu\big[(\mathscr{L} f) \, g \big]
        = \lim_{t \downarrow 0} \mu\Big[\frac{T_t f - f}{t} \, g \Big]
        = \lim_{t \downarrow 0} \mu\Big[f \, \frac{T_t g - g}{t}  \Big]
        = \mu\big[ f \, (\mathscr{L} g) \big].
    \end{align*} According to the proof of~\cite[Proposition~3.1]{JKSZ25}, to now see that \(\mu \in \GG_\theta\), we only have to check the equality \(\mu\big[(\mathscr{L} f) \, g\big] = \mu\big[(f \, (\mathscr{L} g)\big]\) for a class of functions \(f, g \in \Lcal\) such that \(\mathscr{L} f\) and  \(\mathscr{L} g\) are bounded and hence automatically \(\mu\)-integrable too.
\end{proof}

\section{Fisher information and quantitative decay}
\subsection{Thermodynamic limit of the Fisher information}\label{section:fisher_info}
Recall that we denote the finite-volume Fisher information with boundary configurations sampled from \(\nu\) by
    \begin{align*}
        \mathcal{J}^{(\nu)}_{\L}(\mu\lvert \nu)
        &= \int\d x\int \nu_{\L}(\d\eta)\  b_\Lambda^{\nu}(x,\eta_{\L})\ D_x\frac{\d\mu_{\L}}{\d\nu_\L}(\eta_{\L}) \ D_x\log\frac{\d\mu_\L}{\d\nu_\L}(\eta_{\L}) 
    \end{align*}
and the finite-volume Fisher information with free boundary conditions by
    \begin{align*}
        \mathcal{J}^{(\emptyset)}_{\L}(\mu\lvert \nu)
        &= \mathcal{J}_{\L}(\mu\lvert \nu).
    \end{align*}

The following result states that the thermodynamic limits of these two quantities exist and, moreover, that the choice of these two different boundary conditions does not matter macroscopically. 
\begin{lemma}\label{lemma:equivalence_fisher_infos}
    Under the assumptions of \Cref{theorem:main_theorem_HANDA}, we have
    \begin{align*}
        \xi^\mu(0)
        = \sigma(\mu)
        = \mathcal{J}(\mu \vert\nu) 
        =  \lim_{n \uparrow \infty} \frac{1}{\abs{\Lambda_n}}\mathcal{J}^{(\emptyset)}_{\L_n}(\mu\lvert \nu)
        =  \lim_{n \uparrow \infty} \frac{1}{\abs{\Lambda_n}}\mathcal{J}^{(\nu)}_{\L_n}(\mu\lvert \nu).
    \end{align*}
\end{lemma}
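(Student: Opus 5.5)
The first equality \(\xi^\mu(0)=\sigma(\mu)\) is the content of \Cref{theorem:main_theorem_HANDA}, since \(\xi^\mu(0)\) is by definition the symmetrized entropy \(\RelEnt(b(0,\cdot)\mu\,|\,\mu_0^!)+\RelEnt(\mu_0^!\,|\,b(0,\cdot)\mu)\). For the rest, I would show that both finite-volume Fisher informations equal \(\sigma_\Lambda(\mu)=\mu[(-\mathscr{L})\log g_\Lambda]\) with \(g_\Lambda=\d\mu_\Lambda/\d\nu_\Lambda\), up to an error of order \(\abs{\Lambda\setminus\Lambda^{-\range}}\). This boundary error is \(o(\abs{\Lambda})\), so dividing by \(\abs{\Lambda_n}\) gives the remaining equalities and the existence of the limit \(\mathcal{J}(\mu|\nu)\) at once.

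\emph{Step 1: rewrite the entropy production as a Dirichlet form.} Expand \(\mu[\mathscr{L}\log g_\Lambda]=\nu_\Lambda[g_\Lambda\,\mathscr{L}\log g_\Lambda]\). Because \(\log g_\Lambda\) is \(\Lambda\)-local, the birth integral only runs over \(x\in\Lambda\), and the death sum only over \(x\in\eta_\Lambda\). Conditioning on \(\eta_\Lambda\), the birth rate \(b(x,\eta)\) integrated against \(\mu(\d\eta_{\Lambda^c}\,|\,\eta_\Lambda)\) becomes \(b^{\mu}_\Lambda(x,\eta_\Lambda)\), in the notation of the proof of \Cref{lemma:rest_term}. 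Apply the GNZ/Mecke formula for \(\nu_\Lambda\) to the death term: the Papangelou intensity of \(\nu_\Lambda\) is \(b^\nu_\Lambda\), so
\[
\nu_\Lambda\Big[\sum_{x\in\eta_\Lambda}g_\Lambda(\eta)F(\eta-\delta_x,x)\Big]=\int_\Lambda\d x\,\nu_\Lambda\big[b^\nu_\Lambda(x,\eta)\,g_\Lambda(\eta+\delta_x)F(\eta,x)\big].
\]
Combining the two terms, the entropy production becomes
\[
\sigma_\Lambda(\mu)=\int_\Lambda\d x\,\nu_\Lambda\Big[b^\nu_\Lambda(x,\cdot)\,D_xg_\Lambda\,D_x\log g_\Lambda\Big]+E_\Lambda,
\]
where \(E_\Lambda=\int_\Lambda\d x\,\nu_\Lambda\big[(b^\nu_\Lambda-b^\mu_\Lambda)(x,\cdot)\,g_\Lambda\,D_x\log g_\Lambda\big]\) collects the mismatch between the \(\mu\)- and \(\nu\)-averaged birth rates. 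The main term is exactly \(\mathcal{J}^{(\nu)}_\Lambda(\mu_\Lambda|\nu)\).

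\emph{Step 2: control \(E_\Lambda\) and the free boundary condition.} By finite range, \(b^\nu_\Lambda(x,\cdot)=b^\mu_\Lambda(x,\cdot)=b(x,\cdot_\Lambda)\) whenever \(d(x,\partial\Lambda)>\range\), so \(E_\Lambda\) only involves \(x\) in the boundary layer. For the same reason \(\mathcal{J}^{(\emptyset)}_\Lambda-\mathcal{J}^{(\nu)}_\Lambda\) only involves that layer, with rates differing by at most a factor \(\Vert b\Vert_\infty\Vert 1/b\Vert_\infty\). Both terms are therefore bounded by a constant times \(\int_{\Lambda\setminus\Lambda^{-\range}}\d x\,\nu_\Lambda\big[g_\Lambda\abs{D_x\log g_\Lambda}+\abs{D_xg_\Lambda}\abs{D_x\log g_\Lambda}\big]\).

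\emph{Step 3: bound the boundary integrand uniformly.} This is where I expect the main difficulty: showing that the integrand is bounded uniformly in \(x\) and \(\Lambda\). The plan is to use \(D_xg\,D_x\log g=(g(\cdot+\delta_x)-g)\log\frac{g(\cdot+\delta_x)}{g}\ge0\) and to pass to Palm form via \Cref{lemma:existence_of_nice_local_papangelou_intensities}. After translating by \(x\), the \(x\)-integrand becomes
\[
\RelEnt_{\Lambda-x}\big(b(0,\cdot)\mu\,\big|\,\mu_0^!\big)+\RelEnt_{\Lambda-x}\big(\mu_0^!\,\big|\,b(0,\cdot)\mu\big),
\]
with rates modified by bounded factors. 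Monotonicity of relative entropy in the volume, together with \Cref{lemma:abs_log_entropy_inequality} for the \(\abs{\log}\) terms, bounds this by \(C(1+\xi^\mu(0)+\mu[N_{[0,1]^d}])\). That holds when \(\sigma(\mu)<\infty\); if \(\sigma(\mu)=\infty\), then both Fisher informations are also infinite by the identity of Step 1 and nonnegativity, so all quantities agree. The total boundary error is thus \(O(\abs{\Lambda\setminus\Lambda^{-\range}})\), which finishes the proof.
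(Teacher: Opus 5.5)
Your route is essentially the paper's. Both arguments compare $\sigma_\Lambda(\mu)$ with $\mathcal{J}^{(\nu)}_\Lambda$ and with $\mathcal{J}^{(\emptyset)}_\Lambda$ through an error supported in the $\range$-layer at $\partial\Lambda$, where $b$, $b^\mu_\Lambda$ and $b^\nu_\Lambda$ differ. Both then bound the layer integrand by the local Palm entropies $\RelEnt_{\Lambda-x}$, which are dominated by the global ones. Your Step~1 is an explicit version of the paper's identity $\mathcal{J}^{(\nu)}_\Lambda(\mu_\Lambda|\nu)=\mu[(-\mathscr{L}^\nu_\Lambda)\log g_\Lambda]$. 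For $\sigma(\mu)<\infty$ the argument goes through. Normalizing $b(0,\cdot)\mu$ and $\mu_0^!$ before using volume monotonicity and \Cref{lemma:abs_log_entropy_inequality} only costs constants depending on $\mu[b(0,\cdot)]$ and $\mu[N_{[0,1]^d}]$.

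The step that fails as written is the case $\sigma(\mu)=\infty$. From $\sigma_\Lambda(\mu)=\mathcal{J}^{(\nu)}_\Lambda+E_\Lambda$ and $\mathcal{J}^{(\nu)}_\Lambda\ge 0$ you cannot conclude $\mathcal{J}^{(\nu)}_\Lambda/\abs{\Lambda}\to\infty$. The integrand $(b^\nu_\Lambda-b^\mu_\Lambda)\,g_\Lambda\,D_x\log g_\Lambda$ of $E_\Lambda$ has no sign. Your only bound on $E_\Lambda$ (Step~3) uses $\xi^\mu(0)<\infty$, which is exactly what is lost here; the paper also only says this case is ``handled similarly''.

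What is missing is a bound of $E_\Lambda$ by the Fisher information itself. The elementary inequality $\abs{\log r}\le (r-1)\log r+1$ for $r>0$ gives, pointwise, $g_\Lambda\abs{D_x\log g_\Lambda}\le D_xg_\Lambda\,D_x\log g_\Lambda+g_\Lambda$. Hence
\begin{align*}
\abs{E_\Lambda}\le \Vert b\Vert_\infty\Vert 1/b\Vert_\infty\,\mathcal{J}^{(\nu)}_\Lambda(\mu_\Lambda|\nu)+\Vert b\Vert_\infty\abs{\Lambda\setminus\Lambda^{-\range}},
\end{align*}
so $\sigma_\Lambda(\mu)\le(1+\Vert b\Vert_\infty\Vert 1/b\Vert_\infty)\,\mathcal{J}^{(\nu)}_\Lambda(\mu_\Lambda|\nu)+o(\abs{\Lambda})$. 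In addition, $\mathcal{J}^{(\emptyset)}_\Lambda\ge(\Vert b\Vert_\infty\Vert 1/b\Vert_\infty)^{-1}\mathcal{J}^{(\nu)}_\Lambda$, by nonnegativity of the integrand and comparability of $b$ and $b^\nu_\Lambda$.

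There is a companion bound, $g_\Lambda(\cdot+\delta_x)\abs{D_x\log g_\Lambda}\le D_xg_\Lambda\,D_x\log g_\Lambda+g_\Lambda+g_\Lambda(\cdot+\delta_x)$. It shows that $\mathcal{J}^{(\nu)}_\Lambda<\infty$ makes the birth and death parts of $\mathscr{L}\log g_\Lambda$ separately $\mu$-integrable. Your Step~1 needs this to split $\mu[\mathscr{L}\log g_\Lambda]$ and apply the GNZ formula to the death term, rather than having $\sigma_\Lambda(\mu)=+\infty$ merely by convention.
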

\begin{proof}
    Let us first recall that we have
    \begin{align*}
        \xi^\mu(0)
        = \sigma(\mu)
        = \lim_{n \uparrow \infty} \frac{1}{\abs{\Lambda_n}} \mu\Big[(-\mathscr{L}) \log\frac{\d\mu_{\L_n}}{\d \nu_{\L_n}}\Big]
    \end{align*}
    by \Cref{theorem:main_theorem_HANDA} and that
    \begin{align*}
        \mathcal{J}^{(\nu)}_{\L}(\mu\lvert \nu)
        &= \int\d x\int \nu_{\L}(\d\eta)\  b_\Lambda^{\nu}(x,\eta_{\L})\ D_x\frac{\d\mu_{\L}}{\d\nu_\L}(\eta_{\L}) \ D_x\log\frac{\d\mu_\L}{\d\nu_\L}(\eta_{\L}) \\
        &= \nu\Big[\frac{\d\mu_{\L}}{\d \nu_{\L}}\mathscr{L}_{\L}^{\nu} \log\frac{\d\mu_{\L}}{\d \nu_{\L}}\Big] 
        = \mu\Big[\mathscr{L}_{{\L}}^{\nu} \log\frac{\d\mu_{\L}}{\d \nu_{\L}}\Big].
    \end{align*}
    Let us assume \(\sigma(\mu) = \RelEnt\big(\mu_0^! \,\big\vert\, b(0, \cdot) \mu  \big) + \RelEnt\big(b(0, \cdot) \mu \,\big\vert\, \mu_0^! \big) < \infty\), the other case is handled similarly. Notice that these relative entropies individually, opposed to their sum, do not have to be non-negative, as both involved measures are not normalized to be probability measures. Still, the only non-finite value they can attain is \(+\infty\).
    We now have that
    \begin{align*}
        \Big\vert \mu\Big[(-\mathscr{L}) \log\frac{\d\mu_{\L}}{\d \nu_{\L}}\Big] -  \mathcal{J}^{(\nu)}_{\L}(\mu\lvert \nu) \Big\vert
        &= \Big\vert \mu\Big[(-\mathscr{L}) \log\frac{\d\mu_{\L}}{\d \nu_{\L}}\Big] - \mu\Big[(-\mathscr{L}_{{\L}}^{\nu}) \log\frac{\d\mu_{\L}}{\d \nu_{\L}}\Big] \Big\vert \\
        &\lesssim_{b, \mu} \int_{\{\dist(\cdot, \partial \Lambda) \leq \range\}} \d x \, \Big\vert \mu\Big[D_x\log\frac{\d\mu_{\L}}{\d \nu_{\L}}  \Big]\Big\vert \\
        &\lesssim_{b} \int_{\{\dist(\cdot, \partial \Lambda) \leq \range\}} \, \Big\vert \mu\Big[b(x, \cdot)D_x\log\frac{\d\mu_{\L}}{\d \nu_{\L}}  \Big]\Big\vert \, \d x  \\
        &\lesssim_{b, \mu} \int_{\{\dist(\cdot, \partial \Lambda) \leq \range\}}  \,  \Big( \Big\vert \RelEnt_{\Lambda - x}\big(b(0, \cdot) \mu \,\big\vert\, \mu_0^! \big) \Big\vert + 1\Big) \, \d x \\
        &\leq \abs{\{\dist(\cdot, \partial \Lambda) \leq \range\}} (\vert \RelEnt\big(b(0, \cdot) \mu \,\big\vert\, \mu_0^! \big)\vert  + 1),
    \end{align*} where we used the finite-range and boundedness properties of \(b\) and the arguments of \Cref{proof:proof_of_lemma_rest_term} as well as \Cref{proof:proof_of_main_theorem_HANDA}.
    This already shows 
    \begin{align*}
        \sigma(\mu)
        = \lim_{n \uparrow \infty} \frac{1}{\abs{\Lambda_n}} \mu\Big[(-\mathscr{L}) \log\frac{\d\mu_{\L_n}}{\d \nu_{\L_n}}\Big]
        = \lim_{n \uparrow \infty} \frac{1}{\abs{\Lambda_n}}\mathcal{J}^{(\nu)}_{\L_n}(\mu\lvert \nu).
    \end{align*}
    Let us now see that in the thermodynamic limit regarding the Fisher information we can replace the boundary conditions sampled from \(\nu\) by a free boundary.
    Here, denoting \[(\mathfrak{L}^{\emptyset} g)(\eta) = \int_{\{\dist(\cdot, \partial \Lambda) \leq \range\}} \d x \ b(x,\eta)\big( f(\eta+\delta_x) - f(\eta) \big) + \sum_{x\in\eta_{\{\dist(\cdot, \partial \Lambda) \leq \range\}}} \big( f(\eta-\delta_x) - f(\eta)\big) \] and similarly \(\mathfrak{L}^{\nu} g\) with \(b_\Lambda^\nu\) in place of \(b\), we have
    \begin{align*}
         \Big\vert \mathcal{J}^{(\emptyset)}_{\L}(\mu\lvert \nu) -  \mathcal{J}^{(\nu)}_{\L}(\mu\lvert \nu) \Big\vert 
         &\lesssim_{b} \int\d x \int \nu_{\L}(\d\eta)\  b_\Lambda^{\nu}(x,\eta_{\L}) \ D_x\frac{\d\mu_{\L}}{\d\nu_\L}(\eta_{\L}) \ D_x\log\frac{\d\mu_\L}{\d\nu_\L}(\eta_{\L}) \1_{\dist(\cdot, \partial \Lambda) \leq \range\}}(x) \\
         &=  \nu\Big[\frac{\d\mu_{\L}}{\d\nu_\L} (-\mathfrak{L}^\nu) \log \frac{\d\mu_{\L}}{\d\nu_\L}   \Big]  
         \lesssim_{b} \Big\vert \nu\Big[\frac{\d\mu_{\L}}{\d\nu_\L} (- \mathfrak{L}^\emptyset) \log \frac{\d\mu_{\L}}{\d\nu_\L}   \Big] \Big\vert \\
         &= \Big\vert \mu\Big[(-\mathfrak{L}^{\emptyset}) \log \frac{\d\mu_{\L}}{\d\nu_\L} \Big] \Big\vert,
    \end{align*} by the arguments used above and the GNZ equations.
    For the right-hand side we can again recycle the arguments from above to see that
    \begin{align*}
        \Big\vert \mu\Big[(-\mathfrak{L}^{\emptyset}) \log \frac{\d\mu_{\L}}{\d\nu_\L}   \Big] \Big\vert
        &\lesssim_{b, \mu} \int_{\{\dist(\cdot, \partial \Lambda) \leq \range\}} \d x  \,  \big( \vert \RelEnt_{\Lambda - x}\big(\mu_0^! \,\big\vert\, b(0, \cdot) \mu  \big)\vert + \vert \RelEnt_{\Lambda - x}\big(b(0, \cdot) \mu \,\big\vert\, \mu_0^! \big)\vert + 1\big)\\
        &\leq \abs{\{\dist(\cdot, \partial \Lambda) \leq \range\}} (\vert \RelEnt\big(\mu_0^! \,\big\vert\, b(0, \cdot) \mu  \big)\vert + \vert \RelEnt\big(b(0, \cdot) \mu \,\big\vert\, \mu_0^! \big)\vert + 1),
    \end{align*}
    as desired. 
\end{proof}

\subsection{Quantitative decay of specific relative entropy in high-temperature regimes}\label{section:proof-quantitative-decay}

\begin{proof}[Proof of \Cref{lemma:appendix_DaiPraPosta_exponential_decay_entropy_finite_volume}]
    Define
    \begin{align*}
        \varepsilon(\beta)
        &= \e^{\beta \log_+(\Vert b \Vert_\infty )} \sup_{\eta \in \Omega, \,x \in \mathbb{R}^d} \int_{\mathbb{R}^d} \, [1-\exp(-\beta \nabla_x^+ \nabla_y^+H(\eta) - 2 \beta \log_+(\Vert b\Vert_\infty))] \, \d y \\
        &= \e^{\beta \log_+(\Vert b \Vert_\infty )} \sup_{\eta \in \Omega} \int_{B(0, \range)} \, [1 - \exp(-\beta(\nabla_y^+ \nabla_0^+H(\eta) - 2\beta \log_+(\Vert b\Vert_\infty)))] \, \d y.
    \end{align*} 
    
    We verify that
    \begin{equation}\label{eq:DaiPra_Posta_strengthened_entropy_inequality}
        \kappa \,\mathcal{E}_{\Lambda}^{\nu}(f, \log f)
        \leq \nu_\Lambda\big[((\mathscr{L}_{\Lambda}^{\nu})^2 f) \log f\big] + \nu_\Lambda\big[(\mathscr{L}_\Lambda^{\nu} f~)^2/f\big],
    \end{equation} i.e., \cite[Inequality~(2.6)]{DaiPraPosta2013} holds for good \(f\) with
    \begin{align*}
        \kappa = \kappa(\beta) = 1 - \varepsilon(\beta)
    \end{align*} without repeating everything in the cited article. The statement of our lemma then follows like in~\cite[Proposition~2.1]{DaiPraPosta2013}.  It is also evident that
    \begin{align*}
        \kappa(\beta)
        &= 1 - \e^{\beta \log_+(\Vert b \Vert_\infty )} \sup_{\eta \in \Omega} \int_{B(0, \range)} \, [1 - \exp(-\beta(\nabla_y^+ \nabla_0^+H(\eta) - 2\beta \log_+(\Vert b\Vert_\infty)))] \, \d y \\
        &\geq 1 - 4 \abs{B(0, \range)} (\log(\Vert b\Vert_\infty) \lor 1) \beta \e^{\beta \log_+(\Vert b \Vert_\infty )}  
    \end{align*} converges to \(1\) as \(\beta \rightarrow 0\), in particular \(\kappa(\beta) > 0\) for \(\beta > 0\) small enough.
    
    We just follow the steps in \cite[Section~3.1]{DaiPraPosta2013}, where the authors show a corresponding statement for the example of non-negative (``purely repulsive'') pair potentials and indicate the necessary adaptions to fit the case on hand. Their notation is borrowed too for this proof to make it as easy as possible to check the details in their article.
    Define
    \begin{align*}
        &r(\eta, \gamma_x^+, \gamma_y^+)
        := \exp(-2 \beta \log_+(\Vert b \Vert_\infty)) \exp(-\beta \nabla_x^+ \nabla_x^- H(\eta)), \\
        &r(\eta, \gamma_x^-, \gamma_y^-)
        := \begin{cases}
            1, & \text{ for } x,y \in \eta \text{ and } x \neq y, \\
            0, &\text{ otherwise},
        \end{cases} \\
        &r(\eta, \gamma_x^-, \gamma_y^+)
        := r(\eta, \gamma_x^+, \gamma_y^-)
        = 1.
    \end{align*}
    Then, \(r\) is {\em admissible} like in~\cite[Lemma~3.1]{DaiPraPosta2013}. 
    With minor modifications, we can conclude with the proof of~\cite[Theorem~3.2]{DaiPraPosta2013}, noting that for our case
    \begin{align*}
        e^{-\beta \nabla_y^+H(\eta)}
        \leq (\Vert b \Vert_\infty \lor 1)^{\beta}
        = \e^{\beta \log_+(\Vert b \Vert_\infty )}.
    \end{align*} and also
$
        1 - r(\eta, \gamma, \delta)
        \geq 0.
$
\end{proof}

\subsection*{Acknowledgments}
The authors would like to thank Iosif Pinelis and Giorgio Metafune for helpful discussions related to \Cref{lemma:special_de_la_vallee_poussin} on MathOverflow. 
BJ and JK gratefully received support by the Leibniz Association within the Leibniz Junior Research Group on \textit{Probabilistic Methods for Dynamic Communication Networks} as part of the Leibniz Competition (grant no.\ J105/2020).
BJ gratefully received support from Deutsche Forschungsgemeinschaft through DFG Project no.\ P27 within the SPP 2265.

\section{Appendix}

\subsection{Measure theory}
We collect some useful statements about entropies and expectations. 
\begin{lemma}\label{lemma:abs_log_entropy_inequality}
    Suppose \(\mu_1, \mu_2\) are probability measures defined on a common measurable space.
    Then,
    \begin{align}
        \mu_1\Big[\Big\vert \log \frac{\d \mu_1}{\d \mu_2} \Big\vert\Big]
        \leq \RelEnt(\mu_1 \,\vert\, \mu_2) + \sqrt{2 \RelEnt(\mu_1 \,\vert\, \mu_2)}.
    \end{align}
\end{lemma}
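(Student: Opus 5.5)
We prove the inequality by splitting the logarithm into positive and negative parts. Write \(g := \d\mu_1/\d\mu_2\); we may assume \(\RelEnt(\mu_1 \,\vert\, \mu_2) < \infty\), since otherwise there is nothing to show, so in particular \(\mu_1 \ll \mu_2\). Then \(\abs{\log g} = \log g + 2\log_-(g)\) with \(\log_-(g) := \max\{-\log g, 0\}\). Integrating against \(\mu_1\) gives
\begin{align*}
    \mu_1\big[\abs{\log g}\big] = \RelEnt(\mu_1 \,\vert\, \mu_2) + 2\,\mu_1\big[\log_-(g)\big],
\end{align*}
so it suffices to show \(2\mu_1[\log_-(g)] \leq \sqrt{2\RelEnt(\mu_1 \,\vert\, \mu_2)}\).

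To bound the negative part, write \(\mu_1[\log_-(g)] = \mu_2[g\log(1/g)\1_{g<1}]\). The function \(x \mapsto x\log(1/x)\) on \((0,1)\) is controlled by \(1-x\): indeed \(\log(1/x) \leq (1-x)/x\), so \(g\log(1/g)\1_{g<1} \leq (1-g)_+\). Hence
\begin{align*}
    \mu_1\big[\log_-(g)\big] \leq \mu_2\big[(1-g)_+\big] = \tfrac12\,\mu_2\big[\abs{1-g}\big] = \Vert \mu_1 - \mu_2\Vert_{\mathrm{TV}},
\end{align*}
where we use that \(\mu_2[1-g] = 0\) because both measures are probability measures, and \(\Vert\cdot\Vert_{\mathrm{TV}}\) is normalized as the supremum over events.

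Pinsker's inequality, \(\Vert\mu_1-\mu_2\Vert_{\mathrm{TV}} \leq \sqrt{\RelEnt(\mu_1 \,\vert\, \mu_2)/2}\), then gives \(2\mu_1[\log_-(g)] \leq 2\sqrt{\RelEnt/2} = \sqrt{2\RelEnt(\mu_1 \,\vert\, \mu_2)}\), which is the claim.

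The only delicate point is the normalization of the total variation norm when combining \(\tfrac12\mu_2[\abs{1-g}]\) with Pinsker's inequality. Also, \(\mu_1[\log g]\) is well defined whenever the entropy is finite, because the negative part is bounded by \(\mu_2[(1-g)_+] \leq 1\). No other obstacle arises.
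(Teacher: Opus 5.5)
Your proof is correct and follows essentially the same route as the paper: write $\mu_1[\lvert\log g\rvert]=\RelEnt(\mu_1\,\vert\,\mu_2)+2\mu_1[\log_-(g)]$, bound the negative part via $\log x\le x-1$ by $\mu_2(A)-\mu_1(A)$ with $A=\{0<g<1\}$ (your $\mu_2[(1-g)_+]$), and conclude with Pinsker's inequality. Your extra care with the total-variation normalization and with $\mu_1[\log g]$ being well defined is fine but does not change the argument.
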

\begin{proof}
    \cite{handa1996entropy} gives the reference \cite{barron_entropy_and_the_CLT_1986} and probably means the proof of the corollary in Section~3 therein, where the inequality is proved for probability measures on \(\mathbb{R}\) with Lebesgue densities.
    We reproduce the proof in our context for the convenience of our readership. We have that
    \begin{align}
        \mu_1\Big[\Big\vert \log \frac{\d \mu_1}{\d \mu_2} \Big\vert\Big]
        = \RelEnt(\mu_1 \,\vert\, \mu_2) + 2 \mu_1\Big[\Big( \log \frac{\d \mu_1}{\d \mu_2}\Big)^{-}\Big].
    \end{align}
    Letting \(A := \big\{0 < \d \mu_1/\d \mu_2 < 1\big\}\), we have
    \begin{align}
        \mu_1\Big[\Big( \log \frac{\d \mu_1}{\d \mu_2}\Big)^{-} \Big]
        = \mu_1\Big[\log \frac{1}{\d \mu_1/\d \mu_2}\1_A \Big]
        \leq \mu_1\Big[\Big(\frac{1}{\d \mu_1/\d \mu_2} - 1\Big)\1_A \Big]
        = \mu_2(A) - \mu_1(A)
        \leq \sqrt{\frac{1}{2} \RelEnt(\mu_1 \,\vert\, \mu_2)},
    \end{align} 
    by the elementary inequality \(\log(x) \leq x-1\) for \(x > 0\) and Pinsker's inequality.
\end{proof}

\begin{lemma}\label{lemma:special_de_la_vallee_poussin}
    Given a positive random variable \(X\) with \(\mathbb{E}[X]<\infty\) and an increasing function \(g\) with \(g(x)/x \to \infty\), as $x \to \infty$, we can find a positive, increasing, convex function \(f\) with \(f(x)/x \to \infty\), as $x \to \infty$, and \(\mathbb{E}[f(X)] < \infty\)
    such that
    \(f(x)/g(x) \to 0\), as $x \to \infty$, and
    \begin{align*}
        y f(x) \leq f(xy) \leq 3 f(x) f(y)
    \end{align*} for all \(x, y \geq 1\).
\end{lemma}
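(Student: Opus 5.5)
We will look for $f$ of the form $f(x)=x\,h(x)$ with $h(x)=1+\psi(\log x)$ for $x\ge 1$, where $\psi\colon[0,\infty)\to[0,\infty)$ is increasing, concave, smooth, satisfies $\psi(0)=0$ and $\psi(s)\to\infty$. We extend $f$ to $[0,1)$ in any positive, increasing, convex way, for instance by $f(x)=x$; the inequalities are only required for $x,y\ge 1$.

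The two multiplicative inequalities then reduce to properties of $\psi$.
\begin{itemize}
\item \textbf{Lower bound.} The inequality $yf(x)\le f(xy)$ is equivalent to $h(x)\le h(xy)$. This holds because $\psi$ is increasing.
\item \textbf{Upper bound.} Since $\psi$ is concave with $\psi(0)=0$, it is subadditive. Hence
\begin{align*}
h(xy)=1+\psi(\log x+\log y)\le 1+\psi(\log x)+\psi(\log y)\le h(x)h(y).
\end{align*}
This gives $f(xy)\le f(x)f(y)\le 3f(x)f(y)$.
\item \textbf{Superlinearity.} $f(x)/x=h(x)\to\infty$ because $\psi\to\infty$.
\item \textbf{Convexity on $[1,\infty)$.} A direct computation gives $f''(x)=\big(\psi'(\log x)+\psi''(\log x)\big)/x$. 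So convexity is exactly the requirement $\psi''\ge-\psi'$. We also need $f'(1)=1+\psi'(0)\ge 1$ to match the linear piece on $[0,1)$, which holds automatically.
\end{itemize}

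Next we choose the growth of $\psi$ from the data $X$ and $g$. By integrability, $\mathbb E[X;X>t]\to0$, and by assumption $g(x)/x\to\infty$. We therefore pick levels $1=t_0<t_1<t_2<\dots$ with two properties:
\begin{align*}
\mathbb E[X;X\ge t_k]\le 2^{-k},\qquad g(x)/x\ge k^2\ \text{ for } x\ge t_k \ (k\ge1).
\end{align*}
Since both properties survive enlarging the $t_k$, we may also require the gaps $\ell_k:=\log t_{k+1}-\log t_k$ to satisfy $\ell_k\ge \ell_{k-1}+1$ and $\ell_k\ge 2$.

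Let $\psi_0$ be the piecewise linear function with $\psi_0(0)=0$ and slope $1/\ell_k$ on $[\log t_k,\log t_{k+1}]$. Its slopes decrease, so $\psi_0$ is concave, increasing, and satisfies $\psi_0(\log t_k)=k$. We then mollify $\psi_0$ with a kernel of width at most $1/2$ (adjusting near $0$ so that $\psi(0)=0$ is kept). This preserves concavity and monotonicity and changes values by at most $1$.

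With this choice the integrability and comparison with $g$ follow directly.
\begin{itemize}
\item \textbf{Integrability.} On $[t_k,t_{k+1})$ we have $\psi(\log x)\le k+2$. Hence
\begin{align*}
\mathbb E[f(X)]\le \mathbb E[X]+\sum_k (k+2)\,\mathbb E[X;X\ge t_k]\le \mathbb E[X]+\sum_k (k+2)2^{-k}<\infty.
\end{align*}
\item \textbf{Comparison with $g$.} For $x\in[t_k,t_{k+1})$,
\begin{align*}
\frac{f(x)}{g(x)}=\frac{h(x)}{g(x)/x}\le \frac{k+3}{k^2}\longrightarrow 0 .
\end{align*}
\end{itemize}

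The step I expect to be the main obstacle is the convexity condition $\psi''\ge-\psi'$, which concavity of $\psi$ works against.
\begin{itemize}
\item Near the $k$-th kink, the smoothed $\psi''$ has total mass $-(1/\ell_{k-1}-1/\ell_k)$ spread over a window of length about $1$. Its magnitude there is therefore at most of order $1/\ell_{k-1}-1/\ell_k\le 1/\ell_{k-1}$.
\item On that window, $\psi'$ is at least $1/\ell_k$.
\item The condition $\ell_k\ge \ell_{k-1}+1$ keeps the ratio $\ell_k/\ell_{k-1}$ bounded, but it does not by itself make the negative part of $\psi''$ smaller than $\psi'$ pointwise.
\end{itemize}
If the constants do not close, I would instead replace each kink by a slope that decays exponentially, of the form $\psi'(s)=\ell_k^{-1}\exp(-(s-s_k))$ on a short transition interval. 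For such a profile $\psi''=-\psi'$ holds exactly, and we then take the transitions long enough to interpolate between successive slope values $1/\ell_{k-1}$ and $1/\ell_k$. This yields $\psi''\ge-\psi'$ everywhere while keeping $\psi$ concave, increasing, and unbounded. The bounds $\psi(\log x)\le k+O(1)$ on $[t_k,t_{k+1})$ are unaffected, because we may enlarge the gaps $\ell_k$ as needed.
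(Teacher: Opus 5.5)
Your reduction is right: with $f(x)=x\bigl(1+\psi(\log x)\bigr)$, the two multiplicative inequalities, superlinearity, $\E[f(X)]<\infty$ and $f/g\to0$ all follow as you describe, and convexity on $[1,\infty)$ is exactly $\psi''\ge-\psi'$. The mollification route for this last condition does not close, and the problem is not just constants. The condition $\psi''\ge-\psi'$ means $(\log\psi')'\ge-1$, i.e.\ $\psi'(s+u)\ge e^{-u}\psi'(s)$. So any passage from slope $1/\ell_{k-1}$ to slope $1/\ell_k$ needs an interval of length at least $\log(\ell_k/\ell_{k-1})$. A kernel of width at most $1/2$ gives transition windows of length at most $1$, which forces $\ell_k\le e\,\ell_{k-1}$. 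Nothing in your choice of levels guarantees this: the $t_k$ are dictated by $X$ and $g$ and may force $\ell_k\gg\ell_{k-1}$. Your remark that $\ell_k\ge\ell_{k-1}+1$ keeps $\ell_k/\ell_{k-1}$ bounded is backwards, since it bounds the ratio from below, not from above. So the exponential fallback is not a contingency; it is the proof.

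The fallback does work once written out carefully. Let $s_k=\log t_k$ and $L_k:=\log(\ell_k/\ell_{k-1})$.
\begin{itemize}
\item On $[s_k,s_k+L_k]$ put $\psi'(s)=\ell_{k-1}^{-1}e^{-(s-s_k)}$, and $\psi'=1/\ell_k$ on the rest of $[s_k,s_{k+1}]$. The starting value must be $1/\ell_{k-1}$, not $1/\ell_k$ as you wrote; otherwise $\psi'$ jumps down at $s_k$ and $f'$ jumps down too.
\item Since $\ell_{k-1}\ge2$, we have $L_k<\log\ell_k<\ell_k$, so transitions do not overlap.
\item $\psi'$ is continuous, positive, nonincreasing and piecewise $C^1$ with $\psi''\ge-\psi'$ on each piece. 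Hence $f'=1+\psi(\log x)+\psi'(\log x)$ is continuous and nondecreasing on each piece, so $f$ is convex. No smoothing is needed, and the junction at $x=1$ is fine since $\psi'(0)=1/\ell_0>0$.
\item The excess over $\psi_0$ created at the $k$-th transition is $\ell_{k-1}^{-1}(1-e^{-L_k})-L_k/\ell_k\le\ell_{k-1}^{-1}-\ell_k^{-1}$. This telescopes, so $\psi_0\le\psi\le\psi_0+\ell_0^{-1}\le\psi_0+1/2$.
\end{itemize}
It is this telescoping bound, not enlarging the gaps, that gives $\psi(\log x)\le k+2$ on $[t_k,t_{k+1})$. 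The bound $\psi\ge\psi_0$ gives $\psi\to\infty$.

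With that repair, your argument uses the same mechanism as the paper's. There $f(x)=x\widetilde G(\log x)$ with $\widetilde G'=h$ solving $h+h'=G'$, $h(0)=1$. That is, $h(s)=e^{-s}+\int_0^s e^{-(s-u)}G'(u)\,\mathrm{d}u$ is a causal exponential smoothing of $G'$. Then $h'=G'-h\ge-h$ holds automatically, and $h$ is nonincreasing because $G'$ is. So your hand-built exponential transitions are replaced by one ODE. The paper gets integrability and $f/g\to0$ from the de la Vall\'ee-Poussin criterion rather than from explicit levels $t_k$.

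Your additive $1$ in $h=1+\psi$ is a genuine improvement. The paper's $f$ vanishes at $x=1$, so its bound $f(xy)\le3f(x)f(y)$ holds only for $x,y$ large, which is all that is used later. Yours gives $f(xy)\le f(x)f(y)$ for all $x,y\ge1$.
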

The proof is based on a MathOverflow discussion of YS with Giorgio Metafune~\cite{MO_Answer_Giorgio_Metafune}. A different proof was also provided by Iosif Pinelis~\cite{MO_Answer_Iosif_Pinelis}.
\begin{proof}
    Without loss of generality we can assume \(X \geq 1\). By the standard de~la~Vallée--Poussin criterion there is some increasing \(G \colon [0, \infty) \to [0, \infty)\) such that \(G(x)/x \to \infty\), as $x \to \infty$, and \(\mathbb{E}[X G(\log X)] < \infty\). Without loss of generality, we can assume that \(x G(\log x)/g(x) \to 0\), as $x \to \infty$, that \(G\) is smooth, concave and that \(G'(0) = 1\).
    
    Now, there exists a strictly decreasing function \(h \colon [0, \infty) \to [0, \infty)\) with  \(\int_{0}^{x} \, h(t) \, \d t \sim G(x)\) that fulfills the differential inequality \(h' \geq -h\).
    To put it shortly, the solution \(h\) to the ODE \(h + h' = G'\) with \(h(0) = 1\) works.
    Now set
    \begin{align*}
        f(x) = x \, \widetilde{G}(\log x)
    \end{align*} for
    \begin{align*}
        \widetilde{G}(x)
        = \int_{0}^{x} \, h(t)  \, \d t.
    \end{align*}
    Then, \(f\) fulfills the conditions of this lemma because \(\widetilde{G}\) is subadditive by concavity and \(\widetilde{G}(0) = 0\), but the function \(f\) is still convex by \(h' \geq -h\).
\end{proof}

\bibliographystyle{alpha}
\bibliography{references}

@article{Jessen1934NoteOT,
  title={Note on the differentiability of multiple integrals},
  author={B. Jessen and J. Marcinkiewicz and A. Zygmund},
  journal={Fundam. Math.},
  year={1934},
  volume={23},
  pages={217-234},
}

@article{vossboehme,
  title={Gibbsian Characterization for the
Reversible Measures of Interacting
Particle Systems},
  author={A. Vo{\ss}-B{\"o}hme},
  journal={Markov Processes Relat. Fields},
  year={2009},
  volume={15},
  pages={441-476},
}

@article{garcia2006spatial,
  title={Spatial birth and death processes as solutions of stochastic equations},
  author={Garcia, N. L. and Kurtz, T. G.},
  journal={ALEA},
  volume={1},
  pages={281--303},
  year={2006}
}

@article{JK25,
  title={On the long-time behaviour of reversible interacting particle systems in one and two dimensions},
  author={Jahnel, B. and K{\"o}ppl, J.},
  journal={Probab. Math. Phys.},
  volume={6},
  number={2},
  pages={479--503},
  year={2025}
}

@article{xanh1979integral,
  title={Integral and differential characterizations of the {G}ibbs process},
  author={Nguyen, X. X. and Zessin, H.},
  journal={Math. Nachr.},
  volume={88},
  number={1},
  pages={105--115},
  year={1979},
  publisher={Wiley Online Library}
}

@article{georgii1976canonical,
  title={Canonical and grand canonical {G}ibbs states for continuum systems},
  author={Georgii, H.-O.},
  journal={Comm. Math. Phys.},
  volume={48},
  pages={31--51},
  year={1976},
  publisher={Springer}
}

@article{handa1996entropy,
  title={Entropy production per site in (nonreversible) spin-flip processes},
  author={Handa, K.},
  journal={J. Stat. Phys.},
  volume={83},
  pages={555--571},
  year={1996},
  publisher={Springer}
}

@article{nguyen1979ergodic,
  title={Ergodic theorems for spatial processes},
  author={Nguyen, X. X. and Zessin, H.},
  journal={Z. Wahrscheinlichkeitstheorie Verw. Gebiete},
  volume={48},
  number={2},
  pages={133--158},
  year={1979},
  publisher={Springer}
}

@article{penrose2008spatial,
author = {M. D. Penrose},
title = {{Existence and spatial limit theorems for lattice and continuum particle systems}},
volume = {5},
journal = {Probab. Surv.},
publisher = {Institute of Mathematical Statistics and Bernoulli Society},
pages = {1 -- 36},
year = {2008},
doi = {10.1214/07-PS112},
URL = {https://doi.org/10.1214/07-PS112}
}

@article {Sullivan1976,
    AUTHOR = {Sullivan, W. G.},
     TITLE = {Specific information gain for interacting {M}arkov processes},
   JOURNAL = {Z. Wahrscheinlichkeitstheorie Verw. Gebiete},
  FJOURNAL = {Zeitschrift f\"ur Wahrscheinlichkeitstheorie und Verwandte
              Gebiete},
    VOLUME = {37},
      YEAR = {1976/77},
    NUMBER = {1},
     PAGES = {77--90},
       DOI = {10.1007/BF00536299},
       URL = {https://doi.org/10.1007/BF00536299},
}

@book {DaleyVereJones2008,
    AUTHOR = {Daley, D. J. and Vere-Jones, D.},
     TITLE = {An Introduction to the Theory of Point Processes. {V}ol. {II}},
    SERIES = {Probability and its Applications (New York)},
   EDITION = {Second},
 PUBLISHER = {Springer, New York},
      YEAR = {2008},
}

@book {DaleyVereJones2003,
    AUTHOR = {Daley, D. J. and Vere-Jones, D.},
     TITLE = {An Introduction to the Theory of Point Processes. {V}ol. {I}},
    SERIES = {Probability and its Applications (New York)},
   EDITION = {Second},
 PUBLISHER = {Springer, New York},
      YEAR = {2003},
}

@book{EK86,
	author = {Ethier, S. N. and Kurtz, T. G.},
	doi = {10.1002/9780470316658},
	isbn = {9780471081869},
	issn = {1940-6347},
	year = {1986},
	title = {Markov {Processes}},
	url = {http://dx.doi.org/10.1002/9780470316658},
        publisher = {John Wiley \& Sons Inc.},
        series = {Wiley Series in Probability and Mathematical Statistics: Probability and Mathematical Statistics},
}

@article{Dereudre_Vasseur_2020, title={Existence of {G}ibbs point processes with stable infinite range interaction}, volume={57}, 
DOI={10.1017/jpr.2020.39}, 
number={3}, 
journal={J. Appl. Probab.}, 
author={Dereudre, D. and Vasseur, T.}, 
year={2020}, 
pages={775–791}}

@article{HS78,
    author = {R. A. Holley and D. W. Stroock},
    title = {{Nearest neighbor birth and death processes on the real line}},
    volume = {140},
    journal = {Acta Math.},
    publisher = {Institut Mittag-Leffler},
    pages = {103 -- 154},
    year = {1978},
    doi = {10.1007/BF02392306},
    URL = {https://doi.org/10.1007/BF02392306}
}

@article{CCK95,
author = {J. T. Chayes and L. Chayes and R. Koteck{\'y}},
title = {{The analysis of the Widom--Rowlinson model by stochastic geometric methods}},
volume = {172},
journal = {Comm. Math. Phys.},
number = {3},
publisher = {Springer},
pages = {551 -- 569},
year = {1995},
}

@article{Ruelle1971WRPT,
  title = {Existence of a Phase Transition in a Continuous Classical System},
  author = {Ruelle, D.},
  journal = {Phys. Rev. Lett.},
  volume = {27},
  issue = {16},
  pages = {1040--1041},
  numpages = {0},
  year = {1971},
  publisher = {American Physical Society},
  doi = {10.1103/PhysRevLett.27.1040},
  url = {https://link.aps.org/doi/10.1103/PhysRevLett.27.1040}
}

@article{Giacomin1995Agreement,
	author = {Giacomin, G. and Lebowitz, J. L. and Maes, C.},
	journal = {J. Stat. Phys.},
	doi = {10.1007/bf02179875},
	issn = {0022-4715},
	number = {5-6},
	year = {1995},
	pages = {1379--1403},
	publisher = {{Springer Science and Business Media LLC}},
	title = {Agreement percolation and phase coexistence in some {Gibbs} systems},
	url = {http://dx.doi.org/10.1007/BF02179875},
	volume = {80},
}

@article{KonSko06Contact,
author = {Kondratiev, Y. and Skorokhod, A.},
title = {ON CONTACT PROCESSES IN CONTINUUM},
journal = {Infin.~Dimens.~Anal.~Quantum Probab.~Relat.~Top.},
volume = {09},
number = {02},
pages = {187-198},
year = {2006},
doi = {10.1142/S0219025706002305},
URL = {https://doi.org/10.1142/S0219025706002305},
eprint = {https://doi.org/10.1142/S0219025706002305}
}

@article{Finkelshtein2014Dynamical,
	author = {Finkelshtein, D. and Kondratiev, Y. and Kutoviy, O. and Oliveira, M. J.},
	journal = {J. Stat. Phys.},
	doi = {10.1007/s10955-014-1124-6},
	issn = {0022-4715},
	number = {1},
	year = {2014},
	pages = {57--86},
	publisher = {{Springer Science and Business Media LLC}},
	title = {Dynamical {Widom}--{Rowlinson} model and its mesoscopic limit},
	url = {http://dx.doi.org/10.1007/s10955-014-1124-6},
	volume = {158},
}

@book{Kallenberg1983Random,
	author = {Kallenberg, O.},
	year = {1983},
	publisher = {Academic Press},
	title = {Random {Measures}},
	url = {https://books.google.com/books/about/Random_Measures.html?hl=&id=McWmAAAAIAAJ},
}

@article {Preston1975,
    AUTHOR = {Preston, C.},
     TITLE = {Spatial birth-and-death processes},
   JOURNAL = {Bull. Inst. Internat. Statist.},
  FJOURNAL = {Bulletin de l'Institut International de Statistique},
    VOLUME = {46},
      YEAR = {1975},
    NUMBER = {2},
     PAGES = {371--391, 405--408},
   MRCLASS = {60J80 (60K35)},
  MRNUMBER = {474532},
MRREVIEWER = {T.\ M.\ Liggett},
}

@article{huesmann2025,
      title={Non-local {W}asserstein Geometry, Gradient Flows, and Functional Inequalities for Stationary Point Processes}, 
      author={M. Huesmann and H. Stange},
      year={2025},
      journal={Preprint arXiv:2504.12047},
      doi={arXiv:2504.12047},
      url={https://arxiv.org/abs/2504.12047}, 
}

@article{JKSZ24,
      title={The variational principle for a marked {G}ibss point process with infinite-range multibody interactions}, 
      author={Jahnel, B. and Köppl, J. and Steenbeck, Y. and Zass, A.},
      year={2024},
      journal={Preprint arXiv:2408.17170},
      doi={arXiv:2408.17170},
      url={https://arxiv.org/abs/2408.17170}, 
}

@article{Dello2024Wasserstein,
	author = {Dello Schiavo, L. and Herry, R. and Suzuki, K.},
	journal = {	J. {\'E}c. Polytech. Math.},
	doi = {10.5802/jep.270},
	issn = {2270-518X},
	year = {2024},
	pages = {957--1010},
	publisher = {Cellule MathDoc/Centre Mersenne},
	title = {Wasserstein geometry and {Ricci} curvature bounds for {Poisson} spaces},
	url = {http://dx.doi.org/10.5802/jep.270},
	volume = {11},
}

@article{Kond2008, 
    title={Correlation functions and invariant measures in continuous contact model}, 
    volume={11}, 
    DOI={10.1142/s0219025708003038}, 
    number={02}, 
    journal={Infin.~Dimens.~Anal.~Quantum Probab.~Relat.~Top.}, 
    author={Kondratiev, Y. and Kutoviy, O. and Pirogov, S.}, 
    year={2008}, 
    pages={231–258}
}

@article{Kurtz1980Representations,
	author = {Kurtz, T. G.},
	journal = {Ann. Probab.},
	doi = {10.1214/aop/1176994660},
	issn = {0091-1798},
	number = {4},
	year = {1980},
	publisher = {Institute of Mathematical Statistics},
	title = {Representations of {Markov} {Processes} as multiparameter time changes},
	url = {http://dx.doi.org/10.1214/aop/1176994660},
	volume = {8},
}

@article{Garcia1995Birth,
	author = {Garcia, N. L.},
	journal = {Adv. Appl. Probab.},
	doi = {10.2307/1427928},
	issn = {0001-8678},
	number = {4},
	year = {1995},
	pages = {911--930},
	publisher = {Cambridge University Press (CUP)},
	title = {Birth and death processes as projections of higher-dimensional {Poisson} processes},
	url = {http://dx.doi.org/10.2307/1427928},
	volume = {27},
}

@article{Etheridge2019Genealogical,
	author = {Etheridge, A. M. and Kurtz, T. G.},
	journal = {Ann. Probab.},
	doi = {10.1214/18-aop1266},
	issn = {0091-1798},
	number = {4},
	year = {2019},
	publisher = {Institute of Mathematical Statistics},
	title = {Genealogical constructions of population models},
	url = {http://dx.doi.org/10.1214/18-AOP1266},
	volume = {47},
}

@article{holley_free_1971,
    title = {Free energy in a {Markovian} model of a lattice spin system},
    volume = {23},
    issn = {0010-3616, 1432-0916},
    url = {http://link.springer.com/10.1007/BF01877751},
    doi = {10.1007/BF01877751},
    language = {en},
    number = {2},
    urldate = {2021-11-10},
    journal = {Comm. Math. Phys.},
    author = {Holley, R. A.},
    year = {1971},
    pages = {87--99},
}

@article{higuchi_results_1975,
    title = {Some results on {Markov} processes of infinite lattice spin systems},
    volume = {15},
    issn = {2156-2261},
    url = {https://projecteuclid.org/journals/kyoto-journal-of-mathematics/volume-15/issue-1/Some-results-on-Markov-processes-of-infinite-lattice-spin-systems/10.1215/kjm/1250523126.full},
    doi = {10.1215/kjm/1250523126},
    number = {1},
    urldate = {2021-11-10},
    journal = {Kyoto J. Math.},
    author = {Higuchi, Y. and Shiga, T.},
    year = {1975},
}

@article{kunsch_time_1984,
    title = {Non Reversible stationary measures for infinite interacting particle systems},
    volume = {66},
    journal = {Z. Wahrscheinlichkeitstheorie Verw. Gebiete},
    author = {Künsch, H.},
    year = {1984},
    pages = {407--424},
}

@article{jahnel_attractor_2019,
    title = {Attractor properties for irreversible and reversible interacting particle systems},
    volume = {366},
    issn = {0010-3616, 1432-0916},
    url = {http://link.springer.com/10.1007/s00220-019-03352-4},
    doi = {10.1007/s00220-019-03352-4},
    language = {en},
    number = {1},
    urldate = {2020-10-23},
    journal = {Comm. Math. Phys.},
    author = {Jahnel, B. and Külske, C.},
    year = {2019},
    pages = {139--172},
}

@article{jahnel_dynamical_2023,
    title = {Dynamical {Gibbs} variational principles for irreversible interacting particle systems with applications to attractor properties},
    volume = {33},
    issn = {1050-5164},
    url = {https://projecteuclid.org/journals/annals-of-applied-probability/volume-33/issue-6A/Dynamical-Gibbs-variational-principles-for-irreversible-interacting-particle-systems-with/10.1214/22-AAP1926.full},
    doi = {10.1214/22-AAP1926},
    number = {6A},
    urldate = {2024-05-20},
    journal = {Ann. Appl. Probab.},
    author = {Jahnel, B. and Köppl, J.},
    year = {2023},
}

@article{holley_one_1977,
    title = {In one and two dimensions, every stationary measure for a stochastic {Ising} model is a {Gibbs} state},
    volume = {55},
    issn = {0010-3616, 1432-0916},
    url = {http://link.springer.com/10.1007/BF01613147},
    doi = {10.1007/BF01613147},
    language = {en},
    number = {1},
    urldate = {2022-11-28},
    journal = {Comm. Math. Phys.},
    author = {Holley, R. A. and Stroock, D. W.},
    year = {1977},
    pages = {37--45},
}

@book{liggett_interacting_2005,
    address = {Berlin, Heidelberg},
    series = {Classics in {Mathematics}},
    title = {Interacting {Particle} {Systems}},
    isbn = {978-3-540-22617-8 978-3-540-26962-5},
    url = {http://link.springer.com/10.1007/b138374},
    urldate = {2022-10-06},
    publisher = {Springer Berlin Heidelberg},
    author = {Liggett, T. M.},
    year = {2005},
    doi = {10.1007/b138374},
}

@article{georgii_equivalence_1995,
    title = {The equivalence of ensembles for classical systems of particles},
    volume = {80},
    issn = {1572-9613},
    url = {https://doi.org/10.1007/BF02179874},
    doi = {10.1007/BF02179874},
    number = {5},
    journal = {J. Stat. Phys.},
    author = {Georgii, H.-O.},
    year = {1995},
    pages = {1341--1378},
}

@article{barron_entropy_and_the_CLT_1986,
author = {A. R. Barron},
title = {Entropy and the Central Limit Theorem},
volume = {14},
journal = {Ann. Probab.},
number = {1},
publisher = {Institute of Mathematical Statistics},
pages = {336 -- 342},
year = {1986},
}

@article{JKSZ25,
      title={Reversible birth-and-death dynamics in continuum: free-energy dissipation and attractor properties}, 
      author={B. Jahnel and J. Köppl and Y. Steenbeck and A. Zass},
      year={2025},
      journal={Preprint arXiv:2508.21196} ,
      doi={arXiv:2508.21196},
      url={https://arxiv.org/abs/2508.21196},
}

@article{Ahle2022,
title = {Sharp and simple bounds for the raw moments of the binomial and Poisson distributions},
journal = {Stat. Probab. Lett.},
volume = {182},
pages = {109306},
year = {2022},
author = {T. D. Ahle},
}

@article{Dereudre2025,
author = {D. Dereudre and C. Renaud-Chan},
title = {{Liquid-gas phase transition for Gibbs point process with Quermass interaction}},
volume = {30},
journal = {Electron. J. Probab.},
publisher = {Institute of Mathematical Statistics and Bernoulli Society},
pages = {1 -- 32},
year = {2025},
doi = {10.1214/25-EJP1361},
URL = {https://doi.org/10.1214/25-EJP1361}
}

@article{Georgii_1996,
author = {H.-O. Georgii and O. H{\aa}ggstr{\"o}m},
title = {{Phase transition in continuum Potts models}},
volume = {181},
journal = {Commun. Math. Phys.},
number = {2},
publisher = {Springer},
pages = {507 -- 528},
year = {1996},
}

@article{Roelly_Zass_2020, 
    title={Marked {G}ibbs Point processes with unbounded interaction: {A}n existence result}, 
    volume={179}, 
    DOI={10.1007/s10955-020-02559-3}, 
    number={4}, 
    journal={J. Stat. Phys.}, 
    author={R{\oe}lly, S. and Zass, A.}, 
    year={2020}, 
    month={May}, 
    pages={972--996}
}

@book{ruelle_1969, 
place={London}, 
title={Statistical Mechanics: Rigorous Results}, 
publisher={W. A. Benjamin, Inc., New York-Amsterdam}, 
author={Ruelle, D.}, 
year={1969}
}

@article{dereudre2009variational,
  title={Variational characterisation of {G}ibbs measures with {D}elaunay triangle interaction},
  author={Dereudre, D. and Georgii, H.-O.},
volume = {14},
journal = {Electron. J. Probab.},
  year={2009}
}

@book{Bakry2014,
  title = {Analysis and Geometry of Markov Diffusion Operators},
  ISBN = {9783319002279},
  ISSN = {2196-9701},
  url = {http://dx.doi.org/10.1007/978-3-319-00227-9},
  DOI = {10.1007/978-3-319-00227-9},
  journal = {Grundl. Math. Wiss.},
  publisher = {Springer International Publishing},
  author = {Bakry, D. and Gentil, I. and Ledoux, M.},
  year = {2014}
}

@incollection{dereudre_2019,
author={Dereudre, D.},
editor={Coupier, David},
title={Introduction to the theory of {G}ibbs Point Processes},
bookTitle={Stochastic {G}eometry: Modern Research Frontiers},
year={2019},
publisher={Springer International Publishing},
address={Cham},
pages={181--229},
isbn={978-3-030-13547-8},
doi={10.1007/978-3-030-13547-8_5},
url={https://doi.org/10.1007/978-3-030-13547-8_5}
}

@MISC{MO_Answer_Iosif_Pinelis,
    TITLE = {Is there a increasing, convex, superlinear $f$ with $c_1 f(x)y \leq f(xy)\leq c_2 f(x)f(y)$ such that \(\mathbb{E}[f(X)] < \infty\)? (answer)},
    AUTHOR = {I. Pinelis},
    HOWPUBLISHED = {MathOverflow},
    DATE = {2025-11-26},
    NOTE = {URL: \url{https://mathoverflow.net/a/504347} (visited on: 2026-02-10)},
    URL = {https://mathoverflow.net/a/504347},
    URLDATE = {2026-02-10},
}

@MISC{MO_Answer_Giorgio_Metafune,
    TITLE = {Can positive decreasing functions $f$ with $f' \geq -f$? be such that $\int_{0}^{x} f(t) \mathrm{d}t$ diverges arbitrarily slow for $x \to \infty$? (answer)},
    AUTHOR = {G. Metafune},
    HOWPUBLISHED = {MathOverflow},
    DATE = {2025-11-29},
    NOTE = {URL: \url{https://mathoverflow.net/a/504366} (visited on: 2026-02-10)},
    URL = {https://mathoverflow.net/a/504366},
    URLDATE = {2026-02-10},
}

@article{DaiPraPosta2013,
    title={Entropy decay for interacting systems via the {B}ochner–{B}akry–{É}mery approach},
    author={Dai Pra, P. and Posta, G.},
    volume = {18},
    journal = {Electron. J. Probab.},
    year={2013}
}

@article{Fernndez2007,
  title = {The Analyticity Region of the Hard Sphere Gas. {I}mproved Bounds},
  volume = {128},
  ISSN = {1572-9613},
  url = {http://dx.doi.org/10.1007/s10955-007-9352-7},
  DOI = {10.1007/s10955-007-9352-7},
  number = {5},
  journal = {J. Stat. Phys.},
  publisher = {Springer Science and Business Media LLC},
  author = {Fernández, R. and Procacci, A. and Scoppola, B.},
  year = {2007},
  month = jun,
  pages = {1139–1143}
}

@article{Betsch2023,
  title = {On the uniqueness of {G}ibbs distributions with a non-negative and subcritical pair potential},
  volume = {59},
  ISSN = {0246-0203},
  url = {http://dx.doi.org/10.1214/22-AIHP1265},
  DOI = {10.1214/22-aihp1265},
  number = {2},
  journal = {Ann. Inst. Henri Poincaré Probab. Stat.},
  publisher = {Institute of Mathematical Statistics},
  author = {Betsch, S. and Last, G.},
  year = {2023},
  month = may 
}

@article{Houdebert2022,
  title = {An explicit {D}obrushin uniqueness region for {G}ibbs point processes with repulsive interactions},
  volume = {59},
  ISSN = {1475-6072},
  url = {http://dx.doi.org/10.1017/jpr.2021.70},
  DOI = {10.1017/jpr.2021.70},
  number = {2},
  journal = {J. Appl. Probab.},
  publisher = {Cambridge University Press (CUP)},
  author = {Houdebert, P. and Zass, A.},
  year = {2022},
  month = mar,
  pages = {541–555}
}

@article{dereudre2026,
      title={First-order phase transition for Gibbs point processes with saturated interactions}, 
      author={D. Dereudre and C. Renaud-Chan},
      year={2026},
      journal={Preprint arXiv:2602.11078} ,
      doi={arXiv:2602.11078},
      url={https://arxiv.org/abs/2602.11078}
}

\end{document}